  \definecolor{dark-red}{rgb}{0.6,0.15,0.15}
   \definecolor{dark-blue}{rgb}{0.15,0.15,0.6}
   \definecolor{medium-blue}{rgb}{0,0,0.5}
\newcommand{\stkout}[1]{\ifmmode\text{\sout{\ensuremath{#1}}}\else\sout{#1}\fi}
\numberwithin{equation}{section}
\theoremstyle{plain}% --- default
\newtheorem{thm}[equation]{Theorem}
\newtheorem{cor}[equation]{Corollary}
\newtheorem{prop}[equation]{Proposition}
\newtheorem{lem}[equation]{Lemma}
\newtheorem*{linhypo}{Linearization Hypothesis}
\newtheorem*{thmintro1}{\Cref{thm:dual-en}}
\newtheorem*{thmintro2}{\Cref{thm:a-big-one}}
\newtheorem*{thmintro3}{\Cref{thm:whod-have-believed-it-1}}
\newtheorem*{propintro}{\Cref{prop:recog-princ} and \Cref{prop:recog-princ-bis}}
\newtheorem*{thmintro4}{\Cref{thm:dualn=p=3} and \Cref{thm:dualn=p=2}}
\newtheorem*{thmintro5}{\Cref{thm:dualn=p-1}}
\newtheorem*{thmintro6}{\Cref{thm:pic-exotic}}
\theoremstyle{definition}
\newtheorem{defn}[equation]{Definition}
\newtheorem{notation}[equation]{Notation}
\newtheorem{exam}[equation]{Example}
\newtheorem{rem}[equation]{Remark}
\def\quickop#1{\expandafter\newcommand\csname #1\endcsname{\operatorname{#1}}}
\newcommand{\F}{\mathbb{F}}
\newcommand{\R}{\mathbb{R}}
\newcommand{\G}{\mathbb{G}}
\newcommand{\Q}{\mathbb{Q}}
\newcommand{\smsh}{\wedge}
\newcommand{\cC}{\mathcal{C}}
\newcommand{\cU}{\mathcal{U}}
\newcommand{\ZZ}{\mathbb{Z}}
\newcommand{\FF}{\mathbb{F}}
\newcommand{\RR}{\mathbb{R}}
\DeclareFontFamily{OMS}{rsfs}{\skewchar\font'60}
\DeclareFontShape{OMS}{rsfs}{m}{n}{<-5>rsfs5 <5-7>rsfs7 <7->rsfs10 }{}
\DeclareSymbolFont{rsfs}{OMS}{rsfs}{m}{n}
\DeclareSymbolFontAlphabet{\scr}{rsfs}
\def\makeop#1{\expandafter\def\csname #1\endcsname{\mathop{\mathrm{#1}}\nolimits}}
\def\FF{\mathbb{F}}
\def\QQ{\mathbb{Q}}
\def\GG{\mathbb{G}}
\def\SS{\mathbb{S}}
\def\WW{{{\mathbb{W}}}}
\def\ZZ{{{\mathbb{Z}}}}
\def\Z{{{\mathbb{Z}}}}
\def\Ext{\mathrm{Ext}}
\def\longr{{{\longrightarrow\ }}}
\def\sPic{{\mathcal{P}\mathrm{ic}}}
\def\Pic{\mathrm{Pic}}
\def\uPic{{{\underline{\Pic}}}}
\def\Rep{{\mathrm{Rep}}}
\definecolor{darkspringgreen}{rgb}{0.09, 0.45, 0.27}
   \def\cE{{{\mathcal{E}}}}
   \def\Gl{{\mathrm{Gl}}}
\def\gg{\mathfrak{g}}
\def\cO{{{\mathcal{O}}}}
\def\cG{{{\mathcal{G}}}}
\def\cU{{{\mathcal{U}}}}
\def\RR{{{\mathbb{R}}}}
\def\ig{{I_{\cG}}}
\def\tr{{\mathrm{tr}}}
\def\pic{\mathfrak{pic}}
\def\Pic{\mathrm{Pic}}
\newcommand{\ko}{\mathrm{ko}}
\newcommand{\ku}{\mathrm{ku}}
\newcommand{\kon}[1]{\ko\langle #1\rangle}
\newcommand{\ion}[1]{I \langle #1\rangle}
\newcommand{\ionp}[1]{I_p \langle #1\rangle}
\newcommand{\iontwo}[1]{I_2 \langle #1\rangle}
\newcommand{\ionthree}[1]{I_3 \langle #1\rangle}
\def\CC{{{\mathbb{C}}}}
\def\LTE{{{\mathbf{E}}}}
\def\LTK{{{\mathbf{K}}}}
\def\igg{{I_{\GG}}}
\def\mm{{\mathfrak{m}}}
\newcommand{\MJ}[1]{\mathrm{M}_{J(#1)}}
\def\Func{{{F_c}}}
\def\Sp{{\mathbf{Sph}}}
\def\haut{{\mathrm{Gl_1}}} %used to be \def\haut{{\mathrm{haut}}} 
\def\GL1{{\mathrm{Gl_1}}}
\def\SL1{{\mathrm{Sl_1}}}
\def\cK{{{\mathcal{K}}}}
\def\gl{{{\mathrm{gl}}}}
\def\Sq{{\mathrm{Sq}}}
\def\pic{{\mathfrak{pic}}}
\def\res{\mathrm{res}}
\def\tr{\mathrm{tr}}
\def\LTE{{{\mathbf{E}}}}
\def\LTK{{{\mathbf{K}}}}
\def\igg{{I_{\GG}}}
\def\cE{{{\mathcal{E}}}}
\def\sPic{{\mathcal{P}\mathrm{ic}}}
\def\uPic{{{\underline{\Pic}}}}
\def\Rep{{\mathrm{Rep}}}
\def\tr{{\mathrm{tr}}}
\def\maps{\mathbf{map}} %mapping space in spaces
\def\mapsp{\mathfrak{map}} %mapping space in spectra
\def\gGa{{\prescript{g}{}{\Gamma}}}
\def\gMa{{\prescript{g}{}{M}}}
\def\fMf{{f_\ast M^f}}
 \def\FP{{\mathbf{FP}}}
\def\pair{{\langle -, -\rangle}}
\def\barE{{\overline{E}}}
\def\bone{{\Sigma gl_1}}
\def\HH{{\mathbb{H}}}
\def\su2{\mathfrak{su}(2)}
\title[Dualizing spheres]{Dualizing spheres for compact $p$-adic analytic groups and duality in chromatic homotopy}
\date{\today}
\author[A. Beaudry]{Agn\`es Beaudry}
\address{Department of Mathematics\\ University of Colorado Boulder}
\author[P. Goerss]{Paul G. Goerss}
\address{Department of Mathematics\\ Northwestern University}
\author[M.J. Hopkins]{Michael J. Hopkins}
\address{Department of Mathematics\\Harvard University}
\author[V. Stojanoska]{Vesna Stojanoska}
\address{Department of Mathematics\\ University of Illinois at Urbana-Champaign}
\thanks{This material is based upon work supported by the National Science Foundation under grants No.~DMS--1510417, DMS--1812122 and DMS--1906227. The authors also thank the Isaac Newton Institute for Mathematical Sciences for support and hospitality during the program Homotopy Harnessing Higher Structures. This work was supported by EPSRC Grant Number EP/R014604/1.
}
\begin{document}

\begin{abstract}
The primary goal of this paper is to study Spanier-Whitehead duality in the $K(n)$-local category. One of the key players in the $K(n)$-local category is the Lubin-Tate spectrum $E_n$, whose homotopy groups classify deformations of a formal group law of height $n$, in the implicit characteristic $p$. It is known that $E_n$ is self-dual up to a shift; however, that does not fully take into account the action of the Morava stabilizer group $\mathbb{G}_n$, or even its subgroup of automorphisms of the formal group in question. In this paper we find that the $\mathbb{G}_n$-equivariant dual of $E_n$ is in fact $E_n$ twisted by a sphere with a non-trivial (when $n>1$) action by $\mathbb{G}_n$. This sphere is a dualizing module for the group $\mathbb{G}_n$, and we construct and study such an object $I_{\mathcal{G}}$ for any compact $p$-adic analytic group $\mathcal{G}$. If we restrict the action of $\mathcal{G}$ on $I_{\mathcal{G}}$ to certain type of small subgroups, we identify $I_{\mathcal{G}}$ with a specific representation sphere coming from the Lie algebra of $\mathcal{G}$. This is done by a classification of $p$-complete sphere spectra with an action by an elementary abelian $p$-group in terms of characteristic classes, and then a specific comparison of the characteristic classes in question. The setup makes the theory quite accessible for computations, as we demonstrate in the later sections of this paper, determining the $K(n)$-local Spanier-Whitehead duals of $E_n^{hH}$ for select choices of $p$ and $n$ and finite subgroups $H$ of $\mathbb{G}_n$.
\end{abstract}
\subjclass[2000]{55P92; 55R25; 55R50; 55U30; 20E18; 22E41}

\maketitle

\setcounter{tocdepth}{1}
\tableofcontents

\numberwithin{equation}{section}

% !TEX root = dsphere-master.tex

\section{Introduction}

This project began with a contemplation of duality in the $K(n)$-local stable homotopy category, so let us start with
explaining why this is an interesting topic.
One of the standard approaches to stable homotopy theory emphasizes complex oriented cohomology theories, those with
a natural theory of Chern classes. Any such cohomology theory determines 
a smooth $1$-parameter formal group, and the algebraic geometry of formal groups can be used to organize
calculations and the search for large scale phenomena. Over an algebraically closed field of characteristic $p$, 
formal groups are classified up to isomorphism by a single invariant, the height. Fix a prime $p$ and a height $n$, and choose
a representative $F_n$ for this isomorphism class. It is convenient to assume $F_n$ is defined over
some finite field $\FF_q$ of characteristic $p$, and usually $q=p^n$. There is a complex oriented cohomology theory
$ \LTK^\ast:=K(n)^\ast  = K(\FF_q,F_n)^\ast$ determined by the pair $(\FF_q,F_n)$. Let $\LTK$ be the representing spectrum; this is a Morava $K$-theory. 
One standard approach is to then study the homotopy theory of $\LTK$-local spectra in the sense of Bousfield \cite{bousfield_locspectra} and,
perhaps later, assemble that information into a more global picture. 

In many ways, the $\LTK$-local category is much better behaved than the full stable homotopy category. To begin with there
is a very effective computational tool based on Lubin-Tate (or Morava) $E$-theory.
The universal deformation of $F_n$ determines a Landweber
exact complex oriented cohomology theory $ \LTE^\ast:=E^\ast_n  = E(\FF_q,F_n)^\ast$. Let $\LTE$ be the representing spectrum.
If we let $\GG$ be the automorphism group of the pair $(\FF_q,F_n)$, then $\GG$ is a profinite group and acts on $\LTE$. If $X$
is any spectrum we define
\[
\LTE_\ast X = \pi_\ast L_{\LTK}(\LTE \wedge X). 
\]
Then $\LTE_\ast X$ comes equipped with an $\LTE_\ast$-module structure and a compatible continuous action of $\GG$; we say
$\LTE_\ast X$ is a {\it Morava} module.  The $\LTK$-local Adams-Novikov Spectral
Sequence based on $\LTE$ reads
\[
H^s(\GG,\LTE_t X) \Longrightarrow \pi_{t-s}L_{\LTK}X.
\]
Here we are using continuous group cohomology. If $p$ is large with respect to $n$, this spectral sequence collapses
for some important examples, such as $X = S^0$. Even when it does not collapse the Morava module
$\LTE_\ast X$ is a very sensitive and informative algebraic invariant of $X$. 

The group $\GG$ is not simply a profinite group; it is a compact $p$-adic analytic group. It is a classical observation that the category of 
continuous modules over such groups behaves very much like the category of quasi-coherent sheaves on a very nice
projective scheme; for example, there is a very good notion of Grothendieck-Serre duality. See \cite{CohoGal} for a classical
source and especially \cite{SymWei} for a thorough modern treatment. The assignment $X \mapsto \LTE_\ast X$ is not
an equivalence of categories in general. (But see \cite{Piotr} for more on this point.) Nonetheless,
the structure of continuous $\GG$-modules is a very good indicator of what might be true for the $\LTK$-local category.
In particular, there are rich theories of duality in the $\LTK$-local category that can be glimpsed by studying Morava modules. The investigation of these dualities has been an important aspect of research in  chromatic homotopy theory. 

Here, we will concentrate on Spanier-Whitehead Duality in the $\LTK$-local category. Any spectrum $X$ has a $\LTK$-local
Spanier-Whitehead dual defined by the function spectrum
\[
DX = F(X,L_{\LTK}S^0).
\]
The spectrum $L_{\LTK}S^0$ is the unit for the natural symmetric monoidal structure on the $\LTK$-local category, so this is a very basic
duality. There is another duality, more closely related to Grothendieck-Serre duality, known as Gross-Hopkins (or $\LTK$-local Brown-Comenetz)
duality. Without going into detail, if $I_n(-)$ denotes Gross-Hopkins duality, then $D(X) \wedge I_n \simeq I_n(X)$ for a certain invertible
object $I_n$ in the $\LTK$-local category, so the two dualities are closely related. For more on this point, see \cite{StrickGH}
and \cite{666}.

Spanier-Whitehead duality in the global (i.e. unlocalized) stable homotopy category is often extremely hard to compute and behaves in strange and surprising ways when $X$ is not finite. However,
it is much better behaved in the $\LTK$-local category. For example, quite a few years ago, the third author noted that
$D\LTE$ is essentially $\LTE$; in fact, since $\GG$ is a compact $p$-adic analytic group of rank (or dimension) $n^2$ and
since $\LTE^\ast \LTE$ is isomorphic, as a graded $\GG$-module to the profinite group ring $\LTE^\ast[[\GG]]$, there is an isomorphism of continuous Morava modules
\begin{equation}\label{eq:intro0}
\pi_\ast D\LTE \cong \pi_\ast \Sigma^{-n^2}\LTE .
\end{equation}
See \cite{StrickGH} for details. Note that we could equally write $\pi_\ast D\LTE \cong \pi_\ast F(S^{n^2},\LTE)$.
We could then hope that this isomorphism is induced by a $\GG$-equivariant equivalence 
$D\LTE \simeq \Sigma^{-n^2}\LTE$, 
but this is false. In fact, it is not true even when restricted to
certain key finite subgroups of $\GG$. 
Indeed, Behrens \cite{Beh44} and Bobkova \cite{BobkovaSW} have computed $D(\LTE)^{hH}$ for maximal finite subgroups $H$ of $\GG$ at $n=2$, when $p=3$ and $p=2$, respectively, and their results, in particular, show that $D\LTE $ is not $\GG$-equivariantly equivalent to $\Sigma^{-n^2} \LTE$.
Our first goal was to understand this subtlety. 

Ultimately, it turns out that this is not a question about $\LTE$, but a question
about $\GG$. In fact, it is fairly formal, if technically formidable, to arrive at the following result.

\begin{thmintro1}
There is a $p$-complete spectrum $I_{\GG}$ with an action of $\GG$, whose underlying
$p$-complete homotopy type is $S^{n^2}$, such that there is a $\GG$-equivariant equivalence
\[
D\LTE \simeq F(I_{\GG},\LTE).
\]
\end{thmintro1}

By combining \Cref{rem:when-finite-coh} and \Cref{lem:top-action} we also have that the action of $\GG$ on
$H_{n^2}I_{\GG} \cong \ZZ_p$ is trivial. These results imply the formula \eqref{eq:intro0}.

The question then is to understand the equivariant homotopy type of $I_{\GG}$. To do so, it is helpful to generalize 
and replace $\GG$ by an arbitrary compact $p$-adic analytic group $\cG$. Any such $\cG$ has an exhaustive nested sequence of open subgroups, all normal in $\cG$,
\[
\cdots \subseteq \Gamma_{i+1} \subseteq \Gamma_i \subseteq \cdots \subseteq \Gamma_1 \subseteq \Gamma_0 = \cG,
\]
with the property that for large enough $i$, each $\Gamma_i$ is a particularly nice $p$-profinite group known as a uniformly powerful group. We review this theory in \Cref{sec:p-adicBasics}.
We have that $\Gamma_{i+1} \subseteq \Gamma_i$ is of finite index and $\cG$ acts on the classifying space
$B(\Gamma_i/\Gamma_j)$ by conjugation. We first define the $\cG$-space 
\[
B\Gamma_i = \holim_j B(\Gamma_i/\Gamma_j),
\]
where the limit is over the projections $\Gamma_i/\Gamma_{j+1} \to \Gamma_i/\Gamma_j$ and then we define
a $p$-complete $\cG$-spectrum
\[
\ig = \left( \hocolim_i \Sigma_+^\infty B\Gamma_i \right)^\wedge_p,
\]
where $\Sigma_+^\infty$ denotes the suspension spectrum after adding a disjoint basepoint. The colimit is over the transfer maps
\[
\tr \colon  \Sigma_+^\infty B\Gamma_i \to  \Sigma_+^\infty B\Gamma_{i+1},
\]
which are equivariant with respect to the $\cG$-action; see \Cref{rem:naturality-tr}.
Then $\ig$ has the underlying homotopy type of a $p$-complete $d$-sphere, where $d$ is the rank of $\cG$. 
See \Cref{lem:transfer-hom}. Even in the cases where $\cG = \GG$ or $\Gl_n(\ZZ_p)$ the resulting action of $\cG$ on $\ig$ is rich and potentially very non-linear, since the action on the building blocks is rich and highly non-linear.

There is a linear analog
of $\ig$ which is much simpler for computations. Because $\cG$ is a compact $p$-adic analytic group it has a $p$-adic Lie algebra
$\gg$; this is a free $\ZZ_p$-module of rank $d$, where $d$ is the rank of $\cG$.
The conjugation action makes $\gg$ into a $\cG$-module called the adjoint representation. We can define a linear $\cG$-sphere
\[
S^\gg = \left( \hocolim_i \Sigma_+^\infty B(p^i\gg) \right)^\wedge_p
\]
using this representation, where again the colimit is along transfers.
Then $S^\gg$ has the underlying homotopy type of a $p$-complete $d$-sphere and there is an isomorphism of $\cG$-modules
\[
H_d (\ig,\ZZ_p) \cong \Lambda^d \gg \cong H_d (S^\gg,\ZZ_p),
\]
as we recall in
\Cref{lem:top-action}.
Here $\Lambda^d \gg$ denotes the top exterior power of the adjoint representation. This is the classical
dualizing module for continuous $\cG$-modules. Again, see \cite{CohoGal} or \cite{SymWei} for details. 
In analogy with the classical algebraic duality statements discussed in these sources, one is led to the following conjecture.
\begin{linhypo} 
In an appropriate category of continuous $\cG$-spheres there is a
$\cG$-equivariant equivalence $S^\gg \simeq \ig$.
\end{linhypo}

This conjecture  appeared in the work of Clausen, \cite[\S 6.4]{Clausen}. Subsequently, in  \cite{OWS}, Clausen
announced a proof of this conjecture in full generality, including a discussion of
what category is a natural home for the spectrum $\ig$ with its $\cG$-action. Looking further back,
duality and dualizing modules in the $\LTK$-local category featured in work from the early '90s by the third
author and his collaborators. See \cite{DevHopAct}, especially the last section, and \cite{HopkinsGross}. 

\begin{rem}
The analogue of the Linearization Hypothesis for compact Lie groups was stated and proved by Klein in \cite{Klein}. In a different (genuinely equivariant) setting, the Wirthm\"uller isomorphism can also be cast as a duality statement for compact Lie groups as in Fausk-Hu-May \cite{FauskHuMay}.
\end{rem}

In this paper, we do not prove the Linearization Hypothesis in its full strength, but we show it holds when the action of $\cG$ is restricted to certain small subgroups. 
Here we write $Z(\cG)$ for the center of $\cG$.

\begin{thmintro2} Let $\cG$ be compact $p$-adic analytic group and let $H$ be a closed subgroup
of $\cG$ such that $H/H \cap Z(\cG)$ is finite. Suppose the $p$-Sylow subgroup of $H/H \cap Z(\cG)$
is an elementary abelian $p$-group. Then there is an 
$H$-equivariant equivalence
\[
\ig \simeq S^\gg.
\]
\end{thmintro2}

Although we do not prove the Linearization Hypothesis in its full strength, our methods are very different from Clausen's and are valuable in and of themselves: They allow us to access the result for specific computations, as we demonstrate in the latter sections of this paper. A major input in the proof are techniques from Lannes theory \cite{Lannes}, leading up to the following result, which we deduce directly from the work of Castellana \cite{Castellana}.

\begin{thmintro3} Let $F$ be an elementary abelian $p$-group and let $X$ be a
$p$-complete $F$-sphere of virtual dimension $k$. 
Then there is stable vector
bundle $\xi$ over $BF$ of virtual dimension $k$ and a $p$-equivalence of spectra
\[
M\xi \simeq EF_+ \wedge_F X.
\]
Furthermore there is an $F$-equivalence $X \simeq Y$ of $p$-complete $F$-spheres if and only if there
there is an isomorphism of modules over the Steenrod algebra
\[
H^\ast(EF_+ \wedge_F X) \cong H^\ast(EF_+ \wedge_F Y).
\]
Such an isomorphism uniquely determines the $F$-equivalence up to $F$-homotopy.
\end{thmintro3}

This result tells us that any stable $H$-sphere, and in particular either $\ig$ or $S^\gg$, is linearizable, and as such, determined by its 
characteristic classes. We have included the proof here in its entirety, as many of the details are missing in loc.cit. We then dive 
deeply into the constructions of $\ig$ and $S^\gg$ to show that their characteristic classes match.

The natural question that arises at this point is whether our methods for proving \Cref{thm:a-big-one} can be bootstrapped to the case 
when $H/H\cap Z(\cG)$ is a larger finite group. This is  tied to the problem of classifying stable spheres with an action of a finite group. 
This problem, as well as its unstable version, are well studied in homotopical representation theory, though the methods there
for using Lannes theory to obtain results about general groups do not apply here. Nevertheless, work of Jesper Grodal and Jeff 
Smith \cite{OWSGrodalSmith} may point the way toward generalizations.

The essence of linearizability is distilled in the following result, which makes \Cref{thm:a-big-one} accessible for computations. It gives 
a hold on the $H$-equivariant homotopy type of $S^\gg$. We prove this by implementing ideas from geometric topology. 
For simplicity, we limit our attention to finite subgroups of $\cG$, but this is not restrictive in practice.

\begin{propintro} Let $\cG$ be a compact $p$-adic analytic group and let
$F \subseteq \cG$ be a finite subgroup. Suppose there is
a finitely generated free abelian group $L \subseteq \gg$ with the properties that 
\begin{enumerate}

\item $L$ is stable under the adjoint action of $F$ on $\gg$, and

\item $L/pL \cong \gg/p\gg$ (or, more generally, $\QQ_p \otimes L \cong \QQ_p \otimes_{\ZZ_p} \gg$).
\end{enumerate}
Let $V = \RR \otimes L$ and let $S^V$ be the one-point compactification of $V$. Then there is
an $F$-equivariant map $S^V \to S^\gg$ which becomes a weak equivalence after completion at $p$. 
\end{propintro}

This result unlocks the applications of \Cref{thm:a-big-one} in the case of the Morava stabilizer group $\GG$ in chromatic
homotopy  theory.
Specifically, we develop a unified strategy to address the problem of determining $\LTK$-local Spanier-Whitehead
duals $D(\LTE^{hH})$ of spectra of the form $\LTE^{hH}$, for $H$ as in \Cref{thm:a-big-one}. For this application, we
need more than the above duality results applied to the profinite group $\GG$. Namely, we need a good understanding
of invertible equivariant $\LTE$-modules. We develop this is \Cref{sec:PicStuff}, where again the theory of characteristic 
classes plays a leading role. Further, we revisit the known examples  mentioned above but also explore new cases.  An 
important part of this paper develops this new strategy for addressing familiar questions in chromatic homotopy theory.

With that in hand, we apply the observation that if $H$ is finite, then Tate vanishing \cite{GreenSad} 
implies that there is an equivalence
\[
D(\LTE^{hH}) \simeq (D\LTE)^{hH}. 
\]
This (now) standard but subtle fact allows us to combine our results on the equivariant homotopy type of $D\LTE$ and the
results on equivariant invertible $\LTE$-modules to calculate $D(\LTE^{hH})$ for interesting $H$. 

As a first example, if $H \subset Z(\GG)$ is a finite subgroup of the center of $\GG$, then
\begin{align}\label{eq:dualCentral}
D(\LTE^{hH}) \simeq \Sigma^{-n^2} \LTE^{hH}.
\end{align}
This follows from \Cref{thm:dual-en} and the fact that $H$ acts trivially on $I_{\GG}$. The center of $\GG$ is not large; indeed,
$Z(\GG) \cong \ZZ_p^\times$, the units in the $p$-adic integers. Hence, this example has
the most impact when $p=2$ and 
\[H = C_2 = \{\pm 1\} \subseteq \Aut(F_n/\FF_{p^n}) \subseteq \GG.\]

The full force of \Cref{thm:dual-en}, \Cref{thm:a-big-one}, and \Cref{prop:recog-princ} can also be used to recover the 
results of Behrens \cite{Beh44} and Bobkova \cite{BobkovaSW}.
If $n$ is small with respect to $p$, then $\GG$ can contain $p$-torsion elements. For example
if $n=2$ and $p=2$, the maximal $2$-torsion subgroup is isomorphic to the quaternionic group $Q_8$ of order $8$.
If $n=2$ and $p=3$, the maximal $3$-torsion subgroup is a cyclic group $C_3$ of order $3$. In the result below, $\G=\Aut(\F_{p^2}, F_{C})$ where $F_C$ is the formal group law of certain supersingular elliptic curves. 

\begin{thmintro4} Let $n=2$ and $p=2$ or $3$. Let $F \subseteq G \subseteq \GG$ where $G$ is a maximal finite subgroup of $\GG$
containing the maximal $p$-torsion subgroup. Then
\[
D(\LTE^{hF}) \simeq \Sigma^{44} \LTE^{hF}.
\]
\end{thmintro4}

The spectrum $\LTE^{hG}$ is the $\LTK$-localization of the spectrum of topological modular forms. 
At either prime, the Morava module $\LTE_\ast \LTE^{hG}$ is $24$-periodic; from this
and \eqref{eq:intro0} one can prove $\LTE_\ast D(\LTE^{hG}) \simeq \Sigma^{24k-4}\LTE^{hG}$ for some integer $k$.
The difficulty is to understand why $k$ must be $2$.

For $F=G$, \Cref{thm:dualn=p=3}  ($p=3$) was originally proved by Behrens in \cite{Beh44} and \Cref{thm:dualn=p=2}  ($p=2$) by Bobkova in \cite{BobkovaSW}.
Both papers used delicate calculations based on the theory of topological resolutions from \cite{GHMR} and
\cite{BobkovaGoerss}. We have replaced that style of argument with one that depends ultimately on the representation
theory of $C_3$ and $Q_8$, respectively.

The case $p=3$ of \Cref{thm:dualn=p=3} can be extended to an arbitrary odd prime $p$ and height $n=p-1$. In that case, the maximal 
$p$-torsion subgroup of $\GG$ is $C_p$, a cyclic group of order $p$, and its representation theory helps us prove the following result. Here, $\GG=\Aut(\F_{p^n}, F_n)$ where $F_n$ is the Honda formal group law.

\begin{thmintro5} Let $p > 2$ and $n=p-1$. Let $G \subseteq \GG$ be a maximal finite 
subgroup containing  the maximal $p$-torsion subgroup of $\GG$. Then
\[
D(\LTE^{hG}) \simeq \Sigma^{-(p-1)^2(2p+1)} \LTE^{hG}.
\]
\end{thmintro5}

Note that  at $n=2$ and $p=3$, the spectrum $\LTE^{hG}$ is $72$-periodic, so \
$\Sigma^{-28}\LTE^{hG} \simeq \Sigma^{44}\LTE^{hG}$. Compare \Cref{thm:dualn=p=3} and \Cref{thm:dualn=p-1}. 

Finally, a direct application of \Cref{thm:dualn=p-1} gives us a new result about the $\LTK$-local Picard group $\mathrm{Pic}_n$. Namely, 
we compare our result on Spanier-Whitehead duality with results about Gross--Hopkins duals of the same spectra from \cite{BBS}. At 
the very end of this paper, we prove the following result.

\begin{thmintro6} Let $n=p-1$ and  $F=C_p$. Then, there is an element $P_n \in \Pic_n$  
such that $\LTE_*P_n \cong \LTE_*$ as Morava modules but
\[
P_n \smsh \LTE^{hF} \simeq \Sigma^{p^2+p} \LTE^{hF}.
\]
In particular, $P_n$ is a non-trival element of subgroup $\kappa_n$ of exotic elements in the Picard group $\mathrm{Pic}_n$.
\end{thmintro6}

In an analogous way, our main result in the case $n=2=p$ from \eqref{eq:dualCentral} is used by Heard-Li-Shi \cite{HeardEtal} in 
combination with their computation of $I_n(\LTE^{hC_2})$ to prove that there are exotic invertible elements in the $\LTK$-local 
category at $p=2$.

\subsection*{Organization of the paper}

The next three sections are a review of results found throughout the literature needed later in the paper.
In \Cref{sec:p-adicBasics}, we begin by reviewing some theory on compact $p$-adic analytic groups and introduce what will later be our main example, the Morava stabilizer group $\mathbb{G}$. In \Cref{sec:cohanapp}, we discuss properties of the mod $p$ cohomology of these groups. \Cref{sec:dualityandfrob} is a review of Poincar\'e duality and of Frobenius reciprocity for compact $p$-adic analytic groups.

\Cref{sec:TwoSpheres} begins our investigation of new results. There, we introduce the spheres $\ig$ and $S^\gg$ and state the Linearization Hypothesis. In \Cref{sec:resfinite}, we study spheres with actions of finite subgroups, reviewing the classical theory of Lannes and key facts about characteristic classes. This is where we deduce \Cref{thm:whod-have-believed-it-1} from the work of Castellana. We apply it to $\ig$ and $S^{\gg}$ in \Cref{thm:whod-have-believed-it}. This reduces the proof that $\ig \simeq_H S^{\gg}$ (\Cref{thm:a-big-one}) to a cohomological calculation. For this computation, we need to do a tricky analysis of Lyndon-Serre-Hochschild spectral sequences. We have opted to isolate these technical aspects to \Cref{sec:SStech}, which can easily be skipped on a first read. \Cref{sec:squeenrodstairs} is dedicated to the proof of \Cref{thm:a-big-one}. 

In \Cref{sec:anal-lin}, we get ready for our computation in the examples to follow. This is a study of Frobenius reciprocity for $G$-manifolds. We do not claim originality for the results of this section, but include them here because we did not find them conveniently gathered in literature. Based on these geometric results, we prove \Cref{prop:recog-princ} and \Cref{prop:recog-princ-bis} at the end of this section.

The last sections are applications of our results to chromatic homotopy theory. \Cref{sec:e-theory} and \Cref{sec:dualofe} study Lubin-Tate theory $\LTE$. In \Cref{sec:e-theory} we introduce technical background from chromatic homotopy theory and in \Cref{sec:dualofe} we prove \Cref{thm:dual-en}. In \Cref{sec:PicStuff}, we review equivariant techniques established by Hill-Hopkins-Ravenel that can be used study the Picard groups of categories of $R$-modules in $G$-spectra, as well as establish our general strategy for computing the duals $D(\LTE^{hF})$ for finite subgroups $F \subseteq \GG$ in the final parts of the paper. The last two sections are the computational applications of our theory. \Cref{sec:44} contains the proof of \Cref{thm:dualn=p=3} and \Cref{thm:dualn=p=2} and \Cref{sec:newxample} that of \Cref{thm:dualn=p-1} and \Cref{thm:pic-exotic}.

\subsection{Acknowledgments} 

This paper is a result of a long journey, and a great many people helped along the way. The project began in the spring of 2016
during an extended visit by Mike Hopkins to Northwestern University, as part of his residency after winning the
Nemmers Prize in  Mathematics. The initial question we set ourselves was to find some equivariant explanation for the
results of Behrens and  Bobkova mentioned above; that is, why $44$? We quickly understood how to prove
\Cref{thm:dual-en} and then, after blithely assuming the Linearization Hypothesis and some geometric topology, we began making the
calculations found in the final sections of this  paper. Thus, we first have to acknowledge and thank Northwestern University
and the Nemmers Committee for this opportunity for an extended period of collaborative research.

Coming to terms with the Linearization Hypothesis took some time, and some of that intellectual pilgrimage was documented
in a lecture by Hopkins at the conference in July 2017 ``Homotopy Theory: Tools and Applications" at the University of
Illinois Champaign-Urbana. This lecture, now known as the ``Mean Streets of Evanston" talk (``After 10PM, after the Whole
Foods closes, when only sinners are on the streets...'') has been widely viewed on YouTube \cite{BGHS}.

Lannes theory and the cohomology of the classifying space for sphere bundles first arose in conversation between
Mike Hopkins and Vesna Stojanoska at Harvard University. We picked up these ideas at another NSF supported conference 
``Chromatic Homotopy  Theory: Journey to the Frontier" in 2018  at the University of Colorado. Thus we also heartily thank 
the Massachusetts Institute of Technology, which sponsored Stojanoska's visit to Cambridge MA, the Universities 
of Illinois and Colorado, the other organizers of the various conferences, and the National Science Foundation.

However, the use of Lannes Theory went from speculation to concrete reality only after a sequence of pivotal conversations
with Jesper Grodal at the Isaac Newton Institute in 2018 during the semester ``Homotopy Harnessing Higher Structures". 
Jesper has unique and deep knowledge of group actions, Lannes Theory, and classifying spaces -- all crucial to this part of 
the project -- and we are extremely grateful that he was there to share his expertise.

The Newton Institute program brought together a remarkable range of mathematicians in homotopy theory, and we had
very fruitful conversations also with Tobias Barthel, Clark Barwick, Anna Marie Bohmann, John Greenlees,
Hans-Werner Henn, and Michael A. Hill. Thanks are due to all of them, for the inspiring conversations as well as email 
correspondence well after the program ended. We added thanks as well to the organizers of the semester program, and to 
the Newton Institute and the staff there.

As a profession, we would be nowhere without the anonymous and careful work of diligent referees; we are
especially grateful to the main referee of this paper, who gave the paper a very close and careful reading, and who
had a number of insightful comments and questions.

The Linearization Hypothesis is not new with this paper, as we explained above, and we are grateful to Dustin Clausen for 
sharing and explaining his ideas.

Finally, when it became apparent that we would make heavy use of Steenrod squares in some of our arguments,
someone repeated a hoary old joke about ``Squeenrod stares" and Vesna Stojanoska suggested to Anna Marie
Bohmann, who has a long track-record of writing quality light verse, that she could use ``Steenrod stairs" as the
basis of a poem with a good many words beginning st- and sq-.  The result is below.  We are touched and grateful that Anna Marie has
granted us permission to reproduce it here.

\begin{quote}
\textbf{Derivation}
\medskip

\noindent
To a stolid squalid attic

\noindent
Full of strident squeaking squirrels

\noindent
Dr. Squeenrod staggered squiffy up the stairs
\medskip

\noindent
When a tumble most dramatic

\noindent
Put his letters in a whirl: 

\noindent
Squeenrod's stumble gave the storied Steenrod squares!
\medskip

\hfill \emph{Anna Marie Bohmann}
\end{quote}

 % !TEX root = dsphere-master.tex

\section{Basics on compact $p$-adic analytic groups}\label{sec:p-adicBasics}

While our main applications of the results here are for the Morava stabilizer group and the $K(n)$-local
category, the initial analysis  of the dualizing sphere applies to a much wider class of groups $\cG$. The key feature
is that $\cG$ has a finite index subgroup which is a pro-$p$ group of a particularly nice type. Thus we begin by
fixing a prime $p$, and reviewing some basic definitions.

The following material is from Sections I.4 and  II.8 of \cite{profinite}. If $G$ is a topological group, let $G^n \subseteq G$ be the
closed normal subgroup obtained by taking the closure of the subgroup generated by the $n$th powers. Similarly, if
$H \subseteq G$ is a closed subgroup, we let $[H,G] \subseteq G$ be the closure of the commutator subgroup of $H$
with $G$, and if $H$ and $K$ are two subgroups, let $HK$ be the closure of the product subgroup.

In all the statements below, there are slight modifications for the prime $2$. 

\begin{defn}\label{def:uniformly powerful}
A pro-$p$ group $G$ is  \emph{uniformly powerful}  if 
\begin{enumerate}[(a)]
\item $G/G^p$ (or $G/G^4$ if $p=2$) is abelian;
\item $G$ is topologically finitely generated;
\item The lower $p$-series 
\[
\xymatrix{
G = G_1 \supseteq G_2 \supseteq \ldots
\supseteq G_i  \supseteq G_{i+1} = G_i^p[G_i, G]  \supseteq \ldots
}
\]
has the properties that the $p$th power map induces an isomorphism
\[
\xymatrix{
G_i/G_{i+1} \ar[rr]^-{(-)^p}_-{\cong} & & G_{i+1}/G_{i+2}
}
\]
and it is exhaustive: $\bigcap_i G_i = \{e\}$.
\end{enumerate}
\end{defn}

If $G$ is a uniformly powerful pro-$p$-group and $G/G^p \cong (\ZZ/p)^d$, then
$G$ has {\it rank} $d$. Note that each of the subgroups $G_j$ is normal is $G$,
and hence normal in $G_i$ with $i \leq j$.

\begin{rem}\label{rem:quotients} If $G$ is uniformly powerful and topologically generated by $a_1, \ldots, a_d$ 
\begin{align*} 
G/G^p  &\cong \Z/p\{a_1, \ldots, a_d\}  \\
G_i/G_{i+j} &\cong \Z/p^j\{a_1^{p^{i-1}}, \ldots, a_d^{p^{i-1}}\} \qquad\mathrm{for}\ j\leq i.
\end{align*}
\end{rem}

In \S II.8.2. of \cite{profinite}, the authors give an intrinsic definition of a compact $p$-adic analytic group. For our purposes
it is enough to have the following characterization. See Corollary 8.34 of \cite{profinite}.

\begin{thm}\label{thm:p-adic-anal}
Let $\cG$ be a topological group. Then the following are equivalent:
\begin{enumerate}[(1)]
\item $\cG$ is a compact $p$-adic analytic group;
\item $\cG$ is a profinite group with an open subgroup which is a topologically finitely generated pro-$p$ group;
\item $\cG$ has an open normal subgroup of finite index which is a uniformly powerful pro-$p$ group of finite rank.
\end{enumerate}
\end{thm}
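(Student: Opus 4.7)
The statement is an equivalence of three classical characterizations, so the plan is to establish a cycle of implications while borrowing the structural machinery that the excerpt has just set up. I will organize the argument as $(3) \Rightarrow (2) \Rightarrow (3) \Rightarrow (1) \Rightarrow (3)$, since the concrete description in $(3)$ is the most useful hinge; $(1) \Leftrightarrow (3)$ is the heart of Lazard's theorem and is by far the hardest implication.

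The easy step is $(3) \Rightarrow (2)$: if $H \subseteq \cG$ is an open normal uniformly powerful subgroup of finite index, then $H$ is by \Cref{def:uniformly powerful} topologically finitely generated, and $\cG$ itself is profinite because $H$ is pro-$p$ and $\cG/H$ is finite. Next, for $(2) \Rightarrow (3)$, assume $\cG$ has an open topologically finitely generated pro-$p$ subgroup $G$; replacing $G$ by the intersection of its finitely many conjugates we can take $G$ normal in $\cG$ of finite index. The task is to shrink $G$ further to a uniformly powerful open subgroup. The plan is to run the lower $p$-series $G_i$ defined in \Cref{def:uniformly powerful}(c): standard commutator calculus in pro-$p$ groups shows that each $G_i$ is a closed characteristic subgroup, the series is exhaustive because $G$ is finitely generated and pro-$p$, and for $i$ sufficiently large the subgroups $G_i$ are powerful. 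Applying the same argument to $G_i$ and iterating, one arrives at a term for which the $p$-th power map induces an isomorphism $G_i/G_{i+1} \xrightarrow{\cong} G_{i+1}/G_{i+2}$, which is exactly the uniformity condition. Since each $G_i$ is characteristic in $G$, which is normal in $\cG$, we get normality in $\cG$ for free.

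The hard part is $(1) \Leftrightarrow (3)$, and here I would import Lazard's correspondence as packaged in Chapters 7--9 of \cite{profinite}. For $(3) \Rightarrow (1)$: given a uniformly powerful open normal subgroup $H \subseteq \cG$ of rank $d$, I would construct a $p$-adic analytic atlas on $H$ by exhibiting coordinates of the second kind. Concretely, pick topological generators $a_1,\dots,a_d$ and use \Cref{rem:quotients} together with the bijectivity of $p$-th powers to show that the map $(x_1,\dots,x_d) \mapsto a_1^{x_1}\cdots a_d^{x_d}$ is a homeomorphism $\ZZ_p^d \to H$. The Campbell--Hausdorff formula then gives convergent $p$-adic analytic formulas for multiplication and inversion in these coordinates; translating by coset representatives extends the atlas to $\cG$, and compatibility of overlap maps follows from the analyticity of inner automorphisms of $H$. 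Conversely, for $(1) \Rightarrow (3)$: given a compact $p$-adic analytic $\cG$, take a chart at the identity identifying a neighborhood with an open subset of $\ZZ_p^d$; shrinking, the image can be taken to be $p^N \ZZ_p^d$, and then a standard Lie-theoretic estimate on the Campbell--Hausdorff series shows that for $N$ large enough the corresponding subgroup is uniformly powerful. It is open by construction, of finite index by compactness, and can be replaced by a normal subgroup by intersecting conjugates, whose finitely many intersections remain uniformly powerful (another application of the lower $p$-series argument).

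The main obstacle I expect is exactly the Lazard step: making precise the passage between the group structure and the Lie algebra structure via $\log$ and $\exp$, and verifying convergence and uniformity on small enough neighborhoods. This is genuine analysis on pro-$p$ groups rather than formal category theory, and it is the reason that the excerpt chooses to cite Corollary~8.34 of \cite{profinite} rather than reprove it in house; in practice I would limit the write-up to the routine steps above and quote Lazard's theorem for the analytic-to-Lie and Lie-to-analytic dictionary on uniformly powerful groups.
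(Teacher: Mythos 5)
The paper does not actually prove this theorem: it simply cites Corollary~8.34 of \cite{profinite} and gives no argument. Your sketch therefore goes beyond the source, and most of it is a reasonable reconstruction of the route in loc.\ cit.\ (the equivalence $(3)\Leftrightarrow(1)$ via coordinates of the second kind and the Campbell--Hausdorff series is the standard Lazard argument, and deferring that to \cite{profinite} is exactly what the paper does). However, your treatment of $(2)\Rightarrow(3)$ contains a genuine gap. You claim that for a topologically finitely generated pro-$p$ group $G$, the lower $p$-series $G_i$ eventually consists of powerful subgroups. This is false without a finite-rank hypothesis. Take $F$ to be the free pro-$p$ group on two generators: it is profinite, pro-$p$, and topologically finitely generated, but every open subgroup of $F$ is again a free pro-$p$ group (of larger rank, by the pro-$p$ Nielsen--Schreier formula), and a non-abelian free pro-$p$ group is never powerful. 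So no term of the lower $p$-series of $F$ is powerful, $F$ has no open uniformly powerful subgroup, and $F$ is not $p$-adic analytic. The statement you actually need (Theorem~3.13 and Corollary~4.3 of \cite{profinite}) is that a \emph{powerful} finitely generated pro-$p$ group has finite rank and eventually uniform lower $p$-series; being finitely generated alone is not enough.

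This gap, in fact, traces back to an imprecision in condition (2) of the theorem as it appears in the paper. Corollary~8.34 of \cite{profinite} phrases the analogous condition as ``$\cG$ is a profinite group containing an open pro-$p$ subgroup of \emph{finite rank},'' where rank means a uniform bound on the minimal number of topological generators of all open subgroups; this is strictly stronger than being topologically finitely generated, as the free pro-$p$ example shows. With ``finite rank'' in place of ``topologically finitely generated,'' your $(2)\Rightarrow(3)$ argument becomes correct, and the rest of your cycle --- $(3)\Rightarrow(2)$ by inspection, $(3)\Leftrightarrow(1)$ via Lazard --- goes through as you outline.
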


For such a group $\G$, the {\it rank} of $\cG$ is defined to be the rank of $G$ where $G$ is any  
open normal subgroup of finite index which is a uniformly powerful pro-$p$ group. The rank is
independent of this choice.

\begin{defn}\label{def:lie-algebra} Let $\cG$ be a compact $p$-adic analytic group and let $G \subseteq \cG$ be a
an open uniformly powerful finite index normal subgroup of rank $d$. Then, from \Cref{def:uniformly powerful} we have that
$(-)^p:G_i \to G_{i+1}$ is a continous
bijection of sets. This allows us to form the {\it Lie algebra} $\mathfrak{g}$ of  $\cG$.
This is the set $G$ with addition given by
\[
x+y = \lim_{n} (x^{p^n}y^{p^n})^{p^{-n}}
\]
and Lie bracket by
\[
[x,y] = \lim_{n} [x^{p^n},y^{p^n}]^{p^{-2n}} .
\]
The Lie algebra $\gg$ is independent of the choice of $G$; see the beginning of \S II.9.5 of  \cite{profinite}.
A chosen set of topological generators for $G$ defines
a continuous isomorphism $\Z_p^{d} \cong \mathfrak{g}$ of compact abelian groups. See Theorem 4.17 of
\cite{profinite}.

\end{defn}

\begin{rem}[{\bf The adjoint representation}]\label{rem:adjoint-action} Again,  let $G \subseteq \cG$ be a
an open uniformly powerful finite index normal subgroup of rank $d$. Since $G$ is normal in $\cG$, there is a 
conjugation action of $\cG$  on $G$.
This gives a $\ZZ_p$-linear action of $\cG$ on $\mathfrak{g}$ called the adjoint action, and $\gg$ with this action
is the {\it adjoint representation} of $\cG$. For later purposes we define
\[
\mathfrak{g}_i = p^{i-1}\mathfrak{g}.
\]
These form a nested sequence of sub-representations and the equality of sets $\gg = G$ induces
an isomorphism of abelian groups
\begin{align}\label{eq:gg-gamma}
\gg_i/\gg_{i+j}\cong G_i/G_{i+j},\qquad j \leq i,
\end{align}
when $i \geq 1$ if $p > 2$ and $i \geq 2$ for $p=2$. 
This becomes an isomorphism of $\cG$-modules if we act on $G_i/G_{i+j}$ by conjugation.
\end{rem}

\begin{exam}\label{exam:exam0} The simplest example is $\cG = \ZZ_p^d$. Then $G = \cG = \gg$ and $
\Gamma_i = (p^{i-1}\ZZ_p)^d$. 
\end{exam}

\subsection{Groups of units in normed division algebras}\label{rem:normed}
We discuss two instances of compact $p$-adic analytic groups; the general linear group $\Gl_n(\ZZ_p)$, 
in \Cref{exmp:exam1}, and the Morava stabilizer group, in \Cref{exmp:exam2}. In both cases the group $\cG$ arises
as the group of units in a sub-algebra of a complete normed $\QQ_p$-algebra $(A,|\!| \cdot |\!|)$. The norm extends the 
standard norm on $\QQ_p$; hence, $|\!|p^i|\!| = p^{-i}$. If $A$ is of finite rank over $\QQ_p$, we define
\[
A_i = \{x \in A\ |\ |\!|x|\!| \leq p^{-i}\ \}.
\]
Then $A_0 \subseteq A$ is a sub-$\ZZ_p$-algebra, each $A_i$ is an ideal in $A_0$, and $pA_i \subseteq A_{i+1}$. 
In our examples, this inclusion is an equality, so for convenience we assume this. The most basic example is
$A = \QQ_p$, $A_0 = \ZZ_p$ and $A_i = p^i\ZZ_p$.
 
We let $\cG = A^\times$ be the group of units in $A_0$ and define closed normal subgroups
\[
\Gamma_i = 1 + p^iA_0 \subseteq \cG.
\] 
In both our examples $\cG$ is a compact $p$-adic analytic group with a uniformly powerful subgroup 
\begin{align*}
\Gamma_1&=1+A_1=1+pA_0 \text{ \ \  if $p >2$ or} \\
\Gamma_2 &= 1+A_2=1+4A_0 \text{ \ \  if $p=2$.}
\end{align*} 
In fact, more is true. The following is an exercise in definitions. 

\begin{lem}\label{lem:missing-lem} Let $A$ be a complete normed $\Q_p$-algebra. With the notation as established above, if $p A_i = A_{i+1}$, we have the following conclusions.
\begin{enumerate} 
\item If $p > 2$, the subgroup $\Gamma_{i+1} \subseteq \Gamma_1$ is the $i$th term in 
the  lower $p$-series for $\Gamma_1$.
\item If $p =2$,  the subgroup $\Gamma_{i+2} \subseteq \Gamma_2$
is the $i$th term in the lower $p$-series of $\Gamma_2$.
\end{enumerate}
\end{lem}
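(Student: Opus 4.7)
The plan is to prove both parts by direct induction on $i$, showing that the filtration $\{\Gamma_j\}$ coming from the norm matches the lower $p$-series of $\Gamma_1$ (respectively $\Gamma_2$). Let $L_\bullet$ denote the lower $p$-series of the uniformly powerful subgroup, with $L_1 = \Gamma_1$ (or $L_1 = \Gamma_2$ at $p=2$) and $L_{j+1} = L_j^p [L_j, L_1]$.

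The first step is to record the three key computations in the Banach algebra $A$. For $x = 1 + p^i a \in \Gamma_i$, binomial expansion gives
\[
x^p = 1 + p^{i+1}a + \sum_{k=2}^{p}\binom{p}{k}p^{ik}a^k.
\]
For $p > 2$ and $i \geq 1$, each term in the sum has valuation at least $i+2$ (using that $p \mid \binom{p}{k}$ for $1 \leq k \leq p-1$ and that $ip \geq i+2$); for $p = 2$ and $i \geq 2$, $2i \geq i + 2$. Hence $x^p \in \Gamma_{i+1}$ and, more precisely, $x^p \equiv 1 + p^{i+1}a \pmod{\Gamma_{i+2}}$. Similarly, for $y = 1+pb \in \Gamma_1$, one computes $xy - yx = p^{i+1}(ab - ba)$, and since $(yx)^{-1} \in A_0$, the commutator satisfies
\[
[x,y] \;=\; xy(yx)^{-1} \;=\; 1 + p^{i+1}(ab-ba)(yx)^{-1} \;\in\; \Gamma_{i+1}.
\]
Together these give $L_i^p [L_i, L_1] \subseteq \Gamma_{i+1}$ once $L_i \subseteq \Gamma_i$ is known, so the inclusion $L_{i+1} \subseteq \Gamma_{i+1}$ is automatic in the induction. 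The first computation also shows that $(-)^p$ induces a surjection $\Gamma_i/\Gamma_{i+1} \twoheadrightarrow \Gamma_{i+1}/\Gamma_{i+2}$, and combining with \Cref{rem:quotients} this is a bijection; in particular
\[
\Gamma_{i+1} \;=\; \Gamma_i^p \cdot \Gamma_{i+2}.
\]

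For the reverse inclusion $\Gamma_{i+1} \subseteq L_{i+1}$, I would iterate: assuming $L_i = \Gamma_i$ by induction, the identity above yields $\Gamma_{i+1} \subseteq L_{i+1}\cdot \Gamma_{i+2}$. Applying the same observation one level down, $\Gamma_{i+2} \subseteq L_{i+2}\cdot\Gamma_{i+3} \subseteq L_{i+1}\cdot \Gamma_{i+3}$, and so on. Thus for every $j \geq 1$ we can write any $x \in \Gamma_{i+1}$ as $x = \ell_j g_j$ with $\ell_j \in L_{i+1}$ and $g_j \in \Gamma_{i+1+j}$. Since the $\Gamma_{i+1+j}$ form a fundamental system of neighborhoods of $1$ (the norm is complete and $pA_m = A_{m+1}$), the elements $\ell_j$ converge to $x$; as $L_{i+1}$ is closed in the compact group $L_1$ by construction, $x \in L_{i+1}$. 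This closes the induction.

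The case $p = 2$ is identical once $\Gamma_1$ is replaced by $\Gamma_2$ and the indexing is shifted; the only subtlety is that the inequality $2i \geq i+2$ needed for the $x^p \equiv 1 + p^{i+1}a \pmod{\Gamma_{i+2}}$ congruence forces the starting index $i \geq 2$, which is precisely why $\Gamma_2$ rather than $\Gamma_1$ is uniformly powerful at $p=2$. The main obstacle, modest as it is, lies in the topological step of the second paragraph: one needs both the closedness of $L_{i+1}$ inside the compact group $L_1$ (which holds by the definition of the lower $p$-series as the closure of an algebraically generated subgroup) and the fact that each $\Gamma_j$ is normal in $L_1$ (immediate from invariance of the norm under conjugation). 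With these in place the entire argument reduces to the binomial bookkeeping sketched above, consistent with the paper's description of the statement as an exercise in definitions.
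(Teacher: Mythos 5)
The paper gives no proof of this lemma (it is labelled ``an exercise in definitions''), so the comparison is purely about correctness of your argument. Your forward inclusion $L_{i+1}\subseteq\Gamma_{i+1}$ and the binomial bookkeeping (including the observation that $ip\geq i+2$ fails for $p=2,\ i=1$, which is why the lower $p$-series starts at $\Gamma_2$ there) are correct, as is the identity $\Gamma_{i+1}=\Gamma_i^p\cdot\Gamma_{i+2}$ coming from the bijectivity of the $p$-th power map on successive quotients.

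There is, however, a genuine gap in the iteration that drives the reverse inclusion. You write that ``applying the same observation one level down'' gives $\Gamma_{i+2}\subseteq L_{i+2}\cdot\Gamma_{i+3}$. The ``observation'' in question is that $\Gamma_{m+1}=\Gamma_m^p\cdot\Gamma_{m+2}$ together with $\Gamma_m^p\subseteq L_{m+1}$. But $\Gamma_m^p\subseteq L_{m+1}$ was obtained from the inductive hypothesis $L_m=\Gamma_m$; to run it ``one level down'' you would need $L_{i+1}=\Gamma_{i+1}$, which is precisely the statement you are in the middle of proving. As written, $\Gamma_{i+1}^p$ is a priori larger than $L_{i+1}^p$, so the containment $\Gamma_{i+1}^p\subseteq L_{i+2}$ does not follow. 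The assertion is in fact true, but it requires a separate argument: one needs to show directly that $\Gamma_{m-1}^p\subseteq L_{i+1}\cdot\Gamma_{m+1}$ for all $m\geq i+2$. To do this, take $z=1+p^{m-1}c\in\Gamma_{m-1}$; by the inductive stage $\Gamma_{m-1}\subseteq\Gamma_{i+1}=L_{i+1}\Gamma_m$ write $z=\ell g$ with $\ell=1+p^{m-1}a\in L_{i+1}$ and $g\in\Gamma_m$. Then $c\equiv a\pmod{pA_0}$, and your congruence $x^p\equiv 1+p^{m}(\cdot)\pmod{\Gamma_{m+1}}$ gives $z^p\equiv\ell^p\pmod{\Gamma_{m+1}}$, so $z^p\in L_{i+1}\Gamma_{m+1}$. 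Taking closures (the product of two closed subgroups of a compact group is closed) yields $\Gamma_{m-1}^p\subseteq L_{i+1}\Gamma_{m+1}$ and hence $\Gamma_m\subseteq L_{i+1}\Gamma_{m+1}$, which lets the identity $L_{i+1}\Gamma_m=\Gamma_{i+1}$ propagate from $m$ to $m+1$. With this repair the closing completeness/closedness argument you sketch goes through, and the proof is complete.
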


It  follows from \Cref{lem:missing-lem}. that the rank of $\cG$ is the $\QQ_p$-rank of $A$. 

The standard exponential map
\begin{equation}\label{eq:exponential-one}
\exp:A_i \longr 1+A_i = \Gamma_i
\end{equation}
converges for $i\geq 1$ if $p>2$ and $i\geq 2$ if $p=2$. If we give $A_0$ the structure of a $\ZZ_p$-Lie algebra with 
bracket $[x,y] = xy-yx$, then this induces an isomorphism of Lie algebras
\begin{equation}\label{eq:exponential-two}
\xymatrix{
\exp:A_i = p^iA_0 \ar[r]^-\cong & \gg_i
}
\end{equation}
for $i\geq 1$ if $p>2$ and $i\geq 2$ if $p=2$. This is the content of Corollary II.7.14 of \cite{profinite}.

The group $\cG$ acts on $A_0$ by conjugation and the exponential
map also gives an isomorphism of $\cG$ representations
\begin{equation}\label{eq:exponential-three}
\xymatrix{
\exp:A_i/A_{i+j} \ar[r]^-\cong & (1+A_i)/(1+A_{i+j}) = \Gamma_i/\Gamma_{i+j}
}
\end{equation}
for $i\geq 1$ if $p>2$ (or $i\geq 2$ if $p=2$) and $j \leq i$. Note that since $A_iA_j \subseteq A_{i+j}$,
this exponential function has a very simple form: if $x \in A_i$, then
\[
\exp(x) = 1+x\qquad \mathrm{modulo}\  A_{2i}.
\]
In combination with \eqref{eq:gg-gamma} this establishes an isomorphism of representations
\[
A_i/A_{i+j} \cong \gg_i/\gg_{i+j}
\]
for $i\geq 1$ if $p>2$ (or $i\geq 2$ if $p=2$) and $j \leq i$.

\subsection{The main examples} We now give our two main examples.

\begin{exam}[{\bf The General Linear Group}]\label{exmp:exam1} Let
$\cG = \Gl_n(\ZZ_p)$ be the group of invertible $n\times n$ matrices with entries in the $p$-adic integers. Then $\Gl_n(\ZZ_p)$
exactly fits the rubric of \Cref{rem:normed}: we can let $A=M_n(\QQ_p)$, the $\QQ_p$-algebra
of $n\times n$ matrices. Then $A_0 = M_n(\ZZ_p)$ and $\Gl_n(\ZZ_p)=M_n(\ZZ_p)^\times$. Further,
$A_i =p^iM_n(\ZZ_p)$.

For the moment  assume $p>2$. Then $\Gl_n(\ZZ_p)$ is a compact $p$-adic analytic group with uniformly powerful 
subgroup $\Gamma_1=1+pM_n(\ZZ_p)$. The group $\Gamma_i$ is  the kernel of the map $\Gl_n(\ZZ_p) \to \Gl_n(\ZZ/p^i)$;
thus,
\[
\Gamma_i = 1 + p^iM_n(\ZZ_p).
\]
The group $\Gamma_1$ is of rank $n^2$. Furthermore, by \Cref{rem:quotients} we see that
if $j \leq i$ then $\Gamma_i/\Gamma_{i+j}$  is a free $\ZZ/p^j$-module of rank $n^2$.

If $\gg=\mathfrak{gl}_n$ is the Lie algebra of $\Gl_n(\ZZ_p)$ then \Cref{rem:normed} gives
an isomorphism of Lie algebras
\begin{equation}\label{eq:alg-to-liealg}
\xymatrix{
M_n(\ZZ_p) \ar[r]^-\cong & \mathfrak{gl}_n\ .
}
\end{equation}
If we let $\Gl_n(\ZZ_p)$ act by conjugation on $M_n(\ZZ_p)$, this gives an isomorphism of representations
from $M_n(\ZZ_p)$ to the adjoint representation. As in \Cref{rem:adjoint-action}, we can filter $\gg$  by powers 
of $p$ and the isomorphism of \eqref{eq:alg-to-liealg} descends to an isomorphism of $\Gl_n(\ZZ_p)$-modules
\begin{equation}\label{eq:subone}
p^{i}M_n(\ZZ_p)/p^{i+j}M_n(\ZZ_p) \cong \gg_i/\gg_{i+j} \cong \Gamma_i/\Gamma_{i+j},\qquad j \leq i.
\end{equation}
Here the action of $\Gl_n(\ZZ_p)$ on $ \Gamma_i/\Gamma_{i+j}$ is again by conjugation.

If $p=2$, $\Gl_n(\ZZ_2)$ is a compact $p$-adic analytic group with uniformly powerful subgroup $\Gamma_2 = 1 + 4M_n(\ZZ_2)$.
The rest of the remarks go through, with the evident changes on bounds. 

This example extends without much change to $\Gl_n(\WW)$ with $\WW=W(k)$ the $p$-typical Witt vectors
on a finite algebraic extension $k$ of $\FF_p$. The rank of $\Gl_n(\WW)$ is now $n^2[k:\FF_p]$.
\end{exam}

\begin{exam}[{\bf The Morava Stabilizer Group}]\label{exmp:exam2} This is our main example, and it has a number
of variants, all of which fit the setup of \Cref{rem:normed}.

To be concrete let $F=F(x,y)$ be the Honda formal group law of height $n \geq 1$ over $\FF_{p}$ and let
$\cO_n$ be the endomorphism ring of $F$ over $\FF_{p^n}$; thus, an element of $\cO_n$ is a power series
$\varphi(x) \in \FF_{p^n}[[x]]$ so that $\varphi(F(x,y))= F(\varphi(x),\varphi(y))$. Since $F$ is defined over $\FF_p$, the power series $S = x^p$ is an endomorphism in $\cO_n$, and there is an isomorphism of $\ZZ_p$-algebras
\[
\WW\langle S \rangle/(S^n-p) \to \cO_n
\]
where $\WW = W(\FF_{p^n})$ is the Witt vectors. The source is a non-commuting truncated polynomial ring:
if $a \in \WW$ then $Sa = a^\sigma S$ where we write $\sigma$ in exponent to indicate the action of the Frobenius
on $\WW$. 

Note that $\cO_n$ is free of rank $n^2$ over $\ZZ_p$. The $\QQ_p$-algebra $A = \QQ_p \otimes_{\ZZ_p} \cO_n$
is a complete normed $\QQ_p$-algebra with $|\!|S|\!| = p^{-1/n}$. Then $A_0 = \cO_n$ and $A_i = p^i\cO_n$. Define
\[
\SS_n = \cO_n^\times.
\]
This is the small Morava stabilizer group. The Galois group $\Gal(\FF_{p^n}/\FF_p)$ acts on $\cO_n$ through
the action on $\WW$; the full Morava stabilizer group is the semi-direct product
\[
\GG_n =  \SS_n \rtimes \Gal(\FF_{p^n}/\FF_p).
\]

Let $p > 2$. The groups $\SS_n$ and $\GG_n$ are compact $p$-adic analytic groups with uniformly powerful subgroup 
$\Gamma_1 = 1 +p\cO_n$. Then
\[
\Gamma_i = 1 +p^i\cO_n.
\]

\Cref{rem:normed} again applies and we have an isomorphism of Lie algebras
\begin{equation}\label{eq:alg-to-liealg--bis}
\xymatrix{
\cO_n \ar[r]^-\cong & \mathfrak{g}.
} 
\end{equation}
Furthermore $\SS_n$ acts by conjugation on $\cO_n$ and we have an isomorphism of representations
from $\cO_n$ to the adjoint representation. This isomorphism extends to one of $\GG_n$-modules.
As in \Cref{rem:adjoint-action}, we can filter $\gg$  by powers 
of $p$ and the isomorphism of \eqref{eq:alg-to-liealg} descends to an isomorphism of $\GG_n$-modules
\begin{equation}\label{eq:subone-bis}
p^{i}\cO_n/p^{i+j}\cO_n \cong \gg_i/\gg_{i+j} \cong \Gamma_i/\Gamma_{i+j},\qquad j \leq i.
\end{equation}
Here the action of $\GG_n$ on $ \Gamma_i/\Gamma_{i+j}$ is again by conjugation.

If $p=2$, the group $\GG_n$ is a compact $p$-adic analytic group with uniformly powerful subgroup $\Gamma_2 = 1 + 4\cO_n$.
The rest of the remarks go through, with the evident changes on bounds. 

Again this could be generalized to the example where $F$ is a formal group of height $n$ over an
algebraic extension of $\FF_p$. We can go further. The algebra $A = \QQ_p \otimes_{\ZZ_p} \cO_n$ is
a central division algebra of Hasse invariant $1/n$ over $\QQ_p$. The paradigm of \Cref{rem:normed} extends to the case
when $\cG$ is the group of units in the maximal order of any finite dimensional
central division algebra over $\QQ_p$.\end{exam}

\begin{notation}\label{rem:in-practice-gam} Driven by these examples, we will adopt the following
conventions for the rest of the paper. We will equip our 
$p$-adic analytic groups $\cG$ with a nested sequence of open normal subgroups of finite index
\[
\xymatrix{
\cG \supseteq \Gamma_1 \supseteq \Gamma_2 \supseteq \ldots \supseteq \Gamma_i  \supseteq
\Gamma_{i+1}  \supseteq \ldots\ 
}
\]
so that either $\Gamma_1$ (if $p  > 2$) or $\Gamma_2$ (if $p=2$) is a uniformly powerful
pro-$p$ group. If $p > 2$ and $i \geq 1$ then 
\begin{equation}\label{eq:lower-specific}
\Gamma_{i+1} = \Gamma_i^p[\Gamma_i, \Gamma_1].
\end{equation} 
If  $p=2$, then $\Gamma_{i+1} = \Gamma_i^p[\Gamma_i, \Gamma_2]$ for $i \geq 2$. 
Thus the remaining terms  for the sequence are the lower $p$-series for $\Gamma_1$ or $\Gamma_2$,
depending on the prime. See \Cref{lem:missing-lem}. 

Note that if $p=2$, there is no theoretical stipulation on $\Gamma_1$, although in practice it will be very concrete. 

The rank of $\cG$ will be the rank $\Gamma_i$ for $i$ large. 
\end{notation}
 % !TEX root = dsphere-master.tex

\section{Some group cohomology, with applications}\label{sec:cohanapp}

\begin{notation}
In this section as well as the remainder of the paper, we are concerned with continuous group cohomology. \emph{We will not decorate 
the cohomology notation to specify continuity; it is to be understood that all cohomology is continuous whenever that makes sense.} If 
$G$ is any compact $p$-analytic group (or simply a finite group), we
write $H_1(G,\FF_p)$ for $G/G_1$, where $G_1$ is the closure of $G^p[G,G]$. We have
that $H^1(G,\FF_p) \cong H_1(G,\FF_p)^\ast$, where $(-)^\ast$ denotes the $\FF_p$-linear dual.
 If we write $H^\ast(\hbox{--})$ we mean $H^\ast(\hbox{--},\FF_p)$; we use a similar
convention for homology.
\end{notation}

Following \cite{SymWei} we could define
\[
H_\ast (G,M) = \Tor_\ast^{\ZZ_p[[G]]}(\ZZ_p,M)
\]
where $\ZZ_p[[G]]$ is the completed group ring, $M$ lies in some category of continuous $G$-modules,
and $\Tor_\ast$ denotes the derived functors of a completed tensor product. The equation $H_1(G,\FF_p) = G/G_1$
is then a lemma, rather a definition. We won't use the greater generality, so we won't need to make these
ideas precise. 

Now let $\cG$ be a fixed compact $p$-adic analytic group with uniformly powerful subgroup $\Gamma_1$ if $p>2$ or
perhaps $\Gamma_2$ if $p=2$. See \Cref{rem:in-practice-gam}. We are interested in the continuous
cohomology  $H^\ast(\Gamma_i,M)$ for various $i$ and various coefficients $M$.

For any $i$ and all $j > i$, the quotient maps $\Gamma_i \to \Gamma_i/\Gamma_j \to \Gamma_i/\Gamma_{i+1}$
induce isomorphisms
\[
H_1(\Gamma_i) \cong H_1(\Gamma_i/\Gamma_j )\ \cong \Gamma_i/\Gamma_{i+1}.
\]
Let $V_i = H^1 (\Gamma_i) \cong  (\Gamma_i/\Gamma_{i+1})^\ast$. Then
\begin{equation}\label{eq:first-coho}
H^1(\Gamma_i/\Gamma_{j}) \cong H^1(\Gamma_i) = V_i, \qquad j > i. 
\end{equation}

If $V$ is a vector space over a field $k$, let $\Lambda(V)$ denote the graded exterior algebra on $V$,
and write $\Lambda^r(V) \subseteq \Lambda(V)$ for the homogeneous elements of degree $r$.  
Below we will also have a symmetric (polynomial) algebra $P(W)$ on a vector space $W$. 
The following basic result is the key to much of what follows.

\begin{thm}\label{thm:coh-gammai} (1) The natural inclusion $V_i = H^1(\Gamma_i) \hookrightarrow H^\ast(\Gamma_i)$
induces an isomorphism of graded commutative algebras
\[
\Lambda(V_i) \longr H^\ast(\Gamma_i).
\]

(2) The inclusion $\Gamma_{i+1} \to \Gamma_i$ induces the zero map on $H^k(-)$ for $k>0$.
\end{thm}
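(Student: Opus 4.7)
\emph{Deducing Part (2) from Part (1).} Once Part (1) is in hand, $H^\ast(\Gamma_i) = \Lambda(V_i)$ is generated as an $\F_p$-algebra by its degree-one part $V_i$, so the restriction $H^\ast(\Gamma_i) \to H^\ast(\Gamma_{i+1})$ is determined in positive degrees by its effect on $V_i$. Under the identification $V_i = \Hom(\Gamma_i/\Gamma_{i+1}, \F_p)$, which is forced by uniform powerfulness (the commutator bound $[\Gamma_i, \Gamma_1] \subseteq \Gamma_{i+1}$ together with $\Gamma_i^p = \Gamma_{i+1}$ realize $\Gamma_{i+1}$ as the Frattini subgroup of $\Gamma_i$), every element of $V_i$ vanishes on $\Gamma_{i+1}$ by construction. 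Hence the restriction $V_i \to V_{i+1}$ is zero, proving Part (2).

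\emph{Approach to Part (1).} This is essentially Lazard's theorem on the mod-$p$ cohomology of a uniformly powerful pro-$p$ group. I would proceed by induction on the rank $d$. The base case $d = 1$ amounts to $\Gamma_i \cong \ZZ_p$, whose mod-$p$ cohomology is $\Lambda(V_i)$ since $\ZZ_p$ has $p$-cohomological dimension one. For the inductive step, choose a uniformly powerful closed normal subgroup $N \triangleleft \Gamma_i$ of rank $d-1$ with $\Gamma_i/N \cong \ZZ_p$, obtained by pulling back a codimension-one subspace of the elementary abelian Frattini quotient $\Gamma_i/\Gamma_{i+1}$ along $\Gamma_i \to \Gamma_i/\Gamma_{i+1}$. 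Since $\ZZ_p$ has cohomological dimension one, the Lyndon-Hochschild-Serre spectral sequence $E_2^{p,q} = H^p(\ZZ_p, H^q(N, \F_p)) \Rightarrow H^{p+q}(\Gamma_i, \F_p)$ collapses onto the columns $p = 0, 1$ and yields short exact sequences
\[
0 \to H^1(\ZZ_p, H^{n-1}(N, \F_p)) \to H^n(\Gamma_i, \F_p) \to H^0(\ZZ_p, H^n(N, \F_p)) \to 0.
\]
The inductive hypothesis gives $H^\ast(N, \F_p) \cong \Lambda(V_N)$, and these sequences are dimensionally consistent with $\Lambda(V_i)$.

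\emph{Main obstacles.} Two points remain: (a) controlling the conjugation action of $\ZZ_p \cong \Gamma_i/N$ on $H^\ast(N, \F_p)$ — it must act trivially enough on $V_N$ for the invariants and coinvariants to have the expected dimensions, which is delicate since the action on $N/N^p$ is not automatically trivial and must be shown to be unipotent on the nose using the nilpotence of the filtration; and (b) reconstructing the multiplicative exterior algebra structure from these short exact sequences, which requires lifting the generator of $H^1(\ZZ_p, \F_p)$ to an element of $V_i$ and verifying that it squares to zero (automatic for $p > 2$ by graded commutativity, but requiring a direct Bockstein/Steenrod square argument at $p = 2$). An arguably cleaner alternative route follows Lazard's ring-theoretic theorem that the associated graded of the $I$-adic filtration on $\F_p[[\Gamma_i]]$ is a polynomial algebra on $d$ generators concentrated in degree one; Koszul duality then delivers $H^\ast(\Gamma_i, \F_p) = \Ext^\ast_{\F_p[[\Gamma_i]]}(\F_p, \F_p) \cong \Lambda(V_i)$ directly.
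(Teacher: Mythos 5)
Your Part (2) is correct and is the paper's own argument seen from the dual side: the paper passes to $H_1$ and observes that $\Gamma_{i+1}/\Gamma_{i+2} \to \Gamma_i/\Gamma_{i+1}$ induced by inclusion is visibly zero, while you observe that every class in $V_i = H^1(\Gamma_i) = \Hom(\Gamma_i/\Gamma_{i+1},\F_p)$ restricts to zero on $\Gamma_{i+1}$. Both rest on the identification $H_1(\Gamma_i,\F_p) \cong \Gamma_i/\Gamma_{i+1}$ (that $\Gamma_{i+1}$ is the Frattini subgroup of $\Gamma_i$, which holds for uniformly powerful groups), and both use Part (1) to reduce to degree one.

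For Part (1), the paper's proof is a citation (Theorem 5.1.5 of Symonds--Weigel combined with Theorem 3.6 of Dixon--du~Sautoy--Mann--Segal), which amounts to exactly the ``arguably cleaner alternative route'' you sketch at the end: pass to the associated graded of $\F_p[[\Gamma_i]]$, recognize it via Lazard's theory as a polynomial algebra generated in filtration one, and compute $\Ext$ by Koszul duality. Your primary proposed route, induction on rank via the Lyndon--Hochschild--Serre spectral sequence for a quotient $\Gamma_i \to \ZZ_p$, does not go through as written. First, the construction of $N$ is inconsistent: the preimage of a codimension-one subspace of $\Gamma_i/\Gamma_{i+1}$ is an index-$p$ subgroup with quotient $\ZZ/p$, not $\ZZ_p$; to get $\Gamma_i/N \cong \ZZ_p$ you must instead take the kernel of a continuous surjection onto $\ZZ_p$, and both its existence and its uniform powerfulness require argument. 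Second, your obstacle (a) is a real gap: you know $[\Gamma_i,N] \subseteq [\Gamma_i,\Gamma_i] \subseteq \Gamma_{i+1}$, but $\Gamma_{i+1}\cap N \subseteq \Phi(N)$ is not automatic, so a priori the $\ZZ_p$-action on $H^1(N,\F_p)$ is only unipotent, and a nontrivial unipotent action would enlarge $H^1(\ZZ_p, H^q(N))$ beyond what $\Lambda(V_i)$ permits; you need the action to be literally trivial. Third, obstacle (b), recovering the ring structure from the short exact sequences, is also unresolved and at $p=2$ requires checking that the lifted degree-one class squares to zero. So Part (1) as proposed has genuine gaps; the Lazard/Koszul route you defer to is the one to develop, and it is precisely what the paper's citations supply.
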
 

\begin{proof} The first statement is proved  by combining Theorem 5.1.5 of \cite{SymWei} with Theorem 3.6 of 
\cite{profinite}. The second statement follows from the first and the fact that
\[
H_1(\Gamma_{i+1},\FF_p) \cong \Gamma_{i+1}/\Gamma_{i+2} \to \Gamma_i/\Gamma_{i+1}
\cong H_1(\Gamma_{i},\FF_p) 
\]
is the zero map.
\end{proof}

We now turn to the cohomology of $\Gamma_i/\Gamma_j$, with $j > i$, working our way to \Cref{lem:basic-coh}.
To establish some notation we contemplate the exact sequence of groups
\begin{align}\label{eq:a-simple-ses}
\xymatrix{
1 \ar[r] & \Gamma_{j-1}/\Gamma_j \ar[r]^f & \Gamma_i/\Gamma_j \ar[r]^-q & \Gamma_i/\Gamma_{j-1} \ar[r] & 1,
}
\end{align}
where $f$ is the inclusion and $q$ is the projection. Let $W_{j-1} \subseteq H^2(\Gamma_{j-1}/\Gamma_j)$
be the image of the Bockstein operation on $V_{j-1} \cong H^1(\Gamma_{j-1}/\Gamma_j)$. By \Cref{rem:quotients},
the group $ \Gamma_{j-1}/\Gamma_j$ is an elementary abelian $p$-group; therefore, there are isomorphisms
\begin{align}\label{eq:the-base-case}
\Lambda(V_{j-1}) \otimes P(W_{j-1}) &\cong H^\ast ( \Gamma_{j-1}/\Gamma_j ),\qquad p > 2;\\
P(V_{j-1}) &\cong H^\ast ( \Gamma_{j-1}/\Gamma_j ),\qquad p = 2.\nonumber
\end{align}
In the case $p=2$, $W_{j-1} \subseteq H^2 ( \Gamma_{j-1}/\Gamma_j )$ is the subvector space given by 
squares of the elements in degree $1$. 

In \cref{lem:basic-coh}, we extend \eqref{eq:the-base-case} to $H^\ast(\Gamma_i/\Gamma_j)$. The bounds on $i$ and $j$ in
this result are not optimal, but certainly are good enough for later applications and relieve us of the duty of making
special statements at $p=2$. Note that part (1) gives a splitting result, but some care is needed in interpreting that statement. We will add further comments below in \Cref{rem:basic-coh-1}.

\begin{thm}\label{lem:basic-coh} 
\begin{enumerate}
\item\label{thm:part1} For all $i \geq 3$ and $j > i+1$, there is an exact sequence of vector spaces
\[
\xymatrix{
0 \ar[r] & \Lambda^2 V_i \ar[r]^-{q^\ast} & H^2(\Gamma_i/\Gamma_j) \ar[r]^-{f^\ast} & W_{j-1} \ar[r] & 0
}
\]
and any splitting of this short exact sequence defines an isomorphism of algebras
\begin{align}\label{eq:split-temp-1}
\Lambda(V_i) \otimes P(W_{j-1}) \cong H^\ast (\Gamma_i/\Gamma_j). 
\end{align}

\item For all $i \geq 3$ and $j > i+1$, the image of $H^\ast(\Gamma_i/\Gamma_j) \to H^\ast(\Gamma_i/\Gamma_{j+1})$ is
$\Lambda(V_i)$.
\end{enumerate}
\end{thm}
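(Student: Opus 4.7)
The plan is to prove (1) by induction on $j$ for fixed $i\geq 3$, with uniform inductive hypothesis $H^*(\Gamma_i/\Gamma_k)\cong\Lambda(V_i)\otimes P(W_{k-1})$ for $k\geq i+1$; the base case $k=i+1$ is \eqref{eq:the-base-case}, and part (2) will follow by directly comparing the formulas. For the inductive step ($k\geq i+2$) the tool is the Lyndon-Hochschild-Serre spectral sequence of \eqref{eq:a-simple-ses}: centrality follows from $[\Gamma_i,\Gamma_{k-1}]\subseteq[\Gamma_1,\Gamma_{k-1}]\subseteq\Gamma_k$, so the coefficients are trivial and
\[
E_2^{p,q}\cong\bigl(\Lambda(V_i)\otimes P(W_{k-2})\bigr)\otimes\bigl(\Lambda(V_{k-1})\otimes P(W_{k-1})\bigr).
\]

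The crux is to show that the transgression $d_2\colon V_{k-1}\to\Lambda^2V_i\oplus W_{k-2}$ projects isomorphically onto $W_{k-2}$. As cup product with the classifying class of \eqref{eq:a-simple-ses}, the $W_{k-2}$-projection is governed by the uniformly powerful $p$-th-power isomorphism $(-)^p\colon\Gamma_{k-2}/\Gamma_{k-1}\xrightarrow{\cong}\Gamma_{k-1}/\Gamma_k$ from \Cref{def:uniformly powerful}(c) and matches the Bockstein description $W_{k-2}=\beta V_{k-2}$ up to scalar. A commutator component landing in $\Lambda^2V_i$ may appear when $\Gamma_i/\Gamma_k$ is nonabelian, but since the $W_{k-2}$-projection is already an isomorphism, $\Lambda^2V_i$ survives to $E_\infty^{2,0}$ regardless. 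Pinning down the $W_{k-2}$-projection rigorously by a cocycle analysis is the main obstacle. Granting it, the Leibniz rule forces $d_2(W_{k-1})=\beta d_2(V_{k-1})=0$: the $W_{k-2}$-contribution dies by $\beta^2=0$, and any $\Lambda^2V_i$-contribution dies because for $k\geq i+3$ each $v\in V_i\subseteq H^1(\Gamma_i/\Gamma_{k-1})$ lifts to $\ZZ/p^2$-coefficients via $\Gamma_i/\Gamma_{k-1}\twoheadrightarrow\Gamma_i/\Gamma_{i+2}\cong(\ZZ/p^2)^d$ (killing $\beta v$), while for $k=i+2$ the total space $(\ZZ/p^2)^d$ is abelian so the commutator part is absent. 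Multiplicativity and bidegree considerations collapse the sequence at $E_3$ with $E_\infty\cong\Lambda(V_i)\otimes P(W_{k-1})$; a Bockstein check rules out multiplicative extensions, and any splitting of $f^*$ produces the asserted algebra isomorphism.

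For (2), apply (1) to both $j$ and $j+1$. The inflation $q^*$ is an isomorphism on $H^1$ and a ring map, so $\Lambda(V_i)\subseteq\mathrm{image}(q^*)$. For the reverse inclusion, I would compare the LHSS of \eqref{eq:a-simple-ses} for $j$ with the LHSS of the extension $1\to\Gamma_{j-1}/\Gamma_{j+1}\to\Gamma_i/\Gamma_{j+1}\to\Gamma_i/\Gamma_{j-1}\to 1$: both have the same base $\Gamma_i/\Gamma_{j-1}$, and the induced map on fibers is the inflation $H^*(\Gamma_{j-1}/\Gamma_j)\to H^*(\Gamma_{j-1}/\Gamma_{j+1})\cong H^*((\ZZ/p^2)^d)$, under which $W_{j-1}=\beta V_{j-1}$ maps to $\beta V_{j-1}=0$ because Bocksteins vanish on classes lifting to $\ZZ/p^2$-coefficients. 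Thus $q^*(W_{j-1})$ restricts to zero on the larger fiber, and by (1) applied to $j+1$ it must lie in $\Lambda^2V_i\subseteq\Lambda(V_i)$, completing the argument.
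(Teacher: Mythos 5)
Your strategy — inducting on $j$ and analyzing the Lyndon-Hochschild-Serre spectral sequence of \eqref{eq:a-simple-ses} — is exactly the right one, and your identification of the crux is also correct: the whole argument hinges on knowing that $d_2$ carries $V_{j-1}=\barE_2^{0,1}$ onto (a copy of) $W_{j-2}\subseteq\barE_2^{2,0}$. But you then write ``Pinning down the $W_{k-2}$-projection rigorously by a cocycle analysis is the main obstacle'' and proceed by ``Granting it\ldots''. That is a genuine gap, not a technical one: without computing $d_2$, nothing else goes through, and a direct cocycle analysis of the classifying cocycle of a non-abelian extension of uniformly powerful groups is exactly the kind of thing one wants to avoid. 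The paper sidesteps it entirely by a naturality argument. It extends \eqref{eq:a-simple-ses} to a map of extensions (diagram \eqref{eq:ss-setup}) whose source is $1\to\Gamma_{j-1}/\Gamma_j\to\Gamma_{j-2}/\Gamma_j\to\Gamma_{j-2}/\Gamma_{j-1}\to 1$, i.e.\ (by \Cref{rem:quotients}) the completely explicit abelian extension $0\to(\ZZ/p)^d\to(\ZZ/p^2)^d\to(\ZZ/p)^d\to 0$, where $d_2$ is by definition the Bockstein. One then notes that the restriction map $\barE_2^{2,0}=H^2(\Gamma_i/\Gamma_{j-1})\to E_2^{2,0}=H^2(\Gamma_{j-2}/\Gamma_{j-1})$ kills $\Lambda^2V_i$ (because $H^1(\Gamma_i)\to H^1(\Gamma_{j-2})$ is zero by \Cref{thm:coh-gammai}(2)), so comparing with the bottom spectral sequence shows the composite $V_{j-1}\xrightarrow{d_2}\barE_2^{2,0}\to E_2^{2,0}$ is an isomorphism onto $W_{j-2}$, and then one re-chooses the splitting in the inductive hypothesis so that $d_2(V_{j-1})$ is \emph{literally} the $W_{j-2}$-summand. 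This is the step your proposal is missing.

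Two further remarks. First, your argument for the collapse at $E_3$ via Kudo transgression and Bocksteins of lifts to $\ZZ/p^2$-coefficients is workable but more fragile than necessary (and you have written $d_2(W_{k-1})$ where you must mean $d_3$: Kudo says $d_3(\beta x)=-\beta(d_2 x)$, not $d_2(\beta x)=\beta(d_2 x)$; also the case split at $k=i+2$ versus $k\geq i+3$ is awkward). The paper avoids all of this: $d_3(W_{j-1})$ lands in $\barE_3^{3,0}\cong\Lambda^3V_i$, and since the composite $\Lambda(V_i)\to H^*(\Gamma_i/\Gamma_j)\to H^*(\Gamma_i)$ is an isomorphism by \Cref{thm:coh-gammai}, the edge map $\Lambda^3V_i\to\barE_\infty^{3,0}\to H^3(\Gamma_i/\Gamma_j)$ must be injective, so $d_3=0$ on $W_{j-1}$. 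Second, for part (2), your argument (comparing fibers and using that Bocksteins of $\ZZ/p^2$-liftable classes vanish) is in the right spirit, but the cleaner observation is that the induced map $W_{j-1}\to W_j$ on the right-hand terms of the two short exact sequences from part (1) is zero, because it is induced by the map on fibers $\Gamma_j/\Gamma_{j+1}\to\Gamma_{j-1}/\Gamma_j$, which is the zero map; a diagram chase then shows the kernel of $H^2(\Gamma_i/\Gamma_j)\to H^2(\Gamma_i/\Gamma_{j+1})$ provides a splitting and the image is exactly $\Lambda(V_i)$.
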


\begin{proof} We will do the case when $p > 2$. There are evident modifications needed for $p=2$ to
account for the fact that $H^\ast(\Gamma_{j}/\Gamma_{j+1})$ is a polynomial algebra if $p=2$.

Part (1) is by induction on $j$, using that $\Gamma_i/\Gamma_{i+1}$ is elementary abelian
for the base case. Compare \eqref{eq:ss-setup}. The induction step is completed
using the Lyndon-Hochschild-Serre Spectral Sequence for the exact sequence of \eqref{eq:a-simple-ses}.
To compute the differentials we extend that exact sequence to a diagram
\begin{equation}\label{eq:ss-setup}
\xymatrix{
1 \ar[r] & \Gamma_{j-1}/\Gamma_j \ar[r] \ar[d]_=& \Gamma_{j-2}/\Gamma_j \ar[r] \ar[d]&
\Gamma_{j-2}/\Gamma_{j-1} \ar[d] \ar[r] & 1\\
1 \ar[r] & \Gamma_{j-1}/\Gamma_j \ar[r]^f & \Gamma_i/\Gamma_j \ar[r]^q & \Gamma_i/\Gamma_{j-1} \ar[r] & 1.
}
\end{equation}
Since $j > 4$, \Cref{rem:quotients} implies that the top exact sequence is non-canonically isomorphic to 
\[
\xymatrix{
0 \ar[r] & (\ZZ/p)^d \ar[r]^-{\times p} & (\ZZ/p^2)^d \ar[r] & (\ZZ/p)^d \ar[r] & 0.
}
\]

The diagram of \eqref{eq:ss-setup} gives a diagram of Lyndon-Hochschild-Serre Spectral Sequences
\[
\xymatrix{
\barE_2^{p,q} \cong H^p(\Gamma_i/\Gamma_{j-1},H^q(\Gamma_{j-1}/\Gamma_j))  \ar@{=>}[r]  \ar[d]
&H^p(\Gamma_i/\Gamma_{j})\ar[d]\\
E_2^{p,q} \cong H^p(\Gamma_{j-2}/\Gamma_{j-1},H^q(\Gamma_{j-1}/\Gamma_j))  \ar@{=>}[r]  &H^p(\Gamma_{j-2}/\Gamma_{j}).
}
\]
Note that we have decorated the top spectral sequence with an overbar to later help distinguish between the two. 

Since $\Gamma_1$ (or $\Gamma_2$ if $p=2$) is uniformly $p$-powerful, we have by \Cref{def:uniformly powerful} that 
$[\Gamma_i,\Gamma_j] \subseteq \Gamma_{i+j}$ for all $j \geq i \geq 1$ (or $i\geq 2$ if $p=2$). Thus in
both spectral sequences the action of the base group on $H^\ast(\Gamma_{j-1}/\Gamma_j)$ is trivial.

By \Cref{rem:quotients} the bottom of these spectral sequences is completely known: indeed, there are isomorphisms
\begin{align*}
E_2^{\ast,0} &\cong H^\ast(\Gamma_{j-2}/\Gamma_{j-1}) \cong \Lambda(V_{j-2}) \otimes P(W_{j-2}),\\
 E_2^{0,\ast} &\cong H^\ast(\Gamma_{j-1}/\Gamma_{j}) \cong \Lambda(V_{j-1}) \otimes P(W_{j-1}),
\end{align*}
and $E_2^{\ast,\ast} \cong  E_2^{\ast,0} \otimes  E_2^{0,\ast}$. The only non-zero differential is $d_2$. This is determined
by the isomorphism $d_2 \colon V_{j-1} \cong W_{j-2}\subseteq E_2^{2,0}$ and the multiplicative structure of the spectral sequence. We conclude
\[
E_\infty^{\ast,\ast}  \cong E_3^{\ast,\ast} \cong \Lambda(V_{j-2}) \otimes P(W_{j-1})
\]
with $V_{j-2} = E_\infty^{1,0}$ and $W_{j-1} = E_\infty^{0,2}$. 

We now turn to the top spectral spectral sequence. The base case of $j=i+2$ is covered by the above. By the induction hypothesis we have a short
exact sequence
\[
\xymatrix{
0 \ar[r] & \Lambda^2 V_i \ar[r]^-{q^\ast} & H^2(\Gamma_i/\Gamma_{j-1}) \ar[r]^-{f^\ast} & W_{j-2} \ar[r] & 0.
}
\]
We will make a useful choice of splitting of this exact sequence to obtain an isomorphism
$\Lambda(V_i) \otimes P(W_{j-2}) \cong H^\ast (\Gamma_i/\Gamma_{j-1})$ to aid in completing the induction
step.

We have isomorphisms
\begin{align*}
\barE_2^{\ast,0} &\cong H^\ast(\Gamma_{i}/\Gamma_{j-1}) \\
\barE_2^{0,\ast} &\cong H^\ast(\Gamma_{j-1}/\Gamma_{j}) \cong \Lambda(V_{j-1}) \otimes P(W_{j-1})
\end{align*}
and $\barE_2^{\ast,\ast} \cong  \barE_2^{\ast,0} \otimes  \barE_2^{0,\ast}$. 
By the induction hypothesis we have an isomorphism
\[
\barE_2^{\ast,0} \cong H^\ast(\Gamma_{i}/\Gamma_{j-1}) \cong \Lambda(V_i) \otimes P(W_{j-2}) \ .
\]
By the naturality of the spectral
sequences and the calculation just completed we have that the composition
\[
\xymatrix@C=15pt{
V_{j-1} = H^1(\Gamma_{j-1}/\Gamma_j) \cong \barE_2^{0,1} \ar[r]^-{d_2} &
\barE_2^{2,0} \cong H^2(\Gamma_i/\Gamma_{j-1}) \ar[r] & E_2^{2,0} \cong H^2(\Gamma_{j-2}/\Gamma_{j-1})
}
\]
has image exactly $W_{j-2}\subseteq E_2^{2,0}$. 
So $d_2$ maps $V_{j-1} = \barE^{0,1}_2$ isomorphically onto $W_{j-2} \subseteq \barE_2^{2,0}$. 
We then have
\[
 \overline E_3^{\ast,\ast} \cong \Lambda(V_{i}) \otimes P(W_{j-1})
\]
with $V_{i} = \barE_3^{1,0}$ and $W_{j-1} = \barE_3^{0,2}$. 

It remains to show the spectral sequence collapses at $\barE_3$. For this it is sufficient to show
$d_3$ vanishes on $W_{j-1} = \barE_2^{0,2}$. The target of this differential is $\Lambda^3(V_i) \cong
\barE_{3}^{3,0}$. However, we know from \Cref{thm:coh-gammai} that the composition
\[
\Lambda(V_i) \to H^\ast(\Gamma_i/\Gamma_j) \to H^\ast(\Gamma_i)
\]
is an isomorphism, so the map induced by the edge homomorphism
\[
\Lambda^3(V_i) \to \barE_\infty^{3,0} \to H^3(\Gamma_i/\Gamma_j)
\]
must be an injection. Thus $d_3(W_{j-1})=0$ as needed. 

For part (2) we examine the diagram from part (1)
\[
\xymatrix{
0 \ar[r] & \Lambda^2 V_i \ar[r]^-{q^\ast} \ar[d]_=& H^2(\Gamma_i/\Gamma_j) \ar[r]^-{f^\ast} \ar[d] &
W_{j-1} \ar[r] \ar[d]& 0\\
0 \ar[r] & \Lambda^2 V_i\ar[r]^-{q^\ast} & H^2(\Gamma_i/\Gamma_{j+1}) \ar[r]^-{f^\ast} & W_{j} \ar[r] & 0
}
\]
Since the map $H^2(\Gamma_{j-1}/\Gamma_{j}) \to H^2(\Gamma_{j}/\Gamma_{j+1})$ is zero, the map
$W_{j-1} \to W_j$ is zero. It follows that the
kernel of the map $H^2(\Gamma_i/\Gamma_j) \to H^2(\Gamma_i/\Gamma_{j+1})$ is isomorphic
to $W_{j-1}$ and, in fact, this gives a splitting of the top $f^\ast$. The result follows.
\end{proof}

\begin{rem}\label{rem:basic-coh-1} Part (1) of \Cref{lem:basic-coh} says that we can choose an
isomorphism of graded algebras \eqref{eq:split-temp-1}
\[
\Lambda(V_i) \otimes P(W_{j-1}) \cong H^\ast(\Gamma_i/\Gamma_j).
\]
If $j \leq 2i$ then $\Gamma_i/\Gamma_{j} \cong (\ZZ/p^{j-i})^d$ and we can choose this isomorphism
to be an isomorphism of unstable algebras over the Steenrod algebra, and we can even add the further stipulation that
the appropriate higher Bockstein defines an isomorphism from $V_i$ to $W_{j-1}$. If $j > 2i$, it is far less 
clear how the higher Bocksteins behave, and it is no longer possible to be so explicit.

For example,
if $\cG=\SS_n$ is the Morava stabilizer group of \Cref{exmp:exam2} then we know by work
of Lazard and Morava (see Remark 2.2.5 of \cite{MoravaAnnals}) that there is an isomorphism
\[
\Lambda_{\QQ_p}(x_1,x_3,\cdots,x_{2n-1}) \cong H^\ast(\SS_n,\ZZ_p) \otimes_{\ZZ_p} \QQ_p
\]
where $x_{2k-1}$ is in degree $2k-1$. This implies that the structure
of the higher order Bocksteins must be rich. 

As a further remark, note that if we choose a splitting as in \eqref{eq:split-temp-1}, then the map
\[
\Lambda(V_i) \otimes P(W_{j-1}) \cong H^\ast(\Gamma_i/\Gamma_j) \to H^\ast \Gamma_i
\]
gives an isomorphism $\Lambda(V_i) \cong H^\ast \Gamma_i$.  However, we have not proved
that $W_{j-1}$ maps to zero. 
\end{rem} 

If $G$ is a profinite group, it is not immediately obvious how to define its classifying space.  In particular, 
there is a long history, going back to Artin and Mazur, of regarding the classifying space as a pro-object.
We will need nothing that sophisticated. For our purposes, the following will suffice. 

\begin{defn}\label{def:class-pro-finite} Let $G = \lim_j G_j$ be a profinite group. Then the classifying space $BG$ is defined by
\[
BG = \holim_j BG_j.
\]
\end{defn}

In particular, if $\cG$ is our compact $p$-adic analytic group with uniformly powerful subgroup $\Gamma_1$  we have
$B\Gamma_i = \holim_j B(\Gamma_i/\Gamma_{i+j})$. It is immediate that 
\[
\pi_n B\Gamma_i \cong 
\begin{cases}
\Gamma_i, & n = 1;\\
0, & n \ne 1.
\end{cases}
\]
We remark, however, that this isomorphism does not recover the topology on $\Gamma_i$.  

The next result now follows from \Cref{thm:coh-gammai}, \Cref{lem:basic-coh}, \Cref{prop:inv-to-homology} 
below,
and the fact that if  $G$ is a finite $p$-group, then $\Sigma^\infty BG$ is already $p$-complete. 
It is slightly surprising, since suspension and limits to do not formally commute.

\begin{prop}\label{prop:it-doesn't-matter} The natural map
\[
\Sigma^\infty_+ B\Gamma_i \to \holim_j  \Sigma^\infty_+ B(\Gamma_i/\Gamma_{i+j})
\]
is an equivalence after $p$-completion. Furthermore, we have isomorphisms
\[
\Lambda(V_i) \cong \colim_j H^\ast(\Gamma_i/\Gamma_{i+j}) \cong
 H^\ast(\Gamma_i) \cong H^\ast(B\Gamma_i).
\]
\end{prop}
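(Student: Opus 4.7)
The proof splits into a chain of cohomology isomorphisms and the spectrum-level identification; both are to be reduced to the structural results \Cref{thm:coh-gammai}, \Cref{lem:basic-coh}, and \Cref{prop:inv-to-homology}.

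For the cohomology chain, the first isomorphism $\Lambda(V_i) \cong \colim_j H^\ast(\Gamma_i/\Gamma_{i+j})$ I would deduce from \Cref{lem:basic-coh}(2): for every sufficiently large $j$, the image of the transition map $H^\ast(\Gamma_i/\Gamma_{i+j}) \to H^\ast(\Gamma_i/\Gamma_{i+j+1})$ is exactly $\Lambda(V_i)$, so every colimit class is represented, after one step of the direct system, by an element of $\Lambda(V_i)$. Since $\Lambda(V_i)$ injects into each $H^\ast(\Gamma_i/\Gamma_{i+j})$ via $q^\ast$ by \Cref{lem:basic-coh}(1), the colimit is precisely $\Lambda(V_i)$. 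The identification $\colim_j H^\ast(\Gamma_i/\Gamma_{i+j}) \cong H^\ast(\Gamma_i)$ is the standard description of continuous mod-$p$ cohomology of a pro-$p$ group as the colim over cohomologies of its finite quotients, and \Cref{thm:coh-gammai} already records that this is $\Lambda(V_i)$.

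For the spectrum-level statement, note first that since $\Gamma_i/\Gamma_{i+j}$ is a finite $p$-group, each $\Sigma^\infty_+ B(\Gamma_i/\Gamma_{i+j})$ is already $p$-complete; hence so is the target $\holim_j \Sigma^\infty_+ B(\Gamma_i/\Gamma_{i+j})$. To check that the natural comparison map is a $p$-equivalence, it suffices to verify it is a mod-$p$ (co)homology isomorphism. On the target side, I would invoke \Cref{prop:inv-to-homology} to translate the homotopy limit of $p$-complete suspension spectra into the colim of mod-$p$ (co)homologies of the levels, which by the previous paragraph recovers $\Lambda(V_i)$. On the source side, one computes $H^\ast(B\Gamma_i)$ using the tower presentation $B\Gamma_i = \holim_j B(\Gamma_i/\Gamma_{i+j})$ together with a Milnor/Bousfield--Kan argument; the Mittag-Leffler behavior supplied by \Cref{lem:basic-coh}(2) forces the relevant $\lim^1$ to vanish in each degree and yields $H^\ast(B\Gamma_i)\cong \Lambda(V_i)$. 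This simultaneously gives the final isomorphism $H^\ast(\Gamma_i) \cong H^\ast(B\Gamma_i)$.

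The subtle point — and the main obstacle in spirit — is that $\Sigma^\infty_+$ does not generally commute with homotopy limits. The rescue after $p$-completion rests on two ingredients coming from previous results: the $p$-completeness of each level (finiteness of the $p$-groups), and the tower's cohomological Mittag-Leffler property from \Cref{lem:basic-coh}. The precise technical payoff allowing homology of the homotopy limit to be computed from the pieces is isolated in \Cref{prop:inv-to-homology}, and it is invoking this result that does the real work.
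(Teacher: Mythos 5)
Your overall framework matches the paper's — the proof is indeed a single application of \Cref{prop:inv-to-homology} with hypotheses supplied by \Cref{thm:coh-gammai} and \Cref{lem:basic-coh}, plus the observation that suspension spectra of finite $p$-groups are already $p$-complete. The chain of cohomology identifications $\Lambda(V_i)\cong\colim_j H^\ast(\Gamma_i/\Gamma_{i+j})\cong H^\ast(\Gamma_i)$ is also handled correctly. And you correctly flag the central subtlety: $\Sigma^\infty_+$ does not commute with homotopy limits.

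However, your bifurcation into a ``target side'' and ``source side'' computation misapplies the tools and contains a genuine gap. On the target side, you say you would ``invoke \Cref{prop:inv-to-homology} to translate the homotopy limit \ldots into the colim of mod-$p$ cohomologies,'' but \Cref{prop:inv-to-homology} does not compute the homology of a homotopy limit on its own: its hypothesis is a statement about a spectrum $X$ already mapping compatibly to the tower, and its conclusion (the equivalence $X_p\simeq\holim(X_k)_p$ together with the identification $\colim H^\ast X_k\cong H^\ast X$) is delivered all at once. You cannot first identify the cohomology of the holim and separately identify $H^\ast(B\Gamma_i)$; those are coupled by the very proposition you are citing. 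More seriously, on the source side you propose to compute $H^\ast(B\Gamma_i)$ ``using the tower presentation together with a Milnor/Bousfield--Kan argument'' in which ``the relevant $\lim^1$ vanishes.'' This step would fail: there is no $\lim^1$-Milnor sequence for the \emph{cohomology} of a homotopy limit of a tower of spaces or spectra. The Milnor sequence applies to the homotopy groups of a $\holim$, or dually to the cohomology of a $\hocolim$; the cohomology of a $\holim$ is not computed from $\lim$ and $\lim^1$ of the level cohomologies. Indeed, this is exactly the ``slightly surprising'' failure that \Cref{prop:inv-to-homology} is engineered to circumvent, by working through the Adams cobar resolution rather than a naive $\lim^1$ argument. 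The cohomological Mittag--Leffler condition of \Cref{lem:basic-coh}(2) is used to verify the \emph{hypothesis} of \Cref{prop:inv-to-homology} — namely that $H_\ast(\Sigma^\infty_+B\Gamma_i)\to H_\ast(\Sigma^\infty_+B(\Gamma_i/\Gamma_{i+j}))$ is an isomorphism onto the stable image — rather than to control a $\lim^1$ term that does not exist in this setting.

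The corrected structure of the argument is: fix $X=\Sigma^\infty_+B\Gamma_i$ and $X_j=\Sigma^\infty_+B(\Gamma_i/\Gamma_{i+j})$, verify the image hypothesis of \Cref{prop:inv-to-homology} using \Cref{thm:coh-gammai}, \Cref{lem:basic-coh}(2), and the identification of $B\Gamma_i$ as a $K(\Gamma_i,1)$ (the Milnor sequence on $\pi_\ast$ of the holim, which \emph{is} legitimate, pins down the homotopy type), then apply \Cref{prop:inv-to-homology} once to obtain simultaneously the equivalence $X_p\simeq\holim X_j$ and the isomorphism $\colim_j H^\ast X_j\cong H^\ast X$.
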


The proof of \Cref{prop:it-doesn't-matter} is completed with this general result.

\begin{prop}\label{prop:inv-to-homology} Let X be a bounded below spectrum and let
\[
\xymatrix@C=15pt{
X \rto & \cdots \rto & X_k \rto & X_{k-1} \rto & \cdots \rto & X_1
}
\]
be a tower of bounded below spectra under $X$ with the property that $H_\ast X \to H_\ast X_k$ induces an isomorphism
\[
H_\ast X \cong \mathrm{Image}\{H_\ast X_{k+1} \to H_\ast X_k\}.
\]
Then $X_p \to \holim_k (X_k)_p^\wedge$ is an equivalence and the induced maps
\begin{align*}
H_\ast X\ \longr& \lim_k H_\ast X_k\\
\mathop{\colim}_k H^\ast X_k\ &\longr H^\ast X
\end{align*}
are isomorphisms. 
\end{prop}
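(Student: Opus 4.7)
The plan is to first extract purely algebraic consequences of the hypothesis about the tower $\{H_\ast X_k\}$, deduce the two (co)homology statements directly, and finally upgrade to the spectrum-level equivalence via a mod $p$ homology comparison between two $p$-complete bounded below spectra.

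First I would observe that the hypothesis forces a strong Mittag--Leffler condition on the tower. For any $j > k$, the image of $H_\ast X_j \to H_\ast X_k$ is automatically contained in $I_k := \mathrm{Image}(H_\ast X_{k+1} \to H_\ast X_k)$ because the map factors through $X_{k+1}$, while it also contains $\mathrm{Image}(H_\ast X \to H_\ast X_k)$, which by hypothesis equals $I_k$. Hence all these images are independent of $j \geq k+1$ and coincide with $I_k \cong H_\ast X$. In particular $\lim{}^1 H_\ast X_k = 0$ and the natural map induces $H_\ast X \xrightarrow{\cong} \lim_k H_\ast X_k$. The cohomology statement then follows by dualizing over $\F_p$: the map $H^n X_k \to H^n X$ is surjective (dual to the split inclusion $H_n X \hookrightarrow H_n X_k$), and any $\alpha \in H^n X_k$ lying in the kernel vanishes on $I_k$ and therefore pulls back to zero in $H^n X_{k+1}$, because every class in $H_n X_{k+1}$ maps into $I_k$ under the tower map. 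This directly yields $\colim_k H^n X_k \cong H^n X$.

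For the spectrum-level statement, $\holim_k (X_k)_p^\wedge$ is automatically $p$-complete, since $p$-complete spectra form a reflective subcategory closed under homotopy limits. It therefore suffices to show the map $X_p^\wedge \to \holim_k (X_k)_p^\wedge$ is a mod $p$ homology isomorphism of $p$-complete bounded below spectra. Using that $p$-completion does not alter mod $p$ homology for bounded below spectra, combined with the Milnor $\lim{}^1$ sequence
\[
0 \to \lim{}^1 H_{n+1}(X_k) \to H_n\bigl(\holim_k (X_k)_p^\wedge\bigr) \to \lim_k H_n(X_k) \to 0,
\]
the vanishing of $\lim{}^1$ and the identification $\lim_k H_n(X_k) \cong H_n X$ from the first paragraph immediately yield the required isomorphism $H_\ast X \cong H_\ast \holim_k (X_k)_p^\wedge$.

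The main subtlety is in justifying the Milnor sequence, i.e.\ in commuting $H\F_p \wedge (-)$ past the homotopy limit; this requires the tower $\{(X_k)_p^\wedge\}$ to be uniformly bounded below. This is not part of the stated hypothesis but holds in the intended applications: in \Cref{prop:it-doesn't-matter}, each $X_k = \Sigma^\infty_+ B(\Gamma_i/\Gamma_{i+k})$ is connective, and more generally one can reduce to this case by truncating, since the hypothesis already propagates the connectivity of $X$ through the tower.
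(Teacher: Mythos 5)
Your first two paragraphs are correct and track the paper's algebraic input faithfully: the hypothesis gives stability of images (hence Mittag--Leffler and $\lim^1=0$), the maps $I_{k+1}\to I_k$ are isomorphisms because both are identified with $H_\ast X$, so $\lim_k H_\ast X_k \cong H_\ast X$; and the $\F_p$-dual statement for $\colim_k H^\ast X_k$ follows exactly as you say.

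The gap is in the third paragraph, and you have essentially flagged it yourself but then waved it away incorrectly. The Milnor sequence you invoke,
\[
0 \to \lim{}^1 H_{n+1}(X_k) \to H_n\bigl(\holim_k (X_k)_p^\wedge\bigr) \to \lim_k H_n(X_k) \to 0,
\]
is not the Milnor sequence for $\pi_\ast$ of a homotopy limit (which is always valid): it requires first commuting $H\F_p\wedge(-)$ past $\holim_k$, and that in turn requires the tower $\{X_k\}$ to be \emph{uniformly} bounded below. The stated hypothesis says only that each $X_k$ is bounded below, with no uniformity, and the Mittag--Leffler condition on $\{H_\ast X_k\}$ does not propagate a uniform lower bound: the image of $H_\ast X_{k+1}\to H_\ast X_k$ being concentrated in degrees $\geq -N$ says nothing about where $H_\ast X_k$ itself is supported. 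Your suggested remedy ``reduce by truncating'' doesn't obviously repair this, since replacing $X_k$ by a connective cover changes $\holim_k (X_k)_p^\wedge$ in a way that you would still need to control.

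The paper's proof avoids this issue entirely by a different route: it expresses $(-)_p^\wedge$ as the totalization of the cosimplicial cobar resolution $H\F_p^{\bullet+1}\wedge(-)$, and for each fixed cosimplicial level $s$ applies the Milnor sequence to $\pi_\ast \holim_k H\F_p^{s+1}\wedge X_k$ --- which needs no boundedness hypothesis --- using flatness of $A_\ast^{\otimes s}$ to transfer your Mittag--Leffler observation to the tower $\{\pi_\ast H\F_p^{s+1}\wedge X_k\}$. Then one commutes $\holim_\Delta$ with $\holim_k$, which is always allowed. This gives the result with only the stated (non-uniform) boundedness. Your argument would work if you strengthened the hypothesis to a uniform lower bound, which does hold in the application to \Cref{prop:it-doesn't-matter}, but as written the step is not justified for the proposition in its stated generality.
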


\begin{proof} If $Z$ is any spectrum, write $H\FF_p^{\bullet+1} \smsh Z$ for the 
standard cosimplicial cobar complex defining the Adams Spectral Sequence. This spectral sequence reads
\[
\Ext_{A_\ast}^s(\Sigma^t\FF_p,H_\ast Z) \cong \pi^s\pi_t H\FF_p^{\bullet+1} \smsh Z 
\Longrightarrow \pi_{t-s} \holim_\Delta H\FF_p^{\bullet+1} \smsh Z. 
\]
Here $A_\ast$ is the dual Steenrod algebra. If $Z$ is bounded below then the map
\[
Z \longr \holim_\Delta H\FF_p^{\bullet+1} \smsh Z
\]
is $p$-completion. By construction, there is an isomorphism, natural in $Z$,
\[
\pi_\ast H\FF_p^{s+1} \smsh Z \cong A_\ast ^{\otimes s} \otimes H_\ast Z.
\]

Now turn to the tower under $X$. The hypothesis on the homology of the tower implies that for all $s$ the
natural map
\[
\pi_\ast H\FF_p^{s+1} \smsh X \to \pi_\ast \mathop{\holim}_k H\FF_p^{s+1} \smsh X_k
\cong \lim_k \pi_\ast H\FF_p^{s+1} \smsh X_k
\]
is an isomorphism. Thus we have equivalences
\begin{align*}
X_p \simeq \holim_\Delta H\FF_p^{\bullet+1} \smsh X &\simeq \holim_\Delta   \holim_k H\FF_p^{s+1} \smsh X_k\\
&\simeq \holim_k \holim_\Delta  H\FF_p^{s+1} \smsh X_k\\
& \simeq \holim_k (X_k)_p^\wedge. \qedhere
\end{align*}
\end{proof}
 % !TEX root = dsphere-master.tex

\section{Duality and Frobenius Reciprocity}\label{sec:dualityandfrob}

This section collects a great deal of relatively standard material about group cohomology, with the wrinkle that we need
these results for compact $p$-adic analytic groups. Much of this can be collected from \cite{CohoGal} and \cite{SymWei}.

Define the completed group ring of $\cG$ to be
\begin{equation}\label{eq:grp-ring-def}
\ZZ_p[[\cG]] = \lim_{n,i} \ZZ/{p^n}[[G_i]]
\end{equation}
where $\cG \cong \lim G_i$ is any presentation of $\cG$ as an inverse limit of finite groups.

\begin{notation}\label{not:keep-it-str} In this section $\cG$ will be a compact $p$-adic analytic group unless otherwise
stated. All modules we consider will be continuous $\ZZ_p$-modules and most examples
will be either $p$-profinite or discrete $p$-torsion. The following conventions and definitions hold or continue to hold. 
\begin{enumerate}

\item $\Hom$ always means continuous homomorphisms and tensor products between profinite modules
are always completed, and taken over $\Z_p$ if unadorned.

\item  If $M$ is
a continuous right $\cG$-module and $N$ is a continuous left $\cG$-module we will also write
$M \otimes_\cG N$ for $M \otimes_{\ZZ_p[[\cG]]} N$. and we will write $\Hom_{\cG}(M,N)$ for $\Hom_{\ZZ_p[[\cG]]}(M,N)$.
We will similarly abbreviate the $\Tor$ and $\Ext$ decorations.

\item If $M$ and $N$ are two continuous left $\cG$-modules we give $M \otimes N$ the structure of a left $\cG$-module
with the diagonal $\cG$-action: $g(x \otimes y) = gx \otimes gy$. 

\item Define $\Hom^\Delta(M,N)$ to be the  abelian group of continuous homomorphisms with the left
$\cG$-action given $(g\varphi)(a) = g\varphi(g^{-1}a)$.
\end{enumerate}

The action on $\Hom^\Delta$ has the twin features that $\Hom_\cG(A,B) = \Hom^\Delta(A,B)^\cG$ and the evaluation map
\[
\Hom^\Delta(A,B) \otimes A \longr B
\]
sending $\varphi \otimes a \to \varphi(a)$ is a morphism of left $\cG$-modules. 
\end{notation}

\subsection{Group cohomology basics}

Let $\cG$ be a compact $p$-adic analytic group and $M$ a continuous $\cG$-module. 
Following Serre \cite{CohoGal}, we let $C^s(\cG,M)$ be the set of continuous maps
\[
\phi: \cG^s \longr M.
\]
If $s=0$, then $C^s(\cG,M) = M$. 
We will also call these continuous cochains. The collection $C^\bullet(\cG,M)$ is a cosimplicial abelian group with 
coface operators given by
\[
d^i\phi(x_1,\ldots,x_{s+1}) = 
\begin{cases}
x_1\phi(x_1,\ldots,x_{s+1}),&i=0;\\
\phi(x_0,\ldots,x_{i}x_{i+1},\ldots, x_{s+1}),&1 \leq i \leq s;\\
\phi(x_1,\ldots,x_{s}),&i=s+1.
\end{cases}
\]
Codegenerarcy operators are obtained by inserting identities; we won't need them. We then have
\[
H^s(\cG,M) = H^s(C^\bullet(\cG,M),\partial)
\]
where $\partial = \sum_{i=0}^s (-1)^i d^i$. As explained in  Sections 3.1 and 3.2 of \cite{SymWei},
these are the right derived functors of $H^0(\cG,M) = M^{\cG}$; indeed, there is a natural isomorphism
\[
H^s(\cG,M) \cong \Ext^s_\cG(\ZZ_p,M)
\]
and the cochain complex above is the standard cobar construction for calculating the $\Ext$-groups. We also define
\begin{equation}\label{eq:hom-defined-g}
H_s(\cG,M) = \Tor^s_\cG(\ZZ_p,M).
\end{equation}

Let $f:\cG_1 \to \cG_2$ be a continuous map of compact $p$-adic analytic groups and $M$ a $\cG_2$-module. Then $M$ becomes
a $\cG_1$-module by restriction: if $x \in \cG_1$ and $a \in M$, then $x\cdot a = f(x)a$. Write this module as
$f_\ast M$. Then we get a map
\[
f^\ast:C^s(\cG_2,M) \to C^s(\cG_1,f_\ast M)
\]
given by sending $\phi:\cG_2^{s} \to M$ to a cochain $f^\ast\phi:\cG_1^{s} \to f_\ast M$ with
\[
(f^\ast\phi)(x_1,\cdots,x_s) = \phi(f(x_0),\ldots,f(x_s)). 
\]
The new action on $M$ is needed so that $f^\ast(d^0\phi)=d^0f^\ast\phi$. We then get a map
\[
f^\ast \colon H^\ast(\cG_2, M) \to H^\ast(\cG_1, f_\ast M).
\]

\begin{exam}\label{ex:conj-action}
For example, suppose $f = c_g:\cG \to \cG$ is given by conjugation and $M$ is a left $\cG$-module; then we write $\gMa$ for
$f_\ast M$ and we get a map 
\[c_g^\ast:H^\ast(\cG,M) \to H^\ast(\cG,\gMa).\] The action on $\gMa$ is given
by $x\cdot a = (gxg^{-1})a$.

More generally, let $\Gamma \subseteq \cG$ be a closed subgroup. If $g\in \cG$ let $ \gGa$ denote the conjugate 
subgroup $g^{-1}\Gamma g$ and write $c_g(-) = g(-)g^{-1}:\gGa \to \Gamma$ for the conjugation homomorphism. The
conventions are chosen so that $c_{gh} = c_g \circ c_h$. If $M$ is a $\cG$-module, we then get a homomorphism
\[
c_g^\ast:H^\ast(\Gamma,M) \to H^\ast(\gGa,\gMa).
\]
\end{exam}

\subsection{Induced modules and transfer} 
Let $\cG$ be a compact $p$-adic analytic group and let $\Gamma \subseteq \cG$ be a closed subgroup.
If $M$ is a continuous $\Gamma$-module, define the {\it coinduced} $\cG$-module to be the module
\[
M_\Gamma^\cG = \map_\Gamma(\cG,M)
\]
of {\it continuous} $\Gamma$-equivariant functions $\varphi: \cG \to M$ with $\cG$ action given by the formula
\[
(g\varphi)(x) = \varphi(xg).
\]
The functor $M \mapsto M_\Gamma^\cG$ is right adjoint to the forgetful functor to $\Gamma$-modules. The
following result is {\it Schapiro's Lemma}.

\begin{prop}\label{prop:schapiro-lemma} Let $M$ be a $\Gamma$-module which is either discrete $p$-torsion or
profinite. Then we have a natural isomorphism
\[
H^\ast(\cG,M_\Gamma^\cG) \cong H^\ast(\Gamma,M). 
\]
\end{prop}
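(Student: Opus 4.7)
The plan is to deduce Schapiro's Lemma from the basic adjunction between restriction and coinduction, following the standard derived-functor argument while taking care with continuity.

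First I would establish the adjunction at the level of $\Hom$: for any continuous $\cG$-module $N$ and continuous $\Gamma$-module $M$, evaluation at $1 \in \cG$ induces a natural isomorphism
\[
\Hom_\cG(N, M_\Gamma^\cG) \xrightarrow{\ \cong\ } \Hom_\Gamma(N, M),
\]
whose inverse sends $\psi \co N \to M$ to the map $n \mapsto (g \mapsto \psi(gn))$. Under the standing hypothesis that $M$ is either profinite or discrete $p$-torsion, one verifies that both assignments preserve continuity. In particular, setting $N = \ZZ_p$ with trivial $\cG$-action gives the degree-zero case:
\[
H^0(\cG, M_\Gamma^\cG) = (M_\Gamma^\cG)^\cG \cong M^\Gamma = H^0(\Gamma, M).
\]

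To extend to higher degrees, I would exploit that restriction from continuous $\cG$-modules to continuous $\Gamma$-modules is exact (it only forgets structure) and that coinduction is its right adjoint. A standard formal argument then shows that coinduction sends injectives to injectives. Choosing an injective resolution $M \to I^\bullet$ in the appropriate category of continuous $\Gamma$-modules, available by the machinery recalled in \cite{CohoGal} and \cite{SymWei}, we obtain an injective resolution $(I^\bullet)_\Gamma^\cG$ of $M_\Gamma^\cG$ as a continuous $\cG$-module. Applying the adjunction degree by degree, and using that $(J_\Gamma^\cG)^\cG \cong J^\Gamma$, we conclude
\[
H^\ast(\cG, M_\Gamma^\cG) = H^\ast\bigl((I^\bullet_\Gamma^\cG)^\cG\bigr) \cong H^\ast\bigl((I^\bullet)^\Gamma\bigr) = H^\ast(\Gamma, M).
\]

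The main obstacle is bookkeeping in the continuous framework rather than any genuine homological subtlety: one must verify that the adjunction holds at the level of \emph{continuous} maps (which is where the profinite/torsion hypothesis is used), that the relevant abelian categories of continuous modules have enough injectives, and that continuous coinduction is exact. These facts are standard for compact $p$-adic analytic groups, so the plan is to invoke the references above rather than reprove them. An alternative, if one prefers a more hands-on argument, is to write down an explicit cochain-level map $C^\bullet(\cG, M_\Gamma^\cG) \to C^\bullet(\Gamma, M)$ by $\phi \mapsto \bigl((\gamma_1,\ldots,\gamma_s) \mapsto \phi(\gamma_1,\ldots,\gamma_s)(1)\bigr)$ and verify directly that it is a quasi-isomorphism, but the derived-functor proof is cleaner and extends without change to the settings of later sections.
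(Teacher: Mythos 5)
Your derived-functor argument is sound for the discrete $p$-torsion case, where the category of discrete $\Gamma$-modules genuinely has enough injectives, coinduction is exact (one uses that $\cG \to \cG/\Gamma$ has a continuous section, so $M_\Gamma^\cG \cong \map^c(\cG/\Gamma, M)$ as an abelian group, and taking continuous maps from a profinite set into a discrete group is exact), and the right adjoint to the exact restriction functor sends injectives to injectives. That much is a cleaner and more structural version of what the paper does by citing Serre \S I.2.5 directly.

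The gap is in the profinite case. The category of profinite $\ZZ_p[[\Gamma]]$-modules does \emph{not} have enough injectives, so "choosing an injective resolution $M \to I^\bullet$" is not available there, and \cite{SymWei} cannot be invoked for it --- Symonds--Weigel compute continuous cohomology of profinite modules via \emph{projective} resolutions of $\ZZ_p$ (their Sections 3.1--3.2, as the paper itself recalls). You therefore need a separate argument for profinite $M$. The paper's route is to write $M = \lim M_i$ with $M_i$ finite, observe that coinduction commutes with this inverse limit ($\map_\Gamma(\cG, M) \cong \lim \map_\Gamma(\cG, M_i)$), and reduce to the finite discrete case, where $\lim^1$ issues vanish because everything is finite. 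Alternatively, you could dualize your argument: take a projective resolution $P_\bullet \to \ZZ_p$ of $\ZZ_p$ over $\ZZ_p[[\cG]]$, note that it restricts to a projective resolution over $\ZZ_p[[\Gamma]]$ because $\ZZ_p[[\cG]]$ is free over $\ZZ_p[[\Gamma]]$ (again using the continuous section of $\cG \to \cG/\Gamma$), and apply the adjunction $\Hom_\cG(P_s, M_\Gamma^\cG) \cong \Hom_\Gamma(P_s, M)$ termwise. Your own "alternative" cochain-level map is in fact the safer primary proof, since it sidesteps the existence of injectives entirely and works uniformly in both cases; I'd promote it rather than demote it.
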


\begin{proof} The discrete case is covered in \S I.2.5 of \cite{CohoGal}. The profinite case follows from the discrete case and the
fact that if $M = \lim M_i$, then
\[
\map_\Gamma(\cG,M) \cong \lim \map_\Gamma(\cG,M_i).\qedhere
\]
\end{proof}

If $M$ is a continuous $\cG$-module, then there is a map
\[
\eta_M:M \to M_\Gamma^\cG = \map_\Gamma(\cG,M)
\]
adjoint to the identity $M \to M$ regarded as a $\Gamma$-module map. The induced map
\[
\res = \res_\Gamma^\cG:H^\ast(\cG,M) \longr H^\ast(\cG,M_\Gamma^\cG) \cong H^\ast(\Gamma,M)
\]
is the restriction map. 

Now we specialize to the case where $\Gamma \subseteq \cG$ is open and, hence, closed and of finite index. 
There is a $\cG$-equivariant averaging map
\begin{align*}
M_\Gamma^\cG =& \map_\Gamma(\cG,M) \longr \map_{\cG}(\cG,M) \cong M\\
\varphi(-) &\longmapsto \sum_{g\Gamma \in \cG/\Gamma} g\varphi(g^{-1}-).
\end{align*}
The induced map
\begin{equation}\label{eq:coh-trans-def}
\tr = \tr_{\Gamma}^\cG: H^\ast(\Gamma,M) \cong H^\ast(\cG,M_\Gamma^\cG) \longr H^\ast(\cG,M)
\end{equation}
is the {\it transfer} map. The next result follows from the definitions. 

\begin{lem}\label{lem:res-trans-natural} We have the following naturality statements for restriction and transfer. 

(1) Let $f:\cG_1 \to \cG_2$ be a continuous homomorphism of compact $p$-adic analytic groups. Suppose
$\Gamma_i \subseteq \cG_i$ is a closed subgroups and suppose $f(\Gamma_1) \subseteq \Gamma_2$. Let $M$ be a
$\cG_2$-module. Then the following diagram commutes.
\[
\xymatrix{
H^\ast(\cG_2,M) \ar[r]^-{\res} \ar[d]_{f^\ast}& H^\ast(\Gamma_2,M) \ar[d]^{f^\ast}\\
H^\ast(\cG_1,f_\ast M) \ar[r]_-{\res} & H^\ast(\Gamma_1,f_\ast M). 
}
\]

(2) Suppose further that $\Gamma_2$ is of finite index in $\cG_2$ and $f$ induces an isomorphism
$\cG_1/\Gamma_1 \cong \cG_2/\Gamma_2$. Then the following diagram commutes.
\[
\xymatrix{
H^\ast(\Gamma_2,M) \ar[r]^{\tr} \ar[d]_{f^\ast}& H^\ast(\cG_2,M) \ar[d]^{f^\ast}\\
H^\ast(\Gamma_1,f_\ast M) \ar[r]_{\tr} & H^\ast(\cG_1,f_\ast M). 
}
\]
\end{lem}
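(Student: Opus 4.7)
The plan is to verify both statements at the level of continuous cochains, following the explicit descriptions of $\res$, $\tr$, and $f^\ast$ established in the text.

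For part (1), the restriction map at the cochain level is simply restriction of functions: given $\phi \in C^s(\cG,M)$, the composite $M \xrightarrow{\eta_M} \map_\Gamma(\cG,M)$ followed by the Schapiro identification carries $\phi$ to $\phi|_{\Gamma^s}$. Since $f^\ast$ is precomposition with $f^{\times s}$, both $\res \circ f^\ast$ and $f^\ast \circ \res$ send a cochain $\phi \in C^s(\cG_2,M)$ to the cochain $\Gamma_1^s \to f_\ast M$ defined by
\[
(x_1,\ldots,x_s) \longmapsto \phi(f(x_1),\ldots,f(x_s)).
\]
This is well-defined because $f(\Gamma_1) \subseteq \Gamma_2$, so the square commutes.

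For part (2), the key is to exploit the $\cG$-equivariant description of $\tr$: it is induced by the averaging map $\alpha_\cG \colon \map_\Gamma(\cG,M) \to M$, $\varphi \mapsto \sum_{g\Gamma \in \cG/\Gamma} g\,\varphi(g^{-1})$, together with Schapiro's isomorphism $H^\ast(\cG, \map_\Gamma(\cG,M)) \cong H^\ast(\Gamma,M)$. First I would verify the naturality of Schapiro's isomorphism under the pullback $f^\ast$, which is a diagram chase analogous to part (1) applied to the $\cG$-module $\map_{\Gamma_2}(\cG_2,M)$ and the natural map $f_\ast\map_{\Gamma_2}(\cG_2,M) \to \map_{\Gamma_1}(\cG_1, f_\ast M)$ sending $\varphi \mapsto \varphi \circ f$. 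Then it remains to show that the square of $\cG_1$-modules
\[
\xymatrix{
f_\ast\map_{\Gamma_2}(\cG_2,M) \ar[r]^-{f_\ast \alpha_{\cG_2}} \ar[d] & f_\ast M \ar[d]^-{=} \\
\map_{\Gamma_1}(\cG_1, f_\ast M) \ar[r]^-{\alpha_{\cG_1}} & f_\ast M
}
\]
commutes. Here the hypothesis $\cG_1/\Gamma_1 \cong \cG_2/\Gamma_2$ becomes essential: I would choose coset representatives $g'_1,\ldots,g'_n$ of $\cG_1/\Gamma_1$ and set $g_i := f(g'_i)$, which the isomorphism guarantees are coset representatives of $\cG_2/\Gamma_2$. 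With these matched choices, tracing $\varphi \in \map_{\Gamma_2}(\cG_2,M)$ around both paths gives the sum $\sum_i f(g'_i)\,\varphi(f(g'_i)^{-1}) = \sum_i g_i\,\varphi(g_i^{-1})$, using that the $\cG_1$-action on $f_\ast M$ is by $f$.

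The main obstacle is the combinatorics of matching coset representatives and verifying that the averaging squares commute as maps of $\cG_1$-modules, not merely of abelian groups; without the coset-isomorphism hypothesis the two averaging sums would be over different index sets and the square would fail to commute in general. Once this key square is checked, the naturality of $\tr$ in part (2) follows by pasting it with the naturality of Schapiro's isomorphism from the argument used in part (1).
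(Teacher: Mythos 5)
Your proof is correct. The paper states this lemma without proof ("The next result follows from the definitions"), so you are supplying details the authors chose to omit. Your cochain-level verification is exactly the intended argument: part (1) is the direct computation that both composites send $\phi$ to $(x_1,\ldots,x_s)\mapsto\phi(f(x_1),\ldots,f(x_s))$ on $\Gamma_1^s$; part (2) decomposes the transfer as Schapiro's isomorphism followed by the $\cG$-equivariant averaging map, with the hypothesis $\cG_1/\Gamma_1 \cong \cG_2/\Gamma_2$ used precisely to match coset representatives under $f$ and reindex the two averaging sums. You also correctly flag the two points that need checking along the way: that $\varphi\mapsto\varphi\circ f$ lands in $\Gamma_1$-equivariant functions and is $\cG_1$-equivariant (both because the $\cG_1$-action on $f_\ast M$ is through $f$), and that without the coset-isomorphism hypothesis the averaging square would not commute.
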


Still assuming $\Gamma \subseteq \cG$ is open, we have that  the functor $M \mapsto M_\Gamma^\cG$ is also isomorphic
to the left adjoint to the forgetful functor. The natural map of $\Gamma$-modules 
\[
M_\Gamma^\cG = \map_\Gamma^c(\cG,M) \longr M
\]
sending $\varphi$ to $\varphi(e)$ has a natural $\Gamma$-module splitting
that extends to an isomorphism of $\cG$-modules
\begin{equation}\label{eq:induced-vs-coninduced}
\ZZ_p[[\cG]] \otimes_{\ZZ_p[[\Gamma]]} M \cong M_\Gamma^\cG.
\end{equation}
Thus we equally call $M_\Gamma^\cG$ the {\it induced} $\cG$-module.

We have a commutative diagram of $\cG$-modules
\begin{equation}\label{eq:trans-commutes}
\xymatrix{
M \ar[r]^-\psi \ar[d]_= & \ZZ_p[[\cG]] \otimes_{\Gamma} M \ar[r]^-m \ar[d]^\cong & M\ar[d]^=\\
M \ar[r]_-{\eta_M} &  M_\Gamma^\cG \ar[r]_-{\tr} & M
}
\end{equation}
where $\psi(a) = \sum_{g\Gamma \in \cG/\Gamma} g \otimes g^{-1}a$ and $m$ is induced by the action of $\cG$ on
$M$. The map $\psi$ induces the transfer map in homology
\begin{align*}
\tr_\ast \colon H_s(\cG,M)\ \longr H_s(\Gamma,M). 
\end{align*}
More generally, if $N$ is a right $\cG$-module we can use the isomorphism 
\[
N \otimes_{\ZZ_p[[\cG]]} \ZZ_p[[\cG]] \otimes_{\ZZ_p[[\Gamma]]} M \cong N \otimes_{\ZZ_p[[\Gamma]]} M
\]
and the map $\psi$ to obtain a transfer map 
\begin{align*}
\tr_\ast \colon \Tor_s^\cG(N,M) \longr \Tor_s^\Gamma(N,M). 
\end{align*}
Likewise, $m$ induces the map $\res_\ast: \Tor_s^\Gamma(N,M) \longr \Tor_s^\cG(N,M)$ arising from the ring
homomorphism $\ZZ_p[[\Gamma]] \to \ZZ_p[[\cG]]$. There is a naturality statement analogous to \Cref{lem:res-trans-natural}. 

\subsection{Frobenius Reciprocity and cohomology}  Let $M$ and $N$ be two continuous $\cG$-modules and let $M \otimes N$ be
their tensor product with the diagonal action. Then we have a cup product pairing
\[
H^m(\cG,M) \otimes H^n(\cG,N) \longr H^{m+n}(\cG,M \otimes N).
\]
If $\phi \in C^m(\cG,M)$ and $\psi\in C^n(\cG,N)$, then
\[
(\phi\cdot \psi)(x_1,\dots,x_{n+m}) = \phi(x_1,\ldots,x_m) \otimes (x_1\cdots x_m)\psi(x_{m+1},\ldots,x_n).
\]
The following formulas (1) and (2) are in \S I.2.6a of \cite{CohoGal}. Following Serre we leave them
as an exercise. 

\begin{enumerate} 
\item Restriction preserves products: for all $x \in H^m(\cG,M)$ and all $y \in H^n(\cG,N)$
\[
\res^\cG_\Gamma(x\cdot y) = \res^\cG_\Gamma(x)\cdot \res^\cG_\Gamma(y);
\]

\item Frobenius Reciprocity holds: for $x \in H^m(\Gamma,M)$ and $y \in H^n(\cG,N)$
\[
\tr^\cG_\Gamma(x)\cdot y = \tr^\cG_\Gamma(x\cdot\res^\cG_\Gamma(y))
\]
and for all $x \in H^m(\cG,M)$ and $y \in H^n(\Gamma,N)$
\[
x\cdot \tr^\cG_\Gamma(y) = \tr^\cG_\Gamma(\res^\cG_\Gamma(x)\cdot y).
\]

\item Let $\Gamma \subseteq \cG$ be a closed subgroup and $g \in \cG$. Then for all $x \in H^m(\Gamma,M)$ and
$y \in H^n(\Gamma,N)$
\[
c_g^\ast(x \cdot y) = c_g^\ast(x) \cdot c_g^\ast(y) \in H^{m+n}(\gGa,\gMa \otimes \prescript{g}{}N).
\]
Note $\gMa \otimes \prescript{g}{}N \cong \prescript{g}{}(M \otimes N)$. 
\end{enumerate}

\subsection{Conjugation actions} 

As above, let $c_g(x) = g^{-1}xg$ denote conjugation by $g \in \cG$.  Let $M$ be a $\cG$-bimodule;
that is, $M$ is both a left and right $\cG$-module and the two actions  commute. Our main example is
$M = \ZZ_p[[\cG]]$ of \eqref{eq:grp-ring-def}. As $M$ is a left $\cG$-module, we have a map
$c_g^\ast:H^\ast(\cG,M) \to H^\ast(\cG,\gMa)$. But the right multiplication of $\cG$ on $M$ defines a map
$r_g^\ast :H^\ast(\cG,M) \to H^\ast(\cG,M)$. We relate these two maps. 

If  $\phi:\cG^{s} \to M$ is a continuous cochain, define
\[
\chi_g(\phi):\cG^s \to M
\]
by
\begin{equation}\label{eq:conj-in-cochains}
\chi_g(\phi)(x_1,\ldots,x_s) = g^{-1}\phi(c_g(x_1),\ldots,c_g(x_n))g.
\end{equation}
We check that this induces a map on cochain complexes and hence a map
\[
\chi_g^\ast:H^\ast(\cG,M) \to H^\ast(\cG,M).
\]
Note we have $\chi_{gh}^\ast = \chi_h^\ast \circ \chi_g^\ast$, so the assignment $g \mapsto \chi_g^\ast$ defines a right
action on $H^\ast(\cG,M)$. The following result says the action is very simple.

\begin{prop}\label{prop:conjugation-is-right} The map $\chi_g^\ast : H^\ast(\cG,M) \to H^\ast(\cG,M)$ is given
by the right action of $g\in \cG$ on $M$.
\end{prop}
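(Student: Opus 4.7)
The plan is to show that $\chi_g^\ast$ agrees with the map $r_g^\ast$ induced by the right-multiplication homomorphism $r_g \colon M \to M$, $m \mapsto mg$. This is a morphism of left $\cG$-modules, since the two sides of the bimodule structure on $M$ commute: $r_g(xm) = (xm)g = x(mg) = x\cdot r_g(m)$. Hence $r_g^\ast$ is a well-defined endomorphism of $H^\ast(\cG, M)$, and by construction the assignment $g \mapsto r_g^\ast$ is precisely the right action of $\cG$ on $H^\ast(\cG,M)$ coming from the bimodule structure. The proposition therefore reduces to the cohomological identity $\chi_g^\ast = r_g^\ast$.

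To establish this identity I would exhibit an explicit cochain homotopy $h \colon C^s(\cG, M) \to C^{s-1}(\cG, M)$ satisfying
\[
\partial h + h\partial = \chi_g - r_g,
\]
modeled on the classical cochain homotopy that shows inner automorphisms act trivially on group cohomology (cf.\ Brown, \emph{Cohomology of Groups}, III.8.1). The natural candidate inserts a $g$ in each slot and conjugates the output by $g^{-1}(-)g$:
\[
(h\phi)(x_1,\ldots,x_{s-1}) = \sum_{i=0}^{s-1} (-1)^{i}\, g^{-1}\, \phi\bigl(c_g(x_1),\ldots,c_g(x_i),\, g,\, x_{i+1},\ldots,x_{s-1}\bigr)\, g.
\]
The point of the twist by $g^{-1}(-)g$ on the output is precisely that it converts Brown's classical conclusion ``inner conjugation acts as the identity'' (valid after identifying $\gMa \cong M$ for a left $\cG$-module) into the correct statement for a bimodule, namely that inner conjugation acts as right multiplication by~$g$.

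The verification that $\partial h + h\partial = \chi_g - r_g$ is then a direct but tedious computation. Expand each side using the coface formulas for $\partial$. The interior terms telescope pairwise, relying on the fact that $c_g$ is a group homomorphism so that $c_g(x_i x_{i+1}) = c_g(x_i)c_g(x_{i+1})$ collapses adjacent insertions. The $i = 0$ endpoint of $\partial h\phi$ produces $\chi_g\phi$: the leading $x_1$ from $\partial$ meets the inserted $g$ at the first slot, and the conjugating $g^{-1}(-)g$ delivers exactly the defining formula for $\chi_g$. Dually, the last coface of $h\partial\phi$ (corresponding to dropping the final entry) leaves the inserted $g$ exposed on the right of $\phi$, producing $-r_g\phi$ after the $g^{-1}$ prefix is absorbed. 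The main obstacle is purely bookkeeping --- signs, bimodule actions, and the interaction between $c_g$-conjugation and the inserted $g$'s all have to align --- but nothing conceptual beyond the classical inner-automorphism argument is needed.
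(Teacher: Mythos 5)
Your approach is essentially identical to the paper's: both construct an explicit cochain homotopy, summing over insertion positions, to show $\chi_g - r_g$ is null-homotopic. Your formula is a mirror image of the paper's (they insert $g^{-1}$, conjugate the \emph{trailing} slots, and right-multiply by $g$; you insert $g$, conjugate the \emph{leading} slots, and sandwich with $g^{-1}(-)g$), and a direct check (e.g.\ at $s=1$) shows it satisfies $\partial h + h\partial = r_g - \chi_g$, the negative of what you wrote --- a harmless overall sign. One small correction to your narrative: both the $\chi_g\phi$ and $r_g\phi$ terms arise from $h(\partial\phi)$, with $r_g\phi$ coming from the $i=0$ insertion hit by $d^0$ and $-\chi_g\phi$ from the $i=s$ insertion hit by $d^{s+1}$; the summand $\partial(h\phi)$ contributes only cancelling cross-terms rather than one of the two boundary pieces.
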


\begin{proof} The right action of $\cG$ on $H^\ast(\cG,M)$ is given by multiplication of $\cG$ on the right on
$M$; on cochains we have
\[
r_g(\phi)(x_1,\ldots,x_s) = \phi(x_1,\ldots,x_s)g.
\]

We construct a cochain homotopy $T_g:C^{s+1}(\cG,M) \to C^{s}(\cG,M)$
with
\[
\partial T_g + T_g \partial = \chi_g - r_g.
\]
The result will follow.

Define $T_g^i:C^{s+1}(\cG,\ZZ_p[[\cG]]) \to C^{s}(\cG,\ZZ_p[[\cG]])$, $0 \leq i \leq s$,
by
\[
T_g^i(\phi)(x_1,\ldots,x_s) = \phi(x_1,\ldots,x_{i},g^{-1},c_g(x_{i+1}),\ldots,c_g(x_s))g.
\]
Thus $g^{-1}$ is in the $(i+1)$st-slot. Note
\begin{align*}
T_g^0(d^0\phi)(x_1,\ldots,x_s) &= \chi_g(\phi)(x_1,\ldots,x_s)
\end{align*}
and
\begin{align*}
T_g^s(d^{s+1}\phi)(x_1,\ldots,x_s) &= r_g(\phi)(x_1,\ldots,x_s).
\end{align*}
and also
\[
T_g^j d^i = 
\begin{cases}
d^iT_g^{j-1},& i < j;\\
T_g^{i-1}d^i,& i=j \ne 0;\\
d^{i-1}T_g^j,& i > j+1.
\end{cases} 
\]
Then we set
\[
T_g = \sum_{i=0}^s (-1)^i T_g^i:C^{s+1}(\cG,M) \to C^{s}(\cG,M).\qedhere
\]
\end{proof} 

\begin{rem} In particular if $M$ is a trivial module, we can make it a bimodule by giving it the trivial right module structure.
Then we get the standard argument that conjugation of $g$ induces  the trivial action on $H^\ast(\cG,M)$.
\end{rem}

If $M$ is bimodule then we get an isomorphism $M \cong \gMa$ of left $\cG$-modules that sends $a$ to $gag^{-1}$. 
This gives a commutative diagram
\begin{equation}\label{eq:all-conj-all-time}
\xymatrix@R=10pt{
&H^\ast(\cG,M) \ar[dd]^\cong\\
H^\ast(\cG, M) \ar[ur]^{\chi_g^\ast=r_g^\ast}  \ar[dr]_{c_g^\ast}\\
&H^\ast(\cG,\gMa).
}
\end{equation}
Thus all possible ways of defining the conjugation action determine each other. 

\subsection{Dualizing modules and duality} 

We review the concepts of Poincar\'e duality, relying especially on Section 4.4. of \cite{SymWei}. Our emphasis
will be on $p$-adic analytic groups. 

\begin{defn}\label{def:PD-groups} Let $\cG$ be a profinite group. Then we have the following concepts.

\begin{enumerate}

\item The group $\cG$ has {\it finite cohomological $p$-dimension} $cd_p(\cG)$ if there is some integer $m$ so that for
all $p$-torsion $\cG$-modules $M$ and all $s > m$
\[
H^s(\cG,M) = 0.
\]
Then $cd_p(\cG) = n$ if $n$ is minimal among the  integers $m$ for which this condition holds. 

\item The group $\cG$ is of {\it type $p$-$\FP$} if the trivial $\cG$-module $\ZZ_p$ has a finite resolutions
\[
0 \longr P_k \longr \cdots \longr P_1 \longr P_0 \longr \ZZ_p
\]
where each $P_i$ is a finite direct sum of copies of $\ZZ_p[[\cG]]$. Call $k$ the {\it length} of the resolution.

\item The group $\cG$ is a {\it Poincar\'e duality group} of dimension $n$ if $cd_p(\cG) = n$, $\cG$ is of type $p$-$\FP$ and 
\[
H^s(\cG,\ZZ_p[[\cG]]) \cong
\begin{cases}
\ZZ_p,&s=n;\\
0,&s \ne n.
\end{cases}
\]
\end{enumerate} 
\end{defn}

The following is a consequence of Proposition 4.1.1 of \cite{SymWei}.
\begin{lem}\label{lem:length-precise} Let $\cG$ be a Poincar\'e duality group of dimension $n$. Then
\begin{enumerate}

\item $H^s(\cG,M) = 0$ for $s > n$ for all $p$-profinite $\cG$-modules $M$; and

\item the trivial $\cG$-module $\ZZ_p$ has a resolution as in \Cref{def:PD-groups}.2 of exactly length $n$. 

\end{enumerate}
\end{lem}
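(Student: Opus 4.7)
The plan is to establish (1) first, since (2) will then follow by a dimension-shifting argument applied to the already-available finite $p$-$\FP$ resolution. Since $\cG$ is of type $p$-$\FP$, I fix any finite free resolution
\[
0 \to P_k \to P_{k-1} \to \cdots \to P_0 \to \ZZ_p \to 0
\]
of some length $k \geq n$ by finitely generated free $\ZZ_p[[\cG]]$-modules $P_j \cong \ZZ_p[[\cG]]^{r_j}$. For any continuous $\cG$-module $M$, the complex $\Hom_\cG(P_\bullet, M)$ computes $H^\ast(\cG, M)$, and its $j$-th term is canonically $M^{r_j}$.

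For (1), write a $p$-profinite $M$ as $M = \lim_i M_i$ with each $M_i$ finite $p$-primary and transition maps surjective. Then $\Hom_\cG(P_j, M) = \lim_i M_i^{r_j}$, so the cochain complex $\Hom_\cG(P_\bullet, M)$ is the inverse limit of the complexes $\Hom_\cG(P_\bullet, M_i)$, each a finite complex of finite abelian groups. Their cohomology groups $H^s(\cG, M_i)$ are therefore finite, so Mittag-Leffler holds and the associated $\lim^1$ term vanishes, giving
\[
H^s(\cG, M) \cong \lim_i H^s(\cG, M_i).
\]
Each $M_i$ is $p$-torsion and $cd_p(\cG) = n$, so each $H^s(\cG, M_i) = 0$ for $s > n$, proving (1).

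For (2), assume $k > n$ (otherwise the resolution already has the required length) and set $K_j = \ker(P_{j-1} \to P_{j-2})$, so that $K_{n+1}$ is the kernel of a map between finitely generated free $\ZZ_p[[\cG]]$-modules and is therefore $p$-profinite. Dimension-shifting along the partial resolution $0 \to K_n \to P_{n-1} \to \cdots \to P_0 \to \ZZ_p \to 0$ identifies $\Ext^1_\cG(K_n, N) \cong H^{n+1}(\cG, N)$ for any $\cG$-module $N$, and setting $N = K_{n+1}$ together with (1) yields $\Ext^1_\cG(K_n, K_{n+1}) = 0$. Hence the sequence $0 \to K_{n+1} \to P_n \to K_n \to 0$ splits and $K_n$ is a finitely generated projective $\ZZ_p[[\cG]]$-module. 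To upgrade projective to free---as required by the literal form of \Cref{def:PD-groups}---I appeal to the $K_0$-identity $[K_n] = \sum_{j=n}^k (-1)^{j-n}[P_j]$ in $K_0(\ZZ_p[[\cG]])$, derived from the exact sequence $0 \to P_k \to \cdots \to P_n \to K_n \to 0$; this shows $K_n$ is stably free, so $K_n \oplus \ZZ_p[[\cG]]^a \cong \ZZ_p[[\cG]]^b$ for some $a, b$, and splicing in the contractible complex $\ZZ_p[[\cG]]^a \xrightarrow{\mathrm{id}} \ZZ_p[[\cG]]^a$ at positions $n$ and $n-1$ produces the desired length-$n$ free resolution.

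The main obstacle is Step 1. The definition of $cd_p(\cG) = n$ covers only $p$-torsion coefficients, and the nontrivial task is to commute cohomology with an inverse limit of such modules; finiteness of each $M_i$ is precisely what supplies the Mittag-Leffler condition needed for this commutation. Once (1) is in hand, the dimension-shift and the stably-free upgrade in (2) are homological bookkeeping.
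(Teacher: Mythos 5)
Your proof is correct. The paper offers no argument for this lemma---it records it as a consequence of Proposition~4.1.1 of Symonds--Weigel---so there is no internal proof to measure yours against; what you have written is the standard argument filled in. For (1) you compute $H^\ast(\cG,M)$ from a fixed finite free resolution, pass to the inverse limit over finite $p$-torsion quotients $M_i$, and use finiteness of the terms (hence Mittag--Leffler, hence vanishing of $\lim^1$) to commute cohomology with the limit; the hypothesis $cd_p(\cG)=n$ then kills $H^s(\cG,M_i)$ for $s>n$. For (2) you dimension-shift to identify $\Ext^1_\cG(K_n,K_{n+1})$ with $H^{n+1}(\cG,K_{n+1})$, observe $K_{n+1}$ is a closed submodule of a free module and hence $p$-profinite so that (1) applies, split $0\to K_{n+1}\to P_n\to K_n\to 0$, conclude $K_n$ is projective, upgrade to stably free via the Euler characteristic identity in $K_0(\ZZ_p[[\cG]])$, and splice. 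All of this is sound.

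One small loose end worth a sentence: the integer $m=\sum_{j\ge n}(-1)^{j-n}r_j$ satisfying $[K_n]=m[\ZZ_p[[\cG]]]$ must be nonnegative for "stably free" to make literal sense and for the spliced complex to have leftmost term $\ZZ_p[[\cG]]^{m+a}\ne 0$, i.e., to have length exactly $n$ rather than less. In fact $m>0$: if $K_n=0$ then dropping the tail of the resolution exhibits $\ZZ_p$ with a free resolution of length $<n$, forcing $H^n(\cG,-)\equiv 0$ on $p$-torsion modules and contradicting $cd_p(\cG)=n$; since $K_n$ is a nonzero finitely generated projective, its rank $m$ (via the augmentation $\ZZ_p[[\cG]]\to\ZZ_p$) is positive. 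This is immediate from the hypothesis, but you should say it.
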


Let $\cG$ be a compact $p$-adic analytic group
of rank $d$ with an exhaustive system
\[
\cdots \subseteq \Gamma_{i+1} \subseteq \Gamma_i \subseteq \cdots \subseteq \Gamma_1 
\subseteq \cG
\]
of uniformly powerful open normal subgroups. We then have.

\begin{prop}\label{prop:when-g-pd} For any compact $p$-adic analytic group $\cG$ of rank $d$ we have
\[
H^s(\cG,\ZZ_p[[\cG]]) \cong
\begin{cases}
\ZZ_p,&s=d;\\
0,&s \ne d.
\end{cases}
\]
Furthermore, for all $i \geq 1$ (or $i \geq 2$ if $p=2$), the uniformly powerful open normal subgroup $\Gamma_i$
is a Poincar\'e duality group of dimension $d$. 
\end{prop}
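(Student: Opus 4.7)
The plan is to deduce the first claim from the second via Shapiro's lemma, and then to establish the Poincar\'e duality of a uniformly powerful $\Gamma_i$ by verifying each of the three conditions of \Cref{def:PD-groups} separately.

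First I would handle the reduction. Fix some $i$ with $\Gamma_i$ uniformly powerful. Since $\Gamma_i \subseteq \cG$ is open of finite index, the isomorphism \eqref{eq:induced-vs-coninduced} realizes $\ZZ_p[[\cG]]$ as the left $\cG$-module (co)induced from the left $\Gamma_i$-module $\ZZ_p[[\Gamma_i]]$. Applying \Cref{prop:schapiro-lemma} gives
\[
H^s(\cG, \ZZ_p[[\cG]]) \cong H^s(\Gamma_i, \ZZ_p[[\Gamma_i]]),
\]
so the first claim follows from the Poincar\'e duality statement for $\Gamma_i$.

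Two of the three PD conditions are relatively direct consequences of \Cref{thm:coh-gammai}. The isomorphism $H^*(\Gamma_i,\FF_p) \cong \Lambda(V_i)$ shows that mod-$p$ cohomology vanishes above degree $d$ and is one-dimensional in degree $d$; a standard dimension-shifting argument, using that every $p$-profinite module is an inverse limit of finite $p$-torsion modules whose cohomology is controlled by $\FF_p$-cohomology through iterated Bocksteins, promotes this to $cd_p(\Gamma_i)=d$. Similarly, the finite-dimensionality of $H_*(\Gamma_i,\FF_p)$ in each degree and its vanishing above degree $d$ allow one to build a minimal free resolution of $\ZZ_p$ over $\ZZ_p[[\Gamma_i]]$ of length exactly $d$ with finite-rank terms, exhibiting $\Gamma_i$ as of type $p$-$\FP$.

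The remaining condition, namely the computation of $H^*(\Gamma_i,\ZZ_p[[\Gamma_i]])$, is the main obstacle, and here I would proceed via the tower of finite quotients. Writing $\ZZ_p[[\Gamma_i]] = \lim_j \ZZ_p[\Gamma_i/\Gamma_{i+j}]$ and applying Shapiro's lemma term-by-term gives
\[
H^s(\Gamma_i,\ZZ_p[\Gamma_i/\Gamma_{i+j}]) \cong H^s(\Gamma_{i+j},\ZZ_p),
\]
with tower maps induced by the transfer $\tr\colon H^s(\Gamma_{i+j+1},\ZZ_p) \to H^s(\Gamma_{i+j},\ZZ_p)$; the type-$p$-$\FP$ property ensures the natural map $H^s(\Gamma_i,\ZZ_p[[\Gamma_i]]) \to \lim_j H^s(\Gamma_{i+j},\ZZ_p)$ is an isomorphism. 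The delicate point is the asymptotic behavior of this tower: in degree $0$ the transfer is multiplication by the index $p^d$, forcing the limit to vanish; for $0<s<d$ one analyzes the Lyndon-Hochschild-Serre spectral sequences for the extensions $1\to\Gamma_{i+j+1}\to\Gamma_{i+j}\to(\ZZ/p)^d\to 1$ and uses the vanishing of restriction from \Cref{thm:coh-gammai}(2), together with Frobenius reciprocity, to show that the tower of transfers is pro-trivial; and in top degree $s=d$ one must lift the $\FF_p$-fundamental classes compatibly through the integer Bocksteins to $\ZZ_p$-generators and check that transfers preserve them, so that the limit is $\ZZ_p$. The top-degree compatibility is where the subtlest work lies; it is essentially the content of the computation underlying Theorem 4.4.1 of \cite{SymWei}.
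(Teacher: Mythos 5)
Your reduction of the first claim to the Poincar\'e duality of $\Gamma_i$ via Shapiro's Lemma and \eqref{eq:induced-vs-coninduced} is exactly the paper's argument. Where you diverge is in then attempting to prove from first principles that a uniformly powerful pro-$p$ group of rank $d$ is a Poincar\'e duality group of dimension $d$: the paper does not reprove this, it cites the theorem attributed to Serre (Proposition I.4.5 of \cite{CohoGal}) and Lazard (Th\'eor\`eme V.\,2.5.8 of \cite{lazard}).

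Your sketch of that theorem has a genuine gap. The $cd_p$ and type $p$-$\FP$ steps are plausible, and the identification of the tower maps as transfers under Shapiro is correct, as is the degree-$0$ vanishing. The problem is the range $0<s<d$: you propose to show the transfer tower is pro-trivial by combining \Cref{thm:coh-gammai}(2) (restriction vanishes) with Frobenius reciprocity. But the only route in the paper that converts a transfer statement into a restriction statement — \Cref{prop:frob-in-all-glory}, which is what underlies \Cref{lem:transfer-hom} — already presupposes that $\Gamma_i$ is a Poincar\'e duality group, so invoking it here is circular. A direct argument that $\tr\colon H^s(\Gamma_{i+j+1},\ZZ_p)\to H^s(\Gamma_{i+j},\ZZ_p)$ is pro-zero for $0<s<d$ needs to be supplied by other means, and a vague appeal to the Lyndon--Hochschild--Serre spectral sequence does not do it. You acknowledge yourself that the top-degree compatibility of fundamental classes is ``essentially the content'' of the results in \cite{SymWei}, at which point you are citing the same literature anyway — so the honest and cleanest fix is to do what the paper does and cite Serre and Lazard for the second claim rather than reprove it.
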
 

\begin{proof} It is a theorem, going back to Serre (Proposition I.4.5 of \cite{CohoGal}) and Lazard
(Th\'eor\`eme V. 2.5.8 of \cite{lazard})  
that $\Gamma_i$ is a Poincar\'e duality group
of dimension $d$. The statement about $H^s(\cG,\ZZ_p[[\cG]])$ follows from Schapiro's Lemma \Cref{prop:schapiro-lemma}
and that fact if $\Gamma \subseteq \cG$ is of finite index then $\ZZ_p[[\Gamma]]_{\Gamma}^{\cG} \cong \ZZ_p[[\cG]]$. 
See \eqref{eq:induced-vs-coninduced}. 
\end{proof}

The group ring $\ZZ_p[[\cG]]$ is a $\cG$-bimodule, using the left
and right actions of $\cG$ on itself. 

\begin{rem}\label{rem:when-finite-coh-bis} Our main examples of compact $p$-adic analytic groups, such as $\Gl_n(\ZZ_p)$ or
$\GG_n$, need not be of finite cohomological dimension. 
\end{rem}

In calculating $H^s(\cG,\ZZ_p[[\cG]])$ we use the left action of $\cG$ on the group ring.
This continuous cohomology group retains a residual action on the right by $\cG$.

\begin{defn}\label{defn:dualizng-mod} Let $\cG$ be a compact $p$-adic analytic group of rank $d$. Then the {\it compact
dualizing module} $D_p(\cG)$ of $\cG$ is the right-$\cG$ module
\[
D_p(\cG) = H^d(\cG,\ZZ_p[[\cG]]).
\]
\end{defn}

The following version of Poincar\'e Duality can be found as Proposition 4.5.1 of \cite{SymWei}. 

\begin{thm}\label{thm:pd-forg} Let $\cG$ be a compact $p$-adic analytic group of rank $d$ 
with dualizing module $D_p(\cG)$. Suppose further that $\cG$ is a Poincar\'e duality group.
Then there is a natural homomorphism
\begin{equation*}\label{eq:pdf-two}
\Tor_s^\cG(D_p(\cG),M) \longr  H^{d-s}(\cG,M).
\end{equation*}
This homomorphism is an isomorphism if $M$ is either a $p$-profinite module or a discrete $p$-torsion module.
\end{thm}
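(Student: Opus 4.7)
The plan is to deduce the duality from a finite projective resolution of the trivial module together with a tensor--hom identification, which is the standard approach for Poincar\'e duality of (pro)finite groups adapted to the continuous setting.

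First, by \Cref{lem:length-precise}(2) I would fix a resolution
\[
0 \longr P_d \longr \cdots \longr P_1 \longr P_0 \longr \ZZ_p
\]
of the trivial module by finitely generated free left $\ZZ_p[[\cG]]$-modules, and set $P_i^\vee = \Hom_\cG(P_i, \ZZ_p[[\cG]])$, regarded as a right $\ZZ_p[[\cG]]$-module via right multiplication on $\ZZ_p[[\cG]]$. Because each $P_i$ is finitely generated free as a left module, each $P_i^\vee$ is finitely generated free as a right module. The cohomology of the cochain complex $P_\bullet^\vee$ computes $\Ext^\ast_\cG(\ZZ_p, \ZZ_p[[\cG]]) = H^\ast(\cG, \ZZ_p[[\cG]])$, which by \Cref{prop:when-g-pd} is concentrated in degree $d$ where it equals $D_p(\cG)$. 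Reindexing by $Q_i = P_{d-i}^\vee$ therefore yields a length $d$ projective resolution $Q_\bullet \to D_p(\cG)$ of $D_p(\cG)$ by finitely generated free right $\ZZ_p[[\cG]]$-modules.

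Second, I would compute $\Tor_s^\cG(D_p(\cG), M)$ using this resolution and match the answer against $H^\ast(\cG, M)$ computed from $\Hom_\cG(P_\bullet, M)$. The bridge is the natural transformation
\[
\Hom_\cG(P, \ZZ_p[[\cG]]) \,\widehat{\otimes}_\cG\, M \longr \Hom_\cG(P, M), \qquad \varphi \otimes m \longmapsto \bigl(x \mapsto \varphi(x)\cdot m\bigr),
\]
and I would verify it is an isomorphism for any finitely generated free left $\cG$-module $P$ and any $p$-profinite or discrete $p$-torsion $M$. For $P = \ZZ_p[[\cG]]$ this reduces to the unit isomorphism $\ZZ_p[[\cG]] \,\widehat{\otimes}_\cG\, M \cong M$, and finite direct sums handle the general case. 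Applying this termwise identifies the chain complex $Q_\bullet \,\widehat{\otimes}_\cG\, M$ with $\Hom_\cG(P_{d-\bullet}, M)$, so their (co)homologies coincide, yielding the natural isomorphism $\Tor_s^\cG(D_p(\cG), M) \cong H^{d-s}(\cG, M)$.

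The main obstacle, as always for continuous duality arguments, is verifying the tensor--hom comparison map rigorously in the profinite/completed setting. This is precisely where the hypothesis on $M$ is used: a $p$-profinite $M$ is expressed as $\lim M_\alpha$ of finite $\cG$-modules, and one checks that the completed tensor product and the continuous Hom both behave well with respect to this limit; the discrete $p$-torsion case is handled dually by writing $M$ as a filtered colimit of finite submodules. Naturality in $M$ of the whole construction is manifest from the formula, delivering the naturality statement in the theorem.
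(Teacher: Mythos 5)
Your argument is correct, but it takes a genuinely different route from the paper's. You build the duality isomorphism \emph{constructively}: dualize the finite free resolution $P_\bullet \to \ZZ_p$ via $\Hom_\cG(-,\ZZ_p[[\cG]])$, use \Cref{prop:when-g-pd} to see that the resulting cochain complex has cohomology concentrated in degree $d$ so that (after reindexing) it is a finite free resolution $Q_\bullet \to D_p(\cG)$ of right modules, and then identify $Q_\bullet \,\widehat\otimes_\cG\, M$ with $\Hom_\cG(P_{d-\bullet},M)$ termwise by the tensor--hom lemma for finitely generated free modules. The paper instead works abstractly at the level of derived functors: it defines the pairing only in degree $s=0$ as $D_p(\cG)\otimes_\cG M \to H^d(\cG,M)$, observes that $H^d(\cG,-)$ is a right exact functor whose $s$th left derived functor is $H^{d-s}(\cG,-)$ (using \Cref{lem:length-precise}), invokes the universal property of left derived functors to propagate the natural transformation to all $s$, and then proves the isomorphism by first verifying it on $M=\ZZ_p[[\cG]]$ and deducing the profinite and torsion cases from there. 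Your approach is more explicit and never needs the detour through $M=\ZZ_p[[\cG]]$, but it does silently rely on $\Tor$ being balanced in the completed setting (so that it can be computed from a projective resolution of the first variable rather than of $\ZZ_p$); the paper's argument avoids that question entirely by working on the cohomology side throughout. You should also make the limit/colimit verification at the end explicit rather than gestured at, since in the completed tensor setting it is precisely the point where the hypotheses on $M$ are used.
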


\begin{proof} The paper \cite{SymWei} covers a great deal of ground and it takes a while to pull together
the proof of \Cref{thm:pd-forg} from that source. Here is a summary.

For any continuous left $\cG$-module $P$ let $\Hom_\cG(P,\ZZ_p[[\cG]])$ be the module of left $\cG$-module maps.
This is a right $\cG$-module using the right $\cG$-module structure on $\ZZ_p[[\cG]]$. Choose a projective resolution 
$P_\bullet \to \ZZ_p$ of the trivial $\ZZ_p[[\cG]]$-module $\ZZ_p$. Let $M$ be a continuous left $\cG$-module. 
We then we have a pairing
\[
\Hom_\cG(P_s,\ZZ_p[[\cG]]) \otimes_\cG M \longr \Hom_\cG(P_s,M)
\]
sending $\phi \otimes a$ to the function $x \mapsto \phi(x)a$. This passes to an induced pairing 
\[
D_p(\cG) \otimes_\cG M = H^d(\cG,\ZZ_p[[\cG]]) \otimes_\cG M \longr H^d(\cG,M). 
\] 
By \Cref{lem:length-precise} $\cG$ has the property that $H^s(\cG,M) = 0$ for $s > d$ for all $p$-profinite module $M$, we find
this is a natural transformation between right exact functors in $M$. Thus we get a natural transformation of left derived functors
and, since the $s$th left derived functor of $H^d(\cG,-)$ is $H^{d-s}(\cG,-)$ we have the homomorphism
\begin{equation}\label{eq:pdf-one-pre}
\xymatrix{
\Tor_s^{\cG}(D_p(\cG),M) \ar[r] & H^{d-s}(\cG,M).
}
\end{equation}
If $M \cong \ZZ_p[[\cG]]$ this map is an isomorphism when $s=0$ and both source
and target of this homomorphism  vanish if $s\ne 0$. From this we deduce that \eqref{eq:pdf-one-pre}
is an isomorphism when $M$ is $p$-profinite. In particular, it is true when $M$ is finite.
Since any discrete $p$-torsion module is the colimit of its finite submodules, we have the result in that case as well. 
\end{proof}

\begin{rem} In \cite{SymWei} the authors define $D_p(\cG)$ to be the {\it left} $\cG$-module obtained from
$H^d(\cG,\ZZ_p[[\cG]])$ by $g\cdot x = xg^{-1}$. This allows them to write the Poincar\'e Duality isomorphism of \Cref{thm:pd-forg} 
using homology. If we write $D_p(\cG)^\ell$ for this left module structure, then we have, under the hypotheses
of \Cref{thm:pd-forg}
\[
H_s(\cG,D_p(\cG)^\ell \otimes M) \cong \Tor_s^\cG(D_p(\cG),M) \cong H^{d-s}(\cG,M).
\]
There are times when this point of view is convenient. See the proof of \Cref{prop:dual-via-trans-alg}.
\end{rem}

\begin{rem}\label{rem:orientable}  The Poincar\'e duality group $\cG$ is called {\it orientable} if the action on
$D_p(\cG)$ is trivial. Then \cref{thm:pd-forg} implies
\[
H_{s}(\cG,M) \cong  H^{d-s}(\cG,M)
\]
and $H_d(\cG,\ZZ_p) \cong \ZZ_p$. 
\end{rem}

\begin{rem}\label{rem:when-finite-coh} Our main examples are orientable, using the following argument adapted from
the proof of Proposition 5 of \cite{StrickGH}.

By Corollary 5.2.5 of \cite{SymWei} (see \Cref{prop:ipg-is-top-ext} for more
explanation) there is an isomorphism of right $\cG$-modules
\[
D_p(\cG) \cong \Lambda^d \mathfrak{g}^\ast
\]
where $ \Lambda^d \mathfrak{g}^\ast$ is the top exterior power of the dual of the adjoint representation.
In the case where $\cG = \Gl_n(\ZZ_p)$ or $\GG_n$, the group $\cG$ is the group of units in a sub-algebra $A_0$ of a
complete normed $\QQ_p$-algebra and $\gg$ is $A_0$ with the conjugation action of $\cG$. See \Cref{exmp:exam1} and
\Cref{exmp:exam2}. If $f:\gg^\ast \to \gg^\ast$ is any linear transformation,
then $\Lambda^df$ is multiplication by the determinant of $f$. Thus if $g \in \cG$ acts by conjugation on $\gg = A_0$,
it must act trivially $\Lambda^d \gg^\ast$. 
\end{rem}

There is a variant of Poincar\'e Duality which 
is closer to the Serre-Grothendieck duality of algebraic geometry. Let $\ZZ/p^\infty = \colim \ZZ/p^n$; this
is an injective abelian group.  Define the {\it discrete dualizing module} for $\cG$ as the left $\cG$-module
\begin{equation}\label{rem:dualize-the-dualize}
I_p(\cG) = \Hom(D_p(\cG),\ZZ/p^\infty). 
\end{equation}
The action of $\cG$ is given by the formula $g\phi(x) = \phi(xg)$. Evaluation gives an isomorphism\[
D_p(\cG) \otimes_\cG I_p(\cG) \cong \ZZ/p^\infty.
\]
If $H^s(\cG,-) = 0$ for $s > d$, then \Cref{thm:pd-forg} gives an isomorphism
\[
\xymatrix{
\epsilon\colon H^d(\cG,I_p(\cG)) \ar[r]^-\cong & \ZZ/p^\infty.
}
\]
If $M$ is a continuous $\cG$ module define
\[
M^\vee = \Hom^\Delta(M,I_p(\cG))
\]
to be the  abelian group of continuous homomorphisms with the left
$\cG$-action given $(g\varphi)(a) = g\varphi(g^{-1}a)$. See (4) of \Cref{not:keep-it-str}. The evaluation
map $M^\vee \otimes M \to I_p(\cG)$ and cup product give us a pairing
\begin{equation}\label{eq:pd-pairing-infty}
\xymatrix{
H^s(\cG,M^\vee) \otimes H^{d-s}(\cG,M) \ar[r] & H^d(\cG,I_p(\cG)) \ar[r]^-{\epsilon} & \ZZ/p^\infty.
}
\end{equation}
Then we have, as in \S I.3.4 of \cite{CohoGal}:

\begin{thm}\label{thm: pd-is-sg} Let $\cG$ be a Poincar\'e duality group of dimension $d$ and suppose
$M$ is either $p$-profinite or $p$-torsion. Then
\eqref{eq:pd-pairing-infty} is a perfect pairing
which identifies $H^s(\cG,M^\vee)$ with the Pontryagin dual of $H^{d-s}(\cG,M)$. 
\end{thm}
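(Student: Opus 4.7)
The plan is to prove that the pairing is perfect by dualizing the Poincaré duality isomorphism of \Cref{thm:pd-forg} and then identifying the resulting $\Ext$ with the group cohomology $H^s(\cG,M^\vee)$. First I would reinterpret the pairing as its adjoint
\[
\alpha_s \colon H^s(\cG,M^\vee) \longrightarrow \Hom(H^{d-s}(\cG,M),\ZZ/p^\infty),
\]
so the task reduces to showing $\alpha_s$ is an isomorphism for every $s$ and every $M$ as in the hypothesis.

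The central computation is to chain together three isomorphisms. By \Cref{thm:pd-forg}, $H^{d-s}(\cG,M)\cong \Tor_s^\cG(D_p(\cG),M)$. Since $\ZZ/p^\infty$ is injective as an abelian group, Pontryagin duality $(-)^\ast:=\Hom(-,\ZZ/p^\infty)$ is exact. Choosing a projective resolution $P_\bullet\to M$ by continuous $\cG$-modules, the Hom-tensor adjunction yields
\[
\Hom(D_p(\cG)\otimes_\cG P_\bullet,\ZZ/p^\infty)\;\cong\;\Hom_\cG(P_\bullet,\Hom(D_p(\cG),\ZZ/p^\infty))\;=\;\Hom_\cG(P_\bullet,I_p(\cG)),
\]
whose cohomology gives $\Hom(\Tor_s^\cG(D_p(\cG),M),\ZZ/p^\infty)\cong \Ext_\cG^s(M,I_p(\cG))$. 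The remaining step identifies this $\Ext$ group with $H^s(\cG,M^\vee)$. Using the $\Hom^\Delta$/tensor adjunction from \Cref{not:keep-it-str}(4),
\[
\Hom_\cG(A,\Hom^\Delta(M,I_p(\cG)))\;\cong\;\Hom_\cG(A\otimes M,I_p(\cG)),
\]
for every continuous $\cG$-module $A$, taking a projective resolution $P_\bullet\to\ZZ_p$ converts the left-hand side into a complex computing $H^s(\cG,M^\vee)$ and, because $I_p(\cG)$ is injective in the category of continuous $\cG$-modules (it is the Pontryagin dual of a module whose resolutions dualize to injective resolutions), the right-hand side computes $\Ext_\cG^s(M,I_p(\cG))$.

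Finally, I would verify that the composite isomorphism $H^s(\cG,M^\vee)\cong\Hom(H^{d-s}(\cG,M),\ZZ/p^\infty)$ obtained above agrees with the map $\alpha_s$. This is a matter of tracking evaluation maps through each adjunction: the defining pairing $M^\vee\otimes M\to I_p(\cG)$ is the counit of the $\Hom^\Delta$-adjunction, the cup product is induced by multiplication in the cobar complex, and the identification of $H^d(\cG,I_p(\cG))$ with $\ZZ/p^\infty$ is precisely the one built from the Poincaré duality pairing in \Cref{thm:pd-forg}. Hence the algebraic composite \emph{is} the pairing $\alpha_s$ by naturality of all three isomorphisms.

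The main obstacle is the bookkeeping for continuity and the correct interpretation of injectivity: one must justify that $I_p(\cG)$ is injective in the relevant category of continuous $\cG$-modules (reducing, by Pontryagin duality, to projectivity of the relevant resolving objects for $D_p(\cG)$), and that $\Hom$, $\Tor$, and $\Ext$ in the category of profinite or discrete $p$-torsion modules behave compatibly with the corresponding derived functors. Once these formal compatibilities are in place, the perfectness of the pairing is a direct consequence of dualizing \Cref{thm:pd-forg}, and the statement about Pontryagin duals follows because both sides are profinite (respectively discrete $p$-torsion) when $M$ is discrete $p$-torsion (respectively profinite), so Pontryagin duality identifies them exactly.
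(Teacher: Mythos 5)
The paper supplies no proof of this theorem itself; it cites Serre (\S I.3.4 of \cite{CohoGal}), whose argument proceeds by d\'evissage, dimension-shifting along a short exact sequence into an (induced) cohomologically trivial module. Your argument is genuinely different in strategy: you route the isomorphism through $\Ext^s_\cG(M,I_p(\cG))$ via the tensor--$\Hom$ and $\Hom^\Delta$ adjunctions and then dualize the complex $D_p(\cG)\otimes_\cG P_\bullet$ using exactness of $\Hom(-,\ZZ/p^\infty)$. The overall skeleton of that chain is sound, and the closing observation that tracking the evaluation/counit maps through the adjunctions identifies the composite with the pairing $\alpha_s$ is the right thing to say, even if it is only sketched.

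The gap is in the justification of $H^s(\cG,M^\vee)\cong \Ext^s_\cG(M,I_p(\cG))$. You assert that $I_p(\cG)$ is injective in the category of continuous $\cG$-modules, and this is false. Take $\cG=\ZZ_p$: then $I_p(\cG)\cong\ZZ/p^\infty$ with trivial action, and $H^1(\ZZ_p,\ZZ/p^\infty)\cong\ZZ/p^\infty\neq 0$, which is impossible if $I_p(\cG)$ is injective; more generally, any Poincar\'e duality group of dimension $d\geq 1$ has $H^d(\cG,I_p(\cG))\cong\ZZ/p^\infty\neq 0$, which is precisely the point of the dualizing module. Your parenthetical reason for the injectivity is also not coherent: $D_p(\cG)\cong\ZZ_p$ as an abelian group, so there is no nontrivial projective resolution to dualize. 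What you actually need is that the complex $P_\bullet\otimes M$ (with $P_\bullet\to\ZZ_p$ a resolution by finitely generated free $\ZZ_p[[\cG]]$-modules as in \Cref{def:PD-groups}, and the diagonal $\cG$-action on $P_i\otimes M$) is acyclic for the functor $\Hom_\cG(-,I_p(\cG))$. This is true but for a different reason: the untwisting isomorphism identifies $P_i\otimes M$ with a finite sum of copies of the module induced from the trivial subgroup, namely $\ZZ_p[[\cG]]\otimes M$ with $\cG$ acting only on the group-ring factor. Since induction from the trivial subgroup is exact and preserves projectives, $\Ext^s_\cG(\ZZ_p[[\cG]]\otimes M,N)\cong\Ext^s_{\ZZ_p}(M,N)$, and this vanishes for $s>0$ when $N=I_p(\cG)$ because $I_p(\cG)\cong\ZZ/p^\infty$ is injective as a $\ZZ_p$-module. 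With that substitution replacing the false injectivity claim, your argument goes through.
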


\subsection{Frobenius reciprocity and duality}\label{sec:Frob-resp}

Poincar\'e Duality flips transfer and restriction. To make this precise we begin with the following result.

\begin{prop}\label{rem:trans-fund} Let $\cG$ be a compact analytic group of rank $d$ and
$\Gamma \subseteq \cG$ an open subgroup. Then the compact dualizing module $D_p(\Gamma)$ for $\Gamma$
is isomorphic to the compact dualizing module $D_p(\cG)$ for $\cG$ with action restricted to $\Gamma$.
\end{prop}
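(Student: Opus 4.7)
The plan is to apply Schapiro's Lemma (Proposition~\ref{prop:schapiro-lemma}) with $M = \ZZ_p[[\Gamma]]$, treating $\ZZ_p[[\Gamma]]$ as a left $\Gamma$-module. By definition, $H^d(\Gamma,\ZZ_p[[\Gamma]]) = D_p(\Gamma)$. Since $\Gamma \subseteq \cG$ is open, it is of finite index, so by \eqref{eq:induced-vs-coninduced} the induced/coinduced module is
\[
\ZZ_p[[\Gamma]]_\Gamma^\cG \;\cong\; \ZZ_p[[\cG]] \otimes_{\ZZ_p[[\Gamma]]} \ZZ_p[[\Gamma]] \;\cong\; \ZZ_p[[\cG]]
\]
as left $\cG$-modules. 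Schapiro's Lemma then produces an isomorphism of abelian groups
\[
D_p(\Gamma) \;=\; H^d(\Gamma,\ZZ_p[[\Gamma]]) \;\cong\; H^d(\cG,\ZZ_p[[\cG]]) \;=\; D_p(\cG).
\]

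The substance of the proposition is the compatibility of this isomorphism with the right $\Gamma$-actions on both sides. The key observation is that the right multiplication action of an element $\gamma \in \Gamma$ on the bimodule $\ZZ_p[[\Gamma]]$ is a morphism of \emph{left} $\Gamma$-modules (since the left and right actions commute), so by functoriality of Schapiro's Lemma in the module variable, each such $\gamma$ induces compatible endomorphisms on both sides of the cohomology isomorphism. On the $\Gamma$-side this is by definition the right $\Gamma$-action defining $D_p(\Gamma)$. On the $\cG$-side one checks that under the isomorphism $\ZZ_p[[\cG]] \otimes_{\ZZ_p[[\Gamma]]} \ZZ_p[[\Gamma]] \cong \ZZ_p[[\cG]]$, right multiplication by $\gamma$ on the right-hand tensor factor corresponds to right multiplication by $\gamma \in \cG$ on $\ZZ_p[[\cG]]$. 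This identifies the induced endomorphism of $H^d(\cG,\ZZ_p[[\cG]])$ with the restriction to $\Gamma$ of the canonical right $\cG$-action defining $D_p(\cG)$.

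There is no real obstacle here; the argument is essentially a naturality verification, with the only point requiring attention being that Schapiro's isomorphism intertwines the two residual right $\Gamma$-actions, which follows from the bimodule structure being preserved under the shear map realizing \eqref{eq:induced-vs-coninduced}.
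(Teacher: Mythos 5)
Your proof is correct and takes essentially the same route as the paper: apply Schapiro's Lemma to $M = \ZZ_p[[\Gamma]]$, use the identification $\ZZ_p[[\Gamma]]_\Gamma^\cG \cong \ZZ_p[[\cG]]$ from \eqref{eq:induced-vs-coninduced}, and then check compatibility of the residual right $\Gamma$-actions. The paper's proof is terser (it simply asserts that the isomorphisms respect the right $\Gamma$-action), whereas you spell out the naturality argument behind that assertion; the content is the same.
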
 

\begin{proof} Since we have an isomorphism of right $\Gamma$-modules
\[
\ZZ_p[[\Gamma]]_\Gamma^\cG = \ZZ_p[[\cG]] \otimes_\Gamma \ZZ_p[[\Gamma]] \cong \ZZ_p[[\cG]]
\]
Schapiro's Lemma \Cref{prop:schapiro-lemma} gives
\[
D_p(\Gamma) = H^d(\Gamma,\ZZ_p[[\Gamma]]) \cong H^d(\cG,\ZZ_p[[\cG]]) = D_p(\cG)
\]
and these isomorphisms respect the right action by $\Gamma$. 
\end{proof} 

Note that it follows that the discrete dualizing module $I_p(\Gamma)$ for $\Gamma$ is 
isomorphic to the discrete dualizing module $I_p(\cG)$ for $\cG$ with action restricted to $\Gamma$.
This is Proposition 18 of \S I.3.5 of \cite{CohoGal}.

We now have the following result. Note that $D_p(\cG)$ has a canonical structure as $\Gamma$-module by
\Cref{rem:trans-fund}. 

\begin{prop}\label{prop:frob-in-all-glory} Let $\cG$ be compact $p$-adic analytic group of rank $d$
and suppose $H^s(\cG,-) = 0$ for $s > d$. Then we have commutative diagrams 
\begin{equation}\label{eq:trans-res-1a}
\xymatrix{
\Tor_s^\cG(D_p(\cG),M) \ar[r] \ar[d]_{\tr_\ast} & H^{d-s}(\cG,M) \ar[d]^\res\\
\Tor_s^\Gamma(D_p(\cG), M) \ar[r] & H^{d-s}(\Gamma, M).
}
\end{equation}
and
\begin{equation}\label{eq:trans-res-1b}
\xymatrix{
\Tor_s^\Gamma(D_p(\cG), M) \ar[r] \ar[d]_{\res_\ast} & H^{d-s}(\Gamma, M) \ar[d]^\tr\\
\Tor_s^\cG(D_p(\cG),M) \ar[r] & H^{d-s}(\cG, M).
}
\end{equation}
\end{prop}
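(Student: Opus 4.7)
The plan is to deduce both diagrams from naturality of the Poincar\'e duality map of \Cref{thm:pd-forg} in the variable $M$, combined with the Schapiro-type isomorphisms of \Cref{prop:schapiro-lemma} for both group cohomology and $\Tor$. Using \eqref{eq:induced-vs-coninduced} I would identify $M_\Gamma^\cG \cong \ZZ_p[[\cG]] \otimes_\Gamma M$, and the canonical identification
\[
N \otimes_\cG (\ZZ_p[[\cG]] \otimes_\Gamma M) \cong N \otimes_\Gamma M
\]
derives to a $\Tor$-Schapiro isomorphism $\Tor^\cG_s(N, M_\Gamma^\cG) \cong \Tor^\Gamma_s(N, M)$ for any right $\cG$-module $N$. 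By the very definitions recalled around \eqref{eq:coh-trans-def} and in the paragraph containing \eqref{eq:trans-commutes}, these Schapiro isomorphisms carry the $\cG$-equivariant maps $\psi\colon M \to M_\Gamma^\cG$ and $m\colon M_\Gamma^\cG \to M$ to the four relevant comparison maps: $\psi_\ast$ becomes $\res$ on cohomology and $\tr_\ast$ on $\Tor$, while $m_\ast$ becomes $\tr$ on cohomology and $\res_\ast$ on $\Tor$.

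For diagram \eqref{eq:trans-res-1a}, apply naturality of the duality map to $\psi\colon M \to M_\Gamma^\cG$, producing the commutative square
\[
\xymatrix{
\Tor^\cG_s(D_p(\cG), M) \ar[r] \ar[d]_{\psi_\ast} & H^{d-s}(\cG, M) \ar[d]^{\psi_\ast} \\
\Tor^\cG_s(D_p(\cG), M_\Gamma^\cG) \ar[r] & H^{d-s}(\cG, M_\Gamma^\cG).
}
\]
Then rewrite the bottom row under the two Schapiro isomorphisms, invoking \Cref{rem:trans-fund} to identify $D_p(\cG)|_\Gamma$ with $D_p(\Gamma)$, so the bottom arrow becomes the duality map for $\Gamma$ applied to $M$, while the vertical arrows become $\tr_\ast$ on the left and $\res$ on the right. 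This reproduces \eqref{eq:trans-res-1a}. Diagram \eqref{eq:trans-res-1b} then follows by the symmetric argument applied to $m\colon M_\Gamma^\cG \to M$ in place of $\psi$.

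The main obstacle is justifying that the bottom row of the naturality square above really coincides with the duality map for $\Gamma$ once the Schapiro identifications are made. To settle this I would fix a single projective resolution $P_\bullet \to \ZZ_p$ of the trivial $\ZZ_p[[\cG]]$-module; because $\Gamma$ has finite index in $\cG$ the ring $\ZZ_p[[\cG]]$ is free of finite rank over $\ZZ_p[[\Gamma]]$, so $P_\bullet$ is simultaneously a projective resolution over $\ZZ_p[[\Gamma]]$. Both duality maps are then induced by the same cochain pairing $\phi \otimes a \mapsto (x \mapsto \phi(x)a)$, and the two Schapiro isomorphisms arise at the chain level from the standard adjunctions $\Hom_\cG(P_s, \ZZ_p[[\cG]]) \cong \Hom_\Gamma(P_s, \ZZ_p[[\Gamma]])$ and $\Hom_\cG(P_s, M_\Gamma^\cG) \cong \Hom_\Gamma(P_s, M)$. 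With this common resolution in place, the required compatibility is a direct chain-level verification that commutes with the differentials because the underlying adjunctions are functorial.
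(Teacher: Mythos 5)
Your proposal is correct and follows essentially the same route as the paper: the core of both arguments is the use of a single projective resolution of $\ZZ_p$ over $\ZZ_p[[\cG]]$ (which, because $\Gamma$ has finite index, also serves over $\ZZ_p[[\Gamma]]$) together with the chain-level pairing $\phi \otimes a \mapsto (x \mapsto \phi(x)a)$, and your naturality-in-$M$ square combined with the Schapiro identifications is just a repackaging of the paper's chain-level commutative diagram (the paper additionally reduces to $s = 0$ for exposition, but that is not an essentially different idea).
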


\begin{proof} We give an argument only for \eqref{eq:trans-res-1a}, as that is what we will use later. The argument
for \eqref{eq:trans-res-1b} is very similar.

The map $\Tor_s^\cG(D_p(\cG),M) \to H^{d-s}(\cG,M)$ was constructed in the proof of \Cref{thm:pd-forg} by first defining it for
$s=0$ and then using naturality to extend to the higher left derived functors. Built into this construction is
the assertion that if
\[
0 \longr K \longr M \longr N \to 0
\]
is a short exact sequence of continuous $\cG$-modules then we have a commutative diagram of long exact sequences
\[
\xymatrix@C=12pt{
\Tor_{s+1}^\cG(D_p(\cG), N) \ar[d]\ar[r] & \Tor_s^\cG(D_p(\cG), K) \ar[d]\ar[r] &
\Tor_s^\cG(D_p(\cG), M) \ar[d]\ar[r] & \Tor_s^\cG(D_p(\cG), N) \ar[d]\\
H^{d-s-1}(\cG,N) \ar[r] & H^{d-s}(\cG,K) \ar[r] & H^{d-s}(\cG,M) \ar[r] & H^{d-s}(\cG,N)
}
\]
Thus if we can prove that \eqref{eq:trans-res-1a} commutes at $s=0$ then it will follow for all $s \geq 1$ as well. 

Choose a projective resolution  $P_\bullet \to \ZZ_p$ of the trivial $\ZZ_p[[\cG]]$-module $\ZZ_p$. The map
$D_p(\cG)\otimes_\cG M \to H^{d}(\cG,M)$ is defined at the chain level by the pairing
\[
\Hom_\cG(P_s,\ZZ_p[[\cG]]) \otimes_\cG M \longr \Hom_\cG(P_s,M)
\]
sending $\phi \otimes a$ to the function $x \mapsto \phi(x)a$. Let
\begin{align*}
\psi: M &\longr \ZZ_p[[\cG]] \otimes_\Gamma M\\
\psi(a) &= \sum_{g\Gamma \in \cG/\Gamma} g \otimes g^{-1}a
\end{align*}
be the map of \eqref{eq:trans-commutes} used to define $\tr_\ast$.
Since $\ZZ_p[[\cG]] \cong  \big(\ZZ_p[[\Gamma]]\big)_\Gamma^\cG$ we have a commutative diagram
\[
\xymatrix{
\Hom_\cG(P,\ZZ_p[[\cG]]) \otimes_\cG M \ar[r] \ar[d]_{1 \otimes \psi} & \Hom_\cG(P,M)\ar[dd]\\
\Hom_\cG(P,\ZZ_p[[\cG]]) \otimes_\cG  \ZZ_p[[\cG]] \otimes_\Gamma M \ar[d]_\cong\\
\Hom_\Gamma(P,\ZZ_p[[\Gamma]]) \otimes_\Gamma M \ar[r]  &\Hom_\Gamma(P,M)
}
\]
where the right vertical map is inclusion. Passing to cohomology gives the needed commutative diagram
\[
\xymatrix{
H^d(\cG,\ZZ_p[[\cG]]) \otimes_\cG M \ar[r] \ar[d]_{\tr_\ast} & H^d(\cG,M) \ar[d]^\res\\
H^d(\Gamma,\ZZ_p[[\Gamma]]) \otimes_\Gamma M \ar[r] & H^d(\Gamma,M). 
}
\]
\end{proof}

\subsection{Restricting to subgroups} The compact dualizing module $D_p(\Gamma)$ for $\Gamma$
is isomorphic to the compact dualizing module $D_p(\cG)$ for $\cG$ with action restricted to $\Gamma$. 
See \Cref{rem:trans-fund}. We now discuss how to recover the right-$\cG$ module structure on $D_p(G)$ from
$D_p(\Gamma)$ using the subgroup structure of $\cG$.

If $g \in \cG$ and $\Gamma \subseteq \cG$ is a subgroup, let $ \gGa$ denote the conjugate 
subgroup $g^{-1}\Gamma g$ and $c_g(-) = g(-)g^{-1}:\gGa \to \Gamma$ the conjugation homomorphism.
If $\phi:\Gamma^{s+1} \to \ZZ_p[[\Gamma]]$
is a continuous cochain, define $\chi_g(\phi): \gGa^{s+1} \to \ZZ_p[[\gGa]]$ by
\[
\chi_g(\phi)(x_0,x_1,\ldots,x_s) = g^{-1}\phi(c_g(x_0),\ldots,c_g(x_n))g.
\]
This extends to subgroups the definition given for $\cG$ itself in \eqref{eq:conj-in-cochains}. 
We check that this induces a map on cochain complexes and hence a map
\[
\chi_g^\ast:H^\ast(\Gamma,\ZZ_p[[\Gamma]]) \to H^\ast(\gGa,\ZZ_p[[\gGa]]).
\]

\begin{prop}\label{prop:conj2ba} Suppose $\cG$ is a compact $p$-adic analytic group of rank $d$ and
$\Gamma$ is an open subgroup. Suppose $H^s(\Gamma,-) = 0$
for $s > d$. The map
\[
\xymatrix{
D_p(\Gamma) = H^d(\Gamma,\ZZ_p[[\Gamma]]) \ar[r]^-{\chi_g^\ast} & D_p(\gGa) = H^d(\gGa,\ZZ_p[[\gGa]])
}
\]
is isomorphic to the map
\[
\xymatrix{
r_g = {(-)g}:D_p(\cG) \ar[r] & D_p(\cG).
}
\]
\end{prop}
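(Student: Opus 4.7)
The plan is to reduce the statement to the special case $\Gamma = \cG$ already handled by \Cref{prop:conjugation-is-right}. The mechanism is the Schapiro isomorphism, which identifies both $D_p(\Gamma)$ and $D_p(\gGa)$ with $D_p(\cG)$; under this identification the conjugation operation $\chi_g^\ast$ on subgroup cohomology should become the conjugation operation $\tilde\chi_g^\ast$ on the full group, which by \Cref{prop:conjugation-is-right} is right multiplication by $g$.

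First I would recall the Schapiro isomorphism $\sigma_\Gamma\colon D_p(\Gamma)\xrightarrow{\cong} D_p(\cG)$ used in the proof of \Cref{rem:trans-fund}, induced at the chain level by the isomorphism of left $\cG$-modules $\map_\Gamma(\cG,\ZZ_p[[\Gamma]])\cong \ZZ_p[[\cG]]$ (with the right $\Gamma$-action on the coefficients corresponding to right $\Gamma$-multiplication). The analogous construction applied to the conjugate subgroup produces $\sigma_{\gGa}\colon D_p(\gGa)\xrightarrow{\cong} D_p(\cG)$.

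The heart of the proof is then to establish commutativity of the naturality square
\[
\xymatrix{
D_p(\Gamma)\ar[r]^{\sigma_\Gamma}\ar[d]_{\chi_g^\ast} & D_p(\cG)\ar[d]^{\tilde\chi_g^\ast}\\
D_p(\gGa)\ar[r]^{\sigma_{\gGa}} & D_p(\cG),
}
\]
where $\tilde\chi_g^\ast$ is the self-map of $H^d(\cG,\ZZ_p[[\cG]])$ defined by the same cochain formula \eqref{eq:conj-in-cochains} applied to the pair $(\cG,\ZZ_p[[\cG]])$. This amounts to checking, at the level of cochain complexes, that the Schapiro adjunction between $C^*(\cG,\map_\Gamma(\cG,\ZZ_p[[\Gamma]]))$ and $C^*(\Gamma,\ZZ_p[[\Gamma]])$ is natural with respect to the conjugation operation: the group-theoretic map of pairs $(\gGa,\ZZ_p[[\gGa]])\to(\Gamma,\ZZ_p[[\Gamma]])$ given by $c_g$ together with the bimodule map $x\mapsto gxg^{-1}$ fits into a naturality diagram with the corresponding automorphism of $(\cG,\ZZ_p[[\cG]])$.

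Once this is in hand, \Cref{prop:conjugation-is-right} applied to the $\cG$-bimodule $M=\ZZ_p[[\cG]]$ immediately identifies $\tilde\chi_g^\ast$ with right multiplication $r_g$ on $D_p(\cG)$, and combining with the square above yields the desired identification $\sigma_{\gGa}\circ \chi_g^\ast = r_g\circ \sigma_\Gamma$. The main obstacle will be the cochain-level naturality check in the middle step: it is routine in spirit but notationally delicate, because one is tracking the Schapiro adjunction across a change of subgroup ($\Gamma$ to $\gGa$) while the ambient group $\cG$ stays fixed, and one must verify that the concrete formula for $\sigma_\Gamma$ intertwines the subgroup-level conjugation with the ambient-group conjugation.
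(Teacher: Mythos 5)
Your proposal takes essentially the same route as the paper: identify $D_p(\Gamma)$ and $D_p(\gGa)$ with $D_p(\cG)$ via the Schapiro isomorphism, check (at the cochain level) that $\chi_g^\ast$ on the subgroup corresponds to $\chi_g^\ast$ on the ambient group, and then invoke \Cref{prop:conjugation-is-right}. The paper likewise states the commutativity of the square as "a chain level calculation" without spelling it out, so your level of detail matches the original.
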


\begin{proof} A chain level calculation shows that the following diagram commutes
\[
\xymatrix{
H^d(\Gamma,\ZZ_p[[\Gamma]]) \ar[r]^{\chi_g^\ast} \ar[d]_{\cong} & H^d(\gGa,\ZZ_p[[\gGa]]) \ar[d]^\cong\\
H^d(\cG,\ZZ_p[[\cG]])\ar[r]_{\chi_g^\ast} & H^d(\cG,\ZZ_p[[\cG]]).
}
\]
By \Cref{prop:conjugation-is-right} the bottom map is given by the standard right $\cG$-action on $H^\ast(\cG,\ZZ_p[[\cG]])$. 
\end{proof}

\begin{rem}
In particular, if $\Gamma$ is normal in $\cG$, the conjugation action of $\cG$ on $\Gamma$ and $\ZZ_p[[\Gamma]]$
recovers the right action of $\cG$ on $H^d(\cG,\ZZ_p[[\cG]])$ from the conjugation action on $H^d(\Gamma,\ZZ_p[[\Gamma]])$.
\end{rem}

\begin{rem} The conjugation map $c_g$ defines an isomorphism of left $\ZZ_p[[\gGa]]$-modules $\ZZ_p[[\gGa]] \cong
\prescript{g}{}\ZZ_p[[\Gamma]]$ and we have a diagram mapping to the diagram of \eqref{eq:all-conj-all-time} 
\begin{equation*}\label{eq:all-conj-all-time-bis}
\xymatrix@R=10pt{
&H^\ast(\gGa,\ZZ_p[[\gGa]]) \ar[dd]^\cong\\
H^\ast(\Gamma,\ZZ_p[[\Gamma]]) \ar[ur]^{\chi_g^\ast}  \ar[dr]_{c_g^\ast}\\
&H^\ast(\gGa,\prescript{g}{}\ZZ_p[[\Gamma]]).
}
\end{equation*}
Thus we could have worked with $c_g^\ast$ (\Cref{ex:conj-action}) as well. 
\end{rem} 

\Cref{prop:frob-in-all-glory} described how duality interacted with
transfer and restriction. We'd also like to know how duality interacts with conjugation. We gain conceptual
clarity by proving a more general result; we then specialize. The main result is \Cref{prop:conj2b}

Let $\cG$ be a compact $p$-adic analytic group, $N$ a continuous left $\cG$-module and $M$ a continuous $\cG$-bimodule.
We have a pairing
\[
\pair: C^s(\cG,M) \otimes_\cG N \longr C^s(\cG,M \otimes_\cG N)
\]
sending $\phi \otimes a$ to $\psi$ where
\[
\psi(x_1,\ldots,x_s) = \phi(x_1,\cdots,x_s) \otimes a.
\]
We now explore the naturality of this pairing. Let $f:\cG_1 \to \cG_2$ be a continuous homomorphism
of compact $p$-adic analytic groups, $N$ a continuous left $\cG_1$-module and $M$ a continuous $\cG_2$-bimodule. For simplicity
we assume that $f$ is an injection onto a subgroup of finite index, so that we can form the induced module
\[
N_{\cG_1}^{\cG_2} \cong \ZZ_p[[\cG_2]] \otimes_{\ZZ_p[[\cG_1]]} N =: \cG_2 \times_{\cG_1} N.
\]
If $M$ is a right $\cG_2$ module, let $M^f$ denote the \emph{right} $\cG_1$-module obtained by restriction. Recall, from
Section 4.1, that $f_*M$ denotes the \emph{left} $\cG_1$-module obtained by restriction.
The following can be checked on cochains.

\begin{lem}\label{lem:conj1a} Let $N$ be a continuous left $\cG_1$-module, $M$ a continuous $\cG_2$-bimodule and $f \colon \cG_1 \to \cG_2$  a continuous homomorphism
of compact $p$-adic analytic groups which is an
injection onto a subgroup of finite index. Then the pairing $\pair$ induces a commutative diagram
\[
\xymatrix{
H^s(\cG_2,M) \otimes_{\cG_2} (\cG_2 \times_{\cG_1} N) \ar[r]\ar[d]_\cong &
H^s(\cG_2,M \otimes_{\cG_2} (\cG_2 \times_{\cG_1} N))\ar[d]^\cong\\
H^s(\cG_2,M^f) \otimes_{\cG_1} N \ar[r] \ar[d]_{f_\ast} & H^s(\cG_2,M^f \otimes_{\cG_1} N)\ar[d]^{f_\ast}\\
H^s(\cG_1,\fMf) \otimes_{\cG_1} N \ar[r] & H^s(\cG_1,\fMf \otimes_{\cG_1} N) \ .
}
\]
\end{lem}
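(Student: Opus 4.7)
The plan is to verify both squares at the level of continuous cochains; since every map in sight is a chain map, commutativity on cohomology then follows automatically. Throughout I write $C^s$ for $C^s(\cG,-)$ computing $H^s(\cG,-)$, and identify $\cG_2 \times_{\cG_1} N$ with $\ZZ_p[[\cG_2]] \otimes_{\cG_1} N$.

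For the top square, the vertical isomorphism is the standard change-of-rings / associativity identification, applied both to the coefficient module
\[
M \otimes_{\cG_2}(\cG_2 \times_{\cG_1} N) \;\xrightarrow{\;\cong\;}\; M^f \otimes_{\cG_1} N, \qquad m \otimes (g \otimes a) \longmapsto mg \otimes a,
\]
and, in parallel, at the cochain level, yielding $C^s(\cG_2,M) \otimes_{\cG_2}(\cG_2 \times_{\cG_1} N) \cong C^s(\cG_2,M^f) \otimes_{\cG_1} N$; the right $\cG_2$-action on $C^s(\cG_2,M)$ is inherited from the right action on $M$ and restricts via $f$ to the right $\cG_1$-action on $C^s(\cG_2,M^f)$. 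Tracing an elementary tensor $\phi \otimes (g \otimes a)$ clockwise yields the cochain $(x_1,\dots,x_s) \mapsto \phi(x_1,\dots,x_s)g \otimes a$, while going counter-clockwise first rewrites the element as $(\phi \cdot g) \otimes a$ and then pairs to the same cochain.

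For the bottom square, the vertical maps are induced at the cochain level by the pullback
\[
f^* \colon C^s(\cG_2, M^f) \longrightarrow C^s(\cG_1, \fMf), \qquad (f^*\phi)(y_1,\dots,y_s) = \phi(f(y_1),\dots,f(y_s)),
\]
tensored with the identity on $N$; on the coefficient side, $M^f \otimes_{\cG_1} N$ and $\fMf \otimes_{\cG_1} N$ agree as abelian groups, since the only difference between $M^f$ and $\fMf$ is a restriction of the left action, which plays no role after the tensor product over $\cG_1$. Given $\phi \otimes a$, the clockwise route pairs first to $(x_1,\dots,x_s) \mapsto \phi(x_1,\dots,x_s) \otimes a$ on $\cG_2$ and then pulls back along $f$, whereas the counter-clockwise route pulls $\phi$ back first and then pairs; both output the cochain $(y_1,\dots,y_s) \mapsto \phi(f(y_1),\dots,f(y_s)) \otimes a$.

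The only genuine care required is bookkeeping of the right-module structure: one must check that the right $\cG_2$-action on $M$ restricts via $f$ to the right $\cG_1$-action on $M^f$ (and thus on $\fMf$), so that the two pairings $\pair$ appearing in the middle and bottom rows are well defined and $\cG_1$-balanced. With that in place, both squares commute by direct inspection on cochains, and passage to cohomology is immediate since $\pair$, the change-of-rings isomorphism, and $f^*$ are all compatible with the coboundary $\partial$ from the beginning of this section.
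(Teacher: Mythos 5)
Your proposal is correct and is exactly the argument the paper leaves implicit: the paper introduces the lemma with the single line ``The following can be checked on cochains,'' and you have carried out that cochain-level verification in full. Both squares are traced correctly — the top via the change-of-rings/associativity isomorphism applied in parallel to $C^s(\cG_2,-)\otimes_{\cG_2}(\cG_2\times_{\cG_1}N)$ and to the coefficient module, the bottom via $f^*$ on cochains tensored with the identity on $N$ — and the bookkeeping remark about the right $\cG_2$-action restricting along $f$ to the right $\cG_1$-action on $M^f$ is precisely the point that makes both pairings $\cG_1$-balanced.
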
 
\bigskip

We now specialize. Let $\Gamma \subseteq \cG$ be an open subgroup and $g \in \cG$. We let $f$
be the conjugation map $c_g:\gGa \to \Gamma$. For $M$ a $\Gamma$-bimodule, let $ \prescript{g}{}M^g$ be $f_*M^{f}$, the $\gGa$-
bimodule obtained by restriction along $f=c_g$ for both the right and left actions.
Then $c_g$ defines an isomorphism of $\gGa$-modules
\[
\xymatrix{
 \ZZ_p[[\gGa]] \ar[r]^\cong & \prescript{g}{}\ZZ_p[[\Gamma]]^g
}
\]
and if $N$ is a $\gGa$-module
\begin{align*}
\Gamma \times_{\gGa} N &\cong \prescript{g^{-1}}{}N
\end{align*}
where $ \prescript{g^{-1}}{}N$ is the module obtained by restriction along $c_{g^{-1}} \colon \Gamma \to \gGa$.
Thus if we replace $N$ by $\prescript{g}{}N$ in \Cref{lem:conj1a} we have a diagram 
\[
\xymatrix{
H^s(\Gamma,\ZZ_p[[\Gamma]]) \otimes_{\Gamma} N \ar[r]\ar[d]_{c_g^\ast} &
H^s(\Gamma,\ZZ_p[[\Gamma]] \otimes_{\Gamma} N)\ar[d]^{c_g^\ast}\\
H^s(\gGa,\ZZ_p[[\gGa]]) \otimes_{\gGa} \prescript{g}{}N \ar[r] & H^s(\gGa,\ZZ_p[[\gGa]] \otimes_{\gGa} \prescript{g}{}N)\\
}
\]
After using the module multiplications on the coefficients, we get a diagram
\begin{equation}\label{eq:conj1b}
\xymatrix{
H^s(\Gamma,\ZZ_p[[\Gamma]]) \otimes_{\Gamma} N \ar[r]\ar[d]_{c_g^\ast  \otimes N} &
H^s(\Gamma,N)\ar[d]^{c_g^\ast}\\
H^s(\gGa,\ZZ_p[[\gGa]]) \otimes_{\gGa} \prescript{g}{}N \ar[r] & H^s(\gGa,\prescript{g}{}N)\\
}
\end{equation}
The right hand morphism $c_g^\ast$ sends $\phi \otimes a$ to the cocyle
\[
(x_1,\ldots,x_s) \longmapsto g^{-1}\phi(c_g(x_1),\ldots,c_g(x_s))g \otimes a.
\]

In the next result we use that the compact dualizing module $D_p(\Gamma)$ for $\Gamma \subseteq \cG$ open
is isomorphic to the compact dualizing module $D_p(\cG)$ for $\cG$ with action restricted to $\Gamma$. 
See \Cref{rem:trans-fund}.

\begin{prop}\label{prop:conj2b} Suppose $\cG$ is a compact $p$-adic analytic group of rank $d$ and
$\Gamma$ is an open subgroup and a Poincar\'e duality group. Let $M$ be a left $\Gamma$-module. Then the map
\[
\xymatrix{
D_p(\Gamma) = H^d(\Gamma,\ZZ_p[[\Gamma]]) \ar[r]^-{\chi_g^\ast} & D_p(\gGa) = H^d(\gGa,\ZZ_p[[\gGa]])
}
\]
is isomorphic to the map
\[
\xymatrix{
r_g = {(-)g}:D_p(\cG) \ar[r] & D_p(\cG)
}
\]
and we have a commutative diagram 
\[
\xymatrix{
\Tor_s^\Gamma(D_p(\cG),M) \ar[r]^-\cong\ar[d]_{(r_g)_\ast} & H^{d-s}(\Gamma,M)\ar[d]^{c_g^\ast}\\
\Tor_s^\gGa(D_p(\cG),\gMa) \ar[r]^-\cong & H^{d-s}(\gGa,\gMa).
}
\]
\end{prop}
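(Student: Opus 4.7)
The plan is to observe that the first assertion is essentially already proved: since $\Gamma$ is a Poincar\'e duality group, Lemma \ref{lem:length-precise} gives $H^s(\Gamma,-) = 0$ for $s>d$, so the hypotheses of Proposition \ref{prop:conj2ba} are satisfied and it yields exactly the identification of $\chi_g^\ast\colon D_p(\Gamma) \to D_p(\gGa)$ with $r_g\colon D_p(\cG) \to D_p(\cG)$ under the canonical isomorphisms $D_p(\Gamma) \cong D_p(\cG) \cong D_p(\gGa)$ of Proposition \ref{rem:trans-fund}. So the real work is establishing the commutative square relating $(r_g)_\ast$ on $\Tor$ with $c_g^\ast$ on cohomology.

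For the commutative square I would proceed in two steps. \textbf{Step 1 (the case $s=0$):} specialize diagram \eqref{eq:conj1b} to $s=d$ and $N=M$; this gives commutativity of
\[
\xymatrix{
H^d(\Gamma,\ZZ_p[[\Gamma]]) \otimes_\Gamma M \ar[r] \ar[d]_{c_g^\ast \otimes M} & H^d(\Gamma,M) \ar[d]^{c_g^\ast}\\
H^d(\gGa,\ZZ_p[[\gGa]]) \otimes_{\gGa} \gMa \ar[r] & H^d(\gGa,\gMa).
}
\]
Using Part 1 to replace $c_g^\ast$ on the left with $r_g$ acting on $D_p(\cG)$ (and $D_p(\Gamma) \cong D_p(\cG) \cong D_p(\gGa)$ via Proposition \ref{rem:trans-fund}), this is exactly the $s=0$ instance of the target square, since $\Tor_0 = \otimes$ and the horizontal tensor-pairing map $D_p(\cG)\otimes_\Gamma M \to H^d(\Gamma,M)$ is the one defining the Poincar\'e duality isomorphism of Theorem \ref{thm:pd-forg}.

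\textbf{Step 2 (extension to all $s$):} I would copy the dimension-shifting argument from the proof of Proposition \ref{prop:frob-in-all-glory}. The horizontal isomorphisms of Theorem \ref{thm:pd-forg} were constructed by left-deriving the tensor-pairing, so a short exact sequence $0 \to K \to M \to N \to 0$ of $\Gamma$-modules induces compatible long exact sequences on both rows of the target square. The vertical maps $(r_g)_\ast$ and $c_g^\ast$ are natural with respect to such short exact sequences (note $0 \to \gK \to \gMa \to \gNa \to 0$ is also exact), so both vertical composites define morphisms of $\delta$-functors in $M$ (from $\Tor_\ast^\Gamma(D_p(\cG),-)$ to $H^{d-\ast}(\gGa, \prescript{g}{}(-))$). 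Since these are universal $\delta$-functors on the source, such a morphism is determined by its value at $s=0$; by Step 1 the two composites agree there, so they agree for all $s$.

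The main bookkeeping obstacle will be keeping the left/right module twists straight: $D_p(\cG)$ is naturally a right $\cG$-module, so its restriction to $\Gamma$ and to $\gGa$ are related precisely by the right multiplication $r_g$, and one must verify that the module identification $\gMa = \prescript{g}{}M$ in the target of \eqref{eq:conj1b} matches the module appearing on the bottom row of the square. Once those identifications are set up, the content reduces to universality of Tor plus the already-established diagram \eqref{eq:conj1b}.
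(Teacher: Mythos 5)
Your proposal is correct and follows essentially the same route as the paper: Part 1 is read off directly from \Cref{prop:conj2ba} once \Cref{lem:length-precise} validates its hypotheses, and the square is verified at $s=0$ by specializing \eqref{eq:conj1b} to cohomological degree $d$ and then propagated to all $s$ by the same dimension-shifting (universal $\delta$-functor) argument already used in the proof of \Cref{prop:frob-in-all-glory}. The paper's own proof states exactly this reduction, so no gap to report.
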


\begin{proof} The first statement is a repeat of \Cref{prop:conj2ba}. For the second, we can proceed as in the proof of
\Cref{prop:frob-in-all-glory} and note it is sufficient to check the case $s=0$. This is exactly \eqref{eq:conj1b}. The horizontal 
isomorphisms are Poincar\'e Duality. See \Cref{thm:pd-forg}.
\end{proof} 

\begin{cor}\label{cor:how-use-this} Suppose $\cG$ is a compact $p$-adic analytic group of rank $d$ and
$\Gamma$ is an open subgroup and a Poincar\'e duality group.
Suppose $M$ is a trivial $\cG$-module and $\Gamma \subseteq \cG$ is normal.
Then \Cref{prop:conj2b} gives a diagram
\[
\xymatrix{
D_p(\cG) \otimes_\Gamma M \ar[r]^-\cong\ar[d]_{(r_g)_\ast \otimes M} & H^{d}(\Gamma,M)\ar[d]^{c_g^\ast}\\
D_p(\cG) \otimes_\Gamma M \ar[r]^-\cong & H^{d}(\Gamma,M)
}
\]
where the left column is given by the right action of $g$ on $D_p(\cG)$ and the right column is given
by the conjugation action of $g$ on $\Gamma$. 
\end{cor}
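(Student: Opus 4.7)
The plan is to specialize \Cref{prop:conj2b} to the case $s=0$ under the two simplifying hypotheses that $\Gamma$ is normal in $\cG$ and that $M$ is a trivial $\cG$-module. The bulk of the argument is bookkeeping: matching up the identifications in \Cref{prop:conj2b} with the more concrete descriptions given in the statement of the corollary.

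First, I would observe that normality of $\Gamma$ immediately gives $\gGa = g^{-1}\Gamma g = \Gamma$, so the bottom row of the diagram in \Cref{prop:conj2b} has the same underlying group as the top. Next, because $M$ is a trivial $\cG$-module, the twisted module $\gMa$, whose action is $x\cdot a = (gxg^{-1})a$, coincides with $M$ as a $\Gamma$-module. Thus after these substitutions the square of \Cref{prop:conj2b} reads
\[
\xymatrix{
\Tor_s^\Gamma(D_p(\cG),M) \ar[r]^-\cong\ar[d]_{(r_g)_\ast} & H^{d-s}(\Gamma,M)\ar[d]^{c_g^\ast}\\
\Tor_s^\Gamma(D_p(\cG),M) \ar[r]^-\cong & H^{d-s}(\Gamma,M),
}
\]
and the corollary asks for the specialization to $s=0$, where $\Tor_0^\Gamma(D_p(\cG),M) \cong D_p(\cG)\otimes_\Gamma M$.

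The remaining point is the identification of the two vertical arrows. For the right-hand side, $c_g^\ast \colon H^d(\Gamma,M) \to H^d(\gGa,\gMa) = H^d(\Gamma,M)$ is by definition the map induced by the conjugation homomorphism $c_g \colon \Gamma \to \Gamma$ together with the identity on coefficients (which here is genuinely the identity because $M$ is trivial), so this is precisely the conjugation action of $g$ on $\Gamma$. For the left-hand side, the first assertion of \Cref{prop:conj2b} (via \Cref{prop:conj2ba}) identifies the map $\chi_g^\ast \colon D_p(\Gamma) \to D_p(\gGa)$ with right multiplication by $g$ on $D_p(\cG)$; tensoring with $M$ over $\Gamma = \gGa$ gives the map labeled $(r_g)_\ast \otimes M$ in the statement.

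I do not expect any real obstacle here, since every ingredient has been assembled in the preceding propositions; the only care required is to check that under the hypotheses ``$\Gamma$ normal'' and ``$M$ trivial'' the twisted Tor group $\Tor_0^{\gGa}(D_p(\cG),\gMa)$ really does canonically identify with $D_p(\cG)\otimes_\Gamma M$ so that the left vertical map can be written as $(r_g)_\ast \otimes M$ rather than a composite involving the twisting isomorphism. This is automatic given the two hypotheses, and the corollary follows.
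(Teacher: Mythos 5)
Your proposal is correct and is exactly the specialization the paper has in mind: normality gives $\gGa=\Gamma$, triviality of $M$ gives $\gMa=M$, and setting $s=0$ turns $\Tor_0^\Gamma$ into $D_p(\cG)\otimes_\Gamma M$, with the vertical maps identified by the first part of \Cref{prop:conj2b}. The paper offers no separate argument for the corollary, so there is nothing to compare beyond noting you have filled in the same routine bookkeeping.
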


\subsection{The dualizing module revisited}

We can now give a formula for the discrete dualizing module $I_p(\cG)$ due to Serre. The module
$I_p(\cG)$ was defined in \eqref{rem:dualize-the-dualize}. Let $\cG$ be a compact $p$-adic analytic group
of rank $d$ with an exhaustive system
\[
\cdots \subseteq \Gamma_{i+1} \subseteq \Gamma_i \subseteq \cdots \subseteq \Gamma_1 
\subseteq \cG
\]
of uniformly powerful open normal subgroups. The following gives exactly Serre's formula in
\S 3.5 of \cite{CohoGal}. 

\begin{prop}\label{prop:dual-via-trans-alg} There is an isomorphism of left $\cG$-modules
\begin{equation}\label{eq:dual-via-trans-alg-0}
I_p(\cG) \cong \colim_{\tr} H_d(\Gamma_i,\ZZ/p^\infty)
\end{equation}
where $\cG$ acts on $H_\ast(\Gamma_i,\ZZ/p^\infty)$ through conjugation on $\Gamma_i$. 
\end{prop}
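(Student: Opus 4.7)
The plan is to establish, for each $i$, a natural isomorphism
$H_d(\Gamma_i, \ZZ/p^\infty) \cong I_p(\cG)^{\Gamma_i}$
by composing the definition of $I_p(\cG)$ with Poincar\'e duality (\Cref{thm:pd-forg}) and a Pontryagin-duality-style universal coefficients statement, then to check that the structure maps (transfers on homology, inclusions on invariants) and the conjugation action of $\cG$ are compatible, and finally to pass to the colimit.

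Concretely, the pointwise isomorphism is a chain of three identifications. Unwinding $(g\phi)(x) = \phi(xg)$ shows that $\phi$ is $\Gamma_i$-fixed iff it factors through the $\Gamma_i$-coinvariants of $D_p(\cG)$, so $I_p(\cG)^{\Gamma_i} = \Hom(D_p(\cG)_{\Gamma_i}, \ZZ/p^\infty)$. Next, \Cref{thm:pd-forg} with $s=0$ and $M = \ZZ_p$, together with the identification $D_p(\Gamma_i) \cong D_p(\cG)$ from \Cref{rem:trans-fund}, gives $D_p(\cG)_{\Gamma_i} \cong H^d(\Gamma_i, \ZZ_p)$. Finally, since $\Gamma_i$ is of type $p$-$\FP$ with a free resolution of length exactly $d$ (\Cref{lem:length-precise}), one chooses a resolution $P_* \to \ZZ_p$ with $P_s = \ZZ_p[[\Gamma_i]]^{n_s}$ and $P_s = 0$ for $s > d$; then the cochain complex $\Hom_{\Gamma_i}(P_*, \ZZ_p)$ and the chain complex $\ZZ/p^\infty \otimes_{\Gamma_i} P_*$ are term-by-term Pontryagin dual, and exactness of Pontryagin duality on finitely generated $\ZZ_p$-modules commutes with (co)homology, yielding $H_d(\Gamma_i, \ZZ/p^\infty) \cong \Hom(H^d(\Gamma_i, \ZZ_p), \ZZ/p^\infty) \cong I_p(\cG)^{\Gamma_i}$.

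For compatibility with transfer, \Cref{prop:frob-in-all-glory} (diagram \eqref{eq:trans-res-1b}) shows that the quotient $D_p(\cG)_{\Gamma_{i+1}} \twoheadrightarrow D_p(\cG)_{\Gamma_i}$, coming from $\res_*$ in $\Tor_0$, corresponds to the cohomological transfer $\tr \colon H^d(\Gamma_{i+1}, \ZZ_p) \to H^d(\Gamma_i, \ZZ_p)$; Pontryagin dualizing converts the surjection into the inclusion $I_p(\cG)^{\Gamma_i} \hookrightarrow I_p(\cG)^{\Gamma_{i+1}}$ of invariants, while converting the cohomology transfer into the homology transfer $\tr_* \colon H_d(\Gamma_i, \ZZ/p^\infty) \to H_d(\Gamma_{i+1}, \ZZ/p^\infty)$. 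For compatibility with the $\cG$-action, \Cref{cor:how-use-this} identifies the conjugation action on $H^d(\Gamma_i, \ZZ_p)$ with the right-multiplication action of $g$ on $D_p(\cG)_{\Gamma_i}$, and dualizing this action simultaneously recovers the built-in left $\cG$-action on $I_p(\cG)^{\Gamma_i} = \Hom(D_p(\cG)_{\Gamma_i}, \ZZ/p^\infty)$ and the conjugation action on $H_d(\Gamma_i, \ZZ/p^\infty)$.

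To pass to the colimit, note that $D_p(\cG) \cong \Lambda^d \gg^*$ is a rank-one free $\ZZ_p$-module (\Cref{rem:when-finite-coh}), so $I_p(\cG) \cong \ZZ/p^\infty$ as an abelian group and $\cG$ acts through a character $\chi \colon \cG \to \ZZ_p^\times$; every $p^n$-torsion element is stabilized by all $g$ with $\chi(g) \equiv 1 \pmod{p^n}$, and $\chi(\Gamma_i) \subseteq 1 + p^i\ZZ_p$ by \Cref{rem:quotients} applied to the uniformly powerful lower $p$-series, so every element of $I_p(\cG)$ is $\Gamma_i$-fixed for $i$ sufficiently large. Combined with $\bigcap_i \Gamma_i = \{e\}$, this forces $I_p(\cG) = \colim_i I_p(\cG)^{\Gamma_i}$ as $\cG$-modules, and taking the colimit of the pointwise isomorphism finishes the proof. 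The hard part will be the bookkeeping in the middle step: tracking the transfer, restriction, and conjugation structures through two distinct layers of duality with consistent conventions. The necessary naturality ingredients are all in hand from \Cref{prop:frob-in-all-glory} and \Cref{cor:how-use-this} together with the naturality of Pontryagin duality applied to the chosen resolution, but the directions of maps and the left/right action conventions must be reconciled with care.
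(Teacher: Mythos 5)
Your overall structure — establish the pointwise isomorphism $H_d(\Gamma_i,\ZZ/p^\infty)\cong I_p(\cG)^{\Gamma_i}$ with compatible transition maps and $\cG$-equivariance, then take colimits — matches the paper's, and your argument is correct. What differs is how each step is carried out.

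For the pointwise isomorphism, the paper applies Poincar\'e duality (\Cref{thm:pd-forg}) once, with $M = I_p(\cG)$ as coefficients, using the tautological identity $D_p(\cG)^\ell \otimes I_p(\cG) \cong \ZZ/p^\infty$ to rewrite $\Tor_s^{\Gamma_i}(D_p(\cG),I_p(\cG))$ as $H_s(\Gamma_i,\ZZ/p^\infty)$, and specializes to $s=d$. You instead apply \Cref{thm:pd-forg} with the simpler coefficient module $M=\ZZ_p$ to get $D_p(\cG)_{\Gamma_i}\cong H^d(\Gamma_i,\ZZ_p)$, then introduce a separate Pontryagin-duality step via a chosen finite free resolution of length $d$. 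Both are valid; the paper's version is more economical because the coefficient module $I_p(\cG)$ absorbs the Pontryagin duality into a single application of \Cref{thm:pd-forg}, avoiding the need to separately verify that the resolution-level duality is natural in the group. Correspondingly, you use \eqref{eq:trans-res-1b} where the paper uses \eqref{eq:trans-res-1a}: after dualizing, these are equivalent, but again you incur one extra layer of ``check that Pontryagin duality intertwines the structure maps correctly,'' which you rightly flag as the delicate part. The same applies to equivariance: the paper quotes \Cref{prop:conj2b} directly, whereas you route through \Cref{cor:how-use-this} and then dualize, so the identification of the dualized conjugation action on $\Hom(H^d(\Gamma_i,\ZZ_p),\ZZ/p^\infty)$ with the conjugation action on $H_d(\Gamma_i,\ZZ/p^\infty)$ is an extra compatibility you need to record but do not actually prove.

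On the colimit step, your argument is more specialized than necessary. The paper simply observes that for \emph{any} discrete $\cG$-module $M$, the natural map $\colim_i M^{\Gamma_i}\to M$ is an isomorphism (every element has open stabilizer and the $\Gamma_i$ form a neighborhood base of the identity), and applies this to $M=I_p(\cG)$. Your explicit character-theoretic analysis -- that $\cG$ acts on $I_p(\cG)\cong\ZZ/p^\infty$ through a character $\chi\colon\cG\to\ZZ_p^\times$ with $\chi(\Gamma_i)\subseteq 1+p^i\ZZ_p$ -- is correct, but \Cref{rem:quotients} is not really the right reference for the estimate on $\chi(\Gamma_i)$; what you actually want is that a continuous homomorphism of pro-$p$ groups carries the lower $p$-series into the lower $p$-series together with the identification of $1+p^i\ZZ_p$ as the lower $p$-series of $1+p\ZZ_p$ (with the usual adjustment at $p=2$). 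You also do not need the full strength of \Cref{rem:when-finite-coh} ($D_p(\cG)\cong\Lambda^d\gg^\ast$); that $D_p(\cG)$ is free of rank one is already \Cref{prop:when-g-pd}. These are stylistic rather than substantive issues, but replacing them with the one-line general fact about discrete modules would tighten the proof considerably.
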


\begin{proof} Let $D_p(\cG)^\ell$ be left-$\cG$-module obtained by transposing the right action; that is $g\cdot a = ag^{-1}$. 
Then if $M$ is any continuous $\cG$-module we have a natural isomorphism 
\[
D_p(\cG) \otimes_\cG M \cong \ZZ_p \otimes_{\cG} (D_p(\cG)^\ell \otimes M)
\]
where $D_p(\cG)^\ell \otimes M$ has the diagonal action. We thus get a natural isomorphism
\[
H_s(\cG,D_p(\cG)^\ell \otimes M) \cong \Tor^\cG_s(D_p(\cG),M). 
\]
There is an isomorphism of left $\cG$-modules
\[
D_p(\cG)^\ell \otimes I_p(\cG) \cong \ZZ/p^\infty
\]
where the tensor product has the diagonal $\cG$-structure and the action on $\ZZ/p^\infty$ is trivial. 
For $i$ large Frobenius reciprocity \eqref{eq:trans-res-1a} and Poincar\'e Duality \Cref{thm:pd-forg} give
a commutative square
\[
\xymatrix{
H_s(\Gamma_i,\ZZ/p^\infty) \ar[r]^-\cong \ar[d]_{\tr_\ast} & H^{d-s}(\Gamma_i,I_p(\cG)) \ar[d]^\res\\
H_s(\Gamma_{i+1},\ZZ/p^\infty) \ar[r]^-\cong & H^{d-s}(\Gamma_{i+1},I_p(\cG)) \ .
}
\]
Thus, if we set $s=d$ and $M = \ZZ/p^\infty$ we get an isomorphism 
\[
\big[ I_p(\cG) \big]^{\Gamma_i}\cong H_d(\Gamma_i,\ZZ/p^\infty).
\]
This is a $\cG$-equivariant isomorphism by \Cref{prop:conj2b}.
If $M$ is a discrete $\cG$-module then the natural map
\[
\colim_i M^{\Gamma_i} \to M
\]
is an isomorphism; now using the above commutative diagram, we get an isomorphism of $\cG$-modules
\begin{align*}
I_p(\cG) &\cong \colim_{\tr_\ast} H_d(\Gamma_i,\ZZ/p^\infty).\qedhere
\end{align*}
\end{proof}

The following is a direct consequence of Corollary 5.2.5
of \cite{SymWei}, although it would take some translation to get from that statement to ours. The adjoint
representation was defined in \Cref{rem:adjoint-action}

\begin{prop}\label{prop:ipg-is-top-ext} Let $\cG$ be a compact $p$-adic analytic group of dimension $d$ and let
$\gg$ be the the adjoint representation. Then there is a natural isomorphism of right $\cG$-modules
\[
\Lambda^d \mathfrak{g}^\ast \cong D_p(\cG).
\]
where
\[
\gg^\ast = \Hom(\gg,\ZZ_p)
\]
with right $\cG$-action given by $(\phi g)(x) = \phi(gx)$ and $\Lambda^d \mathfrak{g}^\ast$ is the top exterior power.
\end{prop}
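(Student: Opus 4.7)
The plan is to reduce to computing $D_p(\Gamma)$ for a uniformly powerful open normal subgroup $\Gamma$ of $\cG$, identify it with $\Lambda^d \gg^\ast$ via the cohomology calculations of Section 3, and then recover the right $\cG$-module structure through conjugation.

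By \Cref{rem:trans-fund} applied to $\Gamma = \Gamma_i$ for $i$ sufficiently large (so that $\Gamma_i$ is uniformly powerful), we have an isomorphism $D_p(\Gamma_i) \cong D_p(\cG)$ of right $\Gamma_i$-modules. Moreover, \Cref{prop:conj2ba} tells us that the right action of any $g \in \cG$ on $D_p(\cG)$ coincides with the map on $D_p(\Gamma_i) \cong D_p(\prescript{g}{}{\Gamma_i})$ induced by the conjugation cochain $\chi_g^\ast$. Therefore it suffices to produce an isomorphism $D_p(\Gamma_i) \cong \Lambda^d \gg^\ast$ and check that conjugation by $g \in \cG$ on the left corresponds to the adjoint action of $g$ on $\Lambda^d \gg^\ast$ on the right.

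Next, I would identify $D_p(\Gamma_i)$ with $\Lambda^d \gg^\ast$. By \Cref{thm:coh-gammai}, $H^\ast(\Gamma_i, \FF_p) \cong \Lambda(V_i)$ where $V_i \cong (\Gamma_i/\Gamma_{i+1})^\ast$, so the top degree gives $H^d(\Gamma_i, \FF_p) \cong \Lambda^d V_i$. Using the $\cG$-equivariant identification $\Gamma_i/\Gamma_{i+1} \cong \gg_i/\gg_{i+1}$ from \Cref{rem:adjoint-action} (together with $\gg_i \cong \gg$ as $\cG$-modules via multiplication by $p^{i-1}$), this becomes $\Lambda^d(\gg/p\gg)^\ast$ as $\cG$-modules. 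Since $\Gamma_i$ has cohomological dimension $d$ (\Cref{prop:when-g-pd}), $H^{d+1}(\Gamma_i,\ZZ_p) = 0$, and the universal coefficient sequence gives $H^d(\Gamma_i,\ZZ_p)\otimes \FF_p \cong H^d(\Gamma_i, \FF_p)$, so $H^d(\Gamma_i,\ZZ_p)$ is cyclic, and since it is the top cohomology of a Poincaré duality group it is free of rank one over $\ZZ_p$. A Nakayama argument lifts the mod-$p$ identification to a $\cG$-equivariant isomorphism $H^d(\Gamma_i, \ZZ_p) \cong \Lambda^d \gg^\ast$.

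To upgrade from $\ZZ_p$-coefficients to $\ZZ_p[[\Gamma_i]]$-coefficients, I would write $\ZZ_p[[\Gamma_i]] \cong \lim_j \ZZ_p[\Gamma_i/\Gamma_{i+j}]$ and apply Schapiro's lemma (\Cref{prop:schapiro-lemma}): for each $j$, $H^d(\Gamma_i, \ZZ_p[\Gamma_i/\Gamma_{i+j}]) \cong H^d(\Gamma_{i+j}, \ZZ_p) \cong \Lambda^d\gg^\ast$, and the structure maps in the inverse system correspond to transfers along $\Gamma_{i+j+1} \subseteq \Gamma_{i+j}$, which act as identities on the top exterior power after the normalizations fixed above. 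Passing to the limit and combining with Step 1 yields $D_p(\cG) \cong \Lambda^d \gg^\ast$, with the $\cG$-action given by conjugation on $\gg$, which by definition is the adjoint action.

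The main obstacle is executing Step 3 carefully, both to ensure the inverse limit of transfer maps really identifies with the identity on $\Lambda^d\gg^\ast$ (one must pin down generators compatibly across the tower) and to verify $\cG$-equivariance at each stage. Equivalently, one could bypass this by invoking Corollary 5.2.5 of \cite{SymWei}, which packages precisely this computation; the above plan sketches how one would unpack that citation within the framework of this paper.
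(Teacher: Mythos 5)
Your plan takes a genuinely different route from the paper, and while the overall architecture (reduce to $\Gamma_i$ via conjugation and \Cref{prop:conj2ba}, then compare $H^d(\Gamma_i, -)$ with $\Lambda^d\gg^\ast$) is sound, there are two concrete gaps that the paper's actual proof is designed precisely to avoid.

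The first gap is in Step 2. You observe that $H^d(\Gamma_i,\FF_p) \cong \Lambda^d(\gg/p\gg)^\ast$ as $\cG$-modules, note both sides are free rank-one $\ZZ_p$-modules, and then invoke ``a Nakayama argument'' to lift the mod-$p$ identification to an integral $\cG$-equivariant isomorphism. This doesn't work: an isomorphism modulo $p$ between two free rank-one $\ZZ_p[\cG]$-modules does not determine their integral module structure. Both $\ZZ_p$ with the trivial $\cG$-action and $\ZZ_p$ with a nontrivial action through $1 + p\ZZ_p \subseteq \ZZ_p^\times$ reduce to the trivial module mod $p$, so pinning things down mod $p$ is simply not enough. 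Nakayama gives surjectivity of a map you already have; it does not manufacture a map. The paper's proof handles exactly this by constructing, for \emph{each} $j$, an explicit $\cG$-equivariant surjection $\Lambda^d\gg^\ast \to D_p(\cG)/p^j$ (built from $\gg \xrightarrow{p^{i-1}} \gg_i/\gg_{i+j} \cong \Gamma_i/\Gamma_{i+j}$, taking $\Hom(-,\ZZ/p^j)$, and cupping $d$ times), where $i$ is chosen depending on $j$ so that $\Gamma_i$ acts trivially on $D_p(\cG)/p^j$; a compatible family of such surjections between free rank-one $\ZZ_p$-modules is what one actually needs.

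The second gap is in Step 3. You want $H^d(\Gamma_i, \ZZ_p[[\Gamma_i]]) \cong \lim_j H^d(\Gamma_i, \ZZ_p[\Gamma_i/\Gamma_{i+j}])$. Since $C^\bullet(\Gamma_i, \lim_j M_j) = \lim_j C^\bullet(\Gamma_i, M_j)$, taking cohomology introduces a Milnor $\lim^1$-term coming from the system $\{H^{d-1}(\Gamma_{i+j},\ZZ_p)\}_j$ (under Shapiro). These are infinitely generated $\ZZ_p$-modules once $d>1$, and the transition maps are transfers, not restrictions, so one cannot quote \Cref{thm:coh-gammai}(2) for the Mittag-Leffler property; this needs an argument you have not supplied. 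You also assert the transfer is the identity in top degree ``after the normalizations fixed above,'' but \Cref{lem:transfer-hom} only gives this with $\FF_p$-coefficients, and promoting it to $\ZZ_p$-coefficients is again not automatic. You flag the first of these issues, and your suggestion to simply cite Corollary 5.2.5 of \cite{SymWei} is what the paper also does before giving its own cleaner argument --- but the paper's argument does not in fact follow your route. Working modulo $p^j$ for all $j$ simultaneously sidesteps both the $\lim^1$ problem and the failure of Nakayama, which is the whole point of the paper's formulation.
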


\begin{proof} Since $\Lambda^d \mathfrak{g}^\ast$ and $D_p(\cG)$ are free modules of rank $1$ over $\ZZ_p$ it
is sufficient to produce a surjective map of $\cG$-modules
\[
\Lambda^d \mathfrak{g}^\ast \longr D_p(\cG)/p^j
\]
for all $j \geq 1$. Recall from \Cref{rem:adjoint-action} that $\gg_i = p^{i-1}\gg$ and, since $\gg$ is $p$-torsion free, that
$p^{i-1} \colon \gg \to \gg_i$ is an isomorphism. 

Fix $j$. Then $D_p(\cG)/p^j$ is a finite $\cG$-module, so we may choose $i \geq j$ so that $\Gamma_i$ acts trivially on
$D_p(\cG)/p^j$. Then, using \eqref{eq:gg-gamma}, we have a map of left $\cG$-modules
\[
\xymatrix{
\gg \ar[r]^-{p^{i-1}} & \gg_i/\gg_{i+j}\ar[r]^-\cong & \Gamma_i/\Gamma_{i+j} 
}
\]
where $\cG$ acts on the target by conjugation. This gives an isomorphism of right $\cG$-modules
\[
\xymatrix{
H^1(\Gamma_i/\Gamma_{i+j},\ZZ/p^j) \cong \Hom(\gg,\ZZ/p^j).
}
\]
We then obtain a map of right $\cG$-modules
\[
\gg^\ast = \Hom(\gg,\ZZ_p) \longr \Hom(\gg,\ZZ/p^j) \cong H^1(\Gamma_i/\Gamma_{i+j},\ZZ/p^j) \cong H^1(\Gamma_i,\ZZ/p^j).
\]
Using the cup product we get a map of right $\cG$-modules
\[
\Lambda^d \gg^\ast \longr H^d(\Gamma_i,\ZZ/p^j)
\]
which is surjective by \Cref{thm:coh-gammai}. By Poincar\'e Duality and the fact that $D_p(\Gamma_i) \cong D_p(\cG)$
with action restricted to $\Gamma_i$ \Cref{cor:how-use-this} gives an isomorphism of right $\cG$-modules
\[
H^d(\Gamma_i,\ZZ/p^j) \cong H^d(\cG,\ZZ_p[[\cG]]) \otimes_{\Gamma_i} \ZZ/p^j  \cong H^d(\cG,\ZZ_p[[\cG]])/p^j.
\]
The last isomorphism follows from the fact we have chosen $i$ so that $\Gamma_i$ acts trivially on
\[
D_p(\cG)/p^j = H^d(\cG,\ZZ_p[[\cG]])/p^j.
\]
The composition
\[
\Lambda^d \gg^\ast \longr H^d(\Gamma_i,\ZZ/p^j) \cong D_p(\cG)/p^j
\]
is the surjection we need. 
\end{proof}

\begin{rem}\label{rem:dual-via-trans-alg-two} We can combine \Cref{prop:dual-via-trans-alg} and \Cref{prop:ipg-is-top-ext}
to obtain an isomorphism of left $\cG$-modules
\[
I_p(\cG)/p^j \cong \colim_{\tr_\ast} H^d(\Gamma_i,\ZZ/p^j) \cong \Lambda^d\gg/p^j.
\]
This follows from the fact that $I_p(\cG) = \Hom(D_p(\cG),\ZZ/p^\infty)$ and that the dual of $\Lambda^d \gg$ is
$\Lambda^d \gg^\ast$. 
\end{rem} 

 % !TEX root = dsphere-master.tex

\section{Two $\cG$-spheres}\label{sec:TwoSpheres}

In this section, we produce the two $p$-complete $\cG$-spheres which are the main characters of this paper.
They are produced by the same method, so we will give complete details for only one. Here as usual
$\cG$ is a compact $p$-adic analytic group with
an exhaustive system $\cdots \subseteq \Gamma_{i+1} \subseteq \Gamma_i \subseteq \cdots \Gamma_1 
\subseteq \cG$ of uniformly powerful open normal subgroups.

\begin{notation}\label{rem:p-complete} Here and hereafter
we will work in the category of $p$-complete spectra unless we specify otherwise. In particular
we will write
\[
\Sigma^\infty_+ B\Gamma_i \qquad\mathrm{for} \qquad \big(\Sigma^\infty_+ B\Gamma_i\big)_p\ .
\]
This is necessary as we will want to use \Cref{prop:it-doesn't-matter}. 
\end{notation}

The spectra
$\Sigma^\infty_+B\Gamma_i/\Gamma_j$ and $\Sigma^\infty_+B\Gamma_i$ have a $\cG$-action induced from the 
conjugation action of $\cG$ on $\Gamma_i$. Let $j > 1$; then we have an inclusion of a normal subgroup of finite index
$\Gamma_{i+1}/\Gamma_{i+j} \to \Gamma_{i}/\Gamma_{i+j}$
and hence a $\cG$-equivariant transfer map
\[
\Sigma^{\infty}_+ B(\Gamma_{i}/\Gamma_{i+j}) \longr \Sigma^\infty_+ B(\Gamma_{i+1}/\Gamma_{i+j}).
\]
By taking homotopy inverse limits, we then get an induced $\cG$-equivariant transfer map
\begin{align}\label{eq:tr-gammas}
\tr: \Sigma^{\infty}_+ B\Gamma_{i} \longr \Sigma^\infty_+ B\Gamma_{i+1}.
\end{align}

\begin{rem}\label{rem:naturality-tr} To produce the transfer map of \eqref{eq:tr-gammas} we use the following
naturality property of the transfer. Let $K \subseteq H \subseteq G$ be a nested sequence of 
subgroups with all three inclusions normal. Then the following diagram commutes
\[
\xymatrix{
\Sigma_+^\infty BG \ar[r]^{\tr} \ar[d] & \Sigma_+^\infty BH \ar[d]\\
\Sigma_+^\infty B(G/K) \ar[r]^\tr & \Sigma_+^\infty B(H/K).
}
\]
\end{rem}

The following construction is motivated by Serre's formula from \Cref{prop:dual-via-trans-alg}. 

\begin{defn}\label{def:ig-defined} Let $\ig$ be the $\cG$-spectrum
\begin{equation}\label{eq:the-big-kahuna}
\ig = \hocolim \Sigma^\infty_+ B\Gamma_{i}
\end{equation}
where the colimit is over the transfer maps and in the category of $p$-complete spectra; that is, we are taking
the $p$-completion of the ordinary homotopy colimit. 
\end{defn}

\begin{prop}\label{lem:transfer-hom} We have the following calculations in homology.

\begin{enumerate}
\item The map induced in homology by the transfer
\[
\tr_\ast:H_k (B\Gamma_{i},\FF_p) \longr H_k (B\Gamma_{i+1},\FF_p)
\]
is zero if $k \ne d$ and an isomorphism if $k=d$.

\item $H_k \ig = 0$ unless $k =d$ and $H_d\ig \cong \FF_p$. In particular, $\ig$ has the homotopy type of a 
$p$-complete $d$-sphere. 
\end{enumerate}
\end{prop}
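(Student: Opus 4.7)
The approach is to deduce part (1) from Poincar\'e duality combined with the Frobenius-reciprocity diagram \eqref{eq:trans-res-1a} of \Cref{prop:frob-in-all-glory}, and then derive part (2) from a Hurewicz-type identification.

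For part (1), I first verify that each uniformly powerful $\Gamma_i$ is orientable modulo $p$, in the sense that $D_p(\Gamma_i)/p \cong \FF_p$ as a trivial $\Gamma_i$-module. By \Cref{prop:ipg-is-top-ext}, $D_p(\Gamma_i) \cong \Lambda^d \gg^*$ with action $\det\circ\mathrm{Ad}$. Since $\Gamma_i$ is uniformly powerful, the adjoint representation lands in $1+pM_d(\ZZ_p)$ (or $1+4M_d(\ZZ_2)$ if $p=2$), and the determinant of any such element is congruent to $1$ modulo $p$; this gives the claimed mod-$p$ triviality. Together with \Cref{rem:trans-fund} (identifying $D_p(\Gamma_{i+1})$ with the restriction of $D_p(\Gamma_i)$), a projective-resolution argument, reducing mod $p$ since $D_p(\Gamma_i)$ is $\ZZ_p$-free of rank one, yields
\[
\Tor_s^{\Gamma_i}(D_p(\Gamma_i), \FF_p) \cong H_s(\Gamma_i, \FF_p) \quad\text{and}\quad \Tor_s^{\Gamma_{i+1}}(D_p(\Gamma_i), \FF_p) \cong H_s(\Gamma_{i+1}, \FF_p).
\]
Substituting these into \eqref{eq:trans-res-1a} with $M=\FF_p$, the resulting diagram identifies $\tr_*\colon H_s(\Gamma_i,\FF_p)\to H_s(\Gamma_{i+1},\FF_p)$ with $\res\colon H^{d-s}(\Gamma_i,\FF_p)\to H^{d-s}(\Gamma_{i+1},\FF_p)$. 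For $s=d$ the right-hand map is the identity on $H^0=\FF_p$, so $\tr_*$ is an isomorphism; for $0\le s<d$, part (2) of \Cref{thm:coh-gammai} shows restriction is zero, so $\tr_*$ vanishes; and for $s>d$ both sides vanish because $H^*(\Gamma_i,\FF_p)=\Lambda(V_i)$ is concentrated in degrees at most $d$. The remaining ingredient is the standard identification of the topological transfer on homology with the algebraic transfer of \Cref{sec:dualityandfrob}, which follows levelwise from the corresponding identification for finite quotients and then passes through the homotopy limit.

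For part (2), each $\Sigma^\infty_+ B\Gamma_i$ is connective and $p$-complete, so $\ig$ is a connective $p$-complete spectrum. Since mod-$p$ homology commutes with homotopy colimits and with $p$-completion,
\[
H_k(\ig,\FF_p) \cong \colim_i H_k(\Gamma_i,\FF_p),
\]
which by part (1) equals $\FF_p$ for $k=d$ and zero otherwise. A Hurewicz argument then produces a map $(S^d)^\wedge_p\to \ig$ realizing a generator of $H_d(\ig,\FF_p)$; being a mod-$p$ homology isomorphism between bounded-below $p$-complete spectra, it is an equivalence.

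The main obstacle is the mod-$p$ orientability verification: while \Cref{rem:when-finite-coh} records integral orientability for the main examples $\Gl_n(\ZZ_p)$ and $\GG_n$, a general compact $p$-adic analytic group need not be orientable over $\ZZ_p$, and one must pass through the mod-$p$ determinant argument above to force $D_p(\Gamma_i)/p$ to be trivial. With this step in place, the rest of the proof is essentially formal from the duality machinery developed in \Cref{sec:dualityandfrob}.
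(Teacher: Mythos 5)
Your proof is correct and takes essentially the same route as the paper: Poincar\'e duality via the Frobenius-reciprocity square \eqref{eq:trans-res-1a} converts the transfer on homology into the restriction on cohomology, part (2) of \Cref{thm:coh-gammai} kills that map in positive degrees, and a Hurewicz/Whitehead argument finishes part (2). One small remark: the mod-$p$ orientability step you flag as ``the main obstacle'' is in fact automatic and does not need the determinant computation -- $\Gamma_i$ is a pro-$p$ group, so every continuous character $\Gamma_i \to \FF_p^\times$ (a group of order $p-1$, prime to $p$) is trivial, and hence $D_p(\Gamma_i)/p$ is a trivial $\Gamma_i$-module for any compact $p$-adic analytic group; this is why the paper treats the identification of $\Tor_s^{\Gamma_i}(D_p(\Gamma_i),\FF_p)$ with $H_s(\Gamma_i,\FF_p)$ as implicit.
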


\begin{proof} Using \Cref{prop:frob-in-all-glory} the map induced by the
transfer is isomorphic, via Poincar\'e Duality, to the map induced in cohomology by restriction
\[
H^{k-d} (B\Gamma_{i},\FF_p) \longr H^{k-d} (B\Gamma_{i+1},\FF_p).
\]
By Part (2) of \Cref{thm:coh-gammai} the map $H^{\ast} (B\Gamma_{i},\FF_p) \to H^{\ast} (B\Gamma_{i+1},\FF_p)$ is
zero in positive degrees, and the result follows from the Whitehead Theorem for connective spectra.
\end{proof}

We can use these same methods to build a $p$-complete $\cG$ sphere from the Lie algebra $\gg$ of $\cG$.
(The Lie algebra and its properties were discussed in \Cref{def:lie-algebra} and material following that
definition.) Specifically, we use the abelian groups $p^i\gg \subseteq \gg$ in place of $\Gamma_i$, and form the $p$-complete spectrum
\begin{equation}\label{eq:sg-defined}
S^\gg =  \hocolim \Sigma^\infty_+ B(p^i\gg).
\end{equation}
As in \Cref{def:ig-defined}, the colimit is taken over transfers in the category of $p$-complete spectra.
The action of $\cG$ on $S^\gg$ is through the adjoint representation. See \Cref{rem:adjoint-action}.

The following is the analog of \Cref{lem:transfer-hom}.

\begin{lem}\label{lem:transfer-hom-ad} We have the following calculations in homology.

\begin{enumerate}
\item The map induced in homology by the transfer
\[
\tr_\ast:H_k (Bp^i\gg,\FF_p) \longr H_k (Bp^{i+1}\gg,\FF_p)
\]
is zero if $k \ne d$ and an isomorphism if $k=d$.

\item $H_k S^\gg = 0$ unless $k =d$ and $H_d S^\gg\cong \FF_p$. In particular, $S^\gg$ has the homotopy type of a 
$p$-complete $d$-sphere. 
\end{enumerate}
\end{lem}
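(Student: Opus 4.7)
The plan is to mirror the proof of \Cref{lem:transfer-hom} essentially verbatim, replacing the nested system $\{\Gamma_i\}$ by the nested system $\{p^i\gg\}$. The key preliminary observation is that each $p^i\gg \cong \ZZ_p^d$ is itself a uniformly powerful pro-$p$ group of rank $d$, since it is abelian pro-$p$ and the multiplication-by-$p$ map satisfies the conditions of \Cref{def:uniformly powerful}. Moreover its lower $p$-series is precisely $\{p^{i+j}\gg\}_{j\geq 0}$: we have $[p^i\gg,p^i\gg]=0$ and $(p^i\gg)^p = p^{i+1}\gg$. In particular, \Cref{thm:coh-gammai} applies: $H^\ast(Bp^i\gg,\FF_p)\cong \Lambda(V_i)$ with $V_i\cong \FF_p^d$, and the inclusion $p^{i+1}\gg\hookrightarrow p^i\gg$ induces the zero map on $H^k$ for $k>0$.

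For part (1), I would use Frobenius reciprocity combined with Poincar\'e duality (\Cref{prop:frob-in-all-glory}, using that $p^i\gg$ is a Poincar\'e duality group of dimension $d$ by \Cref{prop:when-g-pd}) to identify the transfer
\[
\tr_\ast\colon H_k(Bp^i\gg,\FF_p)\longr H_k(Bp^{i+1}\gg,\FF_p)
\]
with the restriction map
\[
\res\colon H^{d-k}(Bp^i\gg,\FF_p)\longr H^{d-k}(Bp^{i+1}\gg,\FF_p).
\]
The preliminary observation above forces this restriction to vanish whenever $d-k>0$; for $k>d$ both sides are zero by cohomological dimension; and for $k=d$ the restriction is the identity on $H^0\cong\FF_p$. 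This gives part (1).

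Part (2) then follows by invoking \Cref{prop:inv-to-homology} (or, equivalently, the Whitehead theorem for $p$-complete bounded-below spectra): the telescope defining $S^\gg$ has $\FF_p$-homology computed by the colimit of $H_\ast(Bp^i\gg,\FF_p)$ along the transfer, and by part (1) this colimit is $\FF_p$ concentrated in degree $d$. Hence $S^\gg$ is a $p$-complete $d$-sphere.

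The only delicate point -- and it is a minor one, already implicit in the proof of \Cref{lem:transfer-hom} -- is the identification of the topologically defined transfer \eqref{eq:tr-gammas} used in the construction of $S^\gg$ with the algebraic transfer to which \Cref{prop:frob-in-all-glory} refers. This is the standard comparison of the Becker-Gottlieb transfer on classifying spaces with the cohomological transfer, applied to the finite quotients $p^i\gg/p^{i+j}\gg$ and then passed through the homotopy inverse limit defining $Bp^i\gg$; no new ideas beyond those already used for $\ig$ are needed.
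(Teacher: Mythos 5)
Your proposal is correct and matches the paper's intended argument (the paper gives no separate proof, simply declaring \Cref{lem:transfer-hom-ad} the analog of \Cref{lem:transfer-hom}); your explicit observation that each $p^i\gg\cong\ZZ_p^d$ is uniformly powerful of rank $d$ with lower $p$-series $\{p^{i+j}\gg\}_{j\geq 0}$ is exactly what lets \Cref{thm:coh-gammai} and \Cref{prop:frob-in-all-glory} apply verbatim. One small nitpick: \Cref{prop:inv-to-homology} concerns an inverse limit and is not the right citation for part (2); all you need there is that $\FF_p$-homology commutes with the filtered homotopy colimit defining $S^\gg$, together with the Whitehead theorem for $p$-complete connective spectra, which your parenthetical already supplies.
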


The next result identities the $\cG$ action on $H_\ast (\ig,\ZZ_p)$ and $H_\ast (S^\gg,\ZZ_p)$. As these
are both $p$-complete $d$-spheres, we have $H_\ast (\ig,\ZZ_p) \cong \ZZ_p \cong H_\ast (S^\gg,\ZZ_p)$. 

\begin{prop}\label{lem:top-action} There are canonical isomorphisms of $\cG$-modules
\[
H_d (\ig,\ZZ_p) \cong \Lambda^d \gg \cong H_d (S^\gg,\ZZ_p),
\]
where $\Lambda^d \gg$ is the top exterior power of the adjoint representation.
\end{prop}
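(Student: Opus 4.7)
The plan is to match, in all three cases $H_d(\ig,\ZZ_p)$, $H_d(S^\gg,\ZZ_p)$, and $\Lambda^d\gg$, the continuous characters $\cG\to \ZZ_p^\times$ giving the $\cG$-action on these rank one $\ZZ_p$-modules. By \Cref{lem:transfer-hom} and \Cref{lem:transfer-hom-ad}, both $\ig$ and $S^\gg$ are $p$-complete $d$-spheres, so all three abelian groups are abstractly isomorphic to $\ZZ_p$, and the real content of the statement is the comparison of $\cG$-actions.

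For $S^\gg$ this is a direct computation. Each $p^i\gg$ is an abelian pro-$p$ group of rank $d$, so the Pontryagin product gives a $\cG$-equivariant identification
\[
H_d(B(p^i\gg),\ZZ_p) \cong \Lambda^d(p^i\gg),
\]
and multiplication by $p^{id}$ being $\cG$-equivariant makes this canonically the same $\cG$-module as $\Lambda^d\gg$, with the action coming from the adjoint representation. The transfers of \eqref{eq:tr-gammas} are $\cG$-equivariant isomorphisms on $H_d(-,\ZZ_p)$ by the integral lift of \Cref{lem:transfer-hom-ad}, so the identification passes to the colimit to give $H_d(S^\gg,\ZZ_p)\cong \Lambda^d\gg$.

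For $\ig$ the route is through Serre's formula \Cref{prop:dual-via-trans-alg} and its refinement \Cref{rem:dual-via-trans-alg-two}, which together give a $\cG$-equivariant isomorphism
\[
\colim_\tr H_d(\Gamma_i,\ZZ/p^\infty) \cong I_p(\cG) \cong \Lambda^d\gg \otimes_{\ZZ_p} \ZZ/p^\infty.
\]
Because $\FF_p$-homology, and hence $\ZZ/p^\infty$-homology, is insensitive to $p$-completion, the left-hand colimit computes $H_d(\ig,\ZZ/p^\infty)$. Since $\ig$ is a $d$-sphere, the universal coefficient theorem supplies a $\cG$-equivariant identification $H_d(\ig,\ZZ/p^\infty)\cong H_d(\ig,\ZZ_p)\otimes_{\ZZ_p}\ZZ/p^\infty$, and the character of any rank one free $\ZZ_p$-module is recovered from its action on $\ZZ/p^\infty$ via $\Aut(\ZZ/p^\infty)\cong \ZZ_p^\times$. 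This yields $H_d(\ig,\ZZ_p)\cong \Lambda^d\gg$ as $\cG$-modules.

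The principal difficulty is ensuring that every step in the $\ig$ argument is genuinely equivariant: one must check that Serre's formula uses the $\cG$-action on the groups $H_d(\Gamma_i,\ZZ/p^\infty)$ coming from conjugation on $\Gamma_i$, and that this matches the restriction of the $\cG$-action on $\ig$ itself. The naturality of the transfer (\Cref{rem:naturality-tr}) together with the compatibility machinery of \Cref{sec:Frob-resp}, especially \Cref{prop:conj2b} and \Cref{cor:how-use-this}, are designed precisely to handle this bookkeeping.
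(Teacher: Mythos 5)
Your proof is correct and follows the same basic strategy as the paper, but with a slightly different packaging of the coefficients for $\ig$ that is worth noting. For $S^\gg$ your argument and the paper's are the same: by \Cref{lem:transfer-hom-ad} the transfers are isomorphisms on $H_d$, so the colimit is computed by any one term, and $H_d(B(p^i\gg),\ZZ_p)\cong \Lambda^d(p^i\gg)\cong\Lambda^d\gg$ as $\cG$-modules. For $\ig$ the paper takes the route $H_d(\ig,\ZZ_p)\cong\lim_j\colim_\tr H_d(\Gamma_i,\ZZ/p^j)$ and then invokes \Cref{rem:dual-via-trans-alg-two} at each finite stage, whereas you work with divisible coefficients: you invoke Serre's formula \Cref{prop:dual-via-trans-alg} directly (which is stated exactly in the homological form $\colim_\tr H_d(\Gamma_i,\ZZ/p^\infty)\cong I_p(\cG)$ you need), identify $I_p(\cG)$ with $\Lambda^d\gg\otimes\ZZ/p^\infty$ using \Cref{prop:ipg-is-top-ext}, and then recover the $\ZZ_p$-module structure by the observation that the tautological map $\Aut(\ZZ_p)\to\Aut(\ZZ/p^\infty)$ is an isomorphism $\ZZ_p^\times\to\ZZ_p^\times$, so that the character is faithfully detected on $-\otimes\ZZ/p^\infty$. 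Both routes use the same inputs; yours has the mild advantage that \Cref{prop:dual-via-trans-alg} is already phrased with $H_d$, so you avoid the step of translating between $H^d$ and $H_d$ that the paper's citation of \Cref{rem:dual-via-trans-alg-two} implicitly requires. Your closing observations about where the genuine equivariance checks live (\Cref{prop:conj2b}, \Cref{cor:how-use-this}, \Cref{rem:naturality-tr}) are accurate; that work was done in the lead-up and is not repeated in either your proof or the paper's.
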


\begin{proof} For $\ig$ we have 
\[
H_d (\ig,\ZZ_p) \cong \lim_j H_d(\ig,\ZZ/p^j)  \cong \lim_j \colim_\tr H_d(\Gamma_i,\ZZ/p^j).
\]
Now apply \Cref{rem:dual-via-trans-alg-two}. 

For $S^\gg$ we argue slightly differently. By \Cref{lem:transfer-hom-ad} we have
\[
H_d (S^\gg,\ZZ_p) \cong H_d (B\gg,\ZZ_p) \cong \Lambda^d \gg,
\]
as $H_1(B\gg,\ZZ_p) \cong \gg$.
\end{proof}

\Cref{lem:top-action} suggests the following conjecture.
\bigskip

\noindent{\bf Linearization Hypothesis:} There is a $\cG$-equivariant 
equivalence $S^\gg \simeq \ig$.
\bigskip

This conjecture appeared in the work of Clausen \cite[\S 6.4]{Clausen}. Furthermore, \cite{OWS} contains an outline of an argument for showing this conjecture in full generality, including a discussion of what category is a natural home for this equivalence.

If we restrict to a finite subgroup $F \subseteq \cG$, we can regard $\ig$ and $S^\gg$ as objects in
some standard category of spectra with an $F$-action, such as the functor category from the classifying space $BF$ to spectra, or the localization of (any model of) genuine $F$-spectra at the underlying equivalences. Then we have the following conjectural statement we can attack with neoclassical techniques, as we do in the next few sections. 
\bigskip

\noindent{\bf Finite Linearization Hypothesis:} For every finite subgroup $F \subseteq \cG$ there is an $F$-equivariant 
equivalence $S^\gg \simeq \ig$.
 % !TEX root = dsphere-master.tex

\section{Restricting to finite subgroups}\label{sec:resfinite}

In this section we reduce the problem of producing an $F$-equivariant equivalence $\ig \to S^\gg$ for a finite subgroup of $\cG$ to a 
calculation of modules over the Steenrod algebra. The main result is \Cref{thm:whod-have-believed-it}.

\begin{notation}
Both $p$-complete and non-$p$-complete spectra will appear in this section, so we will be suitably decorating the $p$-complete 
objects. For spaces $X$ and $Y$, we will write $\maps(X,Y)$ for the unpointed mapping space, and $\maps_*(X,Y)$ for the pointed 
mapping space (if $X,Y$ are based). The mapping space of spectra will be denoted by $\mapsp(-,-)$. For spaces
we will write
\[
[X,Y] = \pi_0\maps_\ast(X,Y)
\]
for {\it based} homotopy classes of maps. 
\end{notation}

\begin{rem}\label{rem:point-or-not}
Recall that if $X$ and $Y$ are pointed spaces then evaluation at the basepoint of $Y$ gives a fiber sequence
\[
\maps_\ast (X,Y) \longrightarrow \maps(X,Y) \longrightarrow Y
\]
with a section given by the constant maps. When $Y$ is a loop space, this gives a splitting $\maps(X,Y) \simeq
\maps_*(X,Y) \times Y$. Therefore, if $Y$ is connected, then
\[
[X,Y] = \pi_0\maps_\ast(X,Y) \cong \pi_0\maps(X,Y).
\]
\end{rem}
 
\subsection{$F$-spheres} Let $F$ be a finite group. An $F$-{\it sphere} is an $F$-spectrum $X$ so that
the underlying spectrum is equivalent to a $p$-complete  sphere $S^k_p$, $k \in \ZZ$. We call $k$ the virtual dimension of $X$.
An $F$-equivariant map $X \to Y$ of two $F$-spheres is an $F$-{\it equivalence} if the underlying map of spectra is an 
equivalence.  Let $\Sp_F$ be the set of $F$-equivariant equivalence classes of $F$-spheres. This a group under smash 
product in the $p$-complete stable category.

We have the following classical result.

\begin{lem}\label{lem:f-spheres-1} Let $\haut(S^0_p)$ be the space of self homotopy equivalence of the $p$-complete sphere 
spectrum $S^0_p$. There is an isomorphism of abelian groups
\begin{align*}
\Sp_F &\cong \pi_0\maps(BF, \Z \times B\haut(S^0_p))
\end{align*}
so that an $F$-sphere corresponds to an unbased map $BF \to \ZZ \times B\haut(S^{0}_p)$, which in turn classifies a stable,
fiberwise $p$-complete  spherical fibration over $BF$. We also have
\begin{align*}
\Sp_F &\cong [BF_+,\ZZ \times B\haut(S^0_p)]\\
& \cong \ZZ\times [BF, B\haut(S^0_p)].
\end{align*}
\end{lem}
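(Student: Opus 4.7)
The plan is to identify an $F$-sphere with the stable spherical fibration over $BF$ given by the Borel construction, and to compute the relevant classifying space.

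First, I would invoke the standard fact that the $\infty$-category of $F$-spectra localized at underlying (non-equivariant) equivalences is equivalent to the functor $\infty$-category $\mathrm{Fun}(BF, \mathrm{Sp}^{\wedge}_p)$, where $BF$ is viewed as a one-object $\infty$-groupoid and $\mathrm{Sp}^{\wedge}_p$ denotes $p$-complete spectra. Under this identification, an $F$-sphere corresponds to a functor out of $BF$ landing in the core subgroupoid $\mathcal{S}$ spanned by the $p$-complete spheres $S^k_p$, $k \in \Z$, and underlying $F$-equivalences become natural equivalences of such functors. Thus $\Sp_F \cong \pi_0 \mathrm{Fun}(BF, \mathcal{S})$.

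Second, I would compute the classifying space $|\mathcal{S}|$. Since $\pi_0 \mathcal{S} \cong \Z$ (virtual dimension), and suspension induces equivalences $\haut(S^k_p) \simeq \haut(S^0_p)$, we have
\[
|\mathcal{S}| \simeq \coprod_{k\in\Z} B\haut(S^k_p) \simeq \Z \times B\haut(S^0_p).
\]
Combined with the previous step this yields the first asserted isomorphism, with an $F$-sphere $X$ going to the map classifying the stable spherical fibration $EF_+ \wedge_F X \to BF$. Compatibility with the abelian group structure on $\Sp_F$ coming from smash product follows because smash products of $F$-spheres translate, under the Borel construction, to fiberwise smash products of spherical fibrations, classified by the canonical $H$-space multiplication on $\Z \times B\haut(S^0_p)$.

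The remaining two isomorphisms are formal. From $\maps_*(BF_+, Y) = \maps(BF, Y)$ one reads off $\pi_0 \maps(BF, \Z \times B\haut(S^0_p)) = [BF_+, \Z \times B\haut(S^0_p)]$. Since $BF$ is connected, maps into $\Z \times B\haut(S^0_p)$ factor through a single slice $\{k\} \times B\haut(S^0_p)$, giving a splitting $\pi_0 \maps(BF, \Z \times B\haut(S^0_p)) \cong \Z \times \pi_0 \maps(BF, B\haut(S^0_p))$. Finally, $\haut(S^0_p)$ is a grouplike $E_\infty$-space, so $B\haut(S^0_p)$ is a connected infinite loop space, and \Cref{rem:point-or-not} identifies $\pi_0 \maps(BF, B\haut(S^0_p))$ with $[BF, B\haut(S^0_p)]$. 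I do not foresee a serious obstacle; the only step requiring real care is the foundational equivalence of step one, which is standard in any $\infty$-categorical or point-set model of equivariant spectra.
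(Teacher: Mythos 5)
Your proposal is correct and takes essentially the same route as the paper, which simply cites the literature: Cooke's theorem for the classical statement and, in a footnote, Lurie's HTT Proposition 4.2.4.4 for precisely the $\infty$-categorical identification of actions with functors out of $BF$ that you spell out in your first step. Your write-up is a reasonable unpacking of what those references say, and your handling of the remaining two isomorphisms matches the paper's appeal to \Cref{rem:point-or-not} and the infinite-loop-space structure on $\ZZ \times B\haut(S^0_p)$.
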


\begin{proof} The first isomorphism goes back to Theorem 1.1 of \cite{CookeAction} and has been 
extensively  explored and generalized in the follow-up literature. See, for example, Theorem 1.2 of \cite{DDKProc}.\footnote{
In later developments, an action of $F$ on $X$ is {\it defined} to be a map from $BF$ to the classifying space
of self-equivalences of $X$. The connection to our definition can be found in Proposition 4.2.4.4 of \cite{HigherTopos}; see 
also Remark 1.2.6.2 of the same source.}

For the isomorphisms of the second equation use \Cref{rem:point-or-not} and the fact that $\ZZ \times B\haut(S_p^0)$
is an infinite loop space.
\end{proof}

\begin{rem} The isomorphism of \Cref{lem:f-spheres-1} allows for very explicit constructions. For example, if $X \in \Sp_F$, then the 
Thom spectrum of the associated spherical fibration is equivalent to the homotopy orbits $EF_+ \wedge_F X$. 
\end{rem}

As just noted, $\ZZ \times B\haut(S_p^0)$ is an infinite loop space; indeed, it is weakly equivalent to is the Picard space
$\sPic(S^0_p)$ of invertible $p$-complete spectra. Thus, there is a spectrum $\pic(S^0_p)$ and an equivalence
$\ZZ \times B\haut(S_p^0) \simeq \Omega^\infty \pic(S^0_p)$, yielding an isomorphism
\[
\Sp_F \cong [\Sigma^\infty_+BF,\pic(S^0_p)].
\]
Compare \Cref{rem:why-pic}. 

The spectrum $\pic(S^0_p)$ is not $p$-complete, but is nearly so, and it will be useful to have some control
over the difference. Note that
\[
\pi_0\haut(S^0_p) \cong \ZZ_p ^\times \cong
\begin{cases}
\{1+p\ZZ_p\} \times C_{p-1}&\mathrm{if}\ p > 2;\\
\\
\{1+4\ZZ_2\} \times \{\pm 1\} &\mathrm{if}\ p = 2.
\end{cases}
\]
where $C_{p-1}$ is a cyclic group of order $p-1$. There are isomorphisms $1+p\ZZ_p \cong \ZZ_p$ and $1+4\ZZ_2 \cong \ZZ_2$
using a logarithm.

\begin{lem}\label{lem:p-failure} If $p =2$, then $B\GL1(S_2^0)$ is $2$-complete. If $p>2$ the canonical map
\[
B\GL1(S_p^0) \longr BC_{p-1}
\]
has $p$-complete homotopy fiber. 
\end{lem}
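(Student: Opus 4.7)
The plan is to compute the homotopy groups of $B\GL1(S_p^0)$ and then invoke the standard criterion that a nilpotent space is $p$-complete if and only if all of its homotopy groups are Ext-$p$-complete (equivalently, derived $p$-complete). Since $\GL1(S_p^0)$ is an infinite loop space, $B\GL1(S_p^0)$ is a simple space, so nilpotency is automatic. Its homotopy groups are
\[
\pi_1 B\GL1(S_p^0) \cong \pi_0 \GL1(S_p^0) \cong \ZZ_p^\times
\]
and, for $k \geq 2$, $\pi_k B\GL1(S_p^0) \cong \pi_{k-1}(S_p^0)$. For $k \geq 2$ these are the $p$-primary parts of the finitely generated stable stems, hence finite $p$-groups, which are automatically $p$-complete. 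Thus the question reduces entirely to the behavior of $\pi_1$.

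For $p=2$, the given decomposition $\ZZ_2^\times \cong \{\pm1\} \times (1+4\ZZ_2) \cong C_2 \times \ZZ_2$ shows that $\pi_1$ is a finitely generated $\ZZ_2$-module, hence $2$-complete. Combined with the previous paragraph, every homotopy group of $B\GL1(S_2^0)$ is $2$-complete, so $B\GL1(S_2^0)$ is $2$-complete, establishing the first claim.

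For $p>2$, the obstruction is the tame factor: in $\ZZ_p^\times \cong C_{p-1} \times (1+p\ZZ_p)$, the group $C_{p-1}$ has order prime to $p$ and is therefore not $p$-complete (its $p$-completion is trivial). The canonical projection $\ZZ_p^\times \twoheadrightarrow C_{p-1}$ is realized by a map $B\GL1(S_p^0) \to BC_{p-1}$, since $BC_{p-1} = K(C_{p-1},1)$ is an Eilenberg-MacLane space and the map is determined by the induced homomorphism on $\pi_1$. Let $F$ denote its homotopy fiber; the long exact sequence of homotopy groups gives $\pi_1 F \cong 1+p\ZZ_p \cong \ZZ_p$ and $\pi_k F \cong \pi_k B\GL1(S_p^0)$ for $k \geq 2$. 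All of these groups are $p$-complete (a finitely generated $\ZZ_p$-module in degree $1$, finite $p$-groups in higher degrees), so $F$ is $p$-complete. The main (minor) technical point is simply to invoke the Bousfield-Kan characterization of $p$-complete nilpotent spaces; the only truly structural input is the splitting of $\ZZ_p^\times$ into its $p$-complete and prime-to-$p$ parts, which is precisely what forces one to quotient out $C_{p-1}$ when $p>2$ and not when $p=2$.
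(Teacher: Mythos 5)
Your proof is correct and follows essentially the same route as the paper's: compute the homotopy groups of $B\GL1(S_p^0)$, observe that in degrees $\geq 2$ they are finite $p$-groups and in degree $1$ they become a finitely generated $\ZZ_p$-module once the prime-to-$p$ factor $C_{p-1}$ is quotiented out, and invoke Bousfield's criterion that a nilpotent space with Ext-$p$-complete homotopy groups is $p$-complete. The paper's version is more compressed, summarizing the homotopy-group calculation as ``for $k\geq 1$ the map $\pi_k\haut(S^0)\to\pi_k\haut(S_p^0)$ is $p$-completion'' and leaving the remaining bookkeeping implicit, but the substance is the same.
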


\begin{proof} By definition $\haut(X)$ is a set of components in the mapping space $\mapsp(X,X)$ and, hence, for
any basepoint and any $k \geq 1$ we have
\[
\pi_k\haut(X) \cong \pi_k\mapsp(X,X) \cong [\Sigma^kX,X].
\]
From this we can conclude that for $k \geq 1$ the map
\[
\pi_k \haut(S^0) \to \pi_k\haut(S_p^0)
\]
is $p$-completion. The result follows. 
\end{proof}

If $X$ is an $F$-sphere of virtual dimension $k$, then $F$ acts on $H_k(X,\ZZ) \cong \ZZ_p$. We write 
\[
\phi_X:F \longr \ZZ_p^\times
\]
for the resulting character. The map
\[
f_X:BF \longr \{k\} \times B\haut(S^0_p) \subseteq \ZZ \times B\haut(S^0_p)
\]
induces $\phi_X$ on the fundamental group. There is also an action on $H_k(X,\FF_p) \cong \FF_p$. If $p=2$, this
action is necessarily trivial, and if $p > 2$ we write 
\[
\psi_X:F \longr \FF_p^\times = C_{p-1}
\]
for this character. Note that $\psi_X$ is obtained from $\phi_X$ by composing with the quotient map $\ZZ_p^\times \to 
\FF_p^\times$. 

\begin{lem}\label{lem:red-to-sylow} Let $F$ be a finite group and $F_0 \subseteq F$ a $p$-Sylow subgroup. Then
two $F$-spheres $X$ and $Y$ are $F$-equivalent if and only if
\begin{enumerate}

\item $X$ and $Y$ are equivalent as $F_0$-spheres and,

\item if $p > 2$, $\psi_X= \psi_Y:F \to C_{p-1}$.
\end{enumerate}
\end{lem}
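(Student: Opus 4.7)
The ``only if'' direction is immediate from naturality: an $F$-equivalence restricts to an $F_0$-equivalence, and when $p>2$ its induced isomorphism on top $\FF_p$-homology intertwines the $F$-action, forcing $\psi_X = \psi_Y$. The substance is the converse. I would use \Cref{lem:f-spheres-1} to identify $\Sp_F \cong [\Sigma^\infty_+ BF, \pic(S^0_p)]$ as an abelian group, reducing to showing that $Z := X \wedge Y^{-1}$ is trivial in $\Sp_F$ whenever it has virtual dimension zero, trivial character (for $p > 2$), and trivial restriction to $F_0$.

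To isolate the relevant $p$-complete piece of $\pic(S^0_p)$, I would set $T = H\ZZ \oplus \Sigma HC_{p-1}$ (dropping the second summand if $p=2$) and let $\cE$ be the fiber of the natural map $\pic(S^0_p) \to T$ realizing the virtual dimension on $\pi_0$ and the projection $\ZZ_p^\times \to C_{p-1}$ on $\pi_1$. Then $\cE$ is connective, and $\Omega^\infty \cE$ is precisely the $p$-complete space from \Cref{lem:p-failure} (namely $B\GL1(S^0_2)$ at $p=2$, or the fiber of $B\GL1(S^0_p) \to BC_{p-1}$ at odd $p$). Since a connective spectrum is $p$-complete iff its infinite loop space is, $\cE$ itself is $p$-complete.

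Applying $[\Sigma^\infty_+ BF, -]$ to the fiber sequence $\cE \to \pic(S^0_p) \to T$ yields the exact sequence
\[
H^0(BF; C_{p-1}) \xrightarrow{\partial} \cE^0(BF) \to \Sp_F \to \ZZ \oplus \Hom(F, C_{p-1}),
\]
whose first group is $C_{p-1}$ (or $0$ at $p=2$). Since $Z$ maps to zero on the right, it lifts to some $\xi \in \cE^0(BF)$. The restriction $\xi|_{BF_0}$ lifts $Z|_{F_0} = 0$ and hence lies in the image of $\partial_0 \colon H^0(BF_0;C_{p-1}) \to \cE^0(BF_0)$. Since $BF$ and $BF_0$ are connected, restriction is the identity on constant $C_{p-1}$-coefficients, so by naturality of $\partial$ I can find $j \in \mathrm{image}(\partial) \subseteq \cE^0(BF)$ with $j|_{BF_0} = \xi|_{BF_0}$; replacing $\xi$ by $\xi - j$ (still a lift of $Z$) arranges $\xi|_{BF_0} = 0$.

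The finish is the standard transfer argument. Let $\iota \colon BF_0 \hookrightarrow BF$ and let $\mathrm{tr} \colon \Sigma^\infty_+ BF \to \Sigma^\infty_+ BF_0$ be the associated stable transfer; then $\iota \circ \mathrm{tr}$ is multiplication by $[F:F_0]$, a unit on any $\ZZ_p$-module. Since $\cE$ is $p$-complete, $\cE^0(BF)$ is a $\ZZ_p$-module, so $\mathrm{tr}^\ast \circ \iota^\ast$ is invertible and the restriction $\iota^\ast \colon \cE^0(BF) \to \cE^0(BF_0)$ is injective. Therefore $\xi = 0$, $Z$ is trivial, and $X \simeq Y$ as $F$-spheres. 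The hard part will be the naturality step absorbing the $C_{p-1}$-indeterminacy of the lift $\xi$; once that is set up, the transfer relation and $p$-completeness of $\cE$ close the proof.
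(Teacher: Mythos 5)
Your proposal is correct and is essentially the paper's argument: your $\cE$ is the paper's spectrum $A$ (fiber of $\bone \to \Sigma HC_{p-1}$), the reduction to $\cE^0(BF)$ matches the paper's formation of the difference map $g$ and its factoring $g_1$ through $A_0 = \Omega^\infty\cE$, and the finish is the same transfer-splitting argument using that $\cE$ is $p$-complete (\Cref{lem:p-failure}). One simplification you may want to note: the ``$C_{p-1}$-indeterminacy'' step you flag as the hard part is actually vacuous, because the connecting map $\partial\colon H^0(BF;C_{p-1}) \to \cE^0(BF)$ is a homomorphism from a group of order prime to $p$ into a $\ZZ_p$-module, hence zero — so the lift $\xi$ is already unique.
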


\begin{proof} One implication is clear: if $X$ is equivalent to $Y$ as an $F$-sphere, then the two listed conditions must 
hold. We work on the other implication.

Since $X$ and $Y$ are equivalent as $F_0$ spheres, they have the same virtual dimension. Because 
$\ZZ \times B\haut(S_p^0)$ is an infinite loop space, we can form the difference map
\[
g \colon = f_X - f_Y\colon BF \longr B\haut(S_p^0) = \{0\} \times B\haut(S_p^0) \subseteq \ZZ \times B\haut(S_p^0)
\]
and we need only show this map is null-homotopic.  As in \Cref{lem:p-failure}, let $C= C_{p-1}$ if $p>2$; let $C$ be trivial if $p=2$.
Let $A_0$ be the homotopy fiber of the map $B\haut(S_p^0) \to BC$. Then (2) implies that there is a (unique) factoring
of $g$ as in the diagram
\[
\xymatrix{   & BF \ar[d]^-g \ar[dl]_{g_1}& \\
A_0 \ar[r] & B\Gl_1(S_p^0) \ar[r] & BC.
}
\]
We will will show $g_1$ is null-homotopic. Since the order of $C$ is prime to $p$, we have $[BF_0,A_0] \cong [BF_0,B\Gl_1(S_p^0)]$.
Hence (1) implies that the composition
\[
\xymatrix{
BF_0 \ar[r] & BF \ar[r]^{g_1} & A_0
}
\]
is null-homotopic.

Let $\bone$ be the zero connected cover of $\pic(S_p^0)$; thus, $\Omega^\infty\bone \simeq B\haut(S_p^0)$.
Let $A$ be the fiber of the map $\bone \to \Sigma HC$; then $\Omega^\infty A \simeq A_0$ and $g_1$
is adjoint to a map $h_1:\Sigma_+^\infty BF \to A$. By \Cref{lem:p-failure}, $A_0$ is $p$-complete; hence $A$
is $p$-complete as well. Since $F_0$ is a $p$-Sylow subgroup of $F$ and $A$ is $p$-complete, the restriction map
\[
[\Sigma^\infty_+ BF,A] \longr [\Sigma^\infty_+BF_0,A]
\]
is an injection; the transfer for the inclusion $F_0 \subseteq F$ gives a splitting.
Since $h_1$ maps to zero under this map, we must have $h_1=0$, and hence $g_1$ is null as well. 
\end{proof} 

\begin{rem}\label{rem:these-hyp-hold} Our main examples of $F$-spheres are the two spheres constructed in \Cref{sec:TwoSpheres}. 
When trying to compare $\ig$ with $S^\gg$, the condition (2) of \Cref{lem:red-to-sylow} holds 
automatically. See \Cref{lem:top-action}.
\end{rem}

We now begin to specialize $F$. We let $\SL1(X)$ be the component of the identity in $\haut(X)$.

\begin{lem}\label{lem:comp-is-harmless} Let $F$ be a finite $p$-group. Then the map $\haut(S^0) \to \haut(S_p^0)$ induced from the completion $S^0 \to S^0_p$ induces a weak equivalence
\[
\maps_\ast (BF,B\haut(S^0)) \simeq \maps_\ast (BF,B\haut(S_p^0)).
\]

\end{lem}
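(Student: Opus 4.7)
The plan is to reduce the question to a spectrum-level computation. Write $\bone = \Sigma\gl_1 S^0$ and $\bone^p = \Sigma\gl_1 S^0_p$, so that $B\haut(S^0) = \Omega^\infty \bone$ and $B\haut(S^0_p) = \Omega^\infty \bone^p$, and the map in the lemma is $\Omega^\infty$ applied to a map of connective spectra $\bone \to \bone^p$. Using the adjunction
\[
\maps_\ast(BF, \Omega^\infty X) \simeq \Omega^\infty \mapsp(\Sigma^\infty BF, X),
\]
it will suffice to show that the induced map $\mapsp(\Sigma^\infty BF, \bone) \to \mapsp(\Sigma^\infty BF, \bone^p)$ is an isomorphism on $\pi_i$ for all $i \geq 0$.

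Let $K$ be the fiber of $\bone \to \bone^p$. The map $\gl_1 S^0 \to \gl_1 S^0_p$ is the inclusion $\{\pm 1\} \hookrightarrow \mathbb{Z}_p^\times$ on $\pi_0$ and the surjection $\pi_k S^0 \twoheadrightarrow (\pi_k S^0)_{(p)}$ onto the $p$-primary part for $k \geq 1$. The long exact sequence of homotopy groups then gives $\pi_k K = 0$ for $k \leq -1$ and for $k = 1$; $\pi_0 K \cong \mathbb{Z}_p^\times/\{\pm 1\}$; and $\pi_k K \cong (\pi_{k-1} S^0)_{(p')}$, a finite group of order coprime to $p$, for $k \geq 2$. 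By the induced long exact sequence applied to $\mapsp(\Sigma^\infty BF, -)$, the lemma reduces to the vanishing statement $\pi_i \mapsp(\Sigma^\infty BF, K) = 0$ for all $i \geq -1$.

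To establish this, I would invoke the Atiyah--Hirzebruch spectral sequence
\[
E_2^{s,t} = \widetilde H^s(BF, \pi_t K) \Longrightarrow \pi_{t-s}\mapsp(\Sigma^\infty BF, K),
\]
and verify that every entry with $t - s \geq -1$ vanishes. Entries at $s = 0$ contribute only through reduced cohomology $\widetilde H^0 = 0$. For $t \geq 2$ the coefficient $\pi_t K$ is finite of order coprime to $p$, so $H^s(BF, \pi_t K) = 0$ for all $s \geq 1$ since $F$ is a finite $p$-group. The only entry in the range involving $\pi_0 K$ is $(s,t) = (1,0)$, contributing to $\pi_{-1}$; here $H^1(BF, \pi_0 K) \cong \Hom(F^{ab}, \mathbb{Z}_p^\times/\{\pm 1\})$, and this vanishes because $F^{ab}$ is a finite $p$-group while $\mathbb{Z}_p^\times/\{\pm 1\}$ is the direct sum of a torsion-free $\mathbb{Z}_p$ and (for $p$ odd) a cyclic group of order coprime to $p$, neither of which admits nonzero homomorphisms from a finite $p$-group.

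The main subtle point is exactly the presence of the $\mathbb{Z}_p$-summand in $\pi_0 K$: this summand is not coprime-to-$p$-torsion, and indeed $H^s(BF, \mathbb{Z}_p)$ is in general nonzero for $s \geq 2$. However, such AHSS entries contribute only to $\pi_{t-s}$ with $t - s \leq -2$, which lies outside the range $t - s \geq -1$ needed for the desired equivalence of $0$-spaces; within the required range the sole contribution of $\pi_0 K$ occurs at $(1,0)$, where it is killed by the torsion/rank mismatch just noted.
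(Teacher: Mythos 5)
Your proof is correct, but it takes a different route from the paper's. The paper factors the map $B\haut(S^0) \to B\haut(S^0_p)$ through the fiber sequence with fiber $B\SL1$ and base $B\pi_0\haut$, $p$-completes, uses that $B\SL1(S^0_p)$ is already $p$-complete to land on a fiber sequence $B\haut(S^0)_p \to B\haut(S^0_p)_p \to B\ZZ_p$, reduces to the contractibility of $\maps_\ast(BF, B\ZZ_p)$, and then removes the $p$-completion using that $BF$ has no rational or prime-to-$p$ homology. You instead pass directly to the spectrum level: you compute the homotopy groups of the fiber $K$ of $\Sigma \gl_1(S^0) \to \Sigma \gl_1(S^0_p)$ and kill the relevant homotopy of $F(\Sigma^\infty BF, K)$. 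Both arguments rest on the same two algebraic facts --- that $\pi_k S^0 \to \pi_k S^0_p$ is $p$-completion for $k\geq 1$, so the error in positive degrees is prime-to-$p$ torsion and invisible to $BF$; and that $\ZZ_p^\times/\{\pm 1\}$ has no $p$-torsion, so $\Hom(F^{ab},\ZZ_p^\times/\{\pm 1\})=0$ --- but you package them on the spectrum side, which avoids the detour through $B\SL1$ and the space-level $p$-completion step at the end. The one place a careful reader might ask for a word is the convergence of your Atiyah--Hirzebruch spectral sequence; this can be bypassed entirely: since $\pi_t K$ is finite and prime to $p$ for $t\geq 1$ (and $\pi_1 K=0$), applying $F(\Sigma^\infty BF,-)$ to the Postnikov tower of $K$ produces a tower in which every map is an equivalence, so $F(\Sigma^\infty BF, K) \simeq F(\Sigma^\infty BF, H(\ZZ_p^\times/\{\pm 1\}))$, and your observation that $H^1(BF,\ZZ_p^\times/\{\pm 1\})=0$ finishes the argument.
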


\begin{proof} For $k \geq 1$ the map of homotopy groups based at the identity
\[
\pi_k \haut(S^0) \to \pi_k\haut(S_0^p)
\]
is $p$-completion. If we set $C_2  =  \{ \pm 1\}$, there is a short exact sequence
\[
0 \to \pi_0 \haut(S^0) \cong C_2 \to \pi_0\haut(S_0^p) \cong \ZZ_p^\times \to \ZZ_p^\times/C_2 \to 0.
\]
We have a diagram of fiber sequences
\begin{align*}
\xymatrix{
B\SL1(S^0) \ar[r]\ar[d]_i & B\haut(S^0) \ar[r]\ar[d]_j \ar@{}[dr]|{(\star)} & BC_2 \ar[d]\\
B\SL1(S^0_p) \ar[r] & B\haut(S^0_p) \ar[r] & B\Z_p^\times.
}
\end{align*}
This is a diagram of infinite loop spaces so we still have an analogous diagram of fiber sequences after $p$-completion. The space
$B\SL1(S^0_p)$ is already $p$ complete so the map $i$ defines an equivalence
\[
B\SL1(S^0)_p \simeq B\SL1(S^0_p).
\]
Hence, upon $p$ completion the square $(\star)$ becomes the homotopy pullback square
\[
\xymatrix{
B\haut(S^0)_p \ar[r]\ar[d]_j \ar@{}[dr]|{(\star\star)}& (BC_2)_p \ar[d]\\
B\haut(S^0_p)_p \ar[r] & (B\Z_p^\times)_p.
}
\]

At odd primes, $(BC_2)_p \simeq \ast$ and $B(\Z_p^\times)_p \simeq B\Z_p$, so $(\star\star)$ reduces to a fiber sequence
\begin{align}\label{eq:SLtoGLp}
B\haut(S^0)_p \longrightarrow B\haut(S^0_p)_p \longrightarrow B\Z_p.
 \end{align}
When $p=2$, the spaces $BC_2$ and $B\Z_2^\times$ are already $2$-complete. Since $(\star\star)$ is a pullback square, 
we get a fiber sequence \eqref{eq:SLtoGLp} at the prime $2$ as well.

At all primes, we have $\maps_\ast (BF,B\ZZ_p) \simeq \holim \maps_\ast (BF,B\ZZ/p^k)$. Hence, for $i\geq 0$,
\[
\pi_i \maps_\ast (BF,B\ZZ_p) \cong \lim_k \tilde{H}^{1-i}(BF,\ZZ/p^k) = 0,
\]
so $\maps_\ast (BF,B\ZZ_p)$ is contractible.

 We now take pointed maps from $BF$ to \eqref{eq:SLtoGLp}
to get an equivalence
\[
\maps_\ast(BF, B \haut(S^0)_p) \to \maps_\ast(BF, B \haut(S^0_p)_p).
\]
Finally, to get the claim, notice that since $BF$ has no $\FF_\ell$ homology for $\ell \ne p$, and no rational homology, we also have
\[
\maps_\ast(BF,X) \simeq \maps_\ast(BF,X_p)
\]
for any space $X$.
\end{proof}

\begin{rem}\label{rem:comp-is-harmless-1} As an addendum to \Cref{lem:comp-is-harmless} we examine
the role of the fundamental group of $B\haut(S^0)$. There is a split fiber sequence
\[
B\SL1(S^0)\ \longr B\haut(S^0)\ \longr BC_2 .
\]
Since $B\haut(S^0)$ is an infinite loop space, we obtain
a weak equivalence of spaces
\[
B\SL1(S^0) \times BC_2 \simeq B\haut(S^0).
\]
Hence, for any pointed  connected space $X$, we have an equivalence 
\begin{equation}\label{eq:split-for-oriented}
\maps_\ast(X,B\SL1(S^0)) \times \Hom(\pi_1X,C_2) \simeq \maps_\ast(X,B\haut(S^0)). 
\end{equation}
Here we use that $\maps_\ast(X,BC_2)$ has contractible components and  
\[\pi_0\maps_\ast(X,BC_2)\cong  \Hom(\pi_1X,C_2).\]
If $F$ is an elementary abelian $p$-group and $p > 2$ we then obtain
an equivalence
\[
\maps_\ast (BF,B\SL1(S^0)) \simeq \maps_\ast (BF,B\haut(S^0)).
\]
If $p=2$ then \eqref{eq:split-for-oriented} becomes
\[
\maps_\ast(BF,B\SL1(S^0)) \times \Hom(F,C_2) \simeq \maps_\ast(BF,B\haut(S^0)). 
\]
There is a similar decomposition for maps into $BO$ and, in fact, the map $BO \to B\haut(S^0)$ induces a commutative
diagram
\[
\xymatrix{
\maps_\ast(BF,BSO) \times \Hom(F,C_2)\ar[d] \ar[r]^-\simeq & \maps_\ast(BF,BO)\ar[d]\\
\maps_\ast(BF,B\SL1(S^0)) \times \Hom(F,C_2) \ar[r]^-\simeq & \maps_\ast(BF,B\haut(S^0)).
}
\]
In particular, we get that for a $p$-group $F$, 
\begin{align}\label{eq:p-groupSpheres}
\Sp_F \cong \Z \times \Hom(F,C_2) \times [BF, B\SL1(S^0)].
\end{align}
Thus we need to concentrate on computing $[BF, B\SL1(S^0)]$, which is isomorphic to the unpointed homotopy classes of maps
$\pi_0\maps(BF, B\SL1(S^0))$. See \Cref{rem:point-or-not}.
\end{rem}

\subsection{Interlude on characteristic classes and Steenrod operations}\label{rem:pont-steenrod}
The comparison of $F$-spheres and vector bundles on $BF$ will play a crucial role below, when we specialize $F$ even further to 
an elementary abelian $p$-group and reduce the calculation of $F$-spheres to a cohomology calculation. We will use the theory 
developed by Lannes \cite{Lannes}, giving us that it is sufficient to understand $H^\ast (B\SL1(S^0))/J$ where $J$ is the ideal
of nilpotent elements. As a prelude we pull together what we need from the literature on characteristic classes. 

Let $\xi$ be a stable real vector  bundle of virtual dimension $n$ over a space $X$ and let $M\xi$
be the Thom spectrum. If $p > 2$, we assume $\xi$ is oriented. Let $U \in H^n(M\xi,\FF_p)$
be the Thom class; then the Thom isomorphism is given by the cup product
\[
U \smallsmile (-) = U\cdot (-):H^k(X,\FF_p) \to H^{k+n}(M\xi,\FF_p).
\]
If $p=2$ one can \emph{define} the Steifel-Whitney classes $w_i(\xi)$ using Steenrod operations
\begin{equation}\label{eq:def-swi}
U\cdot w_i(\xi) = \mathrm{Sq}^iU.
\end{equation}
This is the approach in \cite[\S 4]{MS}. Then we have $H^\ast (BO,\FF_2) \cong \FF_2[w_1,w_2,\ldots]$
where $w_i = w_i(\gamma)$, with $\gamma$ the universal stable bundle over $BO$.

If $p > 2$, the relationship between Pontrjagin classes and Steenrod operations is more complicated. Recall that
if $\xi$ is a real vector bundle over $X$, then the Pontrjagin classes are defined in terms of Chern classes:
\[
p_i(\xi) = (-1)^i c_{2i}(\xi \otimes_\RR \CC) \in H^{4i}(X,\ZZ_{(p)}).
\]
Then $H^\ast (BSO, \ZZ_{(p)}) = \ZZ_{(p)}[p_1,p_2,\ldots]$, where $p_i $ is the Pontrjagin class of the universal
bundle over $BSO$. 

If $\gamma_1$ over $BU(1) = BSO(2) = \CC\mathrm{P}^\infty$ is the universal complex line bundle, then
$H^\ast (BU(1),\ZZ_{(p)}) \cong \ZZ_{(p)}[c]$, where $c= c_1(\gamma_1)$. We have $p_1(\gamma_1) = c^2$, and we write
\[
t = c^2 \in H^4(BSO(2),\ZZ_{(p)}).
\]
We then get a map $BSO(2)^n \to BSO$ classifying $\gamma_1^{\times n}$, which on cohomology factors as
\[
\xymatrix{
H^\ast (BSO,\ZZ_{(p)}) \ar[r] & \ZZ_{(p)}[t_1,\cdots,t_n]^{\Sigma_n} \ar[r]^-\subseteq & H^\ast (BSO(2)^n, \ZZ_{(p)}).
}
\]
The first of these maps is an isomorphism in degrees $\ast \leq 4n$; this initiates the classical analysis of Pontrjagin classes using
symmetric polynomials. By the Cartan formula for Pontrjagin classes, each $p_i$ maps to the $i$th elementary symmetric
polynomial in the $t_j$. 

If $\xi$ is an oriented real $n$-bundle over a space $X$, we define a cohomology class
\[
q_n(\xi) \in H^{2n(p-1)}(X,\FF_p)
\]
by
\begin{equation}\label{eq:def-qi}
U\cdot q_n(\xi) = P^nU
\end{equation}
where $P^n$ is the $n$th odd-primary Steenrod operation. By considering the universal case over $X=BSO$, we see that
$q_n(\xi)$ must be a polynomial in the Pontrjagin classes $p_i(\xi)$. More specifically, we have
the following result.

\begin{prop}\label{eq:pont-and-steen-1} Let $p = 2r+1$, so $4r= 2(p-1)$. Then there is a congruence
\begin{equation}\label{eq:pont-and-steen}
q_n(\xi) \equiv (-1)^{n(r+1)} rp_{rn}(\xi)\qquad \mathrm{modulo}\qquad I_{rn}
\end{equation}
where $I_{rn}$ is the ideal generated by the Pontrjagin classes $p_i(\xi)$, $i < rn$. 
\end{prop}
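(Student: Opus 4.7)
The plan is to reduce to the universal case over $BSO$ and then exploit the splitting principle. Both sides of \eqref{eq:pont-and-steen} are universal polynomials in the Pontrjagin classes, and for fixed degree the map $H^\ast(BSO,\FF_p) \to \FF_p[t_1,\ldots,t_N]^{\Sigma_N}$ classifying $\gamma_1^{\times N}$ (a sum of $N$ copies of the tautological complex line bundle over $\CC \mathrm{P}^\infty$) is injective once $N$ is large enough; concretely, $N \geq rn$ suffices so that $p_{rn}$ is nonzero. So it is enough to verify the congruence for $\xi = \bigoplus_{i=1}^N L_i$, where $p_i(\xi) = \sigma_i(t_1,\ldots,t_N)$ and $t_i = c_1(L_i)^2$.

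Next I compute $q_n$ on such a bundle. The Thom class $U_\xi$ is the external product of the Thom classes $U_i$ of the $L_i$; applying the Cartan formula to $P^n U_\xi$ shows that $q_n$ is multiplicative in the sense that the total class factors as
\[
\sum_{n \geq 0} q_n(\xi)\, x^n \;=\; \prod_{i=1}^{N}\Bigl(\sum_{k \geq 0} q_k(L_i)\, x^k\Bigr).
\]
For a single line bundle $L$, the Thom isomorphism identifies $U$ with $c = c_1(L) \in H^2$, and the instability of $P^k$ on a degree-two class gives $P^1 c = c^p$ and $P^k c = 0$ for $k \geq 2$. Therefore $q_0(L) = 1$, $q_1(L) = c^{p-1} = c^{2r} = p_1(L)^r$, and $q_k(L) = 0$ for $k \geq 2$, whence
\[
q_n(\xi) \;=\; e_n(t_1^r,\ldots, t_N^r),
\]
the $n$-th elementary symmetric polynomial evaluated at the $r$-th powers of the $t_i$.

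The final step extracts the coefficient of $p_{rn}$ modulo $I_{rn}$. Since $\gcd(r,p) = 1$, the polynomial $y^r - 1$ splits with distinct roots over $\overline{\FF_p}$, so $1 - y^r x^r = \prod_{\zeta^r = 1}(1 - \zeta y x)$. Substituting $y = t_i$ and multiplying over $i$ yields the generating-function identity
\[
\sum_{n \geq 0} (-1)^n q_n(\xi)\, x^{rn} \;=\; \prod_{\zeta^r = 1}\Bigl( \sum_{j \geq 0} (-1)^j \zeta^j p_j(\xi)\, x^j \Bigr).
\]
Working modulo $I_{rn}$, the only tuples $(j_1,\ldots,j_r)$ with $\sum j_i = rn$ that survive are those with exactly one $j_i$ equal to $rn$ and the remaining entries equal to $0$. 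Each such choice contributes $(-1)^{rn}\zeta^{rn} p_{rn} = (-1)^{rn} p_{rn}$ since $\zeta^r = 1$, and summing over the $r$ choices of root gives $r(-1)^{rn} p_{rn}$. Equating coefficients of $x^{rn}$ produces $(-1)^n q_n \equiv r(-1)^{rn} p_{rn} \pmod{I_{rn}}$, and rearranging gives the desired formula $q_n \equiv (-1)^{n(r+1)} r\, p_{rn} \pmod{I_{rn}}$.

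The argument is essentially formal once the splitting $1 - y^r x^r = \prod_\zeta (1 - \zeta y x)$ is brought in; the main point where one has to be careful is the sign bookkeeping, which is confirmed by the sanity checks at $r = 1$ (yielding $q_n = p_n$ outright) and at $r = 2$ (where $\prod_i (1 - t_i x)(1 + t_i x)$ expands directly to give $q_n \equiv (-1)^n \cdot 2\, p_{2n}$).
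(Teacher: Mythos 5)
Your proof is correct, and it diverges from the paper's argument in the final step in an interesting way. Both proofs establish the same key intermediate identity, equation (6.10) in the paper, namely $\sum_{n\geq 0} q_n(\xi) = \prod_i (1 + t_i^r)$, which in the universal case over a sum of line bundles says $q_n(\xi) = e_n(t_1^r,\ldots,t_N^r)$; the paper cites Wu (via Milnor--Stasheff) and also sketches the Cartan formula argument that you spell out in full. The divergence is in how the coefficient of $p_{rn}$ is read off modulo the decomposable ideal $I_{rn}$. The paper lifts the identity to $\ZZ_{(p)}$, applies the logarithm, and invokes Girard's (Newton's) identities to write both $p_k$ and $\overline{q}_k$ modulo decomposables as power sums divided by $k$, then reduces modulo $p$. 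You instead stay in characteristic $p$ but pass to $\overline{\FF_p}$, factor $1 - t_i^r x^r = \prod_{\zeta^r=1}(1 - \zeta t_i x)$ using the separability of $y^r - 1$ (valid because $\gcd(r,p)=1$), and extract the coefficient directly from the resulting product of Pontrjagin generating series; the $r$ surviving monomials, one for each root of unity, each contribute $(-1)^{rn}p_{rn}$ since $\zeta^{rn}=1$, giving the factor of $r$. Your route avoids the integral lift entirely, at the cost of passing to an algebraic closure -- harmless here since the final expression is visibly Galois-invariant and lands in $\FF_p$. Both are clean; yours is arguably more self-contained since it does not appeal to the Newton-identity formalism, and the sanity checks at $r=1,2$ that you record give a useful confidence check on the sign bookkeeping. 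The one place to be slightly careful, which you handle correctly, is that the splitting-principle reduction requires $N \geq rn$ so that the comparison map is injective in the degree $4rn$ of $p_{rn}$.
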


\begin{proof} This is originally due to Wu \cite{WuII}. However the result can be deduced by putting together
various ideas from Milnor-Stasheff \cite{MS}; we now go through this exercise. As usual, it is sufficient to do the universal
example.

First, we assert
\begin{equation}\label{eq:symm-in-q}
1+ q_1 + q_2 + \cdots + q_n + \cdots = (1+t_1^r)\cdots (1+t_{rn}^r)
\end{equation}
in $\FF_p[t_1,\ldots,t_{rn}]^{\Sigma_{rn}} = \FF_p[p_1,\ldots,p_{rn}]$. This is in the proof of
\cite[Theorem 19.7]{MS}, explicitly credited there to Wu.
It can also be seen by observing that since the Steenrod operations have a Cartan formula, we have that
\[
q_i(\xi \times \zeta) = \sum_{j+k=i} q_j(\xi) \times q_k(\zeta).
\]
Then to prove \eqref{eq:symm-in-q} we need only note that $q_1(\gamma_1) = t^r \in H^{4r}(BSO(2),\FF_p)$ and
that $q_i(\gamma_1) = 0$ for $i > 1$. 

Note that the right hand side of \eqref{eq:symm-in-q}
is the reduction of an obvious integral polynomial and we define integral lifts $\overline{q}_i$ of the classes 
$q_i$ by the formula
\[
1+ \overline{q}_1 + \overline{q}_2 + \cdots + \overline{q}_n + \cdots = (1+t_1^r)\cdots (1+t_{rn}^r)
\in \ZZ_{(p)}[t_1,\ldots,t_{rn}]^{\Sigma_{rn}}.
\]
We may equally regard these as elements in  $R := \QQ_{(p)}[t_1,\ldots,t_{rn}]^{\Sigma_{rn}}$. 

To finish the argument, we use a variant of Girard's formula, as in \cite[Problem 16A]{MS}. Apply the logarithm to the formula
\[
1+ p_1 + p_2 + \cdots + p_n = (1+t_1)\cdots (1+t_n)
\]
to get that $p_k \equiv (-1)^{k+1} \sum t_i^k/k$ modulo decomposables in the $p_i$ in $R$. If we then apply the logarithm
to the formula
\[
1+ \overline{q}_1 + \overline{q}_2 + \cdots + \overline{q}_n + \cdots = (1+t_1^r)\cdots (1+t_{rn}^r)
\]
we get $\overline{q}_k \equiv (-1)^{k+1} \sum_{i=1}^{rn} t_i^{rk}/k$ modulo decomposables in the $\overline{q}_i$, so also
modulo decomposables in the $p_i$. This then gives
\[
(-1)^{k(r+1)} r p_{rk} \equiv (-1)^{k(r+1)} (-1)^{rk+1}  r \sum_{I=1}^{rn} t_i^{rk} /{rk} \equiv \overline{q}_k.
\]
This is an integral formula, so we can reduce modulo $p$ to get \eqref{eq:pont-and-steen}
\end{proof}

We now give a construction to realize the classes $q_i$ as generators for the cohomology of a space. This
is a variation on the discussion of \cite[Lecture 4]{AdamsBattelle}. 
After we complete at an odd prime $p$, we have stable Adams operations
\[
\psi^k\colon BU_p \longr BU_p,\qquad k \in \ZZ_p^\times.
\]
The Adams summand of $BU_p$ can be realized as the homotopy fixed points $BU_p^{hC_{p-1}}$ of the cyclic group
$C_{p-1}= \FF_p^\times \subseteq \ZZ_p^\times$. There is an equivalence
\[
BU_p^{hC_{p-1}} \simeq BX(p-1)
\]
where $BX(p-1)$ is the colimit of classifying spaces of certain $p$-compact groups. This space is discussed in detail
by Castellana \cite{Castellana}.

\begin{prop}\label{prop:bxp-random} We have the following calculations in cohomology.
\begin{enumerate}
\item The map $BX(p-1)\simeq BU_p^{hC_{p-1}} \to BU_{p}$ induces an isomorphism
\[
H^\ast (BU,\FF_p)/I \cong H^\ast BX(p-1) \cong \FF_p[c_{(p-1)},c_{2(p-1)},\ldots]
\]
where $I \subseteq H^\ast(BU)$ is the ideal generated by the Chern classes $c_n$ with
$n \not\equiv 0$ modulo $(p-1)$. 

\item Let $f:BSO \to BU$ classify the complexification of the universal bundle. Then
\[
f^\ast: H^\ast (BU,\ZZ_{(p)})   \cong \ZZ_{(p)}[c_1,c_2,\cdots]\to H^\ast BSO \cong \ZZ_{(p)}[p_1,p_2,\cdots]
\]
sends $c_{2n+1}$ to zero and $c_{2n}$ to $(-1)^np_n$. Furthermore $f$ induces an equivalence
\[
BSO_p \simeq BU_p^{hC_2}.
\]

\item There is an equivalence $BX(p-1) \simeq BSO_p^{hC_{(p-1)/2}}$ and an isomorphism of unstable
algebras over the Steenrod algebra
\[
\FF_p[q_1,q_2,\ldots] \cong H^\ast BX(p-1)
\]
where $q_i \in H^{2i(p-1)}BSO$  is the class of \eqref{eq:def-qi} defined using Steenrod operations. 
\end{enumerate}
\end{prop}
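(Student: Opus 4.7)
The plan is to prove the three parts in order: (1) by a homotopy-fixed-point argument using Adams operations, (2) by standard complexification formulas, and (3) by combining (1) and (2) with a change-of-generators argument using Wu's formula from the preceding proposition.

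For part (1), view $\psi^k$ for $k \in C_{p-1} = \FF_p^\times \subseteq \ZZ_p^\times$ as giving a $C_{p-1}$-action on $BU_p$ and form the homotopy fixed points. Since $\lvert C_{p-1}\rvert$ is coprime to $p$, the homotopy-fixed-point spectral sequence collapses and computes $H^\ast(BU_p^{hC_{p-1}}, \FF_p)$ from $C_{p-1}$-invariants on $H^\ast(BU_p, \FF_p)$. By Adams' formula $\psi^k c_n = k^n c_n$ (which follows from splitting into Chern roots with $\psi^k t_j = k t_j$), the kernel of $H^\ast BU_p \to H^\ast BU_p^{hC_{p-1}}$ contains the ideal $I$ generated by Chern classes $c_n$ with $(p-1) \nmid n$. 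Castellana's direct computation in \cite{Castellana} identifies $BX(p-1)$ with $BU_p^{hC_{p-1}}$ and establishes $H^\ast BX(p-1) \cong \FF_p[c_{p-1}, c_{2(p-1)}, \ldots]$ with generators the images of the corresponding Chern classes; combining these yields that the kernel is exactly $I$ and gives the desired isomorphism.

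For part (2), the identities $f^\ast c_{2n+1} = 0$ and $f^\ast c_{2n} = (-1)^n p_n$ are classical: complexifying a real bundle $\xi$ produces a bundle whose Chern roots come in pairs $\pm i t_j$ (with $t_j^2$ the Pontrjagin roots), so $c(\xi \otimes_\RR \CC) = \prod(1 - t_j^2)$ produces only even Chern classes with the stated signs. The equivalence $BSO_p \simeq BU_p^{hC_2}$ comes from identifying the $C_2$-action on $BU_p$ with complex conjugation $\psi^{-1}$; at odd primes the difference between $BSO$ and the fixed points of complex conjugation on $BU$ is $2$-torsion, which vanishes after $p$-completion.

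For part (3), combine (1) and (2) via the short exact sequence $1 \to C_2 \to C_{p-1} \to C_{(p-1)/2} \to 1$ and iterated homotopy fixed points to obtain
\[
BX(p-1) \simeq (BU_p^{hC_2})^{hC_{(p-1)/2}} \simeq BSO_p^{hC_{(p-1)/2}}.
\]
The $C_{(p-1)/2}$-action on $H^\ast(BSO, \FF_p) = \FF_p[p_1, p_2, \ldots]$ sends $p_n \mapsto k^{2n} p_n$ (using $f^\ast c_{2n} = (-1)^n p_n$ and $\psi^k c_{2n} = k^{2n} c_{2n}$); as $k^2$ ranges over the squares in $\FF_p^\times$, a subgroup of order $r = (p-1)/2$, the invariant subalgebra is generated by $p_{rn}$ for $n \geq 1$. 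By Proposition~\ref{eq:pont-and-steen-1},
\[
q_n \equiv (-1)^{n(r+1)} r\, p_{rn} \pmod{I_{rn}},
\]
with $r \not\equiv 0 \pmod p$, so $\{q_n\}$ and $\{p_{rn}\}$ are related by a triangular invertible change of polynomial generators. Since the $q_n$ are defined via Steenrod operations on the universal Thom class, this yields an isomorphism $\FF_p[q_1, q_2, \ldots] \cong H^\ast BX(p-1)$ of unstable algebras over the Steenrod algebra. The main obstacle will be rigorously setting up the $C_{p-1}$-action on $BU_p$ as a genuine group action (so that the iterated homotopy fixed points are well-defined) and verifying $BSO_p \simeq BU_p^{hC_2}$ at odd primes; once those are in hand, the rest is clean invariant theory combined with Wu's formula.
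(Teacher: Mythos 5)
Your overall strategy matches the paper's: part (1) via Adams operations $\psi^k c_n = k^n c_n$ on $H^\ast BU_p$, part (2) via $k=-1$ and classical complexification formulas, part (3) via iterating homotopy fixed points along $C_2 \subset C_{p-1}$ and then changing generators from $\{p_{rn}\}$ to $\{q_n\}$ using Wu's congruence \Cref{eq:pont-and-steen-1}, with $r$ a unit mod $p$ making the change of generators invertible and triangular. You also spell out the invariant-theory details in (3) that the paper elides, which is a welcome improvement.

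The one place to be careful is your assertion in part (1) that ``the homotopy-fixed-point spectral sequence collapses and computes $H^\ast(BU_p^{hC_{p-1}}, \FF_p)$ from $C_{p-1}$-invariants.'' For a \emph{spectrum} with a $G$-action of order prime to $p$, the fixed-point spectral sequence collapses and gives $\pi_\ast E^{hG} \cong (\pi_\ast E)^G$ and likewise in homology; but for a \emph{space} $X$ there is no such direct spectral sequence identifying $H^\ast(X^{hG};\FF_p)$ with invariants of $H^\ast(X;\FF_p)$, and indeed that statement fails for general spaces. The paper's proof handles this by invoking a result specific to connected \emph{infinite loop spaces}: the map $X^{hG}\to X$ induces $H^\ast X/I(G) \cong H^\ast X^{hG}$, deduced via the graded Dieudonn\'e module functor (the footnote in the paper: $D_\ast H_\ast X^{hG} \cong [D_\ast H_\ast X]^G$). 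That Hopf-algebraic input is what lets you pass from a statement about the spectrum $\ku_p$ to one about the space $BU_p$. Your proposal does not make this step precise; however, you also offer the alternative of citing Castellana's direct computation of $H^\ast BX(p-1)$, which covers the gap. Just be aware that without either the Dieudonn\'e module argument or Castellana's calculation, the ``spectral sequence collapses'' claim for a space is not a valid step. The same caveat applies to the identification $BSO_p \simeq BU_p^{hC_2}$ in part (2): one should either use the infinite-loop-space fixed-point lemma applied with $k=-1$ as the paper does, or give a separate argument that the classical map $BSO_p \to BU_p^{hC_2}$ induces a cohomology isomorphism.
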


\begin{proof} If $X$ is a connected  infinite loop space and $G$ is a finite group of order prime to $p$ acting on $X$, then the map
$X^{hG} \to X$ induces an isomorphism
\begin{equation}\label{eq:coho-orbit-infinite}
H^\ast (X,\FF_p)/I(G) \cong H^\ast (X^{hG},\FF_p)
\end{equation}
where $I(G)$ is the ideal generated by elements of the form $g_\ast x - x$ with $x \in H^\ast (X,\FF_p)$. If found nowhere else,
this can be deduced using the techniques of \cite{Goe}.\footnote{What can be proved easily from \cite{Goe} is that 
$D_\ast (H_\ast X^{hG}) \cong [D_\ast H_\ast X]^G$ where $D_\ast(-)$ is the graded Dieudonn\'e module 
functor. The assertion here
can be deduced from that.}

If $L$ is a line bundle, then $\psi^k(L) = L^{\otimes k}$. Hence, $\psi^k_\ast c_1(L) = kc_1(L)$. The splitting principle
then implies $\psi^k_\ast c_n = k^n c_n$. Thus if $k$ is any integer which generates $\FF_p^\times$, the ideal generated
by $(\psi^k_\ast -1)H^\ast BU$ in $H^\ast BU$ is generated by $c_n$ with $n \not\equiv 0$ modulo $(p-1)$. The
first statement follows. 

For $k = -1$, the ideal  $(\psi^{-1}_\ast -1)H^\ast BU$ in $H^\ast BU$ is generated by the odd-dimensional Chern classes
$c_{2n+1}$. Since $p_n = (-1)^n c_{2n}$, the second statement follows from \eqref{eq:coho-orbit-infinite}.

The final statement is the iteration of homotopy fixed points and \eqref{eq:pont-and-steen}.
\end{proof}

\subsection{Characteristic classes and $H^\ast B\haut(S^0)$}

The following result is crucial for us. At $p=2$ it can be found in \cite{MadsenMilgram}; for $p > 2$ it can 
be found in \cite{Tsuchiya}. Definitive references for both results are \cite[Theorems II.5.1 and II.5.2]{CLM}.

\begin{thm}\label{thm:coho-mod-nil} Let $J \subseteq H^\ast( B\haut(S^0),\FF_p)$ be the ideal of nilpotent elements.

\begin{enumerate}
\item If $p=2$, the map $BSO \to B\SL1(S^0)$ induces an isomorphism 
\[
H^\ast (B\SL1(S^0),\FF_2)/J \cong H^\ast (BSO,\FF_2).
\]

\item If $p > 2$, the composition
\[
BX(p-1)_p = BSO_p^{hC_{(p-1)/2}} \to BSO_p \to B\SL1(S^0)_p
\]
induces an isomorphism
\[
H^\ast (B\SL1(S^0),\FF_p)/J \longr H^\ast (BX(p-1),\FF_p).
\]
\end{enumerate}
\end{thm}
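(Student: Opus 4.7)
The strategy I would follow is the classical approach of Madsen-Milgram at $p=2$ and Tsuchiya at odd primes, which proceeds by a direct (and highly technical) computation of $H^\ast(B\SL1(S^0);\FF_p)$ as a ring. First I would reduce from $B\haut(S^0)$ to $B\SL1(S^0)$ using the fibration $B\SL1(S^0) \to B\haut(S^0) \to B\pi_0\haut(S^0)$; at $p=2$ the base is $BC_2$ whose cohomology contributes a polynomial generator in degree $1$ that one tracks separately, while at odd primes $\pi_0\haut(S^0)/p \cong \FF_p^\times$ has trivial mod-$p$ cohomology. This reduces the problem to computing $H^\ast(B\SL1(S^0);\FF_p)/J$.

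Next I would identify $\SL1(S^0)$ with the identity component $Q_1 S^0 \simeq Q_0 S^0$ of $QS^0 = \Omega^\infty S^0$ under the multiplicative infinite loop space structure. The key input is the Dyer-Lashof / Cohen-Lada-May description of $H_\ast(QS^0;\FF_p)$ as a polynomial (or polynomial-exterior at $p>2$) Hopf algebra freely generated from a single fundamental class by the Dyer-Lashof operations $Q^I$, with Steenrod action governed by the Nishida relations. From this one computes $H^\ast(B\SL1(S^0);\FF_p)$ via the Rothenberg-Steenrod (bar) spectral sequence, whose $E_2$-term is $\mathrm{Cotor}^{H_\ast Q_0 S^0}_\ast(\FF_p,\FF_p)$; the explicit Hopf algebra structure of the input makes this tractable, if lengthy.

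The $J$-homomorphism $j \colon BO \to B\SL1(S^0)$ (respectively the composite $BX(p-1) \to BSO \to B\SL1(S^0)_p$ at odd primes, invoking \Cref{prop:bxp-random}) supplies the candidate isomorphism. I would then establish two assertions: (a) the image of $j^\ast$ surjects onto $H^\ast(B\SL1(S^0);\FF_p)$ modulo nilpotents, which follows from a splitting-principle argument that every Stiefel-Whitney class (or Pontryagin / $q_i$ class) lifts to a polynomial generator of the target; and (b) every indecomposable generator of $H^\ast(B\SL1(S^0);\FF_p)$ outside the image of $j^\ast$ is nilpotent, which amounts to showing that all indecomposables arising from the non-linear Dyer-Lashof operations have vanishing powers. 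At odd primes, the reduction from $BSO$ to $BX(p-1)$ is then carried out by taking $C_{(p-1)/2}$-homotopy fixed points, which is cohomologically exact since $(|C_{(p-1)/2}|,p)=1$, as in the proof of \Cref{prop:bxp-random}.

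The main obstacle is assertion~(b): proving that every indecomposable generator not in the image of $j^\ast$ is nilpotent. This requires the full combinatorial force of the Nishida relations, the Kudo-Araki presentation of the Dyer-Lashof algebra, and careful tracking of extensions in the bar spectral sequence. In the published proofs \cite{MadsenMilgram} and \cite{Tsuchiya} this is where the arguments become long and delicate. For the present paper I would be content to invoke the theorems of \emph{loc.~cit.} directly, as the authors already do.
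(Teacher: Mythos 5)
The paper offers no proof of this theorem: it is stated and immediately put to use, with the result attributed to \cite{MadsenMilgram}, \cite{Tsuchiya}, and \cite[Theorems II.5.1 and II.5.2]{CLM} as the definitive reference. Your proposal correctly recognizes this, sketches the strategy underlying those references, and ends by deferring to them --- which is exactly what the paper does (a small aside: the bar spectral sequence computing $H_\ast(B\SL1(S^0))$ from the Pontryagin algebra $H_\ast Q_0S^0$ has $E_2$-term $\Tor_{H_\ast Q_0S^0}(\FF_p,\FF_p)$ rather than $\mathrm{Cotor}$, and at $p=2$ the map in the statement is from $BSO$ rather than $BO$, but neither slip affects your conclusion).
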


Below, we will use \Cref{thm:coho-mod-nil} as input along with the following result. Let $\cU$ be the category of unstable modules over the Steenrod algebra, and $\cK$ the category of unstable algebras over the Steenrod algebra.

\begin{lem}\label{lem:Lannes-Zarati} Let $g\colon A^\ast \to B^\ast$ be a surjective homomorphism of connected unstable algebras
of finite type over the Steenrod algebra and let $I^\ast$ be the kernel of $g$. If $I^\ast$ consists of nilpotent elements, then $g$ 
induces an isomorphism 
\[
\Hom_{\cK}(B^*,H^\ast BF) \cong \Hom_{\cK}(A^*,H^\ast BF)
\]
for all elementary abelian $p$-groups $F$.
\end{lem}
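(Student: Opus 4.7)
The plan is to show that the restriction map $g^\ast\colon \Hom_\cK(B^\ast,H^\ast BF)\to \Hom_\cK(A^\ast,H^\ast BF)$ is a bijection. Injectivity is automatic from the surjectivity of $g$, so the real content is surjectivity: any $\cK$-morphism $\varphi\colon A^\ast\to H^\ast BF$ must satisfy $\varphi(I^\ast)=0$. Once this vanishing is established, the unique factorization $A^\ast\to B^\ast\to H^\ast BF$ of unstable modules is automatically an algebra map because $g$ is a surjection of algebras.

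The key input I intend to use is the classical Lannes--Miller vanishing theorem at the foundation of Lannes theory \cite{Lannes}: the unstable algebra $H^\ast BF$ is \emph{reduced}, in the sense that $\Hom_\cU(N,H^\ast BF)=0$ for every $N$ lying in the Serre subcategory $\mathrm{Nil}\subset\cU$ of nilpotent unstable modules. Applied to the short exact sequence $0\to I^\ast\to A^\ast\to B^\ast\to 0$ of unstable modules, this reduces the lemma to showing that $I^\ast$ lies in $\mathrm{Nil}$.

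To verify $I^\ast\in\mathrm{Nil}$, I exploit that $I^\ast$ is a sub-$\cU$-module of $A^\ast$ all of whose elements are nilpotent. At $p=2$, the top Steenrod operation on a class $x$ of degree $n$ in an unstable algebra is $\mathrm{Sq}^n x=x^2$, so iterating gives $x^{2^k}$, and element-wise nilpotency is \emph{literally} the defining condition for membership in $\mathrm{Nil}$. At an odd prime, the identity $P^{n/2}x=x^p$ for $|x|=n$ even handles the even-degree part of $I^\ast$ by the same reasoning; for an odd-degree class $x\in I^\ast$ (for which $x^2=0$ is forced by graded commutativity), the Bockstein $\beta x$ lies in $I^\ast$ by Steenrod-stability of the kernel of a $\cK$-morphism and has even degree, reducing to the previous case. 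The main obstacle in the plan is this last bookkeeping step: translating element-wise nilpotency at odd primes into the Serre-subcategory definition of $\mathrm{Nil}$ requires some care with the Bockstein, but it is a standard manipulation once arranged this way, and everything else in the argument is formal from the reduced property of $H^\ast BF$.
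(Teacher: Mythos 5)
Your overall strategy is the same as the paper's: both arguments reduce the lemma to the vanishing $\Hom_\cU(I^\ast, H^\ast BF) = 0$, and at $p=2$ your argument coincides with the one in the text (injectivity of $\Sq_0 = (-)^2$ on $H^\ast BF$ versus its local nilpotency on $I^\ast$). The divergence is at odd primes, where $H^\ast BF$ itself contains nilpotent elements. There the paper does not give a bare-hands argument; it cites the Carlsson--Miller injectivity of $H^\ast BF$ in $\cU$, the vanishing $\widetilde{\Sigma}H^\ast BF = 0$, and Lannes--Zarati Proposition~6.1.4, which together deliver the vanishing. You instead factor the odd-prime case into two separate assertions: (i) $\Hom_\cU(N, H^\ast BF) = 0$ for all $N \in \mathrm{Nil}$, asserted as a classical consequence of Lannes theory, and (ii) $I^\ast \in \mathrm{Nil}$.

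Both pieces need repair. For (i): the odd-prime reducedness of $H^\ast BF$ as a $\cU$-module is precisely what the paper's three citations establish; it is not available for free once you notice the exterior generators are nilpotent as algebra elements, and stating it as a black box is presupposing the content of the cited Lannes--Zarati argument. For (ii): your reduction of the odd-degree case to the even-degree case is not arranged correctly. The operator whose local nilpotency characterizes membership in $\mathrm{Nil}$ at odd primes sends an even-degree class $x$ to $P^{|x|/2}x$ (which you handle correctly, since this is $x^p$ in an unstable algebra), but it sends an odd-degree class $x$ to $\beta P^{(|x|-1)/2}x$, not to $\beta x$. As written, "take $\beta x$, it's even degree, done" does not connect to the definition. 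The gap is fixable along the lines you envision: $\beta P^{(|x|-1)/2}x$ lies in $I^\ast$ (which is $\cU$-stable, being the kernel of an algebra map that commutes with Steenrod operations), has even degree, and is nilpotent as an algebra element, so the even case applies to it and one iterate later the operator kills $x$. But the operator in your sketch is the wrong one, so the "standard manipulation" you defer to is not what you wrote.
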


\begin{proof} It is sufficient to show that
$\Hom_{\cU}(I^\ast,H^\ast BF) = 0$. At the prime $p=2$ this is relatively easy to prove. Indeed, we have that the top Steenrod operation
\[
\Sq_0 = \Sq^n = (-)^2  \colon H^nBF \to H^{2n}BF 
\]
is an injection. On the other hand, for all $x \in I^\ast$ there is a $k$ so that $\Sq_0^k(x) = x^{2^k} = 0$.

If $p > 2$ the argument takes more technology because $H^\ast BF$ itself contains nilpotent elements. First we
have the Carlsson-Miller theorem that  $H^\ast BF$ is an injective object in the category $\cU$. See \cite{Carl}, \cite{Miller},
and \cite[Appendix A ]{LZ}. Second, $\widetilde{\Sigma}H^\ast BF = 0$; see the proof of \cite[Corollaire 7.2]{LZ}. Combining
these two facts with \cite[Proposition 6.1.4]{LZ} gives the result. 
\end{proof}

We now have the following remarkable facts; see \cite[Corollary 4.5]{Castellana}. In sum, we
conclude that if $F$ is an elementary abelian $p$-group, then $F$-spheres are in one-to-one correspondence with
stable  vector bundles over $BF$ and that any such vector bundle is completely determined by its Stiefel-Whitney classes or 
Pontraygin classes depending on the prime. See \Cref{rem:point-or-not}. 

\begin{thm}\label{thm:use-of-T-2} Let $p=2$ and let $F$ be an elementary abelian $2$-group. Then there
is a commutative diagram with all maps isomorphisms of sets 
\[
\xymatrix{
[BF,BO] \rto \dto & \Hom_\cK(H^\ast BO,H^\ast BF) \dto\\
[BF,B\haut(S^0)] \rto  & \Hom_\cK(H^\ast B\haut(S^0),H^\ast BF).
}
\]
\end{thm}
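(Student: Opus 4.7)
The plan is to show the four maps in the square are all isomorphisms. Commutativity holds by naturality: both horizontal maps send $f \mapsto f^\ast$, and both verticals are induced by the $J$-homomorphism $BO \to B\haut(S^0)$. The strategy is to prove three of the four maps are bijections; the fourth then follows automatically. I begin with the right vertical. The $J$-homomorphism restricts to $BSO \to B\SL1(S^0)$ on universal covers, and by \Cref{thm:coh-mod-nil}(1) the induced map $H^\ast B\SL1(S^0) \to H^\ast BSO$ is a surjection with nilpotent kernel. Propagating this through the comparison of Serre spectral sequences for the fibrations $BSO \to BO \to BC_2$ and $B\SL1(S^0) \to B\haut(S^0) \to BC_2$ yields that $H^\ast B\haut(S^0) \to H^\ast BO$ is likewise surjective with nilpotent kernel. \Cref{lem:Lannes-Zarati} then makes the right vertical map an isomorphism.

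For the top horizontal map, I would invoke Lannes's theorem: for $F$ an elementary abelian $2$-group and $BO$ (whose mod-$2$ cohomology $\FF_2[w_1, w_2, \ldots]$ is of finite type with standard unstable structure), the natural evaluation $[BF, BO] \to \Hom_\cK(H^\ast BO, H^\ast BF)$ is bijective. For the bottom horizontal, I would first pass to the $2$-completion by \Cref{lem:comp-is-harmless}, which does not alter $[BF, B\haut(S^0)]$. Lannes's theorem then applies to $B\haut(S^0)^\wedge_2$; the substance here is the control of the $T$-functor $T_F H^\ast B\haut(S^0)$, which is what Castellana's work \cite{Castellana} establishes, using the mod-nilpotents identification of \Cref{thm:coh-mod-nil} to reduce the $T_F$-computation to the classical case of $BO$.

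With three maps verified as isomorphisms, commutativity of the square forces the left vertical to be an isomorphism. The principal obstacle is executing the bottom horizontal step: standard Lannes-theoretic statements do not directly apply to spaces whose cohomology carries a substantial nilpotent ideal, as $H^\ast B\haut(S^0)$ does. The key input to bypass this is the Castellana reduction via \Cref{thm:coh-mod-nil}, which controls $T_F H^\ast B\haut(S^0)$ by reducing the computation to the classical case of $BO$; a secondary obstacle is the spectral-sequence propagation in the first step, ensuring that the ``isomorphism modulo nilpotents'' property carries over from the covers to the total spaces.
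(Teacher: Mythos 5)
Your overall strategy—prove three of the four maps are bijections and deduce the fourth—matches the paper's in spirit, and your spectral-sequence step for the right vertical map is a workable alternative to the paper's method (in fact simpler than you suggest, since $BO \simeq BSO \times BC_2$ and $B\haut(S^0) \simeq B\SL1(S^0) \times BC_2$ as infinite loop spaces by \Cref{rem:comp-is-harmless-1}, so the Serre spectral sequences collapse trivially). The invocation of \Cref{lem:Lannes-Zarati} for the right vertical is also correct.

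However, there is a genuine gap in both horizontal steps. The Lannes result cited (Th\'eor\`eme~0.4 of \cite{Lannes}, as used in the proof of \Cref{thm:use-of-T-p}) requires $Y$ to be \emph{simply connected} with finite-type mod-$p$ cohomology. Neither $BO$ nor $B\haut(S^0)$ is simply connected: both have $\pi_1 = C_2$. You cannot therefore "invoke Lannes's theorem" directly for $[BF, BO]$, and the obstacle you identify for the bottom horizontal (the nilpotent ideal in $H^\ast B\haut(S^0)$) is the wrong one—that issue is already handled by \Cref{lem:Lannes-Zarati}. The real subtlety is the fundamental group. The paper handles it by first proving the square with $BSO$ and $B\SL1(S^0)$ (which \emph{are} simply connected, so Lannes applies), and then attaching the split $BC_2$ factor using $\maps_\ast(BF,BO) \simeq \maps_\ast(BF,BSO) \times \Hom(F,C_2)$ from \Cref{rem:comp-is-harmless-1} together with the easy identification $\Hom(F,C_2) \cong \Hom_\cK(H^\ast BC_2, H^\ast BF)$. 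Your proposal needs this reduction explicitly; it is not optional.

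A secondary correction: the appeal to Castellana \cite{Castellana} for the bottom horizontal is misplaced at $p=2$. Castellana's $BX(p-1) \simeq BSO_p^{hC_{(p-1)/2}}$ enters only for odd primes (in \Cref{thm:use-of-T-p}). At $p=2$ the analogous role is played directly by \Cref{thm:coh-mod-nil}(1) (the Madsen--Milgram identification $H^\ast B\SL1(S^0)/J \cong H^\ast BSO$) together with \Cref{lem:Lannes-Zarati}; no $T$-functor computation beyond what these lemmas supply is required.
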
 

\begin{thm}\label{thm:use-of-T-p} Let $p>2$ and let $F$ be an elementary abelian $p$-group. Then there
is a commutative diagram with all maps isomorphisms of sets
\[
\xymatrix{
[BF,BX(p-1)] \rto \dto & \Hom_\cK(H^\ast BX(p-1),H^\ast BF) \dto\\
[BF,B\GL1(S^0)] \rto  & \Hom_\cK(H^\ast B\GL1(S^0),H^\ast BF).
}
\]
\end{thm}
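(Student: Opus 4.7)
The plan is to prove the four maps in the square are bijections by establishing, in order, the top horizontal, the right vertical, and the left vertical; the bottom horizontal then follows automatically from commutativity of the square.

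For the top horizontal map, I would invoke Lannes' $T$-functor theorem. The space $BX(p-1)$ is a $p$-complete connected nilpotent space of finite type whose mod $p$ cohomology is a polynomial algebra on generators in positive even degrees by \Cref{prop:bxp-random}(3); equivalently, it is the classifying space of a connected $p$-compact group, namely $BSO_p^{hC_{(p-1)/2}}$. For such a target, Lannes' theorem directly yields
\[
[BF, BX(p-1)] \xrightarrow{\cong} \Hom_\cK(H^\ast BX(p-1), H^\ast BF).
\]

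For the right vertical map, I would apply \Cref{lem:Lannes-Zarati}. By \Cref{thm:coho-mod-nil}(2), the canonical map $BX(p-1) \to B\GL1(S^0)_p$ induces a surjection
\[
H^\ast(B\GL1(S^0),\FF_p) \twoheadrightarrow H^\ast BX(p-1)
\]
whose kernel is the nilpotent ideal $J$. Since $J$ consists of nilpotent elements, \Cref{lem:Lannes-Zarati} immediately produces the right vertical isomorphism.

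The main obstacle is the left vertical, which asserts that every map $BF \to B\GL1(S^0)$ factors uniquely up to homotopy through $BX(p-1)$. The plan is to first identify $[BF, B\GL1(S^0)] \cong [BF, B\GL1(S^0)_p]$, using that $BF$ is already $p$-complete. The heart of the matter is then to show that Lannes' computation of $H^\ast \maps(BF, B\GL1(S^0)_p)$ via the $T$-functor kills the nilpotent classes in $H^\ast B\GL1(S^0)$, and hence factors through $H^\ast BX(p-1)$. This allows one to detect each homotopy class of maps $BF \to B\GL1(S^0)_p$ by its image in $\Hom_\cK(H^\ast BX(p-1), H^\ast BF)$ and to lift each such cohomological map uniquely to a space-level map factoring through $BX(p-1)$. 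Turning the cohomological factorization into a space-level one is the technical core of Castellana's analysis in \cite{Castellana}, which I would import. With the top horizontal, right vertical, and left vertical established, commutativity of the square then forces the bottom horizontal to be a bijection as well, completing the proof.
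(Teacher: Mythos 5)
Your plan correctly identifies the top horizontal (Lannes applied to the simply connected $p$-complete space $BX(p-1)$) and the right vertical (\Cref{thm:coho-mod-nil} plus \Cref{lem:Lannes-Zarati}). But the order of attack is inverted relative to the paper, and the gap is exactly at the step you flag as the ``main obstacle.'' The paper does \emph{not} prove the left vertical directly; it proves the \emph{bottom} horizontal directly and then deduces the left vertical from the other three bijections in the square. The point you miss is that Lannes' Th\'eor\`eme 0.4 applies to the bottom row too, once one replaces $B\GL1(S^0)$ by $B\SL1(S^0)$: by \Cref{rem:comp-is-harmless-1} the map $B\SL1(S^0)\to B\GL1(S^0)$ is a mod $p$ cohomology isomorphism and induces a bijection $[BF,B\SL1(S^0)]\cong[BF,B\GL1(S^0)]$, and $B\SL1(S^0)$ is simply connected of finite type, so Lannes gives $[BF,B\SL1(S^0)]\cong\Hom_\cK(H^\ast B\SL1(S^0),H^\ast BF)$ for free. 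Once the bottom horizontal is a bijection, the left vertical follows automatically from commutativity, with no extra input.

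Your sketch of a direct attack on the left vertical -- showing that every map $BF\to B\GL1(S^0)_p$ lifts uniquely through $BX(p-1)$ by ``killing nilpotents'' in the $T$-functor and ``importing Castellana's analysis'' -- is not a proof. It conflates a cohomological factorization with a space-level one and defers precisely the hard step to an unspecified external argument; as written it would not close the square. The fix is not to do more work on the left vertical but less: observe that $B\SL1(S^0)$ satisfies the hypotheses of Lannes' theorem, so the bottom row is in the same favorable position as the top, and then there is nothing left to prove on the left.
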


\begin{proof} The same argument, with variations, works for both \Cref{thm:use-of-T-2} and \Cref{thm:use-of-T-p}. 
We use \cite[Th\'eor\`eme 0.4]{Lannes}. This result says that if $Y$ is a simply connected space
with $H^\ast Y = H^\ast(Y,\FF_p)$ finite in each degree then the set of unbased homotopy classes of maps from $BF$ to $Y$ is in 
bijection with $\Hom_{\cK}(H^\ast Y, H^\ast BF)$. So, using \Cref{rem:point-or-not}, we have that 
for a simply connected infinite loop space $Y$, the natural map
\[
\xymatrix{
[BF,Y] \rto  & \Hom_\cK(H^\ast Y,H^\ast BF)
}
\]
is a bijection.  The spaces in \Cref{thm:use-of-T-2} and \Cref{thm:use-of-T-p} are not simply connected, so a little more care is 
required. 

When $p$ is odd, we use \cite[Th\'eor\`eme 0.4]{Lannes} with $B\SL1(S^0)$ in place of $B\GL1(S^0)$. Recall \Cref{rem:comp-is-harmless-1} and note that the natural map $B\SL1(S^0) \to B\GL1(S^0)$ induces an isomorphism in mod $p$ cohomology, and as 
$BX(p-1)$ is $p$-complete, any map $BX(p-1)\to B\GL1(S^0)$ factors through $BSL_1(S^0)$.
We get a commutative diagram 
\[
\xymatrix{
[BF,BX(p-1)] \rto \dto & \Hom_\cK(H^\ast BX(p-1),H^\ast BF) \dto\\
[BF,B\SL1(S^0)] \rto  & \Hom_\cK(H^\ast B\SL1(S^0),H^\ast BF),
}
\]
with all maps isomorphisms, due to \Cref{thm:coho-mod-nil} and \Cref{lem:Lannes-Zarati}, giving the claim. 

When $p=2$,
we again appeal to \cite[Th\'eor\`eme 0.4]{Lannes}, with $BSO$ in place of $BO$ as well as $B\SL1(S^0)$ in place of $B\GL1(S^0)$. 
Using  \Cref{thm:coho-mod-nil} and \Cref{lem:Lannes-Zarati} we get a commutative diagram
with all maps isomorphisms of sets 
\[
\xymatrix{
[BF,BSO] \rto \dto & \Hom_\cK(H^\ast BSO,H^\ast BF) \dto\\
[BF,B\SL1(S^0)] \rto  & \Hom_\cK(H^\ast B\SL1(S^0),H^\ast BF).
}
\]
To complete the proof, again use \Cref{rem:comp-is-harmless-1} and the observation that
\[
\Hom(F,C_2) \cong \Hom_\cK(H^\ast BC_2,H^\ast BF).\qedhere
\]
\end{proof}

\begin{thm}\label{thm:whod-have-believed-it-1} Let $F$ be an elementary abelian $p$-group and let $X$ be a
$p$-complete $F$-sphere of virtual dimension $k$. 
Then there is a stable vector
bundle $\xi$ over $BF$ of virtual dimension $k$ and a $p$-equivalence of spectra
\[
M\xi \simeq EF_+ \wedge_F X.
\]
Furthermore there is an $F$-equivalence $X \simeq Y$ of $p$-complete $F$-spheres if and only if 
there is an isomorphism of modules over the Steenrod algebra
\[
H^\ast(EF_+ \wedge_F X) \cong H^\ast(EF_+ \wedge_F Y).
\]
Such an isomorphism uniquely determines the $F$-equivalence up to $F$-homotopy.
\end{thm}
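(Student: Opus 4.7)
The plan is to combine the Cooke-style classification of $F$-spheres from \Cref{lem:f-spheres-1} with the Lannes-type calculations in \Cref{thm:use-of-T-2} and \Cref{thm:use-of-T-p}, and then translate everything on $BF$ into characteristic classes by means of the Thom isomorphism and Wu's formulas \eqref{eq:def-swi} and \eqref{eq:def-qi}.

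\textbf{Construction of $\xi$ and the Thom-spectrum identification.} First, \Cref{lem:f-spheres-1} exhibits $X$ as a map $f_X \colon BF \to \{k\}\times B\haut(S^0_p)$, and \Cref{lem:comp-is-harmless} together with \Cref{rem:comp-is-harmless-1} lets us work with the uncompleted space $B\haut(S^0)$. By \Cref{thm:use-of-T-2} at $p=2$ and \Cref{thm:use-of-T-p} at odd $p$, the maps $BO\to B\haut(S^0)$ and $BX(p-1)\to B\haut(S^0)$ induce bijections on $[BF,-]$, so $f_X$ lifts uniquely up to homotopy to a classifying map for a stable virtual bundle $\xi$ of dimension $k$ over $BF$. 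By the naturality of Thomification and the standard identification of the Thom spectrum of a spherical fibration with the Borel construction of its fiber, one has $M\xi \simeq EF_+\wedge_F X$ after $p$-completion.

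\textbf{Cohomology detects the $F$-sphere.} One direction is automatic: an $F$-equivalence $X\simeq Y$ induces an equivalence $EF_+\wedge_F X\simeq EF_+\wedge_F Y$ and hence an isomorphism on $\mathcal{A}$-modules. Conversely, suppose we are given $\Phi \colon H^\ast(EF_+\wedge_F X)\cong H^\ast(EF_+\wedge_F Y)$ in $\cU$. Via the Thom isomorphism, $H^\ast(M\xi)$ is a free $H^\ast BF$-module of rank one on the Thom class $U$, and the Steenrod action on $U$ encodes, through Wu's formulas, the Stiefel--Whitney classes $w_i(\xi)$ at $p=2$ and the Wu classes $q_n(\xi)$ at odd primes. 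Since $\Phi$ must carry the (one-dimensional) top-Steenrod-annihilator line to itself, it preserves these classes. By \Cref{prop:bxp-random} the $w_i$ (resp.\ the $q_n$) generate $H^\ast BO$ (resp.\ $H^\ast BX(p-1)$) as unstable algebras, so the equality of characteristic classes produces an equality of $\mathcal{A}$-algebra maps $H^\ast BO\to H^\ast BF$ (resp.\ $H^\ast BX(p-1)\to H^\ast BF$). By \Cref{thm:use-of-T-2} and \Cref{thm:use-of-T-p} this forces the classifying maps of $\xi_X$ and $\xi_Y$ to agree in $[BF,BO]$ (resp.\ $[BF,BX(p-1)]$), and the paragraph above then delivers an $F$-equivalence $X\simeq Y$.

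\textbf{Uniqueness up to $F$-homotopy.} The $F$-homotopy classes of $F$-equivalences $X\to Y$ are the invertible components of $\pi_0$ of the mapping space $\maps^F(X,Y)$; under Cooke's construction these are in bijection with the set of paths between $f_X$ and $f_Y$ in $\maps(BF,\{k\}\times B\haut(S^0_p))$, up to homotopy rel endpoints. A refinement of Lannes' theorem (as used in the proof of \Cref{thm:use-of-T-2} and \Cref{thm:use-of-T-p}) computes not just $\pi_0$ but the whole mapping space in terms of $\Hom_\cK$-type data. Pushing this argument one loop further, and using that $H^\ast BF$ is injective in $\cU$ at odd $p$ and that squaring is injective at $p=2$ (see \Cref{lem:Lannes-Zarati}), identifies the set of such paths with the set of $\mathcal{A}$-module isomorphisms $H^\ast(EF_+\wedge_F X)\cong H^\ast(EF_+\wedge_F Y)$.

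The main obstacle is this last step: carefully justifying that the mapping-space version of Lannes' theorem applies to $B\haut(S^0)$ (which is neither simply connected nor a priori of finite type mod $p$), and extracting from it the precise statement that the $\pi_0$ of the space of $F$-equivalences is computed by $\mathcal{A}$-module isomorphisms of the Borel-construction cohomology. Everything else is a bookkeeping combination of Cooke, Lannes and the Thom/Wu formulas already set up earlier in the paper.
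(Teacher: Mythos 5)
Your first two paragraphs follow the paper's argument closely: its proof is precisely the combination of \Cref{lem:f-spheres-1}, \Cref{lem:comp-is-harmless}, \Cref{prop:bxp-random}, \Cref{thm:use-of-T-2} or \Cref{thm:use-of-T-p}, and the Thom/Wu dictionary of \Cref{rem:pont-steenrod}. However, there is an unflagged gap in the sentence ``Since $\Phi$ must carry the (one-dimensional) top-Steenrod-annihilator line to itself, it preserves these classes.'' A Steenrod module isomorphism $\Phi$ need not be $H^\ast BF$-linear, so from $\Phi(U_X)=cU_Y$ one only gets $\Phi(w_i(\xi_X)U_X)=c\,w_i(\xi_Y)U_Y$, which does not pin down $w_i(\xi_X)$ as an element of $H^\ast BF$ because $\Phi$ in higher degrees is a priori an arbitrary $\FF_p$-linear isomorphism. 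This is not cosmetic: for $F=(\ZZ/2)^2$ with $X$ and $Y$ the dimension-shifted sign-representation spheres of the two $\ZZ/2$-factors, the swap automorphism of $F$ induces a Steenrod module isomorphism $H^\ast(EF_+\wedge_F X)\cong H^\ast(EF_+\wedge_F Y)$ even though $w_1(\xi_X)=t_1\ne t_2=w_1(\xi_Y)$ and $X\not\simeq_F Y$. What the Lannes machinery of \Cref{thm:use-of-T-2}/\Cref{thm:use-of-T-p} actually classifies is the $\cK$-algebra map $\xi^\ast\colon H^\ast BO\to H^\ast BF$, i.e.\ the Steenrod action on the Thom spectrum \emph{together} with its $H^\ast BF$-module structure. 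Your argument tacitly assumes $H^\ast BF$-linearity of $\Phi$; either add this as a hypothesis (which is what the explicit isomorphism of \Cref{thm:main-calc} supplies, so the application is unaffected), or argue separately that such linearity is automatic in the relevant cases.

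On uniqueness, your mapping-space route is a genuinely different approach from what the paper records, which offers no argument for that clause at all. You correctly identify the obstacle: the $\pi_0$-level bijections of \Cref{thm:use-of-T-2}/\Cref{thm:use-of-T-p} do not control $F$-homotopy classes of $F$-equivalences $X \to Y$, since these form a torsor under $\pi_1(\maps(BF, B\haut(S^0_p)), f_X)$, not under $\pi_0$, so a mapping-space refinement of Lannes' theorem -- ideally routed through the simply-connected covers $BSO$ and $BX(p-1)$ where the unstable-module theory applies cleanly -- is genuinely required.
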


\begin{proof} We have phrased this as a result about modules over the Steenrod algebra, but there is more structure
here that can be of use. Let $\xi$ be a spherical fibration over a base space $B$; we assume $\xi$ is oriented if
$p > 2$.  Then the cohomology $H^\ast M\xi$ is a free module of rank $1$ over $H^\ast B$ on the Thom
class $U$. Thus the Steenrod algebra structure on $H^\ast M\xi$ is completely determined by the Cartan formula and the action
of the Steenrod operations on $U$. Note that if $p$ is odd, the Bockstein vanishes on $U$, because $U$ is the reduction
of an integral class. This action is, in turn, completely determined by the Stiefel-Whitney classes,
as in \eqref{eq:def-swi}, or the modified Pontrjagin classes of \eqref{eq:def-qi} and \eqref{eq:pont-and-steen}.

Our result now follows by combining this observation with \Cref{lem:comp-is-harmless},
\Cref{prop:bxp-random},  and \Cref{thm:use-of-T-2} or \Cref{thm:use-of-T-p}, 
as needed.
\end{proof}

The key result of this section is the following application of \Cref{thm:whod-have-believed-it-1}. Let $\cG$ be a compact $p$-adic analytic group. For any group $H$ let
$Z(H)$ denote its center. Note that $Z(\cG)$ acts trivially on both $S^\gg$ and $\ig$, since both actions
are defined by conjugations. 

\begin{thm}\label{thm:whod-have-believed-it} Let $\cG$ be a compact $p$-adic analytic group and let $H$ be a closed subgroup
of $\cG$ such that $H/H \cap Z(\cG)$ is finite. Suppose the $p$-Sylow subgroup $F$ of $H/H \cap Z(\cG)$
is an elementary abelian $p$-group. Then there is an $H$-equivalence $S^\gg \simeq \ig$  if and only if there there
is an isomorphism of modules over the Steenrod algebra
\begin{equation}\label{eq:iso-steen-thom}
H^\ast(EF_+ \wedge_F S^\gg) \cong H^\ast(EF_+ \wedge_F \ig).
\end{equation}
\end{thm}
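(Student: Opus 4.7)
The plan is a two-step reduction followed by a direct application of \Cref{thm:whod-have-believed-it-1}. The ``only if'' direction is immediate: any $H$-equivariant equivalence $S^\gg \simeq \ig$ induces an equivalence of Borel constructions $EF_+ \wedge_F S^\gg \simeq EF_+ \wedge_F \ig$, hence an isomorphism of mod $p$ cohomology as modules over the Steenrod algebra. For the ``if'' direction, the strategy is to first reduce the acting group from $H$ to the finite quotient $\overline{H} := H/(H \cap Z(\cG))$, then from $\overline{H}$ to its $p$-Sylow $F$, and finally translate to cohomology.

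\textbf{Step 1 (Reduction to $\overline{H}$).} Both $\cG$-actions in question are conjugation actions: the action on $\ig$ is induced by conjugation on each $\Gamma_i$, and the action on $S^\gg$ is the adjoint representation on $\gg$ (\Cref{rem:adjoint-action}). Consequently, the center $Z(\cG)$ acts trivially on both $\ig$ and $S^\gg$, and hence so does $H \cap Z(\cG)$. Thus both $H$-actions factor through $\overline{H}$, and producing an $H$-equivariant equivalence is the same as producing an $\overline{H}$-equivariant one.

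\textbf{Step 2 (Reduction to the $p$-Sylow $F$).} Both $\ig$ and $S^\gg$ are $p$-complete $d$-spheres (\Cref{lem:transfer-hom} and \Cref{lem:transfer-hom-ad}), so we may regard them as $\overline{H}$-spheres in the sense of \Cref{lem:f-spheres-1}. By \Cref{lem:red-to-sylow} applied to the finite group $\overline{H}$ with $p$-Sylow subgroup $F$, an $\overline{H}$-equivalence exists iff the two spheres agree as $F$-spheres and (for $p > 2$) the characters $\psi_{\ig}, \psi_{S^\gg} \colon \overline{H} \to C_{p-1}$ coincide. The character condition is automatic here: by \Cref{lem:top-action}, the top $\ZZ_p$-homology modules of both spheres are canonically isomorphic to $\Lambda^d \gg$ as $\cG$-modules, so their mod $p$ reductions agree, see \Cref{rem:these-hyp-hold}.

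\textbf{Step 3 (Reduction to cohomology).} At this point, the existence of an $H$-equivalence $S^\gg \simeq \ig$ is equivalent to the existence of an $F$-equivalence. Since $F$ is elementary abelian by hypothesis and both spheres are $p$-complete $F$-spheres of the same virtual dimension $d$, \Cref{thm:whod-have-believed-it-1} applies verbatim, yielding the stated iff with the Steenrod-algebra condition on $H^\ast(EF_+ \wedge_F S^\gg)$ and $H^\ast(EF_+ \wedge_F \ig)$. The proof itself is essentially formal given the groundwork of this section; the real obstacle is not here but in what comes next, namely actually verifying the cohomological condition \eqref{eq:iso-steen-thom} for the specific spheres $\ig$ and $S^\gg$, which requires a fine understanding of characteristic classes built from the transfer towers defining them.
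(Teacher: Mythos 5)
Your proof is correct and follows essentially the same route as the paper: reduce from $H$ to $H/H\cap Z(\cG)$ using triviality of the central action, reduce to the $p$-Sylow $F$ via \Cref{lem:red-to-sylow} (with the character condition supplied by \Cref{lem:top-action} and \Cref{rem:these-hyp-hold}), and conclude by \Cref{thm:whod-have-believed-it-1}. The paper's proof is more telegraphic but relies on exactly the same chain of reductions and the same auxiliary results.
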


\begin{proof} Suppose we are given the isomorphism of \eqref{eq:iso-steen-thom}. Then
\Cref{thm:whod-have-believed-it-1} gives an $F$-equivalence $S^\gg \simeq \ig$. We
apply \Cref{lem:red-to-sylow} and \Cref{rem:these-hyp-hold} to get
an equivalence of $H/H \cap Z(\cG)$-spectra. This is automatically an $H$-equivalence.

For the converse, if $S^\gg \to \ig$ is an $H$-equivalence then, since the 
center acts trivially, it is an $H/H \cap Z(\cG)$-equivalence and, by restriction, 
an $F$-equivalence. \Cref{thm:whod-have-believed-it-1} again applies, giving the claim.
\end{proof}

\begin{rem}\label{rem:center-small} In our main examples, the center of $\cG$ is small. For example,
if $\cG = \SS_n$ is the Morava stabilizer group and $H$ is a finite subgroup, then $Z(\SS_n) = \ZZ_p^\times$, so $H \cap Z(\SS_n)$ is a cyclic
group of order at most $p-1$ if $p > 2$. In the more interesting case of $p=2$, $H \cap Z(\SS_n)$ is either trivial or
$\{ \pm 1\}$. Note that $H$ can be a quaternionic group if $p=2$. For more on the finite subgroups of
$\SS_n$, see \cite{Hewett, Bujard}.

Similarly, for $\cG = \Gl_n(\ZZ_p)$, the center is the scalar matrices: diagonal matrices with all entries equal.
Thus the center is again $\ZZ_p^\times$. Of course, any finite group is isomorphic to a subgroup $\Gl_n(\ZZ_p)$
for some $n$. 
\end{rem}

 % !TEX root = dsphere-master.tex

\section{Two algebraic preliminaries}\label{sec:SStech}

In this section we collect two technical remarks about spectral sequences needed for the proof of the main theorem of the next section.
This should perhaps not be read until after skimming the next section to see why on earth we need such things.
 
We begin with a first quadrant cohomology spectral sequences
\[
E_2^{p,q} \Longrightarrow A^{p+q}
\]
with differentials $d_r \colon E_r^{p,q} \to E_r^{p+r,q-r+1}$. The $E_2$-term is bounded below in $p$ and $q$; that is, 
$E_2^{p,q} = 0$ for $p < 0$ and $q < 0$. This means, in particular, the 
$E_r^{p,q} \cong E_\infty^{p,q}$ if $r > \mathrm{max}(p,q+1)$. The motivating example is the homotopy
orbit spectral sequence
\[
H^p(F,H^qX) \Longrightarrow H^{p+q}(EF_+ \wedge_F X)
\]
for some $F$-spectrum $X$ with the property that $H^qX= 0$ for $q < 0$.

Here is the set-up. By the very nature of our project, we will have a diagram of such spectral sequences for positive integers
$j$ bigger than some fixed integer $N$
\begin{equation}\label{eq:ss-diag-abstract}
\xymatrix@C=30pt{
E_2^{p,q}(j)\dto_{g}  \ar@{=>}[r] & A^{p+q}(j) \dto^{g}\\
E_2^{p,q}(j+1)\dto_{f}  \ar@{=>}[r] & A^{p+q}(j+1) \dto^{f}\\
E_2^{p,q} \ar@{=>}[r] & A^{p+q}
}
\end{equation}
We make this diagram specific in \eqref{eq:ss-diag-specific}. 

Let $K_r^{\ast,\ast}(j)$ be the kernel of $g \colon E_r^{\ast,\ast}(j)  \to E_r^{\ast,\ast}(j+1)$.

\begin{lem}\label{lem:ss-seq} Suppose the map induced by $f$
\[
E_2^{\ast,\ast}(j)/K_2^{\ast,\ast}(j) \longr E_2^{\ast,\ast}
\]
is an isomorphism for $j \geq N$. Then for $j > N+m-1$ the map
\[
A^\ast(j)/\mathrm{Ker}\{A^\ast(j) \to A^\ast (j+m+1)\} \to A^\ast
\]
is an isomorphism in degrees $n \leq m$.
\end{lem}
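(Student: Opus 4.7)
The plan is first to propagate the $E_2$-level hypothesis to every page $E_r$ (at the cost of shifting the index), then to read off the corresponding $E_\infty$-statement, and finally to transfer the information to the abutment via the filtration. Because the spectral sequence is first-quadrant, $E_r^{p,q} = E_\infty^{p,q}$ once $r \geq n+2$ in total degree $p+q = n$; this is what accounts for both the allowed range $n \leq m$ and the necessity of $m+1$ steps in the tower.

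Let $P(r,j)$ denote the assertion that $E_r(j)/K_r(j) \to E_r$ is an isomorphism. The induction step is driven by the following structural observation: if $P(r,j)$ and $P(r,j+1)$ both hold, one obtains a direct-sum decomposition
\[
E_r(j+1) \;=\; \im\bigl(g \colon E_r(j) \to E_r(j+1)\bigr) \;\oplus\; K_r(j+1),
\]
in which the first summand maps isomorphically to $E_r$ via $f$ and the second is the kernel of $g \colon E_r(j+1) \to E_r(j+2)$. Both summands are stable under $d_r$, since $g$ commutes with $d_r$ and the kernel of a chain map is a subcomplex; taking cohomology gives
\[
E_{r+1}(j+1) \;=\; E_{r+1} \;\oplus\; H\bigl(K_r(j+1),\, d_r\bigr),
\]
and a short verification identifies the right-hand summand with $K_{r+1}(j+1)$, establishing $P(r+1,j+1)$. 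Starting from $P(2,j)$ for $j \geq N$, induction on $r$ then yields $P(r,j)$ for $j \geq N + r - 2$; in particular $P(n+2,j)$ holds in every bidegree of total degree $n$ for $j \geq N+n$.

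To pass to the abutment, I would use the decreasing filtration $F^\bullet A^n(j)$ whose successive quotients are the $E_\infty^{p,n-p}(j)$. Surjectivity of $A^n(j) \to A^n$ follows from the surjectivity on each filtration quotient (by $P(n+2,j)$) together with a descending induction on $p$ via the five-lemma. For the kernel, suppose $\alpha \in F^p A^n(j)$ maps to zero in $A^n$; then its class $\bar\alpha \in E_\infty^{p,n-p}(j)$ dies in $E_\infty^{p,n-p}$, so by the kernel half of $P(n+2,j)$ it already dies in $E_\infty^{p,n-p}(j+1)$. Hence the image of $\alpha$ in $A^n(j+1)$ lies in $F^{p+1}A^n(j+1)$. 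Iterating drops the filtration depth by one at each step, and since $F^{n+1}A^n = 0$ in a first-quadrant spectral sequence with vanishing infinite filtration, the image of $\alpha$ vanishes in $A^n(j + n + 1 - p)$, hence a fortiori in $A^n(j + m + 1)$ whenever $n \leq m$. Combining with the obvious reverse inclusion of kernels completes the proof.

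The main obstacle I anticipate is the bookkeeping in the propagation step: verifying that the direct-sum decomposition is genuine (in particular that $\im(g_j) \cap K_r(j+1) = 0$, which requires the stabilization $\ker(h_{j,j+k}) = \ker(h_{j,j+1})$ that $P(r,j)$ forces), and that passing to cohomology respects the splitting and correctly identifies $K_{r+1}(j+1)$ with $H(K_r(j+1), d_r)$. Once this structural lemma is firmly in hand, the filtration argument reducing to the abutment is routine.
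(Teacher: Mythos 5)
Your argument is correct and follows the same two-phase skeleton as the paper's proof: first propagate the $E_2$-level isomorphism through the pages at the cost of one index shift per page, then transfer to the abutment by walking through the filtration. Where you differ is in the organization of the propagation step. The paper's Steps~1 and~2 are an explicit diagram chase: lift an $r$-cycle of $E_r$ to $E_r(N+r-2)$, use the kernel identification to show that pushing the lift forward one index gives an $r$-cycle (surjectivity), and separately show that a class hitting a boundary in $E_r$ already becomes a boundary after one more index shift (injectivity). You instead observe that $P(r,j)$ and $P(r,j+1)$ together force a $d_r$-stable splitting $E_r(j+1)\cong\operatorname{im}(g_j)\oplus K_r(j+1)$ with $\operatorname{im}(g_j)\cong E_r$ as a complex, so that $E_{r+1}(j+1)\cong E_{r+1}\oplus H(K_r(j+1),d_r)$ and both halves of $P(r+1,j+1)$ drop out at once. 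This is the cleaner packaging: it replaces two separate chases with one structural decomposition, and the only content in ``identify the right summand with $K_{r+1}(j+1)$'' is the sandwich $H(K_r(j+1))\subseteq K_{r+1}(j+1)\subseteq\ker(E_{r+1}(j+1)\to E_{r+1})=H(K_r(j+1))$, the last equality coming from the splitting. The abutment step is then the same idea as the paper's Step~3 --- you iterate, losing one filtration degree per index shift until filtration $n+1$ kills the class --- where the paper instead runs an induction on the filtration degree $q$ proving a shift-by-$(q+1)$ statement. Either way one needs the stable-page isomorphism at indices $j$ through roughly $j+n$, which the hypothesis $j>N+m-1$ with $n\le m$ supplies, and the bounds come out the same.
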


\begin{proof} We break this proof into a number of steps. In the first two steps we show, by induction, that the map
$f\colon E_r^{\ast,\ast}(j) \to E_r^{\ast,\ast}$ induces an isomorphism for $j \geq N+r-2$
\begin{equation}\label{eq:f-iso}
E_r^{\ast,\ast}(j)/K_r^{\ast,\ast}(j) \longr E_r^{\ast,\ast}.
\end{equation}
We have assumed the base case of $r=2$. The third step will complete the argument. 

{\bf Step 1.} Here we show that
\[
f\colon E_{r+1}^{\ast,\ast}(j) \longr E_{r+1}^{\ast,\ast}
\]
is onto for $j \geq N+(r+1)-2 = N+r-1$. It is sufficient to do the case where $j=N+r-1$.

If we choose an $r$-cycle
in $E_{r}^{\ast,\ast}$, then the induction hypothesis implies there is a class $y \in E_{r}^{\ast,\ast}(N+r-2)$
with $f(y) = x$. We show that $g(y) \in E_r^{\ast,\ast}(N+r-1)$ is an $r$-cycle; this will complete this step.

Since the induced map
\[
f:E_{r}^{\ast,\ast}(N+r-2)/K_r^{\ast,\ast}(N+r-2) \to E_r^{\ast,\ast}
\]
is injective and $d_r(f(y))= d_r(x) = 0$, we have that
\[
d_r(y) \in K_r^{\ast,\ast}(N+r-2)
\]
and hence that  $d_r(g(y))=0$ as needed.

{\bf Step 2.} We now show that
\[
f:E_{r+1}^{\ast,\ast}(j)/K^{\ast,\ast}_{r+1}(j)\ \longr E_{r+1}^{\ast,\ast}
\]
is injective for $j \geq N+r-1$. 

It is equivalent to show that if  a class in $ E_{r+1}^{\ast,\ast}(j)$ maps to zero in $E_{r+1}^{\ast,\ast}$, then it maps to zero in
$E_{r+1}^{\ast,\ast}(j+1)$. Suppose we have $y \in E_r^{\ast,\ast}(j)$ and
\[
f(y) = d_r(w) \in E_r^{\ast,\ast}.
\]
By the induction hypothesis we may choose a class $z \in E_r^{\ast,\ast}(j)$ so that $f(z) = w$. Then
\[
f(y-d_r(z)) = f(y) - d_rf(z) = f(y)-d_r(w) = 0 \in E_r^{\ast,\ast}
\]
so, again by the induction hypotheses $y-d_r(z) \in K_r^{\ast,\ast}(j)$. Hence $y-d_r(z)$ maps to zero in
$E_r^{\ast,\ast}(j+1)$, or
\[
d_r(g(z)) = g(y)
\]
as needed.  

{\bf Step 3.} Here we complete the argument with another inductive procedure. 
It is sufficient to show that for $j > N+m-1$ the map
\[
A^m(j)/\mathrm{Ker}\{A^m(j) \to A^\ast (j+m+1)\} \to A^m
\]
is an isomorphism, for in degrees $n < m$, we have $j > N+m-1 > N+n-1$.

Note that in a particular bidegree $(p,q)$, all our spectral sequences satisfy
$E_r^{p,q} = E_\infty^{p,q}$ for $r > \max\{p,q+1\}$. By the first two steps we have that \eqref{eq:f-iso} 
is an isomorphism for $j \geq N +r - 2$, hence
\begin{align}\label{eq:f-iso-infty}
f\colon E_\infty^{p,q}(j)/K_\infty^{p,q}(j) \longr E_\infty^{p,q}
\end{align}
is an isomorphism for
\[
j > N +  \max\{p,q+1\}  - 2. 
\]

Since we have first quadrant cohomological spectral sequences, we have a diagram of filtrations
\[
\xymatrix@C=15pt{
0 = F^{-1}A^m(j) \ar[d] \ar[r]^-\subseteq & F^0A^m(j)  \ar[d] \ar[r]^-\subseteq & \cdots \ar[r]^-\subseteq &F^{m-1}A^m(j)  \ar[d] 
\ar[r]^-\subseteq & F^mA^m(j)=A^m(j) \ar[d] \\
0 = F^{-1}A^m  \ar[r]^-\subseteq & F^0A^m  \ar[r]^-\subseteq
& \cdots \ar[r]^-\subseteq &F^{m-1}A^m  \ar[r]^-\subseteq & F^mA^m=A^m,
}
\] 
and diagrams of short exact sequences
\begin{equation}\label{eq:i-have-to-explain-1}
\xymatrix{
0 \ar[r] &  F^{q-1}A^m(j) \ar[r]\ar[d]  & F^{q}A^m(j) \ar[r]\ar[d] & E_\infty^{{m-q,q}}(j) \ar[r]\ar[d]  & 0\\
0 \ar[r] & F^{q-1}A^m \ar[r] & F^{q}A^m \ar[r] &E_\infty^{{m-q,q}} \ar[r] & 0.
}
\end{equation}
We will use induction on $q$ to show
\begin{align}\label{eq:filtration-q-iso}
f^q: F^qA^m(j)/\ker\{F^qA^m(j) \to F^qA^m(j+q+1)\} \to {F^q}A^m
\end{align}
is an isomorphism for $j > N+m-1$. The case $q=m$ is what we need to prove; the case $q=0$ is the assertion
that we have an isomorphism
\[
\xymatrix{
E_\infty^{m,0}(j)/K_\infty^{m,0}(j) \ar[r]^-\cong & E_2^{m,0},
}
\]
which has already been proved and in fact requires only that $j > N+m-2$. 

Now assume inductively that $f^{q-1}$ is an isomorphism for $j > N+m-1$. 
To complete the induction step we examine the exact sequence of \eqref{eq:i-have-to-explain-1}. The inductive hypothesis
gives that $F^{q-1} A^m(j) \to F^{q-1}A^m$ is onto for $j > N+m-1$, hence by the snake lemma
we have a short exact sequence
\[
0 \to \kappa_1 \to \kappa_2\to \kappa_3 \to 0,
\]
where the $\kappa_i$ are the kernels of the three vertical maps in \eqref{eq:i-have-to-explain-1}.
Note that since also the outer two vertical map in \eqref{eq:i-have-to-explain-1} are onto, the snake lemma implies
that the middle map is onto as well.

It remains to identify $\kappa_2= \ker\{F^qA^m(j) \to F^qA^m \}$ with
\[
\kappa_2'= \ker\{F^qA^m(j) \to F^qA^m (j+q+1) \}.
\]
By construction, $\kappa_2'$ is contained in $\kappa_2$; thus, we just need to show the reverse containment.
Let $x $ be an element of $\kappa_2 \subseteq F^qA^m(j)$. Its image in $E^{m-q,q}_\infty$ is zero, and
since $\kappa_3 \cong \ker\{ E_\infty^{m-q,q}(j) \to E_\infty^{m-q,q}(j+1) \}$, its image in $E_\infty^{m-q,q}(j+1)$ is zero. Hence, from 
the exact sequence
\[
0 \to F^{q-1}A^m(j+1) \to F^{q}A^m(j+1) \to E_\infty^{{m-q,q}} (j+1) \to 0,
\]
it lifts to an element $y \in F^{q-1}A^m(j+1) $, with the property that $y$ maps to zero in $F^{q-1}A^m$; that is,
it is in $\ker\{F^{q-1} A^m(j+1) \to F^{q-1}A^m \}$. By the inductive hypothesis, this kernel is
$\ker\{F^{q-1}A^m(j+1) \to F^{q-1}A^m(j+1+q)  \}$. In particular, the image of $y$ in $F^qA^m(j+q+1)$ is zero;
but this is the same as the image of $x$ in $F^qA^m(j+q+1)$; hence, $x$ is in the kernel $\kappa_2'$.
\end{proof}

Our second algebraic result is about building isomorphisms from arrays of graded abelian groups. 

\begin{rem}\label{rem:chase-diag} Suppose we are given a commutative diagram of graded abelian groups
\[
\xymatrix{
& B_{i+1,j} \ar[r] \ar[d]&  B_{i,j} \ar[r] \ar[d]&  B_{i-1,j}  \ar[d]\\
& B_{i+1,j+1} \ar[r] \ar[d]&  B_{i,j+1} \ar[r] \ar[d]& B_{i-1,j+1} \ar[d]\\
A \ar[r]& A_{i+1} \ar[r] &  A_{i} \ar[r] &  A_{i-1}
}
\]
This is an infinite array, with $i \geq 1$ and $j > i$, but we haven't put in the dots because it makes for a 
very cluttered diagram. In our main examples, we will have
\begin{align*}
A &= H^\ast(EF_+ \wedge_F \ig),\\
A_i &= H^\ast(EF_+ \wedge_F \Sigma_+^\infty B\Gamma_i),\\
B_{i,j} &= H^\ast(EF_+ \wedge_F \Sigma_+^\infty B(\Gamma_i/\Gamma_{j})).
\end{align*}
For integers $i$, $j$, $d$, and $n$, let
\begin{align*}
I(i,d) &= \mathrm{Ker}\{A_i \to A_{i-d}\},\\
J(i,j,m) &= \mathrm{Ker}\{B_{i,j}\to B_{i,j+m}\},\\
\end{align*}

We make the following assumptions, which will have to be justified in our main examples. See
\Cref{lem:isos-are-fin-F-1} and \Cref{lem:isos-are-fin-F-2}. The stipulation that $i > d+1$ arises there,
and $d$ will be the rank of $\cG$. 

\begin{enumerate}

\item For all $i > d+1$, the composition $A \to A_i \to A_i/I(i,d)$ is an isomorphism of graded abelian groups; and,

\item there is an integer $N$, so that for all $i > d+1$ and all $j > N+m-1$ the induced map
$B_{i,j}/J(i,j,m) \to A_i$ is an isomorphism in degrees $n \leq m$.
\end{enumerate}

If we fix $i$ and $j$, let $J_m$ be the kernel
of either of the two ways around the diagram
\[
\xymatrix{
B_{i,j} \ar[r] \ar[d]& B_{i-d,j} \ar[d] \\
B_{i,j+m} \ar[r]& B_{i-d,j+m}. 
}
\]
Note we could write $J(i,j,d,m)$ for $J_m$ but $m$ will be the crucial index. Then we can conclude

\begin{enumerate}

\item[(3)] For all $i > d+1$ and all $j > N+m-1$ the map $\xymatrix{B_{i,j}/J_m \ar[r] & A_i/I(i,d)}$ is an isomorphism
in degrees $n \leq m$.
\end{enumerate}

The proof is a diagram chase. We then can conclude we have maps
\begin{equation}\label{eq:trunc-iso}
\xymatrix{
A \ar[r]^-\cong & A_i/I(i,d) &B_{i,j}/J_m \ar[l] 
}
\end{equation}
If $i > d+1$ and $j > N+m-1$ the first map is an isomorphism and the second map is an isomorphism in degrees $n \leq m$.
\end{rem}

 % !TEX root = dsphere-master.tex

\section{Equivalences of $\cG$-spheres for finite subgroups}\label{sec:squeenrodstairs}

The goal of this section is to show that for suitable closed subgroups $H$ of our group $\cG$ we have an equivalence of
$H$-spectra $\ig \simeq S^\gg$. This is accomplished in \Cref{thm:a-big-one}, as corollary of the
main calculation of this section: we will show in \Cref{thm:main-calc} that for any finite subgroup $F \subseteq \cG/Z(\cG)$, we have an 
isomorphism of modules over the Steenrod algebra
\[
H^\ast (EF_+ \wedge_F \ig) \simeq H^\ast (EF_+ \wedge_F S^\gg). 
\]
Then we appeal to \Cref{thm:whod-have-believed-it} to prove  \Cref{thm:a-big-one}.

\def\cM{{{\mathcal{M}}}}

The strategy is as follows. Fix a finite subgroup $F \subseteq \cG/Z(\cG)$. Since $\cG$ acts by conjugation on $\Gamma_i$ and
$\gg_i$, $F$ acts on all of the spaces $B\Gamma_i$, $B(\Gamma_i/\Gamma_j)$, $B\gg_i$, and so on.

Recall we have compatible transfer maps $\tr: \Sigma_+^\infty B\Gamma_i \to
\Sigma_+^\infty B\Gamma_{i+t}$, for $t>0$ by \eqref{eq:tr-gammas}, and hence maps
\[
r:  \Sigma_+^\infty B\Gamma_i \longr \colim_{\tr} \Sigma_+^\infty B\Gamma_{i+t} = \ig.
\]
We then have a diagram of $F$-spectra
\[
\xymatrix{
\Sigma^\infty_+ B(\Gamma_i/\Gamma_{j}) & \ar[l]_-q \Sigma^\infty_+ B\Gamma_i \ar[r]^-r & \ig.
}
\]
There is a corresponding diagram for $S^{\gg}$
\[
\xymatrix{
\Sigma^\infty_+ B(\gg_i/\gg_{j}) & \ar[l]_-q \Sigma^\infty_+ B\gg_i\ar[r]^-r & S^{\gg}.
}
\]
Finally, there is a group isomorphism $\Gamma_i/\Gamma_j \cong \gg_i/\gg_j$ for $j\leq 2i$, as per \eqref{eq:gg-gamma}. Using the 
techniques of  \Cref{sec:SStech}, we can put together the comparison we need. The key intermediate results
are \Cref{thm:main-calc1} and \Cref{rem:main-calc1-gg}.

In the final applications we will only need $i$ large, so while more generality is possible, all our preliminary lemmas
will set $i > d+1$, where $d \geq 1$ is the rank of $\cG$ and, hence of $\Gamma_i$. This will avoid having to spell out
special cases, especially when $p=2$. As usual, all homology and cohomology in this section is with $\FF_p$-coefficients. 

\begin{notation}\label{not:homotopy-orbits} In this section we will be working heavily with homotopy orbits and
it is convenient to shorten the notation. If $Y$ is a $G$-spectrum for some finite group $G$ we will often write 
\[
Y_{hG} = EG_+ \wedge_G Y.
\]
Hence if $X$ is a $G$-space
\[
[\Sigma^\infty_+ X]_{hG} = EG_+ \wedge_G \Sigma^\infty_+ X.
\]
\end{notation}

\begin{notation}\label{not:all-the-kers} In what follows we will need to have names for the kernels of various restriction
and transfer maps. We will recall the definitions as needed, but we collect them here to put the confusion
in a place where it can be organized. Compare \Cref{rem:chase-diag} as well. 

First, there are the kernels of maps before taking homotopy orbits; they are decorated with subscript $0$:
\begin{align*}
I_0(i) &= \mathrm{ker}\{\tr^\ast:H^\ast B\Gamma_i \longr H^\ast B\Gamma_{i-1}\},\\
J_0(i,j)&=  \mathrm{ker}\{\res^\ast:H^\ast B\Gamma_i/\Gamma_j \longr H^\ast B\Gamma_i/\Gamma_{j+1}\}.
\end{align*}
Next, there are the kernels of maps after taking homotopy orbits; they do not have a subscript, but acquire a new index
because we shift more than one step:
\begin{align*}
I(i,m) &= \mathrm{ker}\{\tr^\ast:H^\ast ([\Sigma_+^\infty B\Gamma_i]_{hF} \longr H^\ast [\Sigma_+^\infty B\Gamma_{i-m}]_{hF}\},\\
J(i,j,m)&=  \mathrm{ker}\{\res^\ast:H^\ast [\Sigma_+^\infty B\Gamma_i/\Gamma_{j}]_{hF} \longr
H^\ast [\Sigma_+^\infty B\Gamma_i/\Gamma_{j+m}]_{hF} \}.
\end{align*}
There is one more kernel, which combines restrictions and transfers. Fixing $i$ and $j$, let $J_m$ be the kernel
of either of the two ways of composing around the commutative diagram
\[
\xymatrix{
H^\ast [\Sigma_+^\infty B\Gamma_i/\Gamma_{j}]_{hF} \ar[d]^{\res^\ast} \ar[r]^-{\tr^\ast}&
H^\ast [\Sigma_+^\infty B\Gamma_{i-d}/\Gamma_{j}]_{hF} \ar[d]^{\res^\ast}\\
H^\ast [\Sigma_+^\infty B\Gamma_{i}/\Gamma_{j+m}]_{hF} \ar[r]_-{\tr^\ast}\ &
H^\ast [\Sigma_+^\infty B\Gamma_{i-d}/\Gamma_{j+m}]_{hF}\ .\\
}
\]
We could write $J(i,j,d,m)$ for $J_m$ but $m$ will be the crucial index. 
Finally, there are the kernels $K_r^{\ast,\ast}(j)$ in \Cref{lem:ss-seq}. These are distinct, but related, and will also be recalled as
needed. 
\end{notation}

We begin with the following algebraic result.  

\begin{lem}\label{lem:isos-are-fin} 
\begin{enumerate}
\item Let $I_0(i)$ be the kernel of the map
$\mathrm{tr}^\ast: H^\ast B\Gamma_{i} \longr H^\ast B\Gamma_{i-1}$. 
For all $i > d+1$ the composition
\[
\xymatrix{
H^\ast \ig \ar[r]^-{r^\ast} & H^\ast B\Gamma_{i} \ar[r] & H^\ast B\Gamma_{i}/I_0(i)
}
\]
is an isomorphism of modules over the Steenrod algebra.
\item Let $J_0(i,j)$ be the kernel of the restriction $H^\ast (B\Gamma_{i}/\Gamma_{j}) \to H^\ast (B\Gamma_{i}/\Gamma_{j+1})$.
Then for all $i >  d+1$ and $j \geq i+1$ the map of modules over the Steenrod algebra
\[
H^\ast (B\Gamma_{i}/\Gamma_{j})/J_0(i,j) \longr H^\ast B\Gamma_{i}
\]
is an isomorphism. 
\end{enumerate}
\end{lem}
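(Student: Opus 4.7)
The plan is to deduce both statements directly from the cohomology computations already assembled in \Cref{thm:coh-gammai}, \Cref{lem:basic-coh}, \Cref{lem:transfer-hom}, and \Cref{prop:it-doesn't-matter}. In both parts the target $H^\ast B\Gamma_i \cong \Lambda(V_i)$ is highly constrained, and the task reduces to checking that $I_0(i)$, respectively $J_0(i,j)$, kills off exactly the extra pieces. Compatibility with Steenrod operations will then be automatic in both cases.

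For part (1), I would start by dualizing \Cref{lem:transfer-hom}(1): working over $\FF_p$, the statement that $\tr_\ast \colon H_k B\Gamma_{i-1} \to H_k B\Gamma_i$ is zero for $k \neq d$ and an isomorphism for $k = d$ is equivalent to the statement that $\tr^\ast \colon H^k B\Gamma_i \to H^k B\Gamma_{i-1}$ is zero in every degree except $d$, where it is an isomorphism (on $H^0$ this map is multiplication by the index $p^d \equiv 0$). So $I_0(i)$ is the sum of the components of $H^\ast B\Gamma_i$ outside degree $d$, and $H^\ast B\Gamma_i/I_0(i)$ is $\FF_p$ concentrated in degree $d$. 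By \Cref{lem:transfer-hom}(2) the spectrum $\ig$ is a $p$-complete $d$-sphere, so $H^\ast \ig$ is also $\FF_p$ in degree $d$, and $r^\ast$ is a nonzero map of one-dimensional vector spaces because each transfer in the defining colimit induces an isomorphism on $H_d$. Since both sides are concentrated in a single positive degree, every positive Steenrod operation automatically vanishes, so the isomorphism is one of modules over the Steenrod algebra.

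For part (2), I would appeal to \Cref{lem:basic-coh}. In the generic range $j > i+1$, part (1) of that lemma identifies $H^\ast(\Gamma_i/\Gamma_j) \cong \Lambda(V_i) \otimes P(W_{j-1})$ and part (2) asserts that the inflation to the next stage $H^\ast(\Gamma_i/\Gamma_{j+1})$ has image exactly $\Lambda(V_i)$, forcing $J_0(i,j)$ to be the ideal generated by $W_{j-1}$; the quotient is then $\Lambda(V_i)$, which by \Cref{prop:it-doesn't-matter} is canonically $H^\ast B\Gamma_i$, and the asserted map is this identification. The boundary case $j = i+1$ lies outside the hypotheses of \Cref{lem:basic-coh}, so I would handle it separately by running the Lyndon-Hochschild-Serre spectral sequence for
\[
1 \to \Gamma_{i+1}/\Gamma_{i+2} \to \Gamma_i/\Gamma_{i+2} \to \Gamma_i/\Gamma_{i+1} \to 1,
\]
exactly as in the proof of \Cref{lem:basic-coh}: the only contribution to the kernel of the base-row edge map $E_2^{\ast,0} \to E_\infty^{\ast,0}$ is the $d_2$ transgression of $V_{i+1}$ onto the Bockstein (or squaring) subspace $W_i \subseteq H^2(\Gamma_i/\Gamma_{i+1})$, and dividing out the ideal it generates again yields $\Lambda(V_i)$. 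Steenrod-algebra compatibility is immediate, because the entire map is induced by the group quotient $\Gamma_i \to \Gamma_i/\Gamma_j$.

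The only bookkeeping nuisance I anticipate is the usual one at $p = 2$, where the fiber cohomology is a full polynomial algebra rather than $\Lambda \otimes P$, but the adjustments needed are exactly those already indicated in \Cref{lem:basic-coh}, so both arguments adapt without difficulty. In sum, the lemma is essentially an assembly job on results already on the table.
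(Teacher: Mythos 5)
Your proposal is correct and follows essentially the same path as the paper: part (1) is deduced from \Cref{lem:transfer-hom}(1), and part (2) from \Cref{lem:basic-coh}(2), which is exactly what the paper's one-sentence proof does.

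You add one genuinely useful observation that the paper glosses over. \Cref{lem:basic-coh}(2) is stated only for $j > i+1$, whereas the statement being proved — and its downstream use in \Cref{lem:isos-are-fin-F-2}, where the hypothesis of \Cref{lem:ss-seq} must hold for all $j \geq N = i+1$ — requires the case $j = i+1$ as well. You handle this boundary case by running the Lyndon--Hochschild--Serre spectral sequence for $1 \to \Gamma_{i+1}/\Gamma_{i+2} \to \Gamma_i/\Gamma_{i+2} \to \Gamma_i/\Gamma_{i+1} \to 1$ and observing that the only contribution to the kernel of the inflation $H^\ast(\Gamma_i/\Gamma_{i+1}) \to H^\ast(\Gamma_i/\Gamma_{i+2})$ is the ideal generated by the $d_2$-transgression $W_i$, whose quotient is $\Lambda(V_i)$. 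This is correct, and it patches a small gap in the paper's cited chain of reasoning. The Steenrod-algebra compatibility claims are also fine: for part (1) both sides are concentrated in a single degree so all positive-degree operations vanish for degree reasons, and for part (2) the map is induced by a map of spaces, hence automatically Steenrod-linear, and the kernels $I_0(i)$, $J_0(i,j)$ are Steenrod submodules because they are kernels of maps induced on cohomology by maps of spectra.
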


\begin{proof} Part (1) follows from part (1) of \Cref{lem:transfer-hom} and part (2) is a consequence
of part (2) of \Cref{lem:basic-coh}. 
\end{proof}

Here is the input for an application of \Cref{lem:ss-seq} and \Cref{rem:chase-diag}. The following diagram 
makes \eqref{eq:ss-diag-abstract} concrete; all the spectral sequences are homotopy orbit spectral sequences. 
\begin{equation}\label{eq:ss-diag-specific}
\xymatrix@C=30pt{
H^p(F,H^qB\Gamma_i/\Gamma_j)\dto_{g}  \ar@{=>}[r] &
H^{p+q}[\Sigma^\infty_+ B\Gamma_i/\Gamma_j]_{hF} \dto^{g}\\
H^p(F,H^qB\Gamma_i/\Gamma_{j+1})\dto_{f}  \ar@{=>}[r] &
H^{p+q}[\Sigma^\infty_+ B\Gamma_i/\Gamma_{j+1}]_{hF} \dto^{f}\\
H^p(F,H^qB\Gamma_i) \ar@{=>}[r] & H^{p+q}[\Sigma^\infty_+ B\Gamma_i]_{hF}
}
\end{equation}

We continue to write $d$ for the rank of $\cG$ and, hence, of $\Gamma_i$. See also \Cref{fig:squeenrodstairs} below.

\begin{lem}\label{lem:isos-are-fin-F-1} Let $I(i,d)$ the be the kernel of the map induced by the transfer
\[
H^\ast ([\Sigma^\infty_+ B\Gamma_{i}]_{hF})
\longr H^\ast ([\Sigma^\infty_+ B\Gamma_{i-d}]_{hF}).
\]
For all $i > d+1$ the composition
\[
H^\ast ([ \ig]_{hF}) \longr H^\ast ([\Sigma^\infty_+ B\Gamma_{i}]_{hF})
\longr H^\ast ([\Sigma^\infty_+ B\Gamma_{i}]_{hF})/I(i,d)
\]
is an isomorphism of modules over the Steenrod algebra.
\end{lem}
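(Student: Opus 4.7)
The plan is to run the homotopy-orbit spectral sequence
\[
E_2^{p,q}(Y) = H^p(F, H^q Y) \Longrightarrow H^{p+q}[Y]_{hF}
\]
in parallel for $Y \in \{B\Gamma_i, \ig\}$ and compare them along the canonical map $B\Gamma_i \to \ig$. By \Cref{lem:transfer-hom}, $H^qB\Gamma_i$ is concentrated in $0 \leq q \leq d$, $H^q\ig$ is concentrated in $q = d$, and the induced map $H^d\ig \to H^d B\Gamma_i$ is an isomorphism of $F$-modules (compare also \Cref{lem:top-action}). Hence $E_2(\ig)$ consists only of its top row and collapses. The map of spectral sequences $\phi \colon E_2(\ig) \to E_2(B\Gamma_i)$ is an isomorphism on the top row $q = d$ and zero elsewhere. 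Since the target has no rows above $q = d$, the top row of $E_\ast(B\Gamma_i)$ receives no incoming differentials; and since the source collapses, naturality $d_r\phi = \phi d_r$ forces all outgoing differentials from the top row of the target to vanish. Consequently $E_\infty^{\ast, d}(B\Gamma_i) = E_2^{\ast, d}(B\Gamma_i) = H^\ast(F, H^d B\Gamma_i)$, and $\phi_\infty$ is an isomorphism onto this row.

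Next I would analyze $\tr^\ast$. Once more by \Cref{lem:transfer-hom}, $\tr^\ast \colon H^q B\Gamma_i \to H^q B\Gamma_{i-1}$ vanishes for $q \ne d$ and is an isomorphism for $q = d$, so the induced $E_2$-map behaves likewise. Consider the convergent filtration $F^\bullet H^n[B\Gamma_i]_{hF}$, which by the shape of the spectral sequence is supported on $n - d \leq p \leq n$. A class $x \in F^pH^n \setminus F^{p+1}H^n$ has nonzero associated-graded piece in $E_\infty^{p, n-p}(B\Gamma_i)$. If $n - p \ne d$, the transfer kills this piece, so $\tr^\ast(x) \in F^{p+1}H^n$ of the target; if $n - p = d$, the transfer is an isomorphism on $E_\infty^{p, d}$ (by the first paragraph applied to $B\Gamma_{i-1}$) and the top piece of $\tr^\ast(x)$ is nonzero. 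Iterating $d$ times therefore sends $F^{n - d + 1}H^n$ into $F^{n+1}H^n = 0$, while leaving nonzero every class with a top-row component. Hence $I(i, d) = F^{n - d + 1}H^n[B\Gamma_i]_{hF}$.

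Combining: the quotient $H^n[B\Gamma_i]_{hF}/I(i, d)$ is identified with $E_\infty^{n-d, d}(B\Gamma_i) = H^{n-d}(F, H^d B\Gamma_i)$, while the collapse of $E_2(\ig)$ gives $H^n[\ig]_{hF} = H^{n-d}(F, H^d\ig)$. The composition in the lemma is the map between these abutments induced by the $F$-equivariant isomorphism $H^d\ig \cong H^d B\Gamma_i$ of \Cref{lem:top-action}, hence itself an isomorphism, and the Steenrod action is preserved by naturality. The one subtle point --- why $d$ transfers suffice rather than $n$ --- is that $E_2(B\Gamma_i)$ is concentrated in rows $0 \leq q \leq d$, so the filtration on each $H^n[B\Gamma_i]_{hF}$ has length at most $d+1$. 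The constraint $i > d + 1$ simply ensures that all the intermediate $\Gamma_{i-k}$, $0 \leq k \leq d$, remain uniformly powerful (which matters at $p = 2$).
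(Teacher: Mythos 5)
Your proof is correct and follows essentially the same route as the paper's: both run the two homotopy-orbit spectral sequences in parallel, use that $E_2(\ig)$ is concentrated on the single row $q=d$ and hence collapses, note that the comparison map is an isomorphism onto the top row of $E_2(B\Gamma_i)$ (hence onto $E_\infty$ there, since nothing can enter or — by naturality against the collapsed source — leave that row), and then observe that the transfer kills $E_\infty^{p,q}$ for $q<d$ and is an isomorphism for $q=d$, so that the filtration length bound $d+1$ forces $I(i,d)$ to be exactly the subgroup of filtration $\geq n-d+1$. Your version spells out the "no outgoing differentials from the top row" step via naturality a bit more explicitly than the paper, but the argument is the same; one trivial slip is that the cohomological transfer statement you invoke is the dual of \Cref{lem:transfer-hom}, recorded in the paper as part (1) of \Cref{lem:isos-are-fin}.
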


\begin{proof} By part (1) of \Cref{lem:isos-are-fin} we have that the maps
\[
H^\ast(\ig) \longr \mathrm{Im}\{\tr^\ast \colon H^\ast B\Gamma_{i+1} \to H^\ast B\Gamma_{i}\} \longr H^\ast B\Gamma_i/I_0(i)
\]
are isomorphisms. Furthermore $H^\ast(\ig) \cong \FF_p$ is concentrated in degree $d$, the map $H^d(\ig) \to H^d B\Gamma_{i}$
is an isomorphism, and $H^n B\Gamma_{i} = 0$ if $n > d$ by \Cref{thm:coh-gammai}.

The spectral sequence
\[
E_2^{p,q}(\ig) = H^p(F,H^q(\ig)) \Longrightarrow H^{p+q}([\ig]_{hF})
\]
has the property that $E_2^{p,q} = 0$ unless $q=d$. Thus it collapses at $E_2$. 

The map of spectral sequences 
\[
\xymatrix{
E_2^{p,q}(\ig) = H^p(F,H^q(\ig)) \ar[d] \ar@{=>}[r] &H^{p+q}([\ig]_{hF})\ar[d] \\
E_2^{p,q}(\Sigma^\infty_+B\Gamma_i) = H^p(F,H^q\Sigma^\infty_+B\Gamma_i) \ar@{=>}[r] &
H^{p+q}([\Sigma^\infty_+B\Gamma_i]_{hF})
}
\]
induces isomorphisms
\[
E_\infty^{p,d}(\ig) \cong E_\infty^{p,d}(\Sigma^\infty_+B\Gamma_i)
\]
as $E_2^{p,q}(\Sigma^\infty_+B\Gamma_i)= 0$ for $q > d$. In addition
if $x \in H^\ast ([ \Sigma^\infty_+B\Gamma_i]_{hF})$ is detected by $a \in E_\infty^{p,q}(\Sigma^\infty_+B\Gamma_i)$
with $q < d$, then
\[
\tr^\ast (a) = 0 \in E_\infty^{p,q}(\Sigma^\infty_+B\Gamma_{i-1});
\]
hence,
\[
\tr^\ast (x) \in H^\ast ([ \Sigma^\infty_+B\Gamma_{i-1}]_{hF})
\]
is detected by a class $E_\infty^{p+s,q-s}(B\Gamma_{i-1})$ with $s > 0$. Thus if we apply the transfer $d$ times
the class $x$ will be sent to zero, as needed. 
\end{proof}

\begin{figure}
\center
\includegraphics[width=\textwidth]{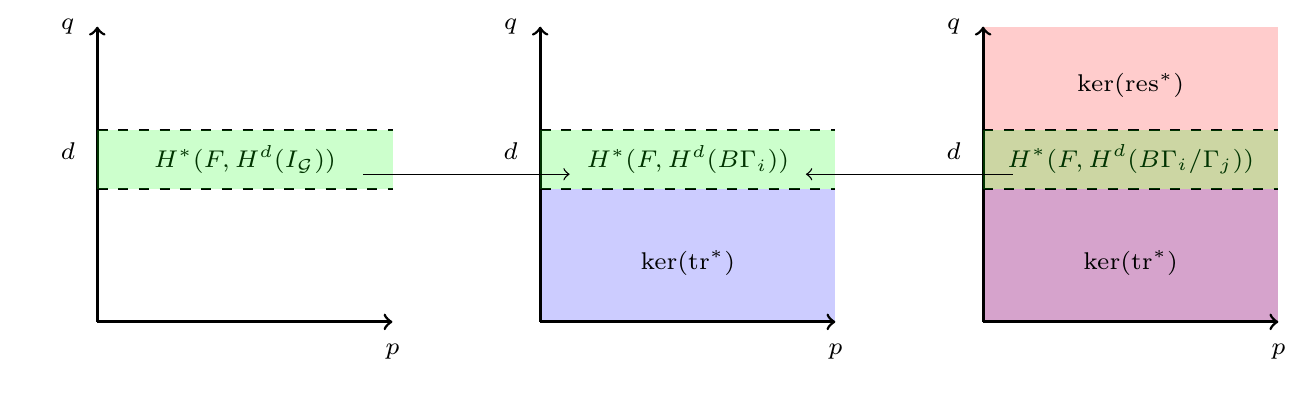}
\captionsetup{width=\textwidth}
\caption{}
\label{fig:squeenrodstairs}
\end{figure}

\begin{lem}\label{lem:isos-are-fin-F-2}  Let $J(i,j,m)$ be the kernel of the restriction map
\[
H^\ast ([\Sigma^\infty_+ B\Gamma_{i}/\Gamma_{j}]_{hF}) \to
H^\ast ([\Sigma^\infty_+ B\Gamma_{i}/\Gamma_{j+m}]_{hF}).
\]
Let $i > d+1$ and let $j > i+m$. Then the homomorphism of modules over the Steenrod algebra
\[
H^\ast ([\Sigma^\infty_+ B\Gamma_{i}/\Gamma_{i+j}]_{hF})/J(i,j,m) \longr
H^\ast ([\Sigma^\infty_+ B\Gamma_{i}]_{hF})
\]
is an isomorphism in degrees $n \leq m$.
\end{lem}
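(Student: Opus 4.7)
The plan is to apply \Cref{lem:ss-seq} to the diagram of homotopy orbit spectral sequences \eqref{eq:ss-diag-specific}, taking $A^*(j) = H^*([\Sigma^\infty_+ B(\Gamma_i/\Gamma_j)]_{hF})$ and $A^* = H^*([\Sigma^\infty_+ B\Gamma_i]_{hF})$, with $E_2$-pages $H^p(F, H^q B(\Gamma_i/\Gamma_j))$ and $H^p(F, H^q B\Gamma_i)$ respectively. Once the hypothesis of \Cref{lem:ss-seq} is verified with $N = i+1$, its conclusion will produce, for $j > i+m$, an isomorphism $A^*(j)/\ker\{A^*(j) \to A^*(j+m+1)\} \to A^*$ in degrees $n \leq m$. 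A brief additional reconciliation in the relevant bidegrees will identify this kernel with $J(i,j,m) = \ker\{A^*(j) \to A^*(j+m)\}$ used in the statement.

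Verifying the hypothesis amounts to showing, for each $j \geq i+1$, that the map $E_2^{p,q}(j)/K_2^{p,q}(j) \to E_2^{p,q}$ is an isomorphism, where $K_2(j)$ is the kernel of the structure map $E_2(j) \to E_2(j+1)$. By \Cref{lem:isos-are-fin}(2), there is a short exact sequence of $F$-modules
\[
0 \to J_0(i,j) \to H^*B(\Gamma_i/\Gamma_j) \to H^*B\Gamma_i \to 0,
\]
and my key claim is that it admits a compatible family of $F$-equivariant sections $s_j\colon \Lambda(V_i) = H^*B\Gamma_i \hookrightarrow H^*B(\Gamma_i/\Gamma_j)$. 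On degree-one generators, $s_j$ is the inverse of the canonical isomorphism $H^1(\Gamma_i/\Gamma_j) \cong V_i$ from \eqref{eq:first-coho}, which is $F$-equivariant because the projection $\Gamma_i \to \Gamma_i/\Gamma_j$ is $F$-equivariant under the conjugation action. It extends to all of $\Lambda(V_i)$ using the identification from \Cref{lem:basic-coh}(2) of the image of the structure map $H^*B(\Gamma_i/\Gamma_j) \to H^*B(\Gamma_i/\Gamma_{j+1})$ with $\Lambda(V_i)$. Compatibility (the composition of $s_j$ with the structure map agrees with $s_{j+1}$) follows from the factorization of projections $\Gamma_i \to \Gamma_i/\Gamma_{j+1} \to \Gamma_i/\Gamma_j$.

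Under the resulting direct sum decompositions $H^*B(\Gamma_i/\Gamma_j) \cong \Lambda(V_i) \oplus J_0(i,j)$ (and likewise for $j+1$), the structure map acts as the identity on the $\Lambda(V_i)$ summand (by compatibility of the sections) and as zero on $J_0(i,j)$, which by definition is its kernel. Applying $H^p(F,-)$ and using additivity of group cohomology under direct sums, this yields $K_2^{p,q}(j) = H^p(F, J_0(i,j)^q)$ and the desired isomorphism $E_2^{p,q}(j)/K_2^{p,q}(j) \cong H^p(F, H^q B\Gamma_i) = E_2^{p,q}$, verifying the hypothesis of \Cref{lem:ss-seq}. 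The principal obstacle in this plan is the construction of the compatible $F$-equivariant sections $s_j$ — particularly the extension from degree one to all of $\Lambda(V_i)$, which is delicate at $p=2$ and requires careful use of \Cref{lem:basic-coh}(2); once that is in place, the remainder of the argument is a formal consequence of \Cref{lem:ss-seq}.
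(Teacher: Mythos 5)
Your proposal takes essentially the same approach as the paper: both reduce to verifying the hypothesis of \Cref{lem:ss-seq} with $N=i+1$ by producing an $F$-equivariant splitting of the short exact sequence $0 \to J_0(i,j) \to H^\ast B(\Gamma_i/\Gamma_j) \to H^\ast B\Gamma_i \to 0$, then observing that $K_2^{\ast,\ast}(j) = H^\ast(F,J_0(i,j))$ because the restriction to level $j+1$ kills $J_0(i,j)$ by definition.

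The main difference is in how the $F$-equivariant section is produced. The paper simply notes that $F$ preserves $V_i = H^1(B\Gamma_i/\Gamma_j)$ for degree reasons and hence preserves the subalgebra $\Lambda(V_i)$ of \Cref{lem:basic-coh}(1); this makes the splitting $F$-equivariant with no further work, and sidesteps the compatibility-of-sections bookkeeping you flag as the principal obstacle. Your route, via the image of the structure map and \Cref{lem:basic-coh}(2), is also workable but has an indexing slip: that result identifies the image of $H^\ast B(\Gamma_i/\Gamma_j) \to H^\ast B(\Gamma_i/\Gamma_{j+1})$ as a subalgebra of the \emph{target}, so to obtain $s_j$ with values in $H^\ast B(\Gamma_i/\Gamma_j)$ you would apply it with $j-1$ in place of $j$, which costs one level at the bottom of the range (and \Cref{lem:basic-coh} already requires $j > i+1$). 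Your instinct that the $p=2$ case needs care at the boundary $j=i+1$ is legitimate — $\Gamma_i/\Gamma_{i+1}$ is elementary abelian, so $H^\ast$ is polynomial there and the exterior algebra does not sit inside it as a subalgebra — but this boundary case is not consequential for the downstream applications, since in \Cref{thm:main-calc} one is free to take $j$ large relative to $i$. You also correctly note that the kernel produced by \Cref{lem:ss-seq} is a priori $\ker\{A^\ast(j) \to A^\ast(j+m+1)\}$ rather than $J(i,j,m) = \ker\{A^\ast(j) \to A^\ast(j+m)\}$ and that a small reconciliation is needed; the paper elides this point. Overall this is the paper's argument with a slightly heavier construction of the splitting.
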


\begin{proof} This is a direct application of \Cref{lem:ss-seq}, with $N= i+1$, and part (2) of \Cref{lem:isos-are-fin}.  

To use \Cref{lem:ss-seq} we must justify the hypotheses of that result. We let
\begin{align*}
E_2^{*,*}(j) &= H^*(F, H^*(B\Gamma_i/\Gamma_j)) \Longrightarrow H^{*}
([\Sigma^\infty_+ B\Gamma_{i}/\Gamma_{j}]_{hF}) = A^{*}(j),\\
E_2^{*,*} &= H^*(F, H^*(B\Gamma_i) )\Longrightarrow H^{*} ([\Sigma^\infty_+ B\Gamma_{i}]_{hF}) = A^{*}, \text{ and} \\
K_2^{*,*}(j) &= \mathrm{Ker}\{E_2^{*,*}(j) \longr E_2^{*,*}(j+1). \}
\end{align*}
We need to check that we have an isomorphism
\[
E_2^{\ast,\ast}(j)/K_2^{\ast,\ast}(j) \longr E_2^{\ast,\ast}.
\]

Suppose $j \geq i+1=N$. From \Cref{lem:isos-are-fin}, we have exact sequences

\[
\xymatrix{
0 \ar[r] & J_0(i,j) \ar[r] \ar[d]^-{=0}&  H^\ast (B\Gamma_{i}/\Gamma_{j}) \ar[d]
\ar[r] &H^\ast B\Gamma_{i} \ar[r] \ar[d]^-{\cong} \ar@/_1pc/@{.>}[l]& 0\\
0 \ar[r] & J_0(i,j+1) \ar[r] &  H^\ast (B\Gamma_{i}/\Gamma_{j+1}) \ar[r]
&H^\ast B\Gamma_{i} \ar[r]  \ar@/_1pc/@{.>}[l] & 0 .
}
\]
By \Cref{thm:coh-gammai} and \Cref{lem:basic-coh} we have algebra splittings, as indicated, and these splitting respect
the $F$-action. Indeed, $H^*(B\Gamma_i) \cong \Lambda(V_i)$, where 
\[V_i \cong H^1(B\Gamma_i) \cong H^1(B\Gamma_{i}/\Gamma_{j}).\] 
The action of $F$ preserves $V_i$ for degree reasons and thus $\Lambda(V_i)$ since $F$ acts by algebra 
homomorphisms. These short exact sequences would normally give long exact sequences in group cohomology, but
the splittings reduce these to the following short exact sequences:
\[ 
\xymatrix{
0 \to H^*(F,J_0(i,j)) \ar[r] \ar[d]_-{0} & H^*(F,H^\ast (B\Gamma_{i}/\Gamma_{j})) \ar[r] \ar[d] & 
H^*(F, H^\ast B\Gamma_{i}) \ar[d]^-{\cong}\to 0 \\ 
 0 \to H^*(F,J_0(i,j+1)) \ar[r] & H^*(F,H^\ast (B\Gamma_{i}/\Gamma_{j+1}))  \ar[r] &  H^*(F, H^\ast B\Gamma_{i}) \to 0
 }
 \]
The left most vertical map is zero as the map
\[
J_0(i,j) =  \mathrm{ker}\{\res^\ast:H^\ast B\Gamma_i/\Gamma_j \longr H^\ast B\Gamma_i/\Gamma_{j+1}\}\longr
H^\ast B\Gamma_i/\Gamma_{j+1}
\]
 is zero.
An easy diagram chase implies that $K_2^{\ast,\ast}(j)$ is the kernel of
\[
H^*(F,H^\ast (B\Gamma_{i}/\Gamma_{j})) \to H^*(F, H^\ast B\Gamma_{i}).
\]
The splitting above implies that this map is surjective.
\end{proof}

Applying \Cref{lem:isos-are-fin-F-1}, \Cref{lem:isos-are-fin-F-2}, and the
isomorphism of \eqref{eq:trunc-iso} we get the following result. Fix $i$ and $j$ and let $J_m$ be the kernel
of either of the two ways around the diagram
\[
\xymatrix{
H^\ast [\Sigma_+^\infty B\Gamma_i/\Gamma_{j}]_{hF} \ar[d]^{\res^\ast} \ar[r]^-{\tr^\ast}&
H^\ast [\Sigma_+^\infty B\Gamma_{i-d}/\Gamma_{j}]_{hF} \ar[d]^{\res^\ast}\\
H^\ast [\Sigma_+^\infty B\Gamma_{i}/\Gamma_{j+m}]_{hF} \ar[r]_-{\tr^\ast}\ &
H^\ast [\Sigma_+^\infty B\Gamma_{i-d}/\Gamma_{j+m}]_{hF}\ .\\
}
\]

\begin{prop}\label{thm:main-calc1} Let $i > d+1$ and $j > i+m$.
Let $I(i,d)$ the be the kernel of the map induced by the transfer
\[
H^\ast ([\Sigma^\infty_+ B\Gamma_{i}]_{hF})
\longr H^\ast ([\Sigma^\infty_+ B\Gamma_{i-d}]_{hF}).
\]
Then the maps
\[
\xymatrix{
\Sigma^\infty_+ B(\Gamma_i/\Gamma_{i+j}) & \ar[l]_-q \Sigma^\infty_+ B(\Gamma_i) \ar[r]^-r & \ig
}
\]
define homomorphisms of modules over the Steenrod algebra
\[
\xymatrix{
H^\ast  ([\ig]_{hF}) \ar[r]^-\cong &
H^\ast ([\Sigma^\infty_+ B\Gamma_{i}]_{hF})/I(i,d)&
\ar[l] H^\ast ([\Sigma^\infty_+ B\Gamma_{i}/\Gamma_{j}]_{hF})/J_m.
}
\]
The first map is an isomorphism and the second map is an isomorphism in cohomological degreee
$n$ with $n \leq m$.
\end{prop}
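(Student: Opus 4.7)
The plan is to obtain this statement as a direct application of the abstract diagram-chase set up in \Cref{rem:chase-diag}, specialized to the cohomological array
\[
A = H^\ast ([\ig]_{hF}), \qquad A_i = H^\ast ([\Sigma^\infty_+ B\Gamma_i]_{hF}), \qquad B_{i,j} = H^\ast ([\Sigma^\infty_+ B(\Gamma_i/\Gamma_j)]_{hF}).
\]
The horizontal maps $A_i \to A_{i-1}$ and $B_{i,j} \to B_{i-1,j}$ come from applying $H^\ast([-]_{hF})$ to the equivariant transfer maps of \eqref{eq:tr-gammas} (and their evident quotient analogues). The vertical maps $B_{i,j} \to B_{i,j+1}$ come from the projections $\Gamma_i/\Gamma_{j+1} \to \Gamma_i/\Gamma_j$, the vertical map $A \to A_i$ comes from $r \colon \Sigma_+^\infty B\Gamma_i \to \ig$, and the vertical map $B_{i,j} \to A_i$ comes from $q$. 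With these definitions, the notation for the kernels $I(i,d)$ and $J_m$ introduced in \Cref{not:all-the-kers} agrees precisely with the kernels $I(i,d)$ and $J_m$ appearing in \Cref{rem:chase-diag}.

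The two substantive hypotheses (1) and (2) required in \Cref{rem:chase-diag} have already been established: hypothesis (1), namely that $A \to A_i/I(i,d)$ is an isomorphism for all $i > d+1$, is the content of \Cref{lem:isos-are-fin-F-1}; hypothesis (2), namely that there exists an integer $N$ such that $B_{i,j}/J(i,j,m) \to A_i$ is an isomorphism in degrees $n \leq m$ whenever $i > d+1$ and $j > N + m - 1$, is \Cref{lem:isos-are-fin-F-2} taken with $N = i+1$ (so that the bound $j > i + m$ in the statement of the present proposition is exactly $j > N + m - 1$).

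Granted these inputs, conclusion (3) of \Cref{rem:chase-diag} together with the map chain \eqref{eq:trunc-iso} supplies exactly the two arrows of the proposition, namely
\[
H^\ast([\ig]_{hF}) \longrightarrow H^\ast([\Sigma^\infty_+ B\Gamma_i]_{hF})/I(i,d) \longleftarrow H^\ast([\Sigma^\infty_+ B(\Gamma_i/\Gamma_j)]_{hF})/J_m,
\]
with the first arrow an isomorphism and the second arrow an isomorphism in degrees $n \leq m$. Since all maps originate from maps of (homotopy orbit) spectra, they are automatically maps of modules over the Steenrod algebra, so the isomorphisms inherit that structure. There is no genuine obstacle to overcome here: the entire technical content has been absorbed into \Cref{lem:isos-are-fin-F-1}, \Cref{lem:isos-are-fin-F-2}, and the abstract chase of \Cref{rem:chase-diag}, and the proof amounts to checking that the bookkeeping of indices matches.
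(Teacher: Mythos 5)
Your proof is correct and is exactly the paper's own argument: the paper derives this proposition with no separate proof, stating right before it that it follows from \Cref{lem:isos-are-fin-F-1}, \Cref{lem:isos-are-fin-F-2}, and the isomorphism of \eqref{eq:trunc-iso} (itself conclusion (3) of \Cref{rem:chase-diag}). Your identification of $A$, $A_i$, $B_{i,j}$, the transfer and restriction maps, and the choice $N=i+1$ matches the paper's bookkeeping precisely.
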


The same argument which proved \Cref{thm:main-calc1} can be immediately adapted to prove the following. 

\begin{prop}\label{rem:main-calc1-gg} Let $i > d+1$ and $j > i+m$.
Let $I^\mathfrak{a}(i,d)$ the be the kernel of the map induced by the transfer
\[
H^\ast ([\Sigma^\infty_+ B\gg_{i}]_{hF})
\longr H^\ast ([\Sigma^\infty_+ B\gg_{i-d}]_{hF}).
\]
Let $J^\mathfrak{a}_m$ denote the kernel of the map
\[
\res^\ast\tr^\ast = \tr^\ast\res^\ast: H^\ast ([\Sigma^\infty_+ B\gg_{i}/\gg_{j}]_{hF}) \to
H^\ast ([\Sigma^\infty_+ B\gg_{i-d}/\gg_{j+m}]_{hF}).
\]
Then the maps
\[
\xymatrix{
\Sigma^\infty_+ B(\gg_i/\gg_{i+j}) & \ar[l]_-q \Sigma^\infty_+ B(\gg_i) \ar[r]^-r & S^\gg
}
\]
define homomorphisms of modules over the Steenrod algebra
\[
\xymatrix{
H^\ast  ([S^\gg]_{hF}) \ar[r]^-\cong &
H^\ast ([\Sigma^\infty_+ B\gg_{i}]_{hF})/I^\mathfrak{a}(i,d)&
\ar[l] H^\ast ([\Sigma^\infty_+ B\gg_{i}/\gg_{j}]_{hF})/J^\mathfrak{a}_m.
}
\]
The first map is an isomorphism and the second map is an isomorphism in cohomological degreee
$n$ with $n \leq m$.
\end{prop}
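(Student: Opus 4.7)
The plan is to run the same four-step template that proved \Cref{thm:main-calc1}, but with $(\Gamma_i,\Gamma_i/\Gamma_j)$ replaced by $(\gg_i,\gg_i/\gg_{i+j})$. The structural ingredients we used before were: (i) the vanishing of restriction/transfer in positive cohomology, as in \Cref{lem:transfer-hom} and part (2) of \Cref{thm:coh-gammai}; (ii) the explicit description of $H^{\ast}(\Gamma_i/\Gamma_{i+j})$ as an exterior-on-polynomial algebra with a split surjection onto $H^{\ast}(\Gamma_i)$, as in \Cref{lem:basic-coh}; and (iii) the purely algebraic \Cref{lem:ss-seq} together with the diagram chase of \Cref{rem:chase-diag}. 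Items (i)--(iii) are formal; the heart of the adaptation is therefore to assemble the cohomological replacements for (i) and (ii) in the linear setting, which is in fact considerably easier because each $\gg_i$ is abelian and each $\gg_i/\gg_{i+j}\cong(\ZZ/p^j)^d$.

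First I would record the linear analog of \Cref{thm:coh-gammai}: $H^{\ast}(B\gg_i)\cong\Lambda(V_i')$ on $V_i'=H^{1}(B\gg_i)\cong(\gg_i/p\gg_i)^{\ast}$, and the inclusion $\gg_{i+1}\subseteq\gg_i$ (which on $\gg_i\cong\ZZ_p^d$ is multiplication by $p$) induces the zero map on $H^{k}$ for $k>0$. Both statements are immediate from the K\"unneth theorem for $B\ZZ_p^d$. Combined with \Cref{lem:transfer-hom-ad} and the Whitehead argument used for \Cref{lem:isos-are-fin}(1), this gives the $\gg$-version of \Cref{lem:isos-are-fin}(1). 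Next I would establish the $\gg$-version of \Cref{lem:basic-coh}: since $\gg_i/\gg_{i+j}\cong(\ZZ/p^j)^d$, a direct K\"unneth computation yields
\[
H^{\ast}(B\gg_i/\gg_{i+j})\ \cong\ \Lambda(V_i')\otimes P(W_{j-1}'),
\]
with $V_i'\cong H^1$ and $W_{j-1}'\subseteq H^2$ the image of the Bockstein, and the projection onto $\Lambda(V_i')\cong H^{\ast}(B\gg_i)$ is split and $F$-equivariant because $F\subseteq\cG/Z(\cG)$ acts through the adjoint representation, which preserves the filtration $\{\gg_i\}$ and therefore preserves $V_i'$ and $W_{j-1}'$.

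With these two inputs in hand, the proofs of \Cref{lem:isos-are-fin-F-1} and \Cref{lem:isos-are-fin-F-2} transcribe verbatim to $S^\gg$, yielding:
\begin{itemize}
\item the composite $H^{\ast}([S^\gg]_{hF})\to H^{\ast}([\Sigma^\infty_+ B\gg_i]_{hF})/I^{\mathfrak a}(i,d)$ is an isomorphism for $i>d+1$; here the collapse of the homotopy-orbit spectral sequence for $[S^\gg]_{hF}$ at $E_2$ is used exactly as before, since $H^{\ast}(S^\gg)$ is concentrated in degree $d$;
\item for $i>d+1$ and $j>i+m$, the quotient $H^{\ast}([\Sigma^\infty_+ B\gg_i/\gg_{j}]_{hF})/J^{\mathfrak a}(i,j,m)\to H^{\ast}([\Sigma^\infty_+ B\gg_i]_{hF})$ is an isomorphism in degrees $n\leq m$. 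This is \Cref{lem:ss-seq} applied to the tower of homotopy-orbit spectral sequences for $F$ acting on $B\gg_i/\gg_{j}$, with $N=i+1$ and with the hypothesis $E_2^{\ast,\ast}(j)/K_2^{\ast,\ast}(j)\cong E_2^{\ast,\ast}$ supplied by the $F$-equivariant splitting above, which kills the connecting maps in the relevant long exact sequences.
\end{itemize}
Finally, the diagram chase of \Cref{rem:chase-diag} combines these two isomorphisms to yield the claimed description via $H^{\ast}([\Sigma^\infty_+ B\gg_i/\gg_{j}]_{hF})/J^{\mathfrak a}_m$, completing the proof.

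I do not anticipate a serious obstacle: every delicate point in the $\Gamma$-argument was either the non-abelianness of $\Gamma_i$ (which simplifies here) or the uniformly-powerful machinery used to compute $H^{\ast}(\Gamma_i/\Gamma_j)$ (which is replaced by a one-line K\"unneth computation). The only item requiring genuine care is the $F$-equivariance of the splitting of $H^{\ast}(B\gg_i/\gg_{i+j})\twoheadrightarrow H^{\ast}(B\gg_i)$ that feeds into the $\gg$-analog of \Cref{lem:isos-are-fin-F-2}; this is handled by noting that $V_i'$ is characterized as the degree-one part of the cohomology and $W_{j-1}'$ as the image of the Bockstein, both of which are preserved by the adjoint $F$-action by naturality.
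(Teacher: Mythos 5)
Your proposal is correct and follows essentially the same approach as the paper, which disposes of this proposition with the single sentence that the argument for \Cref{thm:main-calc1} "can be immediately adapted." You have simply written out the adaptation in detail: the linear analogs of \Cref{thm:coh-gammai} and \Cref{lem:basic-coh} reduce to K\"unneth for $B\ZZ_p^d$ and $B(\ZZ/p^k)^d$, the $F$-equivariance of the splitting follows because $V'_i$ and $W'_{j-1}$ are characterized as degree-one cohomology and the image of the Bockstein respectively, and then \Cref{lem:ss-seq} and \Cref{rem:chase-diag} apply verbatim.
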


We now come to our main calculation. Consider the following diagram of modules over the Steenrod algebra 
with $i < j \leq 2i$ and $j > i+m$.
\begin{equation}\label{eq:big-diagram}
\xymatrix{
H^\ast ([B\Gamma_i/\Gamma_{j}]_{hF})/J_m \ar[r]^-\cong \ar[d]&
H^\ast ([B\gg_i/\gg_j]_{hF})/J^\mathfrak{a}_m\ar[d]\\
H^\ast ([B\Gamma_i]_{hF})/I(i,j) &
H^\ast ([B\gg_i]_{hF})/I^\mathfrak{a}(i,j)\\
H^\ast ([\ig]_{hF}) \ar[u]_\cong&
H^\ast ([S^\gg]_{hF}) \ar[u]_\cong
}
\end{equation}
The horizontal isomorphism comes from the isomorphism of groups
\[
\Gamma_{i}/\Gamma_{j} \cong \gg_i/\gg_j
\]
discussed in \eqref{eq:gg-gamma}. The upwards vertical maps are isomorphisms by \Cref{thm:main-calc1} and
\Cref{rem:main-calc1-gg}. The same results show that the downward  vertical maps are isomorphisms in degrees $n$
with $n \leq m$.

\begin{thm}\label{thm:main-calc} Let $F \subseteq \cG/Z(\cG)$ be a finite subgroup. 
Then the maps of \eqref{eq:big-diagram} define an isomorphism of modules
over the Steenrod algebra
\[
 H^\ast  (EF_+\wedge_F \ig) \cong H^\ast (EF_+ \wedge_F S^\gg).
\]
\end{thm}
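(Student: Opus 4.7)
The plan is to upgrade diagram \eqref{eq:big-diagram} to a genuine degree-wise isomorphism by a limiting argument, and then check that it assembles into a graded isomorphism of modules over the Steenrod algebra.

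First I fix a cohomological degree $n \geq 0$ and choose integers $i, j, m$ satisfying the constraints
\[
i > d+1, \qquad i < j \leq 2i, \qquad j > i+m, \qquad m \geq n.
\]
For instance, $m = n$, $i = n+d+2$, and $j = 2i$ always work. For such a choice, every map in diagram \eqref{eq:big-diagram} makes sense: the top horizontal arrow is an isomorphism because the constraint $j \leq 2i$ forces the $F$-equivariant group isomorphism $\Gamma_i/\Gamma_j \cong \gg_i/\gg_j$ from \eqref{eq:gg-gamma} (here the $F$-action on both sides is by conjugation/adjoint action, and both agree under this identification); the upward vertical arrows are isomorphisms in all degrees by \Cref{thm:main-calc1} and \Cref{rem:main-calc1-gg}; and the downward vertical arrows are isomorphisms in degrees $\leq m$, so in degree $n$, by the same two results. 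Composing around the diagram (inverting the arrows that go in the wrong direction, which is legal in degree $n$) yields an isomorphism $\phi_n \colon H^n([\ig]_{hF}) \xrightarrow{\cong} H^n([S^\gg]_{hF})$.

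Next I check that $\phi_n$ is independent of the auxiliary choice of $(i,j,m)$. Given two admissible triples $(i,j,m)$ and $(i',j',m')$, I can choose a third admissible triple $(i'', j'', m'')$ with $i'' \geq \max(i,i')$, $j'' \geq \max(j,j')$, $m'' \geq \max(m,m')$ still satisfying the constraints. All the relevant maps — the quotient maps $\Gamma_i/\Gamma_{j} \to \Gamma_i/\Gamma_{j+1}$, the transfers $\Sigma^\infty_+B\Gamma_i \to \Sigma^\infty_+B\Gamma_{i+1}$ (and their analogs for $\gg_i$), and the group isomorphisms $\Gamma_i/\Gamma_j \cong \gg_i/\gg_j$ — are natural in $(i,j)$ and equivariant for the $F$-action, so the two diagrams stack into a three-dimensional diagram of $F$-spectra whose cohomology commutes. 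This forces the two candidate maps $\phi_n$ arising from $(i,j,m)$ and $(i',j',m')$ to agree. As $n$ varies these degree-wise isomorphisms therefore assemble into a single graded isomorphism $\phi \colon H^\ast([\ig]_{hF}) \xrightarrow{\cong} H^\ast([S^\gg]_{hF})$.

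Finally, every map in \eqref{eq:big-diagram} is induced either by a map of $F$-spectra (restriction, transfer, inclusion) or by an $F$-equivariant isomorphism of groups. Hence each is $\FF_p$-linear and commutes with the Steenrod operations, so $\phi$ is an isomorphism of modules over the Steenrod algebra, as required.

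The main obstacle is the well-definedness step: verifying that the degree-$n$ isomorphism is insensitive to the choice of $(i,j,m)$. This is entirely a diagram-chase driven by the naturality of transfer and restriction in the tower $\{\Gamma_i\}$ (\Cref{rem:naturality-tr}) together with the naturality of the identification $\Gamma_i/\Gamma_j \cong \gg_i/\gg_j$ in $i$ and $j$; the content is bookkeeping rather than any further cohomological input.
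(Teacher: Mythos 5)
Your proposal is correct and follows the same route as the paper's own (very terse, one-sentence) proof: the paper simply states that in any range of degrees up to $m$ one may choose $i$ and $j$ satisfying $i < j \leq 2i$ and $j > i+m$, then points at diagram \eqref{eq:big-diagram}. What you add is an explicit treatment of the well-definedness of the assembled graded map — the paper leaves it implicit that the degree-wise isomorphisms obtained from different admissible choices of $(i,j,m)$ agree, and your naturality/three-dimensional-diagram argument is exactly the right way to discharge that obligation. The diagram chase you sketch does go through because the exponential isomorphism $\Gamma_i/\Gamma_j \cong \gg_i/\gg_j$ of \eqref{eq:gg-gamma} is natural with respect to both the quotient maps in $j$ and the transfer/inclusion maps in $i$ (the identification is the identity on underlying sets, and all structure maps are defined identically on the two sides), and the maps $r:\Sigma^\infty_+ B\Gamma_i \to \ig$ and $r:\Sigma^\infty_+ B\gg_i \to S^\gg$ are by construction compatible with the transfers. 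The observation that every map in the diagram is induced by a map of $F$-spectra or by an $F$-equivariant group isomorphism, hence commutes with Steenrod operations, is also correct and is the point the paper tacitly relies on for the "modules over the Steenrod algebra" conclusion. In short: same argument, fleshed out where the paper is laconic.
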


\begin{proof} We look at \eqref{eq:big-diagram}. In any given range of degrees up to an integer $m$, we may choose $i$ and $j$
so that $i < j \leq 2i$ and $j > i+m$.
\end{proof}

Combining \Cref{thm:whod-have-believed-it} and \Cref{thm:main-calc} immediately implies our key result. 

\begin{thm}\label{thm:a-big-one} Let $\cG$ be compact $p$-adic analytic group and let $H$ be a closed subgroup
of $\cG$ such that $H/H \cap Z(\cG)$ is finite. Suppose the $p$-Sylow subgroup of $H/H \cap Z(\cG)$
is an elementary abelian $p$-group. Then there is an 
$H$-equivariant equivalence
\[
\ig \simeq S^\gg.
\]
\end{thm}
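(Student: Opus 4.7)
The plan is to reduce \Cref{thm:a-big-one} to a cohomological comparison problem and then solve that problem by exploiting the isomorphism $\Gamma_i/\Gamma_j \cong \gg_i/\gg_j$ (for $j \leq 2i$) at finite levels. First I would invoke \Cref{thm:whod-have-believed-it}: because $H$ acts trivially on both $\ig$ and $S^\gg$ via $Z(\cG)$, and because $Z(\cG)$-invariants reduce the comparison to the finite quotient $H/H \cap Z(\cG)$, it suffices to produce an isomorphism of unstable modules over the Steenrod algebra
\[
H^\ast(EF_+ \wedge_F S^\gg) \cong H^\ast(EF_+ \wedge_F \ig),
\]
where $F$ is the elementary abelian $p$-Sylow subgroup of $H/H\cap Z(\cG)$.

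To build this isomorphism, I would set up the diagram \eqref{eq:big-diagram} of modules over the Steenrod algebra, comparing
\[
[\Sigma^\infty_+ B(\Gamma_i/\Gamma_j)]_{hF}, \quad [\Sigma^\infty_+ B\Gamma_i]_{hF}, \quad [\ig]_{hF}
\]
on the group side with the parallel triple
\[
[\Sigma^\infty_+ B(\gg_i/\gg_j)]_{hF}, \quad [\Sigma^\infty_+ B\gg_i]_{hF}, \quad [S^\gg]_{hF}
\]
on the Lie-algebra side. The top horizontal comparison is essentially tautological: by \eqref{eq:gg-gamma}, for $j \leq 2i$ the groups $\Gamma_i/\Gamma_j$ and $\gg_i/\gg_j$ are canonically isomorphic as $F$-modules, so the cohomologies of their homotopy orbit spectra agree as Steenrod modules. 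The nontrivial content is that the two vertical columns each compute the cohomology of the respective dualizing sphere.

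The main obstacle, which is the technical heart of the argument, is to show that in a range of degrees $n \leq m$ the quotients
\[
H^\ast\big([\Sigma^\infty_+ B(\Gamma_i/\Gamma_j)]_{hF}\big)/J_m \twoheadrightarrow H^\ast\big([\Sigma^\infty_+ B\Gamma_i]_{hF}\big)/I(i,d) \xleftarrow{\cong} H^\ast([\ig]_{hF})
\]
identify the cohomology of $[\ig]_{hF}$ (and analogously for $S^\gg$), where $I(i,d)$ and $J_m$ are the kernels of appropriate iterated transfer and restriction maps described in \Cref{not:all-the-kers}. This is where I would use \Cref{thm:main-calc1} (respectively \Cref{rem:main-calc1-gg}), whose proofs rest on the cohomological calculations of \Cref{thm:coh-gammai} and \Cref{lem:basic-coh}, together with the spectral-sequence book-keeping isolated in \Cref{lem:ss-seq} and \Cref{rem:chase-diag}. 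Specifically, these results tell me (i) that pre-orbit-taking, the transfer maps $H^\ast B\Gamma_{i+1} \to H^\ast B\Gamma_i$ vanish in positive degrees, so $H^\ast(\ig)$ is detected on a single copy of $H^\ast B\Gamma_i$ modulo the transfer kernel; and (ii) that the truncation $H^\ast(B\Gamma_i/\Gamma_j)/J_0(i,j) \cong H^\ast B\Gamma_i$ is a splitting of unstable algebras, which persists under $F$-equivariant cohomology.

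With all the vertical arrows in \eqref{eq:big-diagram} identified as isomorphisms (in the first case) or isomorphisms through degree $m$ (in the second case), and with the top horizontal arrow an isomorphism from the group/Lie-algebra identification, the four-corner diagram yields an isomorphism $H^\ast([\ig]_{hF}) \cong H^\ast([S^\gg]_{hF})$ in degrees $n \leq m$. Since $i$, $j$, $m$ can be chosen arbitrarily large subject to $i > d+1$ and $i < j \leq 2i$ with $j > i+m$, this gives the isomorphism of Steenrod modules in all degrees. \Cref{thm:whod-have-believed-it} then upgrades this to the claimed $H$-equivariant equivalence $\ig \simeq S^\gg$.
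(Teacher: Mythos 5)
Your proposal is correct and follows essentially the same route as the paper: reduce via \Cref{thm:whod-have-believed-it} to an isomorphism of Steenrod modules on Thom spectra, establish that isomorphism through the diagram \eqref{eq:big-diagram} using \Cref{thm:main-calc1}, \Cref{rem:main-calc1-gg}, and the finite-level identification $\Gamma_i/\Gamma_j \cong \gg_i/\gg_j$, and conclude by letting $m$ (hence $i,j$) grow. This is precisely the paper's argument for \Cref{thm:main-calc} feeding into \Cref{thm:a-big-one}.
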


 % !TEX root = dsphere-master.tex

\section{Analyzing the linear action}\label{sec:anal-lin}

In this section we write down a general result that allows us to use linear algebra to analyze the equivariant homotopy
type of $S^\gg$ for finite subgroups. The main result is \Cref{prop:recog-princ}, but before getting there
we need an intermediate result about the transfer for finite covering spaces of equivariant manifolds, namely \Cref{prop:eq-collapse-stable}. This
can be proved by putting together ideas from standard sources such as \cite{Bredon} and \cite{LMS}, but it is easy enough
and clearer to be completely explicit. 

\subsection{Equivariant geometric transfer}
Fix a finite group $F$ and let
\[
q: M \longr N
\]
be a $F$-equivariant differentiable finite-sheeted cover of a closed $F$-manifold $N$. To be clear:
\begin{enumerate}

\item $M$ and $N$ are closed $C^\infty$-manifolds of dimension $d$ with a differentiable $F$-action;

\item $q$ is an $F$-equivariant differentiable map and a finite covering map.
\end{enumerate}
If $M$ is a differentiable manifold, let $TM$ denote the tangent bundle and $T_mM$ the fiber of $TM$ at
$m \in $M.

\begin{rem}[{\bf Equivariant Geometric Transfers}]\label{rem:eq-transfer} Let $\cU=\{U_i\}$, $i \in A$,
be a finite open cover of $M$ by open subsets $U_i \subseteq M$ with the following properties:
\begin{enumerate}

\item for all $i$ the restriction of the covering map to $q:U_i \to N$ is an open embedding defining a
diffeomorphism onto its image; and,

\item for all $g \in F$ and all $i \in A$, there is a $k \in A$ so the action by $g$ on $M$ restricts to a diffeomorphism
$g:U_i \to U_k$.
\end{enumerate}
Note that part (2) defines an action of $F$ on $A$, and hence
a permutation representation $W = \RR^A$.

Next, let $\phi_i$ be a partition of unity subordinate to the cover $\cU$ with the following equivariance property: if $g \in H$ and
$g:U_i \to U_k$ is as in Part (2), then 
\[
\phi_k(gx) = \phi_i(x).
\]
The existence of such partitions of unity can be found in \S III.6 of \cite{Bredon}. The map 
\begin{align*}
j:M &\longr W \times N\\
x &\mapsto  ((\phi_i(x)),q(x))
\end{align*}
is an $F$-equivariant differentiable embedding and we have a commutative diagram of $F$-maps
\[
\xymatrix{
M \ar[r]^-j \ar[dr]_q & W \times N \ar[d]^{p_2}\\
&N.
}
\]
We have an isomorphism of the $F$-equivariant tangent bundles $q^\ast T_N \cong T_M$; this a property of covering
spaces. For any base $F$-space $Y$ let $\theta_W$ be the trivial $F$-bundle of $Y$ with total space $Y \times W$
and diagonal $F$-action. Then we have an isomorphism of $F$-bundles
$T_{W \times N} \cong \theta_W \oplus p_2^\ast T_N$. Thus we can conclude that
\[
j^\ast T_{W \times N} \cong \theta_W \oplus T_M.
\]
It follows that the normal bundle of $M$ in $W \times N$ is isomorphic to the trivial bundle $\theta_W$ over
$M$. Choose an equviariant tubular neighborhood $\nu$ of $M$ in $W \times N \subseteq S^W \times N $. 
(See Theorem IV.2.2 of \cite{Bredon}.) Then one model for the transfer is the Thom collapse map
\begin{align}\label{eq:model-trans}
S^W \wedge N_+ \cong &(S^W \times N)/(\{\infty\} \times N)\\
&\longr (S^W \times N)/((S^W \times N)-\nu) \cong \overline{\nu}/\partial{\nu} = S^W \wedge M_+.\nonumber
\end{align}
This last isomorphism uses that the normal bundle is trivial, so there is an equivariant
diffeomorphism
\begin{equation}\label{eq:triv-tube-1}
W \times M \cong \nu.
\end{equation}
The collapse map of \eqref{eq:model-trans} is an unstable model for the transfer,
so we write $\tr_W: S^W \wedge N_+ \to S^W \wedge M_+$ for this map.
After further suspension we can cancel the representation sphere $S^W$
and get the stable equivariant transfer map
\[
\tr\colon \Sigma^\infty_+ N \longr \Sigma^\infty_+ M.
\]
This is independent of the choices. 
\end{rem}

The next result is a special case of the equivariant tubular neighborhood theorem. See Theorem VI.2.2 of
\cite{Bredon}.

\begin{lem}\label{lem:eq-top} Let $M$ be a smooth manifold with a differentiable action by a finite group $F$.
Let $m \in M$ be a fixed point for the action. Then there exists an $F$-invariant open neighborhood $U$ of $m$
and an $F$-equivariant diffeomorphism $f\colon U \cong T_mM$ taking $m$ to $0$.

Furthermore, the closure $\overline{U}$ of $U$ is homeomorphic to a closed ball and the map $f$ extends to an
$F$-equivariant collapse map
\begin{equation}\label{eq:collapse}
f_U: M \to M/(M-U) \cong \overline{U}/\partial\overline{U} \cong S^{T_mM}.
\end{equation}
\end{lem}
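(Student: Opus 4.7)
The plan is to use an averaging argument together with the exponential map from Riemannian geometry to produce an equivariant local diffeomorphism, and then to reinterpret the Thom-style collapse in terms of the one-point compactification of the representation $T_mM$.

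First I would equip $M$ with any Riemannian metric and average it over the finite group $F$ to obtain an $F$-invariant Riemannian metric; this is the standard trick which works precisely because $|F|$ is invertible. Using this metric, the exponential map $\exp_m \colon T_mM \to M$ is well-defined in a neighborhood of $0$ and, because isometries commute with the exponential, it is $F$-equivariant where defined (here we use that $m$ is fixed by $F$, so each $g \in F$ acts on $T_mM$ by its derivative $d_mg$, which is an orthogonal transformation with respect to the averaged inner product). Since $d_0\exp_m$ is the identity, $\exp_m$ restricts to a diffeomorphism from some open ball $B_\varepsilon(0) \subseteq T_mM$ onto its image. The ball $B_\varepsilon(0)$ is $F$-invariant because the $F$-action on $T_mM$ is by linear isometries, so setting $U = \exp_m(B_\varepsilon(0))$ produces an $F$-invariant open neighborhood of $m$ equivariantly diffeomorphic to $B_\varepsilon(0)$. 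Composing with the $F$-equivariant diffeomorphism $B_\varepsilon(0) \cong T_mM$ given by $v \mapsto v/(1 - \|v\|^2/\varepsilon^2)$ (which is $F$-equivariant because $F$ acts by isometries, hence preserves $\|\cdot\|$) yields the desired $f \colon U \xrightarrow{\cong} T_mM$ sending $m$ to $0$.

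For the second half, observe that $\overline{U} = \exp_m(\overline{B_{\varepsilon}(0)})$ is, by the same mechanism, $F$-equivariantly homeomorphic to the closed ball $\overline{B_{\varepsilon}(0)}$ with boundary the sphere of radius $\varepsilon$. Thus $\overline{U}/\partial\overline{U}$ is $F$-equivariantly homeomorphic to the one-point compactification of the open ball, which is $F$-equivariantly homeomorphic to the one-point compactification $S^{T_mM}$ of the representation $T_mM$. The map $M \to M/(M-U)$ is tautologically $F$-equivariant since $U$ (and hence $M - U$) is $F$-invariant, and under the identifications above it takes the desired form.

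The main (minor) obstacle is just being careful that every identification is genuinely $F$-equivariant: the averaging of the metric, the rescaling diffeomorphism from the open ball to all of $T_mM$, and the identification of $\overline U/\partial\overline U$ with $S^{T_mM}$ all rely on $F$ acting by linear isometries on $T_mM$, which in turn relies on the choice of an invariant Riemannian metric. None of these steps is difficult, but they must all be threaded through the same invariant metric to ensure the composite of collapse maps is indeed the advertised $f_U$.
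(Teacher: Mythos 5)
Your proof is correct, and it is exactly the standard argument underlying the reference the paper cites (Bredon's equivariant tubular neighborhood theorem, Theorem VI.2.2): average the metric, use the equivariance of $\exp_m$, rescale the $\varepsilon$-ball to $T_mM$, and identify the collapse with the one-point compactification. The only small detail worth spelling out is that $\varepsilon$ should be chosen strictly smaller than the injectivity radius at $m$, so that $\exp_m$ is a diffeomorphism on an open set containing $\overline{B_\varepsilon(0)}$ and hence $\overline{U}=\exp_m(\overline{B_\varepsilon(0)})$; with that said, your argument is complete.
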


If $q:M \to N$ is our
covering map, then $q(m) \in N$ is also an $F$-fixed point. We may suppose further 
that $q$ maps $U$ and $\overline{U}$ diffeomorphically onto the images $V$ and $\overline{V}$ in $N$. 
Note that $q:M \to N$ defines an isomorphism of representations $T_mM \cong T_{q(m)}N$.

\begin{prop}\label{prop:eq-collapse-stable} There is an $F$-equivariant commutative diagram of spectra
\begin{equation*}
\xymatrix{
\Sigma_+^\infty N \ar[r]^\tr \ar[d]_{\Sigma_+^\infty\wedge f_V}& \Sigma_+^\infty M\ar[d]^{\Sigma_+^\infty\wedge f_U}\\
S^{T_{q(m)}N} \ar[r]_\simeq & S^{T_mM}
}
\end{equation*}
where $f_U$ and $f_V$ are the equivariant collapse maps to the top cell around the fixed points. 
\end{prop}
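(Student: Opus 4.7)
The plan is to identify both compositions in the diagram with the \emph{same} equivariant Pontryagin--Thom collapse map, namely the one associated to the embedded fixed point $j(m) = (e_{i_0}, q(m)) \in W \times N$ for a suitable index $i_0$. Concretely, I will show that after making a compatible choice of the auxiliary data used in \Cref{rem:eq-transfer}, the two ways of going around the square admit the same point-set description on representation sphere level (before desuspending by $S^W$).

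First, I would make the choices in \Cref{rem:eq-transfer} so that the fixed point $m$ sits in a distinguished chart. Choose the cover $\cU = \{U_i\}_{i \in A}$ and the $F$-equivariant partition of unity $\{\phi_i\}$ so that there is an index $i_0 \in A$ with $U_{i_0}$ equal to the $F$-invariant neighborhood $U$ of $m$ from \Cref{lem:eq-top}, so that $q$ restricts to a diffeomorphism $U_{i_0} \xrightarrow{\cong} V = q(U_{i_0})$, and so that $\phi_{i_0}(m) = 1$ (hence $\phi_j(m) = 0$ for $j \ne i_0$). Since $m$ is an $F$-fixed point, the index $i_0$ is automatically $F$-fixed in the induced $F$-action on $A$, so the basis vector $e_{i_0} \in W = \RR^A$ is $F$-fixed, and $j(m) = (e_{i_0}, q(m)) \in W \times N$ is a fixed point of the $F$-action on $W \times N$.

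Next, I would arrange the equivariant tubular neighborhood $\nu$ of $j(M) \subseteq S^W \times N$ so that, under the canonical trivialization $\nu \cong W \times M$ of \eqref{eq:triv-tube-1}, the open subset $W \times U_{i_0} \subseteq W \times M$ corresponds to an open neighborhood of $j(m)$ inside $\nu \subseteq W \times N$ of the form $W \times V$, where the identification uses the diffeomorphism $q|_{U_{i_0}} \colon U_{i_0} \xrightarrow{\cong} V$. This is possible because the equivariant tubular neighborhood theorem (Theorem IV.2.2 of \cite{Bredon}), applied in a neighborhood of the fixed point $j(m)$, allows us to arrange any preferred local trivialization, and the partition of unity was chosen so that $j$ has a product form near $m$. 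With this choice, \Cref{lem:eq-top} identifies the collapse $f_{U_{i_0}} \colon M \to S^{T_m M}$ with the Pontryagin--Thom collapse of $\{m\} \subseteq M$ with tubular neighborhood $U_{i_0}$, and similarly $f_V$.

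Finally, I would unwind the two compositions. Going around through $M$: the transfer $\tr_W$ is the Thom collapse on $S^W \wedge N_+$ associated to the submanifold $j(M) \subseteq S^W \times N$ with the chosen $\nu$, and smashing with $f_U$ further collapses onto the fixed point $m \in M$. By transitivity of Pontryagin--Thom collapse maps (i.e., composing a collapse to a submanifold with a collapse to a sub-submanifold yields the collapse to the latter), the composite is the collapse of $S^W \wedge N_+$ onto the fixed point $j(m)$, with normal data trivialized as $W \oplus T_m M$. Going around through $N$: the map $f_V$ is the collapse of $N$ onto $\{q(m)\}$ with normal data $T_{q(m)} N$, and smashing with $S^W$ and applying the isomorphism $S^{T_{q(m)} N} \xrightarrow{\cong} S^{T_m M}$ (induced by $dq_m$) gives the same collapse of $S^W \wedge N_+$ onto $j(m)$ with normal data $W \oplus T_m M$. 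The two unstable maps $S^W \wedge N_+ \to S^W \wedge S^{T_m M}$ therefore agree, and desuspending by $S^W$ yields the claimed $F$-equivariant commutative diagram of spectra. The main technical point is the compatibility of the tubular neighborhood $\nu$ with the product structure near $j(m)$, which is handled by the combined flexibility in the choice of $\nu$ and of the partition of unity; once that alignment is in place, the commutativity is a matter of comparing two descriptions of the same Pontryagin--Thom collapse.
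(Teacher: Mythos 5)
Your argument follows the same route as the paper's: identify all four corners and all four maps with collapse maps on quotients of $S^W\times N$, make compatible choices of the collapse data, and then observe that commutativity is automatic. The paper records this as the square of quotient maps
\[
\xymatrix{
\frac{S^W \times N}{\{\infty\} \times N} \ar[r] \ar[d] & \frac{S^W \times N}{(S^W \times N)-\nu} \ar[d]\\
\frac{S^W \times N}{\{\infty\} \times N \cup S^W \times (N-V)} \ar[r] & \frac{S^W \times N}{(S^W \times N)-\nu_U},
}
\]
which is exactly your ``transitivity of Pontryagin--Thom collapses,'' written as a diagram of further quotients.

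However, one step of your argument fails as stated. You arrange for $\nu_U$, the image of $W\times U_{i_0}$ under the tube trivialization, to be \emph{equal} to $W\times V$. That is impossible whenever the cover $q$ is nontrivial: in that case $q^{-1}(V)$ has components besides $U_{i_0}$, so $j(M)\cap(W\times V)$ contains sheets other than $j(U_{i_0})$, and since the tubular neighborhood $\nu$ must be open around \emph{all} of $j(M)$, the piece $\nu_U$ is forced to be a proper open subset of $W\times V$ disjoint from those other sheets. (Your supporting claim about the partition of unity also does not hold: if $\phi_{i_0}\equiv 1$ on all of $U_{i_0}$ then continuity forces $\phi_{i_0}\equiv 1$ on the boundary of $U_{i_0}$, which is incompatible with $\phi_{i_0}$ being supported in $U_{i_0}$ when $M$ is connected; one can only make $\phi_{i_0}\equiv 1$ on a strictly smaller neighborhood of $m$.) What is actually needed, and what is achievable by shrinking $\nu$, is merely the containment $\nu_U\subseteq W\times V$. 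With that, every map in the quotient square above is a further-collapse quotient map, the square commutes strictly, and the bottom horizontal map is a collapse that is a homotopy equivalence; this is the ``$\simeq$'' in the statement. So your final assertion that the two unstable composites ``agree'' is too strong as written---they agree only in the sense that they fit into this commuting quotient square, not as literally identical maps---but once the condition on $\nu_U$ is relaxed to a containment, the argument coincides with the paper's proof.
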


\begin{proof} We claim there is a commutative diagram of $F$-spaces
\begin{equation}\label{eq:eq-collapse-unstable-1}
\xymatrix{
S^W \wedge N_+ \ar[r]^{\tr_W} \ar[d]_{S^W \wedge f_V}& S^W \wedge M_+\ar[d]^{S^W \wedge f_U}\\
S^{W+T_{q(m)}N} \ar[r]_\simeq & S^{W+T_mM}.
}
\end{equation}

To see this, let $\nu_U \subseteq \nu$ be the image of $W \times U$ under the equivariant diffeomorphism $W \times M \cong \nu$
of \eqref{eq:triv-tube-1}. Then a model for the square \eqref{eq:eq-collapse-unstable-1} is
\[
\xymatrix{
\frac{S^W \times N}{\{\infty\} \times N} \ar[r] \ar[d] & \frac{S^W \times N}{(S^W \times N)-\nu} \ar[d]\\
\frac{S^W \times N}{\{\infty\} \times N \cup S^W \times (N-V)} \ar[r] & \frac{S^W \times N}{(S^W \times N)-\nu_U}.
}
\]

We can now take the diagram of \eqref{eq:eq-collapse-unstable-1}, stabilize, and cancel the representation sphere
$S^W$ to obtain the result.
\end{proof}

\subsection{Linear spheres} Let $\cG$ be a compact $p$-adic analytic group of rank $d$ 
and let $\gg$ be the adjoint representation.

\begin{prop}\label{prop:recog-princ} Let $F \subseteq \cG$ be a finite subgroup. Suppose there is
a finitely generated free abelian group $L \subseteq \gg$ with the properties that 
\begin{enumerate}

\item $L$ is stable under the adjoint action of $F$ on $\gg$, and

\item $L/pL \cong \gg/p\gg$.
\end{enumerate}
Let $V = \RR \otimes L$ and let $S^V$ be the one-point compactification of $V$. Then there is
an $F$-equivariant map $S^V \to S^\gg$ which becomes a weak equivalence after completion at $p$. 
\end{prop}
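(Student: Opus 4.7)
The plan is to build $S^V \to S^\gg$ as an $F$-equivariant zigzag routed through the quotient tori $T_i := V/p^iL$. Each $T_i$ plays a dual role that bridges the two sides: as a closed smooth $F$-manifold with $F$-fixed origin $0$ whose tangent representation is $V$, and as a model for the classifying space $K(p^iL, 1) \simeq B(p^iL)$ of the discrete lattice subgroup $p^iL \subseteq p^i\gg$. The geometric picture gives access to $S^V$ via equivariant collapse, while the algebraic picture gives access to $S^\gg$ via the lattice inclusion $L \hookrightarrow \gg$.

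First I would observe that the finite $F$-equivariant covers $T_{i+1} \to T_i$, induced by $p^{i+1}L \subseteq p^iL$, yield stable $F$-transfers $\tr \colon \Sigma_+^\infty T_i \to \Sigma_+^\infty T_{i+1}$ which, under $T_i \simeq Bp^iL$, coincide with the algebraic transfers for finite-index subgroup inclusions. The inclusion $L \hookrightarrow \gg$ gives compatible $F$-equivariant group homomorphisms $p^iL \hookrightarrow p^i\gg$ and hence $F$-maps $T_i \to Bp^i\gg$ commuting with the transfers by naturality. Taking the $F$-equivariant hocolim and $p$-completing produces
\[
\alpha \colon \Bigl(\hocolim_i \Sigma_+^\infty T_i\Bigr)^\wedge_p \longrightarrow S^\gg.
\]
In parallel, the $F$-equivariant collapse maps $T_i \to S^V$ around the origin (\Cref{lem:eq-top}) commute with the geometric transfers by \Cref{prop:eq-collapse-stable}, since each cover $T_{i+1} \to T_i$ fixes $0$ and induces the identity on the common tangent space $V$. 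These assemble into an $F$-map $\beta \colon \hocolim_i \Sigma_+^\infty T_i \to S^V$.

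The main content is showing that $\alpha$ and (the $p$-completion of) $\beta$ are weak equivalences. For $\beta$, linear algebra delivers the key computation: using $H_k(T_i, \ZZ) \cong \Lambda^k(p^iL)$ together with the fact that the deck group $p^iL/p^{i+1}L$ acts trivially on homology, one finds that the transfer on $H_k$ is multiplication by $p^{d-k}$ (after the canonical identification $p^iL \cong p^{i+1}L$ by the scalar $p$). Hence after $p$-completion of the telescope only $H_d \cong \ZZ_p$ survives; since the collapse is degree one on the top cell, $\beta^\wedge_p$ is an equivalence onto $S^V_p$. For $\alpha$, hypothesis (2) gives $\Lambda^d L \otimes \ZZ_p \cong \Lambda^d \gg$, which together with \Cref{lem:top-action} identifies $\alpha$ as an equivalence of $p$-complete $F$-spheres.

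Finally, inverting $\beta^\wedge_p$ in the $p$-complete $F$-equivariant homotopy category and composing yields
\[
S^V \longrightarrow S^V_p \xrightarrow[\simeq]{(\beta^\wedge_p)^{-1}} \Bigl(\hocolim_i \Sigma_+^\infty T_i\Bigr)^\wedge_p \xrightarrow{\alpha} S^\gg,
\]
a $p$-equivalence by construction. The hard part will be the compatibility assertion in the second paragraph: reconciling the geometric (Becker--Gottlieb) transfer on the torus $T_i = V/p^iL$ with the profinite transfer structure on $Bp^i\gg$ used in \eqref{eq:sg-defined}, which requires careful naturality of the transfer under the group homomorphism $p^iL \hookrightarrow p^i\gg$ and under the homotopy limit $Bp^i\gg = \holim_j B(p^i\gg/p^{i+j}\gg)$. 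For the weaker form of hypothesis (2), a preliminary reduction replacing $L$ by an appropriate integer multiple of itself brings us into the case $L \otimes \ZZ_p = \gg$ where the argument above applies directly.
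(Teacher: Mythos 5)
Your proposal is correct and follows the same strategy as the paper: route the comparison through the tori $T_i = V/p^iL$, using the equivariant collapse of \Cref{prop:collapse-eq} and \Cref{prop:eq-collapse-stable} to identify the $p$-completed transfer telescope with $S^V$, and the lattice inclusion $p^iL \hookrightarrow p^i\gg$ to map it to $S^\gg$. The one place you diverge in execution is in verifying that $\alpha$ is a $p$-equivalence: you compute the transfer on integral homology ($\times p^{d-k}$ on $H_k$) and then match top homology classes via $\Lambda^d L \otimes \ZZ_p \cong \Lambda^d\gg$, whereas the paper avoids this computation altogether by observing that each map $V/p^iL \to B(p^iL) \to B(p^i\gg)$ is a $p$-completion on $\pi_1$ between aspherical spaces, hence already an isomorphism on $H_*(-,\FF_p)$ at every level $i$, so the colimit map is too; this is cleaner and buys you the result in a single line once \Cref{prop:int-transfers} is in hand. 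The transfer compatibility you flag as ``the hard part'' is indeed left implicit in the paper, but it is a routine consequence of the standard naturality of transfers combined with the coset bijection $p^iL/p^{i+1}L \cong p^i\gg/p^{i+1}\gg$ furnished by hypothesis~(2), passed through the $\holim$ description of $Bp^i\gg$ via \Cref{prop:it-doesn't-matter}, so it is not a genuine obstacle. One small slip in your closing remark: for the weaker hypothesis $\QQ_p \otimes L_0 \cong \QQ_p \otimes_{\ZZ_p} \gg$ (the content of \Cref{prop:recog-princ-bis}), replacing $L_0$ by an \emph{integer multiple} of itself does not change $L_0/pL_0$ and so cannot produce the condition $L/pL \cong \gg/p\gg$; the correct move, made precise in \Cref{lem:sub-lattice}, is to \emph{enlarge} $L_0$ to the saturation $L = \{x \in \gg : p^k x \in L_0 \text{ for some } k\}$, which is $F$-invariant, has the same real span, and has the right mod-$p$ reduction. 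This does not affect the proof of \Cref{prop:recog-princ} itself, only the passage to the -bis version.
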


The proof is below, after \Cref{prop:int-transfers}. The argument we have in mind proceeds geometrically,
using that $V/L= (\RR\otimes L)/L$ is an $F$-parallelizable 
manifold, as shown in \Cref{lem:lat-trival}. 

As before, if $Y$ is any $F$-space and $W$ is a real representation of $F$, then $\theta_W$ denotes the trivial bundle with total
space $Y \times W$. If $M$ has an action by a finite group $F$ and $m \in M$ is a fixed point, then $T_mM$ is real representation
of $F$. Note also that $0 = 0 + L \in V/L$ is fixed under the action of $F$. 

\begin{lem}\label{lem:lat-trival} Let $F$ be a finite group acting on a finitely generated free abelian group $L$. Let
$V = \RR \otimes L$. Then there is an equivariant isomorphism of bundles over $V/L$
\[
T(V/L) \cong \theta_V.
\]
In particular there is an isomorphism of $F$-representations $T_0(V/L) \cong V$. 
\end{lem}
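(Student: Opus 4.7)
The plan is to exploit the fact that $V/L$ is a Lie group (a torus) and that $F$ acts by Lie group automorphisms fixing the identity element $0 = 0+L$. In this setting the tangent bundle is trivialized by left-invariant (equivalently, translation-invariant) vector fields, and left-invariance is automatically preserved by any automorphism.

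Concretely, I would argue as follows. First, since the quotient map $\pi \colon V \to V/L$ is a local diffeomorphism, it induces an isomorphism $T\pi \colon TV \to \pi^{\ast}T(V/L)$. The tangent bundle of the vector space $V$ has a canonical trivialization $TV \cong V \times V = \theta_V$, where the second factor is the "direction" component. The deck transformations for the covering $\pi$ are translations $v \mapsto v + \ell$ for $\ell \in L$, whose derivatives are the identity; thus, under the canonical trivialization, $L$ acts on $V \times V$ by translation on the first factor and trivially on the second. Passing to the quotient yields a bundle isomorphism $T(V/L) \cong (V/L) \times V = \theta_V$ over $V/L$.

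Next I would check $F$-equivariance. The group $F$ acts on $V$ by the linear extension of its action on $L$, and the derivative of a linear map is itself; hence the induced action on $TV \cong V \times V$ is the diagonal action $g \cdot (v,w) = (gv, gw)$. This action commutes with the $L$-action by translations (since $F$ preserves $L \subseteq V$), and therefore descends to the diagonal action on $T(V/L) \cong (V/L) \times V$ with respect to the trivialization just constructed. Thus the isomorphism $T(V/L) \cong \theta_V$ is $F$-equivariant, where on $\theta_V$ the base carries the induced $F$-action and each fiber carries $V$ as a representation.

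The final claim about $T_0(V/L) \cong V$ is then obtained by restricting to the fixed point $0 \in V/L$, whose fiber under the trivialization is canonically the representation $V$. The only real content here is the canonical identification of translation-invariant vector fields on $V/L$ with the representation $V$; there is no genuine obstacle, and I would expect the argument to be entirely formal once the trivialization via $\pi$ is in place.
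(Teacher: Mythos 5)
Your argument is correct and follows the same approach as the paper's proof: use the standard linear trivialization of $TV$ over $V$ and observe that it descends through the covering $V \to V/L$ because deck transformations are translations with identity derivative, and that $F$-equivariance follows since $F$ acts linearly. The paper's proof is terser (stating only that "the standard linear trivialization descends"), while you spell out the descent and the equivariance check explicitly; there is no substantive difference.
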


\begin{proof} The standard linear trivialization $\theta_V \cong TV$ over $V$ descends to the needed trivialization over $V/L$. 
Specifically, the trivialization over $V$ is the map
\[
V \times V\longr TV
\]
sending $(v,w)$ to $(v,\gamma'(0))$ where $\gamma(t) = v + tw$.
\end{proof}

\Cref{lem:lat-trival} and \Cref{lem:eq-top} immediately imply the following.

\begin{prop}\label{prop:collapse-eq} Let $F$ be a finite group acting on a finitely generated free abelian group $L$.
Let $V = \RR \otimes L$. Then there is a choice of $F$-equivariant neighborhood $U$
of $0 + L \in V/L$ that gives an equivariant collapse map
\[
f_U: V/L \to S^V.
\]
This map sends $0 + L$ to $0 \in V \subseteq S^V$. 
\end{prop}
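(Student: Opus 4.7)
The plan is to assemble this statement directly from the two preceding lemmas, verifying that the ingredients fit together $F$-equivariantly as advertised. First, I would observe that $F$ acts on $V = \RR \otimes L$ by the linear action induced from $L$, and this descends to a smooth action on the torus $V/L$ because $L \subseteq V$ is $F$-stable. The origin $0 \in V$ is fixed by any linear action, so the coset $0 + L \in V/L$ is a fixed point of the $F$-action on the manifold $V/L$, making it a legitimate target for the equivariant tubular neighborhood construction of \Cref{lem:eq-top}.

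Next I would apply \Cref{lem:eq-top} to $M = V/L$ and $m = 0 + L$. This produces an $F$-invariant open neighborhood $U$ of $0+L$, an $F$-equivariant diffeomorphism $f \colon U \xrightarrow{\cong} T_{0+L}(V/L)$ sending $0+L \mapsto 0$, and an extension to an equivariant collapse map
\[
f_U \colon V/L \longr (V/L)/(V/L - U) \cong \overline{U}/\partial \overline{U} \cong S^{T_{0+L}(V/L)}.
\]
The identification of the codomain with $S^V$ then comes from \Cref{lem:lat-trival}, which gives an $F$-equivariant isomorphism $T_{0+L}(V/L) \cong V$ of representations (as the specific instance at the fixed point $0+L$ of the global trivialization $T(V/L) \cong \theta_V$). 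Composing with this isomorphism of representation spheres yields the desired $f_U \colon V/L \to S^V$.

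Finally, I would trace the basepoint: the diffeomorphism $f$ sends $0+L$ to $0 \in T_{0+L}(V/L)$ by \Cref{lem:eq-top}, and the trivialization of \Cref{lem:lat-trival} is $\ZZ$-linear, hence sends $0$ to $0 \in V$. Under the one-point compactification $V \hookrightarrow S^V$ (where the added point $\infty$ is the image of the collapsed complement $V/L - U$), the origin maps to $0 \in V \subseteq S^V$, giving $f_U(0+L) = 0$ as required. There is no genuine obstacle here — the whole content lies in knowing that (i) $V/L$ is $F$-parallelizable with fiber $V$ at the fixed point, and (ii) a fixed point in a smooth $F$-manifold admits an equivariant disc neighborhood — both of which are precisely the two lemmas in hand, so the proof is essentially a one-line citation together with a check that the basepoint tracks correctly.
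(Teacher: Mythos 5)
Your proof is correct and is exactly the paper's argument: the paper simply states that \Cref{lem:lat-trival} and \Cref{lem:eq-top} "immediately imply" the proposition, and your write-up is the same two-lemma assembly (fixed point $0+L$, equivariant disc neighborhood, identification $T_{0+L}(V/L)\cong V$ via the trivialization) spelled out with the basepoint check made explicit. No gaps.
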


The following result is an immediate consequence of \Cref{prop:eq-collapse-stable} and \Cref{prop:collapse-eq}. 

\begin{prop}\label{prop:int-transfers} Let $F$ be a finite group acting on a finitely generated free abelian group $L$.
Let $V = \RR \otimes L$. Then there is a stable $F$-equivariant equivalence
\[
\hocolim \Sigma^\infty_+ (V/p^iL) \simeq S^V
\]
where the colimit is over the transfers for the covering map $V/p^{i+1}L \to V/p^iL$. 
\end{prop}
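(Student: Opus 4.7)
The plan is to produce a compatible family of collapse maps $f_i\colon V/p^iL \to S^V$ via \Cref{prop:collapse-eq}, use \Cref{prop:eq-collapse-stable} to show that they intertwine the transfers for the coverings $q_i\colon V/p^{i+1}L \to V/p^iL$ with the identity on $S^V$, and then conclude by an $\FF_p$-homology computation that the induced map from the homotopy colimit is an equivalence in the $p$-complete stable category (which is the ambient category by the convention of \Cref{rem:p-complete}).

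To begin, I would fix an $F$-invariant open neighborhood $U$ of $0 \in V$ small enough (using an $F$-invariant inner product on $V$) that it embeds under the natural projection into each $V/p^iL$. Writing $U_i \subseteq V/p^iL$ for this image, the $U_i$ are $F$-invariant open neighborhoods of the origin, coherently compatible in the sense that $q_i$ carries $U_{i+1}$ diffeomorphically onto $U_i$. Using $U_i$ as the invariant neighborhood in \Cref{prop:collapse-eq} (applied with $p^iL$ in place of $L$), I obtain an $F$-equivariant collapse map $f_i\colon V/p^iL \to S^V$ onto the one-point compactification of $T_0(V/p^iL) \cong V$ (the identification coming from \Cref{lem:lat-trival}).

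Next, since $q_i$ is an $F$-equivariant finite differentiable cover of closed $F$-manifolds carrying $0$ to $0$, and since by the coherent choice of $U_i$ the linearization $dq_i\colon T_0(V/p^{i+1}L) \to T_0(V/p^iL)$ is the identity map on $V$, \Cref{prop:eq-collapse-stable} produces a commutative square of stable $F$-equivariant maps
\[
\xymatrix{
\Sigma^\infty_+ (V/p^iL) \ar[r]^-{\tr_i} \ar[d]_{f_i} & \Sigma^\infty_+ (V/p^{i+1}L) \ar[d]^{f_{i+1}} \\
S^V \ar@{=}[r] & S^V.
}
\]
Assembling these squares yields a stable $F$-equivariant map $\Phi\colon \hocolim_i \Sigma^\infty_+(V/p^iL) \to S^V$.

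Finally, I would verify that $\Phi$ is an equivalence by computing mod $p$ homology (which detects $p$-complete equivalences and ignores the $F$-action). Each $V/p^iL$ is a $d$-torus with $d = \dim_\RR V$, so $H_*(V/p^iL;\FF_p) \cong \Lambda^*(\FF_p^d)$, with top class in degree $d$. From the defining property $q_{i*}\circ \tr_{i*} = p^d \cdot \mathrm{id}$ of the transfer for a $p^d$-sheeted cover, together with the observation that $q_{i*}$ is multiplication by $p^k$ on $H_k$ (via K\"unneth and the $p$-fold circle cover), one deduces that $\tr_{i*}$ is multiplication by $p^{d-k}$ on $H_k$; so on $\FF_p$-homology the transfer is the identity in degree $d$ and zero elsewhere. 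Hence $\hocolim_i H_*(V/p^iL;\FF_p) \cong \FF_p$ concentrated in degree $d$, matching $H_*(S^V;\FF_p)$, and $\Phi$ realizes this isomorphism since each $f_i$ has degree one on top cells. The main technical subtlety is arranging the tubular neighborhoods coherently so that the intertwining squares commute strictly rather than merely up to homotopy, but the uniform choice of $U$ at the outset resolves this cleanly.
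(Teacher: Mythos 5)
Your proof is correct and follows essentially the same route the paper indicates, combining \Cref{prop:collapse-eq} and \Cref{prop:eq-collapse-stable} to produce a coherent map from the transfer tower to the constant tower $S^V$ and then checking the induced map of $p$-complete spectra is an equivalence via a mod-$p$ homology computation. The paper declares the result an ``immediate consequence'' of the two cited propositions; your write-up supplies the details they leave implicit, notably the uniform choice of linearizing neighborhood $U \subseteq V$ that makes the transfer squares compatible and the computation that $\tr_{i*}$ is multiplication by $p^{d-k}$ on $H_k$ (paralleling the argument of \Cref{lem:transfer-hom-ad}).
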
 

We can now prove the main result of this section.

\begin{proof}[{\bf Proof of  \Cref{prop:recog-princ}}]
Let $\Gamma_i \subseteq \cG$
be our preferred set of open uniformly powerful subgroups of the compact $p$-adic analytic group $\cG$. Recall from \Cref{def:class-pro-finite} that we have defined
\[
B(p^i\gg) = \holim B(p^i\gg/p^{i+j}\gg)
\]
and we proved in \Cref{prop:it-doesn't-matter} that there is an equivalence after $p$-completion
\begin{equation}\label{eq:here-we-go-again}
\Sigma_+^\infty B(p^i\gg) = \holim \Sigma_+^\infty B(p^i\gg/p^{i+j}\gg).
\end{equation}
Finally, $S^\gg = \hocolim \Sigma_+^\infty B(p^i\gg)$, again after $p$-completion. 

Our assumption that $L/pL \cong \gg/p\gg$ allows us to conclude that for all $i$ the map
\[
B(p^iL) \longr B(p^i\gg)
\]
induces $p$-completion on the fundamental group. The map $V/p^iL \to B(p^iL)$ classifying the principal $p^iL$ bundle
$V \to V/p^iL$ is a weak equivalence and we also have that the composition
\[
V/p^iL \longr B(p^iL) \longr B(p^i\gg)
\]
induces $p$-completion on the fundamental group. Since both spaces have no non-zero higher homotopy groups
it is then an isomorphism on $H_\ast(-,\FF_p)$. Then
\[
\Sigma_+^\infty V/p^iL \longr \Sigma_+^\infty B(p^i\gg)
\]
is also an isomorphism on $H_\ast(-,\FF_p)$. Using \eqref{eq:sg-defined} and \Cref{prop:int-transfers} 
we can now conclude that
\[
S^V \simeq \hocolim \Sigma_+^\infty V/p^iL \longr \hocolim \Sigma_+^\infty B(p^i\gg) = S^\gg
\]
is an isomorphism on $H_\ast(-,\FF_p)$. This implies the result.
\end{proof} 

In our examples the natural choice of a finitely generated free abelian group $L \subseteq \gg$ may not have property that
$L/pL \cong \gg/p\gg$. The following result will let us relax this hypothesis.

\begin{lem}\label{lem:sub-lattice} Let $M$ be a free $\ZZ_p$-module of finite rank and let $L_0 \subseteq M$ be 
a finitely generated free abelian group so that $\QQ_p \otimes L_0 \cong \QQ_p \otimes_{\ZZ_p} M$. Define
\[
L = \{ x \in M\ |\ p^kx \in L_0\ \hbox{\rm{for some} $k \geq 0$}\} \subseteq M.
\]
Then $L$ is a finitely generated free abelian group, $\QQ_p \otimes L \cong \QQ_p \otimes_{\ZZ_p} M$, $L_0 \subseteq L$
has finite index, and
\[
L/pL \cong M/pM.
\]
Furthermore, if $f:M \to M$ is any continuous $\ZZ_p$-linear isomorphism such that $f(L_0)=L_0$, then $f(L) = L$. 
\end{lem}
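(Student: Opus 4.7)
My plan is to handle three things in turn: the structural claims that $L$ is free of rank $d = \operatorname{rank}_{\ZZ_p} M$ with $\QQ_p \otimes L \cong \QQ_p \otimes_{\ZZ_p} M$ and $[L:L_0]<\infty$; the identification $L/pL \cong M/pM$; and the $f$-invariance.

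The crux is a saturation bound, namely that $L \subseteq p^{-a} L_0$ for some $a\ge 0$. To produce such an $a$, I fix a $\ZZ$-basis $f_1,\dots,f_d$ of $L_0$. The hypothesis $\QQ_p \otimes L_0 \cong \QQ_p \otimes_{\ZZ_p} M$ means the $f_i$ are $\QQ_p$-linearly independent in $\QQ_p \otimes M$, hence also $\ZZ_p$-linearly independent in $M$, so $N := \sum_i \ZZ_p f_i$ is a free $\ZZ_p$-submodule of rank $d$; then $M/N$ is a finite $p$-group, whose exponent I call $p^a$, so that $p^a M \subseteq N$. Given $x \in L$, choose $k \ge a$ with $p^k x \in L_0$, and expand $p^a x \in N$ as $p^a x = \sum c_i f_i$ with $c_i \in \ZZ_p$. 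Then $p^k x = \sum p^{k-a} c_i f_i$ lies in $L_0$, and the $\ZZ_p$-independence of the $f_i$ forces $p^{k-a} c_i \in \ZZ$; since $c_i \in \ZZ_p$ as well, this means $c_i \in \ZZ$, and therefore $p^a x \in L_0$. We conclude $L_0 \subseteq L \subseteq p^{-a} L_0$: $L$ is a subgroup of a free abelian group of rank $d$ containing a subgroup of the same rank, so $L$ is itself free of rank $d$, the quotient $L/L_0$ is finite, and $\QQ_p \otimes L = \QQ_p \otimes L_0 = \QQ_p \otimes_{\ZZ_p} M$.

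For $L/pL \cong M/pM$, both sides are $\FF_p$-vector spaces of dimension $d$, so it suffices to show the natural map is injective, i.e.\ that $L \cap pM = pL$. The nontrivial inclusion is immediate from the definition: if $\ell \in L$ and $\ell = pm$ with $m \in M$, then for some $k$ we have $p^{k+1} m = p^k \ell \in L_0$, so $m \in L$ by definition, and hence $\ell \in pL$. Finally, the equivariance claim is formal: $f(p^k x) = p^k f(x)$ together with $f(L_0) = L_0$ gives $f(L) \subseteq L$. Since $f$ is an automorphism of $M$ and restricts to a surjective endomorphism of the finitely generated group $L_0$, it restricts to an automorphism there (finitely generated abelian groups are Hopfian), so $f^{-1}(L_0) = L_0$; applying the argument to $f^{-1}$ yields $L \subseteq f(L)$.

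The only step requiring any real thought is the saturation bound $L \subseteq p^{-a} L_0$; the remaining assertions are direct manipulations of the defining condition $p^k x \in L_0$.
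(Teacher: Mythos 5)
Your proof is correct, and it follows a genuinely different route from the paper's. The paper filters $M$ by powers of $p$, views the associated graded $E_*M$ as a finitely generated $\FF_p[x]$-module, shows $E_*L = E_*M$, and from there reads off the rank, the isomorphism $L/pL \cong M/pM$, and a uniform bound $p^n L \subseteq L_0$ (which is used to deduce freeness and finite index). Your argument establishes the same uniform saturation bound $L \subseteq p^{-a}L_0$ directly: you pass to the $\ZZ_p$-span $N$ of a $\ZZ$-basis of $L_0$, use that $M/N$ is a finite $p$-group of exponent $p^a$, and exploit the elementary fact that $p^{-(k-a)}\ZZ \cap \ZZ_p = \ZZ$ to force the $\ZZ_p$-coefficients of $p^a x$ to be integers. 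From $L_0 \subseteq L \subseteq p^{-a}L_0$ you then read off freeness, rank, and finite index immediately, and you handle $L/pL \cong M/pM$ by a direct verification that $L \cap pM = pL$ together with a dimension count, rather than through the associated graded. Both proofs are complete; yours is more hands-on and replaces the filtered-module bookkeeping with explicit coefficient manipulations, at the cost of fixing a basis, while the paper's graded formalism is more in keeping with the tone of the surrounding sections. (One small remark: in the $f$-invariance step, invoking the Hopfian property is harmless but unnecessary — $f$ is already injective as an automorphism of $M$, so $f^{-1}(L_0) = L_0$ follows immediately from $f(L_0) = L_0$.)
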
 

\begin{proof} Filter $M$ by powers of $p$; that is, set $F_sM = p^sM$. Let $E_\ast M$ be the associated graded module. Then
$E_\ast M = \FF_p[x] \otimes M/pM$ where $M/pM$ has filtration $0$ and $x$ has filtration $1$ and is the residue
class of $p$ in $E_\ast \ZZ_p$. Since $M/pM$ is finite, $E_\ast M$ is a finitely generated $\FF_p[x]$-module.

We have induced filtrations on $L_0$ and $L$; for example, $F_sL = F_sM \cap L$. The hypothesis
that $\QQ_p \otimes L_0 \cong \QQ_p \otimes_{\ZZ_p} M$ implies there is 
an integer $n$ so that we have inclusions of graded $\FF_p[x]$-modules 
\[
x^nE_\ast M \subseteq E_\ast L_0 \subseteq E_\ast L \subseteq E_\ast M. 
\]
We first show the last inclusion is an equality. 

Suppose $a \in E_sM$ is represented by $\alpha \in F_sM$. Then $x^na \in E_\ast L_0$ so there is a $\beta \in F_{s+n}L_0$
and $z \in p^{s+n+1}M = F_{s+n+1}M$ so that 
\[
p^n \alpha = \beta + z.
\]
Since $p:F_sM \to F_{s+1}M$ is an isomorphism, we may choose $y \in F_{s+1}M$ so that $p^{n}y = z.$ Set
$\gamma = \alpha - y$. Then $\gamma \equiv \alpha$ modulo $F_{s+1}M$ and hence $\gamma$ also represents $a$. 
Furthermore
\[
p^n \gamma =  \beta \in L_0
\]
Thus $\gamma \in L$ as needed. 

We can now prove the result. Since $p^nL \subseteq L_0$ and $p^n: L \to p^nL$ is an isomorphism,
we have that $L$ is finitely generated and free. The inclusions $p^nL \subseteq L_0 \subseteq L$ imply
$L_0$ is of finite index in $L$, that $L_0$ and $L$ have the same rank, and that
\[
\QQ_p \otimes L_0 \cong \QQ_p \otimes L \cong \QQ_p \otimes_{\ZZ_p} M.
\]
The equality $E_\ast L = E_\ast M$ shows $L/pL = M/pM$.  

Let $f:M \to M$ be any isomorphism, and let $x \in L$ so that $p^kx \in L_0$ for some $k$. Since $L_0$ is invariant under
$f$, we have $p^kf(x) \in L_0$, so $f(x) \in L$. That every $x\in L$ is in $f(L)$ is an easy exercise. 
\end{proof}

Using this we have the following useful variant of \Cref{prop:recog-princ}. 

\begin{prop}\label{prop:recog-princ-bis} Let $F \subseteq \cG$ be a finite subgroup. Suppose there is
a finitely generated free abelian group $L_0 \subseteq \gg$ with the properties that 
\begin{enumerate}

\item $L_0$ is stable under the adjoint action of $F$ on $\gg$, and

\item $\QQ_p \otimes L_0 \cong \QQ_p \otimes_{\ZZ_p} \gg$.
\end{enumerate}
Let $V = \RR \otimes L_0$ and let $S^V$ be the one-point compactification of $V$. Then there is
an $F$-equivariant map $S^V \to S^\gg$ which becomes a weak equivalence after completion at $p$. 
\end{prop}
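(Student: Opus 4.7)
The plan is to deduce this from Proposition \ref{prop:recog-princ} by enlarging the lattice $L_0$ to a lattice $L$ satisfying the stronger condition $L/pL \cong \gg/p\gg$, using Lemma \ref{lem:sub-lattice}.

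First, I would regard $\gg$ as a free $\ZZ_p$-module of finite rank $d$ (via the chosen topological generators, see \Cref{def:lie-algebra}), and note that the adjoint action of every $g \in F$ provides a continuous $\ZZ_p$-linear automorphism of $\gg$ that, by hypothesis (1), preserves $L_0$. Applying \Cref{lem:sub-lattice} with $M = \gg$ and our given $L_0$ produces a finitely generated free abelian subgroup
\[
L = \{x \in \gg \mid p^k x \in L_0 \text{ for some } k \geq 0\} \subseteq \gg
\]
with $L_0 \subseteq L$ of finite index, $\QQ_p \otimes L \cong \QQ_p \otimes_{\ZZ_p} \gg$, and, crucially, $L/pL \cong \gg/p\gg$. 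The final clause of \Cref{lem:sub-lattice} guarantees that $L$ is preserved by every $\ZZ_p$-linear automorphism of $\gg$ that preserves $L_0$; hence $L$ is stable under the adjoint action of $F$. Therefore $L$ satisfies both hypotheses of \Cref{prop:recog-princ}.

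Next I would apply \Cref{prop:recog-princ} to $L$, obtaining an $F$-equivariant map $S^{\RR \otimes L} \to S^\gg$ that is a weak equivalence after $p$-completion. To finish, I would identify $V = \RR \otimes L_0$ with $\RR \otimes L$ as $F$-representations: since $L_0 \subseteq L$ is a subgroup of finite index, the quotient $L/L_0$ is a finite abelian group, and tensoring the short exact sequence
\[
0 \longr L_0 \longr L \longr L/L_0 \longr 0
\]
with $\RR$ kills the finite quotient and yields an $F$-equivariant $\RR$-linear isomorphism $\RR \otimes L_0 \cong \RR \otimes L$. Taking one-point compactifications produces an $F$-equivariant homeomorphism $S^V \cong S^{\RR \otimes L}$, and composing with the map from the previous paragraph gives the desired $F$-equivariant map $S^V \to S^\gg$, which is a $p$-completion equivalence.

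There is no serious obstacle here: the whole argument is essentially a lattice-enlargement trick. The only point requiring care is verifying that the enlarged $L$ inherits $F$-stability, which is handled by the last sentence of \Cref{lem:sub-lattice}, and that passing from $L_0$ to $L$ does not change the underlying real representation, which follows from $[L : L_0] < \infty$.
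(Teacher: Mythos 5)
Your proof is correct and follows exactly the route the paper takes: enlarge $L_0$ to $L$ via \Cref{lem:sub-lattice}, note that $F$-stability persists and $\RR \otimes L_0 \cong \RR \otimes L$, then invoke \Cref{prop:recog-princ}. You have merely spelled out the details that the paper's two-sentence proof leaves implicit.
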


\begin{proof} Use \Cref{lem:sub-lattice} to produce a finitely generated free abelian group $L$ so that
$L_0 \subseteq L \subseteq \gg$,  $L$ is $F$-invariant, and $L/pL \cong \gg/p\gg$. Since
$V = \RR \otimes L_0 \cong  \RR \otimes L$, \Cref{prop:recog-princ} now applies.
\end{proof}

 % !TEX root = dsphere-master.tex
 
\section{Lubin-Tate theory and its fixed points}\label{sec:e-theory}

We fix $p$ and $n \geq 1$. If $F$ is a formal group law of height $n$ over a finite algebraic extension $k$ of $\FF_p$, 
let $\LTE = E(k,F)$ be the Lubin-Tate spectrum -- aka the Morava $E$-theory -- associated to the pair $(k,F)$. 
The spectrum $\LTE$ is an $E_\infty$-ring spectrum with an action, through $E_\infty$-ring maps,
of the group $\GG = \Aut(k,F)$ of the automorphisms of pair $(k,\Gamma)$ \cite{rezk_hm, gh_modulien}.
Namely, the elements of $\GG$ are pairs $( f, \varphi)$ where $\varphi \in \Gal(k/\F_p)
$ and $f \colon \varphi^*F \xrightarrow{\cong} F$ is an isomorphism; an explicit description of $\GG$ for the Honda formal group law is given in \Cref{exmp:exam2}.
 A general theory is possible here, but we will focus on two examples.
For both, the pair $(k,F)$ will have the property that for any algebraic extension $k \subseteq k'$ the inclusion
$\Aut(k,F) \subseteq \Aut(k',F)$ is an isomorphism. We will say that the pair $(k,F)$ has all automorphisms at $k$. 

\begin{exam}\label{ex:which-fgls1} The classical example is the Honda formal group law $F$ of height $n$ over $\FF_{p^n}$.
In this case $F$ is the unique $p$-typical formal group law defined over $\FF_p$ with $p$-series
$x^{p^n}$. Not all automorphisms of $F$ are defined over $\FF_p$; for this we need to pass to $\FF_{p^n}$.
\end{exam}
\begin{exam}\label{ex:which-fgls2}
The other case of interest is specific to $n=2$, and we only concentrate on $p=2$ or $3$. Then $F$ is the formal group law of a supersingular
elliptic curve defined over $\FF_p$. We can and will choose the curves so that once we
pass to $\FF_{p^2}$ we get all automorphisms.
\end{exam}

{\bf From now on we assume we are working with one of these two examples.} In both cases $k=\FF_{p^n}$,
where $n$ is the height of the formal group.

\begin{rem}[{\bf Structure of the Morava stabilizer group}]\label{rem:struct-morava-stab} Because the formal
group law $F$ is defined over $\FF_p$, there is a split surjective homomorphism
\[
\GG = \Aut(\FF_{p^n},F) \longr \Gal(\FF_{p^n}/\FF_p)
\]
with kernel the group $\SS := \Aut(F/\FF_{p^n})$, the group of automorphisms of $F$ over $\FF_{p^n}$. Both $\GG$
and its subgroup $\SS \subseteq \GG$ are compact $p$-adic analytic groups. We write
\begin{equation}\label{eq:subgrs-of-sn}
\cdots \subseteq \Gamma_{i+1} \subseteq \Gamma_i \subseteq \cdots \subseteq \Gamma_1 \subseteq \SS
\end{equation}
for the filtration of \Cref{exmp:exam2}.
\end{rem}

There is a non-canonical isomorphism 
\[
W(\FF_{p^n})[[u_1,\ldots,u_{n-1}]][u^{\pm 1}]\cong \LTE_\ast 
\]
where the power series ring is in degree $0$ and $u$ is in degree $-2$. The power series ring $\LTE_0$ is a complete
local Noetherian ring with maximal ideal $\mm = (p,u_1,\ldots,u_{n-1})$ and residue field $\FF_{p^n}$. Define
$\LTK=K(n)$ to be the version of Morava $K$-theory with $\LTK_\ast = \FF_{p^n}[u^{\pm 1}]$ and formal group law $F$.
There is map of complex oriented ring spectra $\LTE \to \LTK$ which, on coefficients,
is the quotient by $\mm$. The category of $\LTK$-local spectra is the standard $K(n)$-local category. 

The $\LTK$-local category is a closed symmetric monoidal category with smash product $L_{\LTK}(X \wedge Y)$.
As is commonly done, we define
\[
\LTE_\ast X \cong \pi_\ast L_{\LTK}(\LTE \wedge X). 
\]
This is not quite a homology theory, so the notation is slightly abusive. It has very nice algebraic properties;
see for example \Cref{rem:e-of-fixed points}. The action of $\GG$ on $\LTE$ gives a continuous action of $\GG$
on $\LTE_\ast X$ making $\LTE_\ast X$ a \emph{Morava module}
(see \cite[Section 1.3]{BobkovaGoerss} for a definition).

\begin{rem}\label{rem:towers-of-n} Recall that $L_n$-localization is localization with respect
to the homology theory $K(0)\vee\ldots\vee K(n)$ or, equivalently, with respect to $\LTE$. Here $K(0)$ is rational homology.

Using the periodicity results of Hopkins and Smith \cite{nilpotence2},
Hovey and Strickland produce a sequence of ideals $J(i) \subseteq \mm \subseteq \LTE_0$ and finite type $n$
spectra $\MJ{i}$ with the following properties:
\begin{enumerate}

\item $J(i+1) \subseteq J(i)$ and $\bigcap_i J(i) = 0$;

\item $\LTE_0/J(i)$ is finite;

\item $\LTE_\ast(\MJ{i}) \cong \LTE_\ast/J(i)$ and there are spectrum maps $q\colon \MJ{i+1} \to \MJ{i}$ realizing the quotient
$\LTE_\ast/J(i+1) \to \LTE_\ast/J(i)$;

\item There are maps $\eta=\eta_i\colon S^0 \to \MJ{i}$ inducing the quotient map $\LTE_0 \to \LTE_0/J(i)$ and
$q\eta_{i+1} = \eta_i\colon
S^0 \to \MJ{i}$; 

\item If $X$ is a finite type $n$ spectrum, then the map $X \to \holim_i(X \smsh \MJ{i})$ induced by the maps
$\eta$ is an equivalence.

\item  If $X$ is any $L_n$-local spectrum, then $L_{\LTK}X \simeq \holim_i X \smsh
\MJ{i}$. In particular we have $ \LTE \simeq \holim_i \LTE \smsh \MJ{i}$.

\end{enumerate} 
Most of this is proved in \cite[Section 4]{666}, while (6) is proved in \cite[Proposition 7.10]{666}.
In the same source, Hovey and Strickland also prove that items (1)-(5) characterize the tower $\{\MJ{i}\}$ up to equivalence
in the pro-category of towers under $S^0$; see \cite[Proposition 4.22]{666}.
Note that the topology on $\LTE_0$ defined by the sequence $\{ J(i)\}$ is same as the $\mm$-adic topology and
that $\G$ acts on $\LTE_\ast/J(i)$ through a finite quotient of $\GG$. 
\end{rem}

We can now prove the following useful recognition lemma. 

\begin{lem}\label{lem:detecting-kn-equiv} Let $f:X \to Y$ be a map of $L_n$-local spectra. Then $L_\LTK f:L_\LTK X \to 
L_\LTK Y$ is an equivalence if and only if there is a finite type $n$ complex $T$ so that 
\[
f \wedge T: X \wedge T \longr Y \wedge T
\]
is an equivalence. 
\end{lem}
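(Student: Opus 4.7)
The plan is to prove both implications using the tower $\{\MJ{i}\}$ from \Cref{rem:towers-of-n} together with a thick subcategory argument. The key observation on both sides is that for $X$ an $L_n$-local spectrum and $T$ a finite type $n$ complex, the smash product $X \wedge T$ is automatically $\LTK$-local: it is $L_n$-local because $T$ is finite, and $L_{n-1}(X \wedge T) \simeq L_{n-1} X \wedge T = 0$ since $K(m)_\ast T = 0$ for $m < n$ (so $L_{n-1} T = 0$). The arithmetic fracture square for $L_n$-local spectra then forces $X \wedge T \simeq L_\LTK(X \wedge T)$.

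For the forward direction, suppose $L_\LTK f$ is an equivalence. Fix any finite type $n$ complex $T$; for example, take $T = \MJ{0}$. Since $T$ is finite, smashing with $T$ commutes with $\LTK$-localization, so $L_\LTK(f \wedge T) \simeq (L_\LTK f) \wedge T$ is an equivalence. By the observation above, $X \wedge T$ and $Y \wedge T$ are already $\LTK$-local, hence $f \wedge T$ itself is an equivalence.

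For the backward direction, suppose $f \wedge T$ is an equivalence for some finite type $n$ complex $T$. By the Hopkins--Smith thick subcategory theorem, the thick subcategory of finite $p$-local spectra generated by $T$ is exactly $\mathcal{C}_n$, the thick subcategory of finite spectra of type $\geq n$. In particular, each $\MJ{i}$ lies in this thick subcategory. Since the functor $f \wedge (-)$ on finite spectra preserves cofiber sequences and retracts, the class of finite spectra $Z$ for which $f \wedge Z$ is an equivalence is itself thick and contains $T$; hence $f \wedge \MJ{i}$ is an equivalence for every $i$. Invoking item (6) of \Cref{rem:towers-of-n}, we conclude
\[
L_\LTK X \simeq \holim_i (X \smsh \MJ{i}) \xrightarrow{\;\holim (f \wedge \MJ{i})\;} \holim_i (Y \smsh \MJ{i}) \simeq L_\LTK Y
\]
is an equivalence, as desired.

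The argument is essentially formal once the two ingredients are in place; the only mildly nontrivial step is recognizing that $X \wedge T$ is already $\LTK$-local for $L_n$-local $X$ and type-$n$ $T$, but this is a standard consequence of $L_{n-1} T = 0$. There is no real obstacle here; the whole point is that \Cref{rem:towers-of-n} was set up precisely to enable such reductions.
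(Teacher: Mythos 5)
Your proof is correct and follows essentially the same route as the paper's: thick subcategory theorem plus item (6) of the $\MJ{i}$ tower for the backward direction, and the fact that $X \wedge T$ is already $\LTK$-local when $X$ is $L_n$-local and $T$ is type $n$ for the forward direction. You spell out the details more fully (in particular the fracture-square justification of the $\LTK$-locality of $X \wedge T$, where one should also note $L_{n-1}L_\LTK(X \wedge T) = 0$ by the same finiteness argument), but the strategy is identical.
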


\begin{proof} The collection of type $n$ complexes forms a thick subcategory $\cC_n$ of finite spectra, so if $f \wedge T$ is
an equivalence for any $T \in \cC_n$, it is an equivalence for all $T \in \cC_n$. Thus one implication follows
from item (6) of \Cref{rem:towers-of-n}. The other implication follows from the fact that if $X$ is $L_n$-local and
 $T$ is of type $n$, the map $X \wedge T \to L_\LTK (X \wedge T)$ is an equivalence.
\end{proof} 

\begin{rem}[{\bf Fixed point spectra and transfers}]\label{rem:fixed-and-tr} According to the theory of Devinatz and Hopkins \cite{DH}, for any closed subgroup $K  \subseteq \GG$ there is a continuous homotopy fixed point spectrum
$\LTE^{hK}$. These are again $E_\infty$-ring spectra, and there is a fixed point spectral sequence
\begin{equation}\label{eq:fixed-pnt-ss}
H^s(K,\LTE_t) \Longrightarrow \pi_{t-s}\LTE^{hK}.
\end{equation}
The $E_2$-term is continuous group cohomology. We have $\LTE^{h\GG} \simeq L_{\LTK}S^0$ and 
the spectral sequence of \eqref{eq:fixed-pnt-ss} is
the $\LTE$-based Adams-Novikov spectral sequence in the $\LTK$-local category.

The spectral sequence of \eqref{eq:fixed-pnt-ss} has a very rigid convergence properties. There is an integer
$N$, independent of $K$, so that $E_\infty^{s,\ast} = 0$ for $s \geq N$. If $K \subseteq \Gamma_1$
(or $K \subseteq \Gamma_2$ if $p > 2$), then 
\[
H^s(K,\LTE_t) = 0 
\]
for $s > n^2$, as $K$ is then a Poincar\'e duality group of dimension $\leq n^2$. 

These fixed point spectra of $\LTE$ also  have the property that if $K_1 \subseteq K_2$ is normal and of finite index, 
then $\LTE^{hK_1}$  has an action by $K_2/K_1$ and there is a weak equivalence
\begin{equation}\label{eq:double-fixed}
\LTE^{hK_2} \simeq (\LTE^{hK_1})^{hK_2/K_1}.
\end{equation}
This is proved as Theorem 4 of \cite{DH}.
In particular, we have a transfer map $\tr: \LTE^{hK_1} \to \LTE^{hK_2}$. If $K \subseteq \GG$ is open (and hence closed), 
then $K$ is of finite index in $\GG$ and we have a transfer map
\[
\tr:\LTE^{hK} \to L_{\LTK}S^0 \simeq \LTE^{h\GG}.
\]
\end{rem}

\begin{rem}\label{rem:e-of-fixed points} Let $H \subseteq \GG$ be a closed subgroup. 
A crucial property of the fixed point spectrum $\LTE^{hH}$, from \cite[Theorem 2]{DH}, is that there are isomorphisms
\begin{align}\label{eq:e-of-efixed}
\LTK_\ast \LTE^{hH} &\cong \pi_\ast L_{\LTK}(\LTK \wedge \LTE^{hH}) \cong \map(\GG/H,\LTK_\ast)\\
\LTE_\ast \LTE^{hH} &\cong \pi_\ast L_{\LTK}(\LTE \wedge \LTE^{hH}) \cong \map(\GG/H,\LTE_\ast)\nonumber
\end{align} 
where $\map$ denotes continuous set maps. The action of $\GG$ on the left factor of $\LTE$ defines the Morava
module structure on $\LTE_\ast\LTE^{hH}$. The isomorphism of \eqref{eq:e-of-efixed} becomes an isomorphism of
Morava modules if we give the module of continuous maps the conjugation action
\[
(g\varphi)(x) = g\varphi(g^{-1}x).
\]
If $H$ is normal, then $\GG$ acts on $\LTE^{hH}$. The isomorphism of \eqref{eq:e-of-efixed} becomes equivariant if  we define an action on the module of continuous maps by
\[
(g\star \varphi)(x) = \varphi(xg).
\]
\end{rem}

\begin{rem}[{\bf Continuous actions and fixed points}]\label{rem:defn-cont-action} For one of our main
results we will need some details about how the continuous fixed points of \cite{DH} are constructed.  
See \Cref{lem:baby-step-2}.

Let $\cG = \lim_i G_i$ be a profinite group. We are thinking of $\cG = \GG$, $\SS$, or $\Gamma_i \subseteq \SS$.
Let $X$ be a spectrum presented as a homotopy inverse limit
\[
X \simeq \holim X_j
\]
with $\pi_t X_j$ {\bf finite} for all  $j$ and $t$. We are thinking of the $\LTE \simeq \holim_j \LTE \smsh \MJ{j }$, as in
point (6) of \Cref{rem:towers-of-n}. 
Define the spectrum of continuous map by
\begin{equation}\label{eq:cont-funk}
\Func(\Sigma_+^\infty \cG,X) = \mathop{\holim}_j \mathop{\hocolim}_i F(\Sigma_+^\infty G_i,X_j).
\end{equation}
This definition is built so that 
\begin{equation}\label{eq:cont-funk-cons}
\pi_\ast \Func(\Sigma_+^\infty \cG,X) \cong \map(\cG,\pi_\ast X),
\end{equation}
where the target is the group of continuous maps. We can extend this definition of the continuous function spectrum
to
\[
\Func(\Sigma_+^\infty \cG^{1+s},X) = \mathop{\holim}_j \mathop{\hocolim}_i F(\Sigma_+^\infty \cG^{1+s}_i,X_j),
\]
where $\cG^m$ is the $m$-fold Cartesian product of $\cG$ with the product topology.
Then the data of a continuous action of $\cG$ on $X$ is a map $X \to \Func(\Sigma_+^\infty \cG,X)$ which extends 
(in the obvious way) to a map of cosimplicial spectra
\[
X \longr \Func(\Sigma_+^\infty \cG^{1+\bullet},X).
\]
One of the main theorems of \cite{DH} is that the action of $\GG$ in $\LTE$ can be refined to a continuous action.

If $\Gamma \subseteq \cG$ is open,  then we define
\[
X^{h\Gamma} = \holim_\Delta  \Func(\Sigma_+^\infty \cG^{1+\bullet},X)^\Gamma =
\holim_\Delta \Func(\Sigma_+^\infty (\cG/\Gamma \times \cG^{\bullet}),X).
\]
The Bousfield-Kan Spectral Sequence of this cosimplicial space then gives the homotopy fixed point
spectral sequence
\[
H^s(\Gamma,\pi_tX) \Longrightarrow \pi_{t-s}X^{h\Gamma},
\]
where the $E_2$-term is continuous cohomology.
\end{rem}

This notion of homotopy fixed points commutes with various types of inverse limits; the following will
be sufficient for our purposes. If $X$ is a spectrum, let $P_nX$ be its $n$th Postnikov section. 

\begin{lem}\label{eq:fixed-inv-commute} Let $\cG$ be a profinite group and let $\Gamma \subseteq \cG$
be an open subgroup. Suppose $X$ is presented as a homotopy inverse limit
\[
X \simeq \holim X_j
\]
with $\pi_t X_j$  finite for all $j$ and $t$ and that, with this presentation, $X$ has a continuous $\cG$-action.
Then $P_nX \simeq \holim P_nX_j$ and the natural map
\[
X^{h\Gamma} \longr \holim_n (P_nX)^{h\Gamma}
\]
is an equivalence.
\end{lem}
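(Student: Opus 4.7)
The plan is to prove the two assertions separately, both ultimately driven by the finiteness of the groups $\pi_t X_j$ and the consequent vanishing of the relevant $\lim^1$ terms. The first assertion is a direct Milnor-sequence computation; the second reduces to the formal commutation of two homotopy limits, with the bulk of the work going into checking that the continuous-action apparatus of \Cref{rem:defn-cont-action} is compatible with Postnikov truncation.

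For the first assertion, I would apply Milnor's short exact sequence to the tower $\{P_n X_j\}_j$:
\[
0 \to {\lim_j}^1 \pi_{t+1} P_n X_j \to \pi_t \holim_j P_n X_j \to \lim_j \pi_t P_n X_j \to 0.
\]
Since each $\pi_s X_j$ is finite, each $\pi_s P_n X_j$ is a finite abelian group, so the tower satisfies Mittag-Leffler and the $\lim^1$-term vanishes. For $t > n$ both outer groups vanish; for $t \leq n$ the truncation map induces isomorphisms $\pi_t P_n X_j \cong \pi_t X_j$, and taking $\lim_j$ (again $\lim^1 = 0$ by finiteness) yields $\pi_t X = \pi_t P_n X$. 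This identifies $P_n X \simeq \holim_j P_n X_j$.

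For the second assertion, I would first observe that the map $X \to \holim_n P_n X$ is an equivalence for any spectrum $X$: the tower $\{\pi_k P_n X\}_n$ stabilizes to $\pi_k X$ as soon as $n \geq k$, so Milnor's sequence gives vanishing $\lim^1$ and $\lim = \pi_k X$. Next, recall from \Cref{rem:defn-cont-action} that since $\Gamma$ is open in $\cG$, the quotient $\cG/\Gamma$ is finite and
\[
X^{h\Gamma} = \holim_\Delta \Func\bigl(\Sigma_+^\infty (\cG/\Gamma \times \cG^\bullet), X\bigr);
\]
this is a homotopy limit (an outer $\holim_\Delta$ over an inner $\holim_j \hocolim_i F(\Sigma_+^\infty (\cG/\Gamma \times G_i^\bullet), X_j)$). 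The functoriality of $P_n$ on the cosimplicial maps upgrades the continuous $\cG$-action on $X$ to compatible continuous $\cG$-actions on each $P_n X$, and then the desired equivalence
\[
X^{h\Gamma} \simeq (\holim_n P_n X)^{h\Gamma} \simeq \holim_n (P_n X)^{h\Gamma}
\]
follows by commuting homotopy limits.

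The principal obstacle, and therefore the step I would write out most carefully, is verifying that $\Func(-, -)$ as defined in \eqref{eq:cont-funk} commutes with the Postnikov tower in its second variable, so that $\Func(\Sigma_+^\infty \cG^{1+s}, X) \simeq \holim_n \Func(\Sigma_+^\infty \cG^{1+s}, P_n X)$ for every simplicial level $s$. This forces one to interchange a $\holim_n$ with the internal $\hocolim_i$. The rescue is that for each fixed $s$ and $i$, the spectrum $\Sigma_+^\infty G_i^{1+s}$ is finite, so $F(\Sigma_+^\infty G_i^{1+s}, -)$ preserves arbitrary homotopy limits; and by Step 1 each $P_n X_j$ still has finite homotopy groups, so the $\lim^1$ arguments of Step 1 iterate to the function spectra, making all the limits behave well. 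Once these compatibilities are set up, every homotopy limit in sight commutes with every other, and the equivalence $X^{h\Gamma} \simeq \holim_n (P_n X)^{h\Gamma}$ drops out.
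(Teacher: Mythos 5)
Your first step (that $P_n X \simeq \holim_j P_n X_j$) is fine, and the Milnor-sequence argument using finiteness of $\pi_t X_j$ is exactly the right justification. The second step, however, has a gap precisely where you wave at "$\lim^1$ arguments of Step~1 iterate \ldots making all the limits behave well." After unwinding, you must interchange the inner filtered $\hocolim_i$ in \eqref{eq:cont-funk} with the tower $\holim_n$ over Postnikov sections, and that interchange is not formal. Your stated rescue --- finiteness of $\pi_t P_n X_j$ --- does not repair it, since the groups that actually appear in the outer tower, namely $\pi_t \hocolim_i F(\Sigma_+^\infty G_i^{1+s}, P_n X_j) \cong \map(\cG^{1+s}, \pi_t P_n X_j)$, are \emph{infinite}. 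The interchange does hold, but for a different reason: for fixed $t$ the groups $\pi_t P_n X_j$, and hence $\map(\cG^{1+s}, \pi_t P_n X_j)$, are constant in $n$ once $n \geq t$, uniformly in $i$ and $j$, so the tower in $n$ is eventually constant and its $\lim^1$ vanishes. That uniform Postnikov stabilization --- not finiteness --- is the mechanism, and you do not invoke it.

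The paper's proof avoids the interchange entirely by using \eqref{eq:cont-funk-cons} as a closed formula: $\pi_t \Func(\Sigma_+^\infty \cG^\bullet, Y) \cong \map(\cG^\bullet, \pi_t Y)$ for any $Y$ presented as in \Cref{rem:defn-cont-action}. From this, the natural map $\Func(\Sigma_+^\infty \cG^{1+\bullet}, X) \to \Func(\Sigma_+^\infty \cG^{1+\bullet}, P_n X)$ is seen to be an isomorphism on $\pi_t$ for $t\leq n$ while the target vanishes above $n$; so the right-hand side is an $n$-truncation of the left, $\holim_n$ recovers the cosimplicial spectrum $\Func(\Sigma_+^\infty \cG^{1+\bullet}, X)$, and applying $\holim_\Delta$ finishes. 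Your approach can be saved by substituting the stabilization argument above for the hand-wave, but the paper's route is both shorter and avoids any appeal to a colimit/limit interchange; note also that the compatibility you assert ("functoriality of $P_n$ upgrades the $\cG$-action to $P_n X$") is itself most easily justified by the same computation of $\pi_t\Func$, so you are implicitly using the paper's key observation without stating it.
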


\begin{proof} Note that by the definition \eqref{eq:cont-funk} and \eqref{eq:cont-funk-cons}
\[
\pi_t \Func(\Sigma_+^\infty \cG^\bullet,X) \longr \pi_t \Func(\Sigma_+^\infty \cG^\bullet,P_nX) 
\]
is an isomorphism for $t \leq n$. Thus the natural map
\[
\Func(\Sigma_+^\infty \cG^{1+\bullet},X) \longr \holim_n \Func(\Sigma_+^\infty \cG^{1+\bullet},P_nX) 
\]
is an equivalence of cosimplicial spectra. The result follows.
\end{proof}

 % !TEX root = dsphere-master.tex

\section{Dualizing the Lubin-Tate spectrum}\label{sec:dualofe}

In this section we prove one of our main theorems, identifying the equivariant homotopy type of the Spanier-Whitehead dual
of $\LTE$. See \Cref{thm:dual-en} and \Cref{cor:what-we-use}.

Let $D(\hbox{-}) = F(\hbox{-},L_{\LTK}S^0)$  denote Spanier-Whitehead duality in the $\LTK$-local category. By definition,
a $\LTK$-local spectrum $X$ is dualizable if the natural map
\[
L_{\LTK}(DX \wedge Y) \to F(X,Y)
\]
is an equivalence for all $\LTK$-local spectra $Y$.

\begin{lem}\label{lem:lem-colim-fixed} (1) Let $\Gamma \subseteq \GG$ be an open subgroup. Then $\LTE^{h\Gamma}$
is dualizable in the $\LTK$-local category. Furthermore, $\LTE_\ast D\LTE^{h\Gamma} $ is zero in odd degrees and
there is a $\GG$-equivariant isomorphism
\[
\LTE_0 D\LTE^{h\Gamma} \cong \LTE^0\LTE^{h\Gamma} \cong \LTE_0[\GG/\Gamma]
\]
where $\GG$ acts on $\LTE_0[\GG/\Gamma]$ by $g(\sum a_ix_i\Gamma) = \sum g(a_i)gx_i\Gamma$.
\bigskip

(2) Let $\Gamma_i \subseteq \SS \subseteq \GG$ be the open subgroups of \eqref{eq:subgrs-of-sn}. 
The natural maps $\LTE^{h\Gamma_i} \to \LTE$ induce a $\GG$-equivariant equivalence in the $\LTK$-local 
category
\[
\xymatrix{
\mathop{\hocolim}\LTE^{h\Gamma_i} \ar[r]^-\simeq & \LTE\ .
}
\]
\end{lem}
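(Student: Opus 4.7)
The plan is to use the Morava module computation \eqref{eq:e-of-efixed} of Devinatz-Hopkins together with the Hovey-Strickland theory of $\LTK$-local dualizability and the cohomological vanishing in \Cref{thm:coh-gammai}.

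For part (1), openness of $\Gamma \subseteq \GG$ makes $\GG/\Gamma$ finite, so \eqref{eq:e-of-efixed} gives $\LTK_\ast \LTE^{h\Gamma} \cong \map(\GG/\Gamma, \LTK_\ast)$, which is finitely generated free over $\LTK_\ast$. By the Hovey-Strickland criterion from \cite{666} this suffices to conclude that $\LTE^{h\Gamma}$ is dualizable in the $\LTK$-local category. Dualizability then gives a $\GG$-equivariant isomorphism $\LTE_\ast D\LTE^{h\Gamma} \cong \LTE^{-\ast}\LTE^{h\Gamma}$ via the evident equivalence $L_\LTK(D\LTE^{h\Gamma} \smsh \LTE) \simeq F(\LTE^{h\Gamma}, \LTE)$ together with the action on the target copy of $\LTE$. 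Since the Morava module $\LTE_\ast\LTE^{h\Gamma}$ is free of finite rank and concentrated in even degrees, so is its $\LTE_\ast$-linear dual, giving both the odd-degree vanishing and the $\LTE_0$-module identification $\LTE^0 \LTE^{h\Gamma} \cong \Hom_{\LTE_0}(\map(\GG/\Gamma, \LTE_0), \LTE_0) \cong \LTE_0[\GG/\Gamma]$. For the $\GG$-action I would use the characteristic function basis of $\GG/\Gamma$ and check directly that the conjugation action $(g\varphi)(x) = g\varphi(g^{-1}x)$ dualizes to precisely the action $g(\sum a_i x_i\Gamma) = \sum g(a_i) g x_i\Gamma$ claimed in the statement.

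For part (2), by \Cref{lem:detecting-kn-equiv} it suffices to prove the map is an equivalence after smashing with some finite type-$n$ complex $T$. Since $T$ is finite, smash product commutes with continuous homotopy fixed points, giving $\LTE^{h\Gamma_i} \smsh T \simeq (\LTE \smsh T)^{h\Gamma_i}$, and $\pi_\ast (\LTE \smsh T)$ is finite in each degree. The resulting homotopy fixed point spectral sequences
\[
E_2^{s,t}(i) = H^s(\Gamma_i, \pi_t(\LTE \smsh T)) \Longrightarrow \pi_{t-s}(\LTE \smsh T)^{h\Gamma_i}
\]
have $E_2$-terms that are finite in each bidegree and converge strongly. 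The strategy is to show that the colimit over $i$ of these spectral sequences collapses onto the zero line, giving $\pi_\ast \hocolim_i (\LTE \smsh T)^{h\Gamma_i} \cong \pi_\ast(\LTE \smsh T)$. Continuity of the $\GG$-action on the finite module $\pi_t(\LTE \smsh T)$ produces some $i_0$ for which $\Gamma_{i_0}$ acts trivially; for $i\geq i_0$, an induction on the length of a composition series by $\FF_p$-modules using \Cref{thm:coh-gammai}(2) shows that in each bidegree $(s,t)$ with $s>0$ the restriction map $E_2^{s,t}(i) \to E_2^{s,t}(i+N)$ vanishes for $N$ sufficiently large, while the zero-line $H^0(\Gamma_i, \pi_t(\LTE \smsh T))$ stabilizes to $\pi_t(\LTE \smsh T)$ itself. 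Equivariance is essentially automatic: each $\Gamma_i = 1 + p^i\cO_n$ is preserved by Galois and by $\SS$-conjugation, hence normal in $\GG$, so every $\LTE^{h\Gamma_i}$ carries a continuous $\GG$-action factoring through the finite quotient $\GG/\Gamma_i$, and all of the structure maps together with the map to $\LTE$ are equivariant.

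I expect the main obstacle to be the HFPSS analysis in part (2): the cohomological vanishing in \Cref{thm:coh-gammai}(2) is stated for the trivial $\FF_p$-coefficient module, and one must promote it to the finite $\Gamma_i$-modules $\pi_t(\LTE \smsh T)$ with their possibly non-trivial actions, and then track strong convergence uniformly in $i$ so that the statement about $\pi_\ast$ of the colimit can be read off the $E_\infty$-pages. The dualizability and the identification in part (1) are relatively formal consequences of \eqref{eq:e-of-efixed} combined with the Hovey-Strickland theory.
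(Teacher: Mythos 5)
Part (1) follows the paper's argument essentially verbatim: invoke \eqref{eq:e-of-efixed} to get finite generation of $\LTE_\ast\LTE^{h\Gamma}$ (or finiteness of $\LTK_\ast\LTE^{h\Gamma}$), apply Theorem 8.6 of \cite{666} for dualizability, then use the Universal Coefficient Spectral Sequence and the conjugation action for the identification of $\LTE_0 D\LTE^{h\Gamma}$; your version fills in a few unstated details but makes no new moves.

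Part (2) is where you diverge. The paper's proof is a one-liner: since $\LTK$-local equivalences are detected on $\LTK_\ast$-homology, and $\LTK_\ast \LTE^{h\Gamma_i}\cong\map(\GG/\Gamma_i,\LTK_\ast)$, $\LTK_\ast\LTE\cong\map(\GG,\LTK_\ast)$ are both given directly by \eqref{eq:e-of-efixed}, one only needs the elementary fact that $\colim_i\map(\GG/\Gamma_i,\LTK_\ast)=\map(\GG,\LTK_\ast)$, which holds because $\LTK_\ast$ is discrete. Your route instead passes through \Cref{lem:detecting-kn-equiv}, smashes with a finite type-$n$ complex $T$, and analyzes a colimit of $\Gamma_i$-homotopy fixed point spectral sequences, promoting \Cref{thm:coh-gammai}(2) from $\FF_p$ to the finite modules $\pi_t(\LTE\smsh T)$. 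This is a valid strategy---it is in fact the strategy the paper deploys later for the harder interchange-of-limit statements in \Cref{lem:baby-step-5} and \Cref{lem:baby-step-1}, where the vanishing result \Cref{lem:baby-step-0} plays exactly the role you anticipate---but for the present lemma it is substantial overkill: you must justify strong convergence uniformly in $i$, handle the passage from trivial to nontrivial coefficients, and rule out $\lim^1$ pathologies, all to re-derive something that drops out of the $\LTK_\ast$ computation for free. The $\LTK_\ast$ argument also bypasses any need for a horizontal vanishing line on the $E_2$-page; you would want to note that $\Gamma_i$ has finite cohomological dimension $n^2$ (from \Cref{prop:when-g-pd}) to make the convergence claim airtight in your version. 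The upshot is that both approaches give the result, but you should recognize that the $\LTK_\ast$ detection principle gives part (2) with essentially no work, and reserve the spectral-sequence machinery for the later lemmas where it is genuinely needed.
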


\begin{proof} Since $\Gamma$ is open, it is of finite index in $\GG$; therefore, \eqref{eq:e-of-efixed} implies that
$\LTE_\ast \LTE^{h\Gamma}$ is finitely generated as an $\LTE_\ast$-module. Then Theorem 8.6 of \cite{666} implies
that $\LTE^{h\Gamma}$ is dualizable. To finish the proof of part (1) we use the standard Spanier-Whitehead duality
isomorphism, the Universal Coefficient Spectral Sequence
\[
\LTE_0 D\LTE^{h\Gamma} \cong \LTE^0\LTE^{h\Gamma} \cong \Hom_{\LTE_0}(\LTE_0 \LTE^{h\Gamma},\LTE_0)
\]
and \eqref{eq:e-of-efixed}. Note that to make this isomorphism $\GG$-equivariant we must act by conjugation on the 
target; that is, $(g\psi)(a) = g\psi(g^{-1}a)$. 

For (2) we need to check that
\[
\hocolim \LTK_\ast \LTE^{h\Gamma_i} \cong \LTK_\ast \LTE^{h\Gamma_j}.
\]
By \eqref{eq:e-of-efixed}, this is equivalent to showing the map
\[
\hocolim \map(\GG/\Gamma_i,\LTK_\ast) \to \map(\GG,\LTK_\ast)
\]
is an isomorphism, where $\map$ continues to denote the continuous set maps. Since the topology on $\LTK_\ast$ is discrete, this is clear. 
\end{proof}
\bigskip

%%%%%%%
%%%%%%%
%%%%%%%

In order to assemble the Spanier-Whitehead duals of $\LTE^{h\Gamma_i}$
for various $i$ we need
a version of Frobenius reciprocity. Let $R$ be an $E_\infty$-ring spectrum and suppose $G$ is a finite group that acts on $R$
through $E_\infty$-ring maps. Then the spectrum $R^h:= F(EG_+, R)$ is an $E_\infty$-ring in genuine $G$-equivariant spectra \cite{HillMeier17}. \footnote{In fact, the same source explains that $R^h$ has more equivariant multiplicative structure, but we don't need that here.} The notation is chosen so that for any subgroup $H\subseteq G$, the categorical fixed points $(R^h)^H$ agree with the homotopy fixed points of the original $H$-action on $R$.
Suppose that $K \subseteq H \subseteq G$ are subgroups. Then we have the following maps
\begin{enumerate}
\renewcommand{\theenumi}{\alph{enumi}}
\item the inclusion (restriction) map $r:R^{hH} \to R^{hK}$; 

\item the transfer (induction) map $\tr:R^{hK} \to R^{hH}$; and,

\item conjugation maps $c_g:R^{hH} \to R^{h(g^{-1}Hg)}$,
\end{enumerate}
These maps are in fact defined for any genuine $G$-spectrum, and are at the core of the identification of genuine $G$-spectra with spectral Mackey functors \cite{GuillouMayII, Barwick}. 
Because it will be important for the next Lemma, let us recall the provenance of these maps. 
The restriction is induced by the map of $G$-sets $p:G/K \to G/H$; then $r = F(p_+,R^h)^G$. The conjugation maps are similarly induced by maps of $G$-sets, namely the conjugation by $g\in G$ maps $G/g^{-1}Hg \to G/H$. The transfer is not induced by a map of $G$-sets, but rather by the Spanier-Whitehead dual of $p_+$ in $G$-spectra \cite{AlaskaNotes}. 

\begin{lem}\label{lem:brand-new-frob-11} The inclusion, transfer, and conjugation maps have following properties.
\begin{enumerate}

\item The inclusion $r:R^{hH} \to R^{hK}$ and conjugation $c_g:R^{hH} \to R^{h(g^{-1}Hg)}$ are maps of $E_\infty$-ring spectra.

\item We have Frobenius Reciprocity commutative diagrams such as
\[
\xymatrix@R=15pt{
&R^{hK} \wedge R^{hK} \ar[r]^-m & R^{hK} \ar[dd]^{\tr}\\
R^{hH} \wedge R^{hK} \ar[ur]^{r \wedge 1} \ar[dr]_{1 \wedge \tr}\\
&R^{hH} \wedge R^{hH} \ar[r]_-m & R^{hH}.
}
\]
\end{enumerate}
\end{lem}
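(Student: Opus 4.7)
The plan is to reduce both parts of the lemma to formal properties of the spectral Mackey functor associated to a commutative algebra in genuine $G$-equivariant spectra. Since $R^h$ is an $E_\infty$-algebra in such spectra, for every finite $G$-set $X$ the spectrum $F(X_+, R^h)^G$ carries a natural $E_\infty$-ring structure, with a canonical identification $R^{hH} \simeq F((G/H)_+, R^h)^G$. The assignment $X \mapsto F(X_+, R^h)^G$ extends to a spectral Mackey functor, so each map of finite $G$-sets $p \colon T \to S$ yields both a pullback ring map $p^* \colon F(S_+, R^h)^G \to F(T_+, R^h)^G$ (the restriction) and a wrong-way map $p_! \colon F(T_+, R^h)^G \to F(S_+, R^h)^G$ (the transfer), the latter arising from the Spanier--Whitehead dual $D p_+$ in $G$-spectra.

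For part (1), applying this formalism to the quotient $G/K \to G/H$ recovers $r$, and to the conjugation $G/g^{-1}Hg \to G/H$ recovers $c_g$. Both are $E_\infty$-ring maps because the multiplication on $F(X_+, R^h)^G$ is induced by the diagonal $X \to X \times X$ smashed with the multiplication on $R^h$, and the diagonal is natural in maps of $G$-sets; therefore pullback along any map of $G$-sets is automatically a map of $E_\infty$-ring spectra. For part (2), the pentagonal diagram is the Frobenius reciprocity / projection formula for $p \colon G/K \to G/H$: the natural transformation
\[
p_!(p^*A \wedge B) \longrightarrow A \wedge p_!(B)
\]
arising from the $(p_!, p^*)$ adjunction is an isomorphism, and setting $A = B = R^h$, composing with the multiplication $m \colon R^h \wedge R^h \to R^h$, and taking $G$-fixed points yields exactly the commutativity asserted.

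The principal obstacle is verifying the projection formula itself. In the Guillou--May or Barwick spectral Mackey functor formalism this is built in, and more geometrically one can prove it by choosing an equivariant embedding of $G/K$ in a $G$-representation $V$, realising $\tr$ as the Thom--Pontryagin collapse, and checking by hand that smashing with $R^h$ intertwines correctly with multiplication. Either argument is by now routine, so once the framework is fixed both parts follow formally; the real content of the lemma is the initial identification of the categorical fixed points of $R^h$ with $R^{hH}$ together with the recognition of $r$, $\tr$, and $c_g$ as arising from the Mackey-functor structure.
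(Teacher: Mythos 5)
Your proposal takes essentially the same approach as the paper: identify $R^{hH}$ with the $G$-fixed points of the cotensor $F((G/H)_+, R^h)$, observe that pullback along maps of $G$-sets automatically respects the $E_\infty$-structure (proving part (1)), and reduce Frobenius reciprocity to the statement that the transfer is a module map over the base. The only cosmetic difference is that the paper restricts to $H$-spectra and observes directly that $\tr$ is the $H$-fixed points of an $A$-module map $F(H/K_+,A) \to A$ (which is marginally more elementary, since it does not invoke the full strength of the projection-formula isomorphism but only the module structure on the source), whereas you phrase the argument through the $(p_!, p^*)$ projection formula in the $G$-equivariant ambient category; both phrasings capture the same underlying Wirthm\"uller/Mackey-functor mechanism.
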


\begin{proof} We use ideas from \cite{ElmendMay}. 
More generally, this result holds for any $E_\infty$-ring $A$ in genuine $G$-spectra, not just those of form $A=R^h$.

For such an $A$, we have that the restriction and conjugations are maps of $E_\infty$-rings, since they are obtained by taking $G$-fixed points (a lax monoidal functor) of the $E_\infty$-ring maps 
\[F(G/H_+, R) \to F(G/K_+,R) \ \ \text{ and } \ \ F(G/H_+,R) \to F(G/g^{-1}Hg_+,R).\]

For the transfer, it suffices to restrict to $H$, so that $\tr$ will be the $H$-fixed points of the $A$-module map $F(H/K_+,A) \to A$, obtained by mapping into $A$ the Spanier-Whitehead dual in $H$-spectra of the map of $H$-sets $H/K_+ \to H/H_+$. Upon taking $H$-fixed points, we get that $A^K \to A^H$ is an $A^H$-module map, where $A^K$ has the $A^H$-algebra structure given by the restriction map. Frobenius Reciprocity is simply the diagramatic form of the statement that $\tr$ is an $A^H$-module map.
\end{proof}

Much more sophisticated results are possible, using the language of spectral Mackey functors
and spectral Green functors. See \cite{Barwick} and \cite{BarGlaSha}. 

From \Cref{lem:brand-new-frob-11} we have pairings
\[
\xymatrix{
R^{hH} \wedge R^{hH} \ar[r]^-m & R^{hH} \ar[r]^{\tr} & R^{hG}
}
\]
and get maps
\[
R^{hH} \to F(R^{hH},R^{hG})
\]
that switch restriction and transfer; for example, the following diagram commutes
\begin{equation}\label{eq:frob-infty} 
\xymatrix{
R^{hK} \ar[r] \ar[d]_\tr & F(R^{hK},R^{hG}) \ar[d]^{F(r,1)}\\
R^{hH} \ar[r] &F(R^{hH},R^{hG}).
}
\end{equation}

%%%%%%%
%%%%%%%
%%%%%%%

\begin{lem}\label{lem:dual-e-for} Let $r_j:\LTE^{h\Gamma_{j}} \to \LTE^{h\Gamma_{j+1}}$ be the restriction maps.
The maps $f_j$ fit into a $\GG$-equivariant commutative diagram
\[
\xymatrix{
\LTE^{h\Gamma_{j+1}} \dto_{\tr} \rto^-{f_{j+1}} & F(\LTE^{h\Gamma_{j+1}},L_{\LTK}S^0) \dto^{F(r_j,1)}\\
\LTE^{h\Gamma_{j}} \rto_-{f_j} &F(\LTE^{h\Gamma_{j}},L_{\LTK}S^0).
}
\]
Furthermore, we have a $\LTK$-local $\GG$-equivariant equivalence
\[
\mathop{\holim}_\tr \LTE^{h\Gamma_{j}} \simeq D\LTE.
\]
\end{lem}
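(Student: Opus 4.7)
My proof has three movements: set up the maps $f_j$ and verify the square via Frobenius reciprocity; show each $f_j$ is already a $K(n)$-local equivalence by a direct Morava-module calculation; then pass to the homotopy limit, using that Spanier--Whitehead duality turns $\hocolim$ into $\holim$.

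First I would take $f_j\colon \LTE^{h\Gamma_j}\to F(\LTE^{h\Gamma_j},L_\LTK S^0)$ to be the adjoint of the pairing
\[
\LTE^{h\Gamma_j}\wedge\LTE^{h\Gamma_j}\xrightarrow{m}\LTE^{h\Gamma_j}\xrightarrow{\tr}L_\LTK S^0.
\]
The commutative square is then \eqref{eq:frob-infty} applied to the finite group $G=\GG/\Gamma_{j+1}$ acting on the $E_\infty$-ring $R=\LTE^{h\Gamma_{j+1}}$, with nested subgroups $K=\{1\}\subseteq H=\Gamma_j/\Gamma_{j+1}\subseteq G$; under \eqref{eq:double-fixed} these identify $R^G\simeq L_\LTK S^0$, $R^H\simeq \LTE^{h\Gamma_j}$, $R^K\simeq \LTE^{h\Gamma_{j+1}}$, so \Cref{lem:brand-new-frob-11} is in force. $\GG$-equivariance of every ingredient is automatic because $\Gamma_j$ and $\Gamma_{j+1}$ are normal in $\GG$ by \Cref{rem:in-practice-gam}, so the $\GG$-action on $\LTE^{h\Gamma_j}$ factors through $\GG/\Gamma_j$, and multiplication, restriction, and transfer are natural with respect to that residual action.

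The computational heart is to show that each individual $f_j$ is a $K(n)$-local equivalence. By \eqref{eq:e-of-efixed} and \Cref{lem:lem-colim-fixed}(1), both
\[
\LTE_\ast\LTE^{h\Gamma_j}\;\cong\;\map(\GG/\Gamma_j,\LTE_\ast)
\qquad\text{and}\qquad
\LTE_\ast D\LTE^{h\Gamma_j}\;\cong\;\LTE_\ast[\GG/\Gamma_j]
\]
are free $\LTE_\ast$-modules of rank $|\GG/\Gamma_j|<\infty$, and they are $\LTE_\ast$-linearly dual via the evident evaluation pairing. The Morava-module transfer $\tr\colon \map(\GG/\Gamma_j,\LTE_\ast)\to\LTE_\ast$ is the standard summation $\phi\mapsto \sum_{x\in\GG/\Gamma_j}\phi(x)$ (this is the classical formula; equivalently, it is forced by the Frobenius identity $\tr\circ\res=|\GG/\Gamma_j|$ together with $\LTE_\ast$-linearity). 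Consequently $\LTE_\ast(f_j)$ sends $\phi$ to the functional $\psi\mapsto\sum_x\phi(x)\psi(x)$, which is the canonical perfect pairing on $\map(\GG/\Gamma_j,\LTE_\ast)$ and hence an isomorphism. Because $\LTE^{h\Gamma_j}$ and $D\LTE^{h\Gamma_j}$ are both $K(n)$-local, $f_j$ is an equivalence.

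Finally I would pass to $\holim_j$ in the square. The $\holim$ of the equivalences $f_j$ is an equivalence, and
\[
\holim_j F(\LTE^{h\Gamma_j},L_\LTK S^0)\;\simeq\; F\!\bigl(\hocolim_j\LTE^{h\Gamma_j},\,L_\LTK S^0\bigr)\;\simeq\; F(\LTE,L_\LTK S^0)\;=\;D\LTE,
\]
where the second equivalence is \Cref{lem:lem-colim-fixed}(2). Chaining these produces a $\GG$-equivariant equivalence $\holim_\tr\LTE^{h\Gamma_j}\simeq D\LTE$. The main obstacle is the identification of the transfer on Morava modules as summation over $\GG/\Gamma_j$; once that is pinned down, perfectness of the pairing---and hence the equivalence---is automatic, and the remaining steps are the formal $\hocolim\leftrightarrow\holim$ interchange performed by $F(-,L_\LTK S^0)$.
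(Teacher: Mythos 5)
Your proof is correct and follows essentially the same route as the paper: realize the square as an instance of the formal Frobenius reciprocity diagram \eqref{eq:frob-infty} applied to $R=\LTE^{h\Gamma_{j+1}}$, $G=\GG/\Gamma_{j+1}$, $H=\Gamma_j/\Gamma_{j+1}$, $K=\{e\}$, and then pass to $\holim$ using part (2) of \Cref{lem:lem-colim-fixed} to identify the target with $F(\LTE,L_\LTK S^0)=D\LTE$. The difference is one of explicitness: the paper's proof exhibits the square and then simply asserts the equivalence ``follows from'' $\hocolim_r\LTE^{h\Gamma_j}\simeq\LTE$, leaving implicit the fact that each $f_j$ is individually a $K(n)$-local equivalence (i.e.\ the self-duality of $\LTE^{h\Gamma_j}$ over $L_\LTK S^0$ via the trace form). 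You isolate this as the ``computational heart'' and supply the Morava-module verification, which is exactly the missing step; your reading of what $f_j$ is (adjoint to $\tr\circ m$) also matches the discussion following \Cref{lem:brand-new-frob-11}.

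One caveat on your secondary justification for the transfer formula: the Frobenius identity $\tr\circ\res=|\GG/\Gamma_j|$ together with $\LTE_*$-linearity does \emph{not} determine $\tr$ on all of $\map(\GG/\Gamma_j,\LTE_*)$ — it only determines it on the image of $\res$, which is the rank-one submodule of constant functions. To pin $\tr$ down one needs additionally that it is a map of Morava modules (and in practice one invokes the double-coset/Mackey decomposition of $\res\circ\tr$, or the explicit cocycle description from Devinatz–Hopkins). Your primary appeal to the classical formula is fine; just don't present the Frobenius-identity argument as an ``equivalently.''
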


\begin{proof} The commutative square can be obtained from \eqref{eq:frob-infty} by taking $R = E^{h\Gamma_{i+1}}$, 
$G = \GG/\Gamma_{i+1}$, $H = \Gamma_i/\Gamma_{I+1}$ and $K = \{e\} = \Gamma_{i+1}/\Gamma_{i+1}$.
Note that $R^{hG} = L_\LTK S^0$.

The last statement follows from Part (2) of \Cref{lem:lem-colim-fixed}. 
\end{proof}

\begin{rem}\label{rem:strickland-nsquared} It is an observation due to the third author (see \cite{StrickGH}), that 
there is an underlying (that is, non-equivariant)  equivalence of spectra $D\LTE \simeq \Sigma^{-n^2}\LTE$.
This included a calculation the $\GG$-action on $\pi_\ast D\LTE$. 
This older result is a corollary of our theorem \Cref{thm:dual-en}, but we will end up giving a proof of this fact as we go;
the argument is essentially the same as in \cite{StrickGH}. In summary, we will show certain spectral sequences
that appear in the proof of  \Cref{lem:baby-step-1} have 
only one non-zero line and the shift of $n^2$ is due to the degree of that line. 
\end{rem} 

We now add an algebraic result which will be used in several arguments to follow. There is nothing special
about the group $\GG$; the argument works equally well for any compact $p$-adic analytic group $\cG$.
We fix a nested sequence of open subgroups $\Gamma_i \subseteq \cG$, as in \Cref{rem:in-practice-gam}.
An abelian group $A$ is finite $p$-torsion if $A$ is finite and there is an integer $n$ so that $p^nA=0$.

\begin{lem}\label{lem:baby-step-0} Let $\cG$ be a compact $p$-adic analytic group of dimension 
$d$ and $A$ be a finite $p$-torsion discrete $\cG$-module. 
\smallskip

(1) There is an integer $N$ so that for all $i \geq N$ the subgroup $\Gamma_i$ acts trivially on $A$. 

(2) Taking the limit over transfers gives an isomorphism
\begin{equation*}\label{eq:lim-trans-gen}
\lim_j H^s(\Gamma_j,A) \cong
\begin{cases}A, & s=d;\\
0, & s \ne d.
\end{cases}
\end{equation*}
If $i \geq N$, then the natural map $\lim_j H^{d}(\Gamma_j,A) \to  H^{d}(\Gamma_i,A) \cong A$ 
is an isomorphism. 

(3) Taking the colimits over restriction gives an isomorphism
\begin{equation*}\label{eq:colim-dual-1}
\mathop{\colim}_i H^s(\Gamma_i,A) \cong
\begin{cases}A & s=0;\\
0 & s > 0.
\end{cases}
\end{equation*}
If $j \geq N$, then natural map $A \cong H^{d}(\Gamma_j,A) \to \colim_i H^{d}(\Gamma_i,A)$ 
is an isomorphism. 
\end{lem}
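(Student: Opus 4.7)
The plan is to prove the three parts in sequence, with Part (1) providing the reduction of Parts (2) and (3) to trivial-coefficient computations on the uniformly powerful tower.

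For \emph{Part (1)}, the action map $\cG\to\Aut(A)$ is a continuous homomorphism into the finite discrete group $\Aut(A)$, so its kernel is open in $\cG$. Since $\{\Gamma_i\}$ is a descending system of open normal subgroups with $\bigcap_i\Gamma_i=\{e\}$ (\Cref{def:uniformly powerful}, \Cref{rem:in-practice-gam}), it is a neighborhood basis of the identity, so some $\Gamma_N$ sits inside the kernel.

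For \emph{Part (2)}, fix $j\geq N$ and enlarge $N$ so that $\Gamma_j$ also acts trivially on the finite module $D_p(\cG)/p^k$, where $p^kA=0$; this is legitimate by a second application of Part (1) to that module, recalling from \Cref{prop:ipg-is-top-ext} that $D_p(\cG)\cong\Lambda^d\gg^*$ with $\cG$ acting through a determinant character. Then \Cref{prop:when-g-pd}, \Cref{thm:pd-forg}, and \Cref{rem:trans-fund} give
$H^{d-s}(\Gamma_j,A)\cong \Tor_s^{\Gamma_j}(D_p(\cG),A)\cong H_s(\Gamma_j,A)$,
the last isomorphism using that $D_p(\cG)\cong\ZZ_p$ as a trivial $\Gamma_j$-module once $j\geq N$. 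By \Cref{prop:frob-in-all-glory}, the cohomological transfer tower is identified with the homological tower $i_*\colon H_s(\Gamma_{j+1},A)\to H_s(\Gamma_j,A)$ induced by the inclusion. In degree $s=0$ this is the identity on $A$, giving $\lim_j H^d(\Gamma_j,A)\cong A$ together with the asserted projection isomorphism. For $s>0$, by $\FF_p$-linear duality applied to the vanishing of \Cref{thm:coh-gammai}(2), the map $i_*\colon H_s(\Gamma_{j+1},\FF_p)\to H_s(\Gamma_j,\FF_p)$ is zero; a d\'evissage of $A$ along the filtration $A\supset pA\supset p^2A\supset\cdots$ (successive quotients are $\FF_p$-vector spaces, and the long exact sequences in coefficients pass through inverse limits since each group is finite) extends the vanishing to arbitrary finite $p$-torsion $A$, so the inverse limit is zero.

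For \emph{Part (3)}, \Cref{thm:coh-gammai}(2) gives directly that the cohomological restriction $\res\colon H^s(\Gamma_j,\FF_p)\to H^s(\Gamma_{j+1},\FF_p)$ vanishes in positive degrees, and the same d\'evissage extends the vanishing to arbitrary finite $p$-torsion $A$; so all transition maps of the directed system are zero above degree zero and $\colim_i H^s(\Gamma_i,A)=0$ for $s>0$. In degree zero, $H^0(\Gamma_i,A)=A^{\Gamma_i}=A$ with identity transitions for $i\geq N$, so the colimit is $A$. The main obstacle throughout is the bookkeeping around the $\cG$-action on $D_p(\cG)\cong\Lambda^d\gg^*$, which is a nontrivial determinant character in general; this is circumvented by the second application of Part (1) to $D_p(\cG)/p^k$, after which the argument is a direct assembly of \Cref{thm:coh-gammai}, Poincar\'e duality, and Frobenius reciprocity.
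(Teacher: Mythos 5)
Your proof is correct and matches the paper's approach in substance. The only real difference is in Part (2): the paper simply cites the already-proved \Cref{lem:transfer-hom} for $\FF_p$-coefficients and dualizes, whereas you re-run the Poincar\'e duality and Frobenius reciprocity argument from scratch, using the second square of \Cref{prop:frob-in-all-glory} to identify the cohomological transfer tower with the corestriction tower on homology and then invoking the vanishing of restriction from \Cref{thm:coh-gammai}(2); this is a mild repackaging of the same machinery, and both routes finish with the same d\'evissage on $A$. One small imprecision worth flagging: for $j \geq N$ it is the reduction $D_p(\cG)/p^k$, not $D_p(\cG)$ itself, that becomes a trivial $\Gamma_j$-module, and the isomorphism $\Tor_s^{\Gamma_j}(D_p(\cG),A)\cong H_s(\Gamma_j,A)$ should be justified via $\Tor_s^{\Gamma_j}(D_p(\cG),A)\cong H_s(\Gamma_j,D_p(\cG)^\ell\otimes A)$ together with the fact that $D_p(\cG)^\ell\otimes A\cong A$ as trivial $\Gamma_j$-modules once $p^kA=0$ and $\Gamma_j$ acts trivially mod $p^k$ — but since you explicitly enlarge $N$ to arrange the mod-$p^k$ triviality, the argument is sound.
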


\begin{proof} To prove (1) we use that if $A$ is discrete the orbit of any element $a \in A$ is finite. Let $H_a \subseteq \GG$
be the isotropy subgroups of $a$, then $H_a$ is open, so $\cap_a H_a$ is also open, since this is a finite intersection.
Choose $N$ so that $\Gamma_N \subseteq \cap_a H_a$.

We next prove part (2). Since the limit depends only on large $j$, we make take $j \geq N$ and assume the
action is trivial.  Furthermore, since $H^k(\Gamma_i,\FF_p)$ is finite for all $k$, by \Cref{thm:coh-gammai}, the functor
\[
A \longmapsto \lim_j H^s(\Gamma_j,A)
\]
sends short exact sequences in $A$ to long exact sequences.  If $A = \ZZ/p$ the result can be found in \Cref{lem:transfer-hom};
more precisely, that result is for homology, but in this case cohomology is dual to homology.
The result is then immediate if $pA = 0$. Now use induction on $k$ and the long exact sequence in cohomology
obtained from the short exact sequence
\[
\xymatrix{
0 \ar[r] & A/pA \ar[r]^-{p^{k-1}} &  A \ar[r] & A/p^{k-1}A \ar[r] & 0 \ .
}
\]

Part (3) is proved the same way, now using part (2) of  \Cref{thm:coh-gammai}
\end{proof}

The next step is to decompose the homotopy inverse limit of \Cref{lem:dual-e-for}
even further, using part (2) of \Cref{lem:lem-colim-fixed}.

\begin{lem}\label{lem:baby-step-5} The $\GG$-equivariant $\LTK$-local equivalence
$\hocolim_i \LTE^{h\Gamma_i} \simeq \LTE$
induces a $\GG$-equivariant $\LTK$-local equivalence
\[
\xymatrix{
\hocolim_i [(\LTE^{h\Gamma_i})^{h\Gamma_j}]  \ar[r]^-\simeq & [\hocolim_i \LTE^{h\Gamma_i}]^{h\Gamma_j}
\simeq \LTE^{h\Gamma_j}
}
\]
and hence a $\GG$-equivariant $\LTK$-local equivalence
\[
\holim_j\hocolim_i [(\LTE^{h\Gamma_i})^{h\Gamma_j}] \simeq \holim_j \LTE^{h\Gamma_j}
\]
\end{lem}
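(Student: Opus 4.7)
The second equivalence is a formal consequence of the first. Both $\hocolim_i (\LTE^{h\Gamma_i})^{h\Gamma_j}$ and $\LTE^{h\Gamma_j}$ are natural in $j$ through the restriction maps, and the first equivalence is $\GG$-equivariant and compatible with these naturalities (since it is obtained by taking $h\Gamma_j$ of the $\GG$-equivariant statement of Part (2) of \Cref{lem:lem-colim-fixed}). Thus the first equivalence assembles into an equivalence of towers indexed by $j$, which then passes to $\holim_j$.

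For the first equivalence, we must show that the natural comparison map
\[
\alpha_j\colon \hocolim_i (\LTE^{h\Gamma_i})^{h\Gamma_j} \longrightarrow \big(\hocolim_i \LTE^{h\Gamma_i}\big)^{h\Gamma_j} \simeq \LTE^{h\Gamma_j}
\]
is a $\LTK$-local equivalence, where the last identification uses Part (2) of \Cref{lem:lem-colim-fixed}. By \Cref{lem:detecting-kn-equiv}, it suffices to find a finite type-$n$ complex $T$ such that $\alpha_j \wedge T$ is a genuine equivalence of spectra. Choose $T$ to be a generalized Moore spectrum of type $n$ so that $\pi_\ast(\LTE \wedge T)$ is finite in each degree. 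Then $\pi_\ast (\LTE^{h\Gamma_i} \wedge T)$ is also finite in each degree: its homotopy fixed-point spectral sequence has $E_2^{s,t} = H^s(\Gamma_i, \pi_t(\LTE \wedge T))$ finite in each bidegree (continuous cohomology of a Poincar\'e duality group of rank $n^2$ with finite coefficients) and strongly converges thanks to the horizontal vanishing line from \Cref{rem:fixed-and-tr}.

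Since $T$ is dualizable and carries the trivial $\Gamma_j$-action, $(-) \wedge T$ commutes with $(-)^{h\Gamma_j}$; it also commutes with hocolim. Finally, \Cref{lem:detecting-kn-equiv} itself says that the $K(n)$-equivalence $\hocolim_i \LTE^{h\Gamma_i} \simeq \LTE$ becomes a genuine equivalence $\hocolim_i (\LTE^{h\Gamma_i} \wedge T) \simeq \LTE \wedge T$ after smashing with $T$. Thus $\alpha_j \wedge T$ is identified with the comparison
\[
\hocolim_i (\LTE^{h\Gamma_i} \wedge T)^{h\Gamma_j} \longrightarrow (\LTE \wedge T)^{h\Gamma_j}.
\]
Both sides are computed by HFPSS's that converge strongly with finite $E_2$-terms. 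On $E_2$, the comparison becomes
\[
\colim_i H^s(\Gamma_j, \pi_t(\LTE^{h\Gamma_i} \wedge T)) \longrightarrow H^s(\Gamma_j, \pi_t(\LTE \wedge T)).
\]
This is an isomorphism because $\pi_t(\LTE \wedge T) \cong \colim_i \pi_t(\LTE^{h\Gamma_i} \wedge T)$ (no $\lim^1$ issues, as each $\pi_t$ is finite) and continuous cohomology $H^\ast(\Gamma_j, -)$ preserves filtered colimits of finite discrete $\Gamma_j$-modules.

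The main obstacle is the interchange of $\hocolim_i$ with the homotopy fixed-point functor $(-)^{h\Gamma_j}$, which is a kind of homotopy limit and does not commute with filtered colimits in general. The trick that unlocks the proof is the reduction via \Cref{lem:detecting-kn-equiv} to the setting where all homotopy groups are finite; this eliminates the usual $\lim^1$ obstructions and makes the problem amenable to a direct spectral-sequence comparison using the standard exactness properties of continuous cohomology of profinite groups.
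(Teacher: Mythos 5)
Your proposal is correct and follows essentially the same strategy as the paper's proof: reduce via \Cref{lem:detecting-kn-equiv} to smashing with a generalized type-$n$ Moore spectrum $T$, then compare homotopy fixed-point spectral sequences using that the $E_2$-terms become finite and that continuous cohomology of $\Gamma_j$ commutes with filtered colimits of finite discrete modules. The only difference is cosmetic: you spell out more explicitly the commutation of $(-)\wedge T$ with $\hocolim$ and $(-)^{h\Gamma_j}$, while the paper is slightly more careful in spelling out why the colimit of strongly convergent spectral sequences is again strongly convergent (via the uniform horizontal vanishing line), a point you assert a bit tersely.
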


\begin{proof} We apply \Cref{lem:detecting-kn-equiv}: to prove it is a $\LTK$-local equivalence, it is
sufficient to check we have an equivalence after smashing with some finite type $n$ complex $T$. Since $T$ is finite,
$(-) \wedge T$ commutes with homotopy fixed points and the map becomes
\begin{equation}\label{eq:switch-T-1}
\hocolim_i [((\LTE\wedge T)^{h\Gamma_i})^{h\Gamma_j}]  \longr  (\LTE\wedge T)^{h\Gamma_j}
\end{equation}
We may assume that we have chose $T$ so that for all $t$ the $\GG$-module $\LTE_t T$ is discrete, finite, and
annihilated by $\mm^k$ for some $k$. Such $T$ appeared in \Cref{rem:towers-of-n}. Write
$X_i = (\LTE\wedge T)^{h\Gamma_i}$.

In the next diagram we use the following basic fact: If $G = \lim G_s$ is profinite and $M$ is finite and discrete, then
$\colim H^\ast(G_s,M) = H^\ast (G,M)$.

We now have a diagram of spectral sequences
\[
\xymatrix{
\colim_iH^s(\Gamma_j,\pi_tX_i) \ar[r]\ar@{=>}[d] & H^s(\Gamma_j,\LTE_tT) \ar@{=>}[d]\\
\colim_i \pi_{t-s}(X_i^{h\Gamma_j}) \ar[r]  &\pi_{t-s}(\LTE\wedge T)^{h\Gamma_j}.
}
\]
Since $\colim \pi_\ast X_i = \LTE_\ast T$ and $\pi_tX_i$ is finite and discrete for each $t$, the map across the 
top is an isomorphism. Since each of the spectral sequences 
\[
H^s(\Gamma_j,\pi_tX_i) \Longrightarrow \pi_{t-s}(X_i^{h\Gamma_j})
\]
strongly converge and $\colim$ is an exact functor, the spectral sequence on the left strongly converges. Since the spectral 
sequence on the right strongly converges as well, we are done. 
\end{proof}

The next step is to switch  the limit and colimit.

\begin{lem}\label{lem:baby-step-1} The natural map
\begin{equation}\label{eq:switch}
\colim_i\holim_j (\LTE^{h\Gamma_i})^{h\Gamma_j} \longr \holim_j\colim_i (\LTE^{h\Gamma_i})^{h\Gamma_j}
\simeq D\LTE
\end{equation}
is a $\GG$-equivariant $\LTK$-local equivalence.
\end{lem}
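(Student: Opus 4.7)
The strategy is to reduce the claim to a homotopy-group calculation by smashing with a well-chosen finite type $n$ complex $T$, and then analyze both sides using the fixed-point spectral sequences together with the algebraic input of \Cref{lem:baby-step-0}. I would take $T = \MJ{k}$ from \Cref{rem:towers-of-n}, so that $\pi_t(\LTE \wedge T) \cong \LTE_t/J(k)$ is finite discrete $p$-torsion in every degree. By \Cref{lem:detecting-kn-equiv}, it suffices to verify the map is an underlying equivalence after smashing with $T$; since $T$ is finite, $(-) \wedge T$ commutes with both $\hocolim_i$ and $\holim_j$, so I may work throughout with $Y_{i,j} := (\LTE^{h\Gamma_i} \wedge T)^{h\Gamma_j}$ and prove the map $\colim_i \holim_j Y_{i,j} \to \holim_j \colim_i Y_{i,j}$ is a $\pi_\ast$-isomorphism.

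For the target, \Cref{lem:baby-step-5} smashed with $T$ identifies $\colim_i Y_{i,j}$ with $(\LTE \wedge T)^{h\Gamma_j}$, whose homotopy is governed by the fixed-point spectral sequence
\[
H^s(\Gamma_j, \LTE_t T) \Longrightarrow \pi_{t-s}\bigl((\LTE \wedge T)^{h\Gamma_j}\bigr).
\]
These spectral sequences have a uniform horizontal vanishing line in $s$ (\Cref{rem:fixed-and-tr}), and for $j$ large the coefficient $\LTE_t T$ carries trivial $\Gamma_j$-action by \Cref{lem:baby-step-0}(1). Applying $\lim_j$ along the transfers and invoking \Cref{lem:baby-step-0}(2) collapses the limit spectral sequence to the single row $s = n^2$; the vanishing line kills any $\lim^1$ obstructions, and we conclude
\[
\pi_\ast\bigl(\holim_j \colim_i Y_{i,j}\bigr) \cong \pi_{\ast+n^2}(\LTE \wedge T).
\]

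For the source, fix $i$. For $j \geq i$ we have $\Gamma_j \subseteq \Gamma_i$, and since the $\GG$-action on $\LTE^{h\Gamma_i}$ factors through $\GG/\Gamma_i$, the subgroup $\Gamma_j$ acts trivially on $\LTE^{h\Gamma_i} \wedge T$. Moreover $\pi_t(\LTE^{h\Gamma_i} \wedge T)$ is finite (applying the horizontal vanishing line and finiteness of $\LTE_\ast T$ to the fixed-point spectral sequence for $\LTE^{h\Gamma_i} \wedge T \simeq (\LTE \wedge T)^{h\Gamma_i}$). Running the argument of the previous paragraph verbatim, with coefficient module $\pi_\ast(\LTE^{h\Gamma_i} \wedge T)$ in place of $\LTE_\ast T$, yields $\pi_\ast(\holim_j Y_{i,j}) \cong \pi_{\ast+n^2}(\LTE^{h\Gamma_i} \wedge T)$. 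Taking $\colim_i$, which is exact on $\pi_\ast$, and using part (2) of \Cref{lem:lem-colim-fixed}, I obtain
\[
\pi_\ast\bigl(\colim_i \holim_j Y_{i,j}\bigr) \cong \pi_{\ast+n^2}(\LTE \wedge T).
\]

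A naturality check shows the comparison map of the lemma realizes the identity between these two identifications, so is an isomorphism after smashing with $T$; this completes the proof. The principal obstacle is technical: justifying the interchange of $\lim_j$ with the formation of spectral-sequence pages and with passage to $E_\infty$. Everything rests on the \emph{uniform} horizontal vanishing line, which ensures that both the inverse-limit tower of spectral sequences and the spectral sequence of the inverse limit converge strongly with no $\lim^1$ terms, and that the comparison on $E_2$-pages is exactly the $\lim_j$ computed in \Cref{lem:baby-step-0}. As a pleasant byproduct, the argument also recovers Strickland's underlying equivalence $D\LTE \simeq \Sigma^{-n^2} \LTE$ of \Cref{rem:strickland-nsquared}.
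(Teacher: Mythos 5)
Your proposal is correct and takes essentially the same route as the paper: reduce via \Cref{lem:detecting-kn-equiv} to smashing with a type $n$ complex $T$ with finite discrete $\GG$-module homotopy, exploit the uniform horizontal vanishing line at $s = n^2$ in the $\Gamma_j$-fixed-point spectral sequences, apply \Cref{lem:baby-step-0} to collapse both sides to a single row, and use finiteness of all terms to suppress $\lim^1$. The only difference is packaging: the paper assembles the two sides into a single diagram of spectral sequences and proves the comparison map is an isomorphism by constructing a commutative triangle whose two legs (the maps called $f$ and $g$ there) are shown to be isomorphisms at $E_2$, whereas you compute $\pi_\ast$ of each side independently as $\pi_{\ast + n^2}(\LTE \wedge T)$ and appeal to naturality for the compatibility. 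Your final sentence — "a naturality check shows the comparison map realizes the identity" — is where the paper's commutative triangle lives, so it is worth noting that this is not purely cosmetic: the isomorphisms on source and target are built from edge homomorphisms of distinct spectral sequences, and the observation that the natural comparison map intertwines them is precisely the content of that diagram. Since you have in effect established what the paper calls $f$ and $g$, the naturality does indeed go through, but a reader would want it spelled out rather than asserted.
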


\begin{proof} As a key to the upcoming argument, we make the following convention. If we use the index
$i$ we will be taking a colimit along maps induced by restrictions; thus for example
\[
\xymatrix{
\cdots \ar[r] & \LTE^{h\Gamma_{i-1}} \ar[r]^-r & \LTE^{h\Gamma_i} \ar[r]^-r &  \LTE^{h\Gamma_{i+1}} \ar[r] &\cdots
}
\]
If we use the index $j$ we will be taking the 
limit along maps induced by transfer; thus for example,
\[
\xymatrix{
\cdots \ar[r] & \LTE^{h\Gamma_{j+1}} \ar[r]^-{\tr} & \LTE^{h\Gamma_j} \ar[r]^-{\tr} &  \LTE^{h\Gamma_{j-1}} \ar[r] &\cdots
}
\]

The spectra $\LTE^{h\Gamma_i}$ are $\LTK$-local, hence $L_n$-local. Since $L_n$-localization
is smashing, the category of $L_n$-local spectra is closed under homotopy colimits and homotopy limits; hence
the map of \eqref{eq:switch} is a map of $L_n$-local spectra. We now apply \Cref{lem:detecting-kn-equiv}:
to prove it is a $\LTK$-local equivalence, it is sufficient to check it is an equivalence after smashing with some finite
type $n$ complex $T$. Since $T$ is finite the map may be rewritten
\begin{equation}\label{eq:switch-T}
\colim_i\holim_j ((\LTE\wedge T)^{h\Gamma_i})^{h\Gamma_j} \longr
\holim_j\colim_i ((\LTE\wedge T)^{h\Gamma_i})^{h\Gamma_j}\ .
\end{equation}
As before we assume that for all $t$, the $\GG$-module $\LTE_t T$ is discrete, finite and
annihilated by $\mm^k$ for some $k$; see \Cref{rem:towers-of-n}. In particular,
$\LTE_t T$ satisfies the hypotheses of \Cref{lem:baby-step-0}.

By \Cref{lem:detecting-kn-equiv}, the fixed point spectral sequences
\[
H^s(\Gamma_i,\LTE_t T) \Longrightarrow \pi_{t-s}(\LTE\wedge T)^{h\Gamma_i} 
\]
have a horizontal vanishing line at $s=n^2$ at $E_2$ for all $i \geq 2$. For all $t$, the cohomology groups
are finite and $p$-torsion; hence, $\pi_t(\LTE\wedge T)^{h\Gamma_i} $ is finite and of bounded $p$-power order for
all $t$. A similar 
argument shows $\pi_t((\LTE\wedge T)^{h\Gamma_i})^{h\Gamma_j}$ is  also finite and of bounded $p$-power order for all $t$. 
Thus, to prove the result, it's enough to prove that 
\begin{equation*}
\mathop{\colim}_i\lim_j \pi_*((\LTE\wedge T)^{h\Gamma_i})^{h\Gamma_j} \longr
\lim_j\mathop{\colim}_i \pi_*((\LTE\wedge T)^{h\Gamma_i})^{h\Gamma_j}
\end{equation*}
is an isomorphism as all $\lim{}^1$-terms vanish.

By assembling the $\Gamma_j$ fixed point spectral sequences we get a diagram of spectral sequences
\[
\xymatrix{
\mathop{\colim}_i\lim_j H^s(\Gamma_j,\pi_t(\LTE\wedge T)^{h\Gamma_i}) \ar[d] \ar@{=>}[r] &
\mathop{\colim}_i\lim_j \pi_{t-s}((\LTE\wedge T)^{h\Gamma_i})^{h\Gamma_j} \ar[d]\\
\lim_j\mathop{\colim}_i H^s(\Gamma_j,\pi_t(\LTE\wedge T)^{h\Gamma_i}) \ar@{=>}[r] &
\lim_j\mathop{\colim}_i \pi_{t-s}((\LTE\wedge T)^{h\Gamma_i})^{h\Gamma_j}\ .
}
\]
There is an issue here: while $\colim$ is exact on directed systems of abelian groups, $\lim$ is not, so applying 
$\lim$ to a system of spectral sequences does not necessarily yield a spectral sequence. Here again we use that
the terms $H^s(\Gamma_j,\pi_t(\LTE\wedge T)^{h\Gamma_i})$ are all finite, so that there will be no $\lim^1$-terms
and we do get a diagram of strongly convergent spectral sequences. 

It is now sufficient to show that the right vertical map on $E_2$-terms is an isomorphism. We will show much more: it will turn out 
that both the source and target of the map on $E_2$-terms are zero unless $s=n^2$; this directs our attention to $s=n^2$.

We now use \Cref{lem:baby-step-0}.
Choose $j$ large enough that the action of $\Gamma_j$ on $\pi_t(\LTE\wedge T)$ is trivial. Then we have a diagram
\[
\xymatrix{
\colim_i\lim_j H^{n^2}(\Gamma_j,\pi_t(\LTE\wedge T)^{h\Gamma_i}) \ar[r] \ar[d] & 
\colim_iH^{n^2}(\Gamma_j,\pi_t(\LTE\wedge T)^{h\Gamma_i})\ar[dd]\\
\lim_j\colim_i H^{n^2}(\Gamma_j,\pi_t(\LTE\wedge T)^{h\Gamma_i})\ar[d]\\
\lim_jH^{n^2}(\Gamma_j,\pi_t(\LTE\wedge T))\ar[r]& 
H^{n^2}(\Gamma_j,\pi_t(\LTE\wedge T)).
}
\]
By \Cref{lem:baby-step-0}, we know $H^{n^2}(\Gamma_j,\pi_t(\LTE\wedge T)) \cong \pi_t(\LTE\wedge T)$ 
and that the natural map $\lim_jH^{n^2}(\Gamma_j,\pi_t(\LTE\wedge T))\to
H^{n^2}(\Gamma_j,\pi_t(\LTE\wedge T))$ is an isomorphism. Thus we have a commutative triangle
\[
\xymatrix@C=5pt{
\colim_i\lim_j H^{n^2}(\Gamma_j,\pi_t(\LTE\wedge T)^{h\Gamma_i}) \ar[rr] \ar[dr]_f &&
\lim_j\colim_i H^{n^2}(\Gamma_j,\pi_t(\LTE\wedge T)^{h\Gamma_i})\ar[dl]^g\\
& \pi_t(\LTE\wedge T)
}
\]
We show both the maps $f$ and $g$ are isomorphisms. 

We begin with the map $f$. Using part (2) of \Cref{lem:baby-step-0} we have
\[
\lim_j H^s(\Gamma_j,\pi_t(\LTE\wedge T)^{h\Gamma_i}) \cong
\begin{cases}
\pi_t(\LTE\wedge T)^{h\Gamma_i}, &s = n^2;\\
0,  &s \ne n^2.
\end{cases} 
\]
Then, using part (2) of \Cref{lem:lem-colim-fixed}, we have (as needed) that
\[
\mathop{\colim}_i\lim_j H^s(\Gamma_j,\pi_t(\LTE\wedge T)^{h\Gamma_i}) \cong 
\begin{cases}
\pi_t(\LTE\wedge T), &s = n^2;\\
0, &s \ne n^2.
\end{cases} 
\]

We complete the argument by analyzing the map $g$. We claim the maps
\[
(\LTE\wedge T)^{h\Gamma_i} \simeq \LTE^{h\Gamma_i}\wedge T \to \LTE\wedge T
\]
induce an isomorphism
\[
\colim_i H^s (\Gamma_j,\pi_t(\LTE\wedge T)^{h\Gamma_i})) \cong H^s(\Gamma_j,\pi_t(\LTE \wedge T))
\]
and hence, by \Cref{lem:baby-step-0},  isomorphisms
\[
\lim_j \mathop{\colim}_i H^s (\Gamma_j,\pi_t(\LTE\wedge T)^{h\Gamma_i})) \cong 
\begin{cases}
\pi_t(\LTE \wedge T), & s = n^2;\\
0, & s \ne n^2.
\end{cases}
\]

If $M$ is any finite and discrete $\GG$-module, then $\Gamma_k$ acts trivially on $M$ for large
$k$. It follows that for all $j$ we have 
\[
\colim_k H^\ast(\Gamma_j/\Gamma_{j+k},M) \cong
H^\ast (\Gamma_j,M).
\]
This applies to $M = \pi_t(\LTE\wedge T)^{h\Gamma_i}$ and $M= \pi_t(\LTE \wedge T)$. Note also
that by part (2) of \Cref{lem:lem-colim-fixed} we have $\hocolim_i \LTE^{h\Gamma_i}\wedge T \simeq \LTE\wedge T$.
We now have
\begin{align*}
\colim_i H^s(\Gamma_j,\pi_t(\LTE\wedge T)^{h\Gamma_i}) 
&\cong \colim_i \colim_k H^s(\Gamma_j/\Gamma_{j+k},\pi_t(\LTE\wedge T)^{h\Gamma_i})\\
&\cong \colim_k \colim_i H^s(\Gamma_j/\Gamma_{j+k},\pi_t(\LTE\wedge T)^{h\Gamma_i})\\
& \cong \colim_k H^s(\Gamma_j/\Gamma_{j+k},\colim_i \pi_t(\LTE\wedge T)^{h\Gamma_i})\\
& \cong \colim_k H^s(\Gamma_j/\Gamma_{j+k},\pi_t (\LTE\wedge T))\\
&\cong H^s(\Gamma_j,\pi_t (\LTE\wedge T)).
\end{align*}
The second to last isomorphism uses part (2) of \Cref{lem:lem-colim-fixed}.
\end{proof} 

In light of \Cref{lem:baby-step-1}, the project now is to compute $\hocolim_i\holim_j 
(\LTE^{h\Gamma_i})^{h\Gamma_j}$; see  \Cref{lem:baby-step-4}. The next result is the first step.

\begin{lem}\label{lem:baby-step-2} Let $j  > i$ so that the action $\Gamma_j$
on $\LTE^{h\Gamma_i}$ is trivial. Then  there is a $\GG$-equivariant $\LTK$-local equivalence
\[
(\LTE^{h\Gamma_i})^{h\Gamma_j} \xrightarrow{\simeq} F(\Sigma^\infty_+ B\Gamma_j,\LTE^{h\Gamma_i}).
\]
where $\GG$ acts on $\Sigma^\infty_+ B\Gamma_i$ through conjugation and diagonally on the function spectrum.
\end{lem}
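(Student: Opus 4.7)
The plan is to use the cosimplicial definition of continuous homotopy fixed points from \Cref{rem:defn-cont-action} and exploit the trivial action hypothesis to reduce the cobar construction to a mapping spectrum from the classifying space. Write $X = \LTE^{h\Gamma_i}$ throughout. By the assumption that $\Gamma_j$ acts trivially on $X$, the structure map $X \to \Func(\Sigma_+^\infty \Gamma_j, X)$ factors through the inclusion of constant maps, and each coface in the cobar cosimplicial spectrum $\Func(\Sigma_+^\infty \Gamma_j^{1+\bullet}, X)^{\Gamma_j}$ is determined purely by the group multiplication in $\Gamma_j$ (with no twisting). Taking $\Gamma_j$-fixed points on the leftmost factor of $\Gamma_j^{1+s}$ identifies this cosimplicial spectrum with $\Func(\Sigma_+^\infty \Gamma_j^\bullet, X)$, where the cofaces are induced by the standard nerve maps.

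Next, I would identify the totalization of this cobar complex with a mapping spectrum out of $B\Gamma_j$. The nerve construction gives a weak equivalence $\hocolim_\Delta \Sigma_+^\infty \Gamma_j^\bullet \simeq \Sigma_+^\infty B\Gamma_j$ at the level of finite quotients, so for each $k$ large enough that $\Gamma_j/\Gamma_{j+k}$ acts trivially on a chosen Postnikov/Moore approximation $X \wedge \MJ{\ell}$ of $X$, we have
\[
\holim_\Delta \Func(\Sigma_+^\infty (\Gamma_j/\Gamma_{j+k})^\bullet, X \wedge \MJ{\ell}) \simeq F(\Sigma_+^\infty B(\Gamma_j/\Gamma_{j+k}), X \wedge \MJ{\ell}).
\]
Passing to the homotopy limit over $k$ on both sides and using \Cref{prop:it-doesn't-matter} (which gives $\Sigma_+^\infty B\Gamma_j \simeq \holim_k \Sigma_+^\infty B(\Gamma_j/\Gamma_{j+k})$ after $p$-completion) converts the right-hand side into $F(\Sigma_+^\infty B\Gamma_j, X \wedge \MJ{\ell})$. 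The continuous function spectrum $\Func$ on the left is defined precisely as such an iterated $\holim_k \hocolim$, so the identification is tautological at each finite stage after one unwinds the definition in \eqref{eq:cont-funk}. Finally, reassemble over $\ell$ using \Cref{eq:fixed-inv-commute} together with the presentation $X \simeq \holim_\ell X \wedge \MJ{\ell}$, which exchanges $X^{h\Gamma_j}$ and $\holim_\ell (X \wedge \MJ{\ell})^{h\Gamma_j}$ and likewise commutes with the function spectrum $F(\Sigma_+^\infty B\Gamma_j, -)$.

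For the $\GG$-equivariance, observe that every construction in sight is functorial in the pair $(\Gamma_j, X)$ equipped with compatible conjugation actions by $\GG$. On the left, $\GG$ acts by conjugation on $\Gamma_j$ (permuting the cobar cosimplicial structure) and on $X = \LTE^{h\Gamma_i}$ (since $\Gamma_i$ is normal in $\GG$). On the right, $\GG$ acts on $B\Gamma_j$ via conjugation and on $X$ as before, and the function spectrum receives the diagonal action. The identifications above are built from the nerve construction and Spanier–Whitehead duality of finite maps, both of which are manifestly natural for group isomorphisms, so the $\GG$-equivariance is automatic.

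The main obstacle is the careful handling of continuity: the spectra involved are only well behaved when expressed as homotopy inverse limits of objects with finite homotopy groups, and one must check that the exchange of $\holim_\Delta$ with the defining $\holim_\ell \hocolim_k$ of $\Func$ is legitimate. This is where one exploits the Hovey–Strickland tower $\MJ{\ell}$ of \Cref{rem:towers-of-n}: each $X \wedge \MJ{\ell}$ has finite homotopy groups annihilated by a fixed power of $\mm$, so some $\Gamma_{j+k}$ acts trivially on it and the $\hocolim_k$ reduces to a single term. After this reduction the exchange of limits is the standard fact that homotopy limits commute, and the $\LTK$-localness of the result follows from \Cref{lem:detecting-kn-equiv} since both sides are $L_n$-local and become equivalent after smashing with a finite type $n$ complex.
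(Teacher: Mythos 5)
Your proposal identifies the right ingredients (the cosimplicial/cobar description of continuous homotopy fixed points, approximation by finite quotients of $\Gamma_j$, the Hovey--Strickland tower to get finite homotopy groups, Postnikov truncation, and naturality for $\GG$-equivariance), but the single step you flag as "the main obstacle" is handled by a claim that is simply false: the $\hocolim_k$ over finite quotients in the definition \eqref{eq:cont-funk} does \emph{not} reduce to a single term. That colimit runs over increasingly large finite quotients $\Gamma_j/\Gamma_{j+k}$ and computes the (ever-growing) module of continuous maps $\map(\Gamma_j,\pi_\ast(X\smsh \MJ{\ell}))$; the hypothesis that $\Gamma_{j}$ (and a fortiori $\Gamma_{j+k}$) acts trivially on $X\smsh\MJ{\ell}$ has no bearing on this, since the colimit indexes finite models for the \emph{source} of the continuous maps, not the coefficient group that needs to be acted on trivially. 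So your asserted collapse, which was supposed to make "the exchange of limits the standard fact that homotopy limits commute," never happens.

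The content that is actually needed, and that the paper supplies, is precisely what you are eliding. After smashing with a type $n$ complex $T$ (so that $\pi_\ast X$ is finite $p$-torsion), the paper compares the map $f\colon \hocolim_k X^{h\Gamma_j/\Gamma_{j+k}} \to X^{h\Gamma_j}$ and the map $g\colon \hocolim_k F(\Sigma_+^\infty B(\Gamma_j/\Gamma_{j+k}),X)\to F(\Sigma_+^\infty B\Gamma_j,X)$, and proves each is an equivalence by \emph{first} reducing to the case where $X$ is bounded above (via Postnikov truncation and \Cref{eq:fixed-inv-commute}, a step separate from the $\MJ{\ell}$/type-$n$ reduction, which you conflate), and \emph{then} running an Atiyah--Hirzebruch Spectral Sequence argument for $g$, using the convergence control afforded by the bounded-above hypothesis and the vanishing in \Cref{lem:basic-coh}, and a comparison of homotopy fixed-point spectral sequences for $f$. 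In your proposal the map $g$ appears implicitly when you pass from $\hocolim_k F(\Sigma_+^\infty B(\Gamma_j/\Gamma_{j+k}),X\smsh\MJ{\ell})$ to $F(\Sigma_+^\infty B\Gamma_j, X\smsh\MJ{\ell})$, but mapping \emph{out of} a homotopy limit of spaces into a fixed spectrum does not commute with homotopy colimits, so this is exactly the nontrivial interchange; your tautology claim papers over it. You should (a) separate the two reductions ($\MJ{\ell}$ for finiteness, Postnikov for boundedness-above), (b) drop the "single term" claim, and (c) supply the spectral sequence arguments showing both interchange maps are equivalences.
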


\begin{proof} We use the following basic fact about fixed point spectra. Let $G$ be a finite group, $K$ a 
normal subgroup of $G$, and  $X$ a  $G$ spectrum on which $K$ acts trivially. Then there is a natural equivalence of $G$-spectra
\[
\xymatrix{
X^{hK} \ar[r]^-\simeq & F(\Sigma^\infty_+ BK,X)
}
\]
where $G$ acts on $\Sigma^\infty_+ BK$ through conjugation and diagonally on the function spectrum. The
difficulty here is that $\GG$ is not finite.

Since we seek a $\LTK$-local equivalence we need only prove
this after smashing with a type $n$-complex $T$. See \Cref{lem:detecting-kn-equiv}. Thus we assume
we have a spectrum $X$ with a trivial $\Gamma = \Gamma_j$ action and with the property that $\pi_\ast X$ 
finite, discrete, and annihilated by some power of $p$. We will
show there is a natural $\GG$-equivariant equivalence
\[
X^{h\Gamma_j} \simeq F(\Sigma^\infty_+ B\Gamma_j,X).
\]
In practice, we will take $X = \LTE^{h\Gamma_i} \wedge T \simeq (\LTE \wedge T)^{h\Gamma_i}$.

Let $\Gamma = \Gamma_j$. Since $\Gamma$ acts trivially of $X$, we have a trivial action of the
finite group $\Gamma/\Gamma_k$ of $X$ for all $k \geq j$ and the maps $X^{h\Gamma/\Gamma_k} \to X^{h\Gamma}$
induced by the quotient map on groups give us a natural equivariant diagram
\[
\xymatrix{
\hocolim X^{h\Gamma/\Gamma_k} \ar[r]^-\simeq \ar[d]_f & \hocolim F(\Sigma_+^\infty B(\Gamma/\Gamma_k),X)\ar[d]^g\\
X^{h\Gamma}& F(\Sigma_+^\infty B\Gamma,X).
}
\]

Our strategy is now as follows. We will show that the maps $f$ and $g$ are equivalences when $X$ is bounded above in homotopy; 
that is, there is an integer $n$ so that $\pi_t X=0$ for $t > n$. For general $X$, this will then provide us a natural
equivalence
\[
(P_nX)^{h\Gamma} \simeq F(\Sigma^\infty_+ B\Gamma, P_nX).
\]
for all $n$. We now then can use \Cref{eq:fixed-inv-commute} to get an equivalence
\[
X^{h\Gamma} \simeq \holim_n (P_nX)^{h\Gamma} \simeq \holim_n F(\Sigma^\infty_+ B\Gamma, P_nX) 
\simeq F(\Sigma^\infty_+ B\Gamma, X).
\]
Thus, for the rest of the argument, assume $\pi_t X=0$ if $t > n$.

We first show $g$ is an equivalence. 
The basic fact we will use is that if $A$ is a finite discrete abelian group with the property that $p^kA=0$ for some $k$ with
trivial $\Gamma$-action, then
\begin{align}\label{eq:key-fact-again}
H^\ast (\Gamma,A) &= \colim H^\ast (\Gamma/\Gamma_k,A)\nonumber \\
&\cong \colim H^\ast (\Sigma^\infty_+ B(\Gamma/\Gamma_k),A) \cong H^\ast (\Sigma^\infty_+ B\Gamma,A).
\end{align}
See \Cref{prop:it-doesn't-matter}.

We are asserting that the natural map
\begin{equation}\label{eq:key-fact-again-1} 
\hocolim F(\Sigma^\infty_+ B(\Gamma/\Gamma_k),X) \longr F(\holim \Sigma^\infty_+ B(\Gamma/\Gamma_k),X)
\end{equation}
is an equivalence. This follows from \eqref{eq:key-fact-again} and an Atiyah-Hirzebruch Spectral Sequence argument.
We use that if $Y$ is a space, then
\[
E_2^{p,q}(Y) = H^p(Y,\pi_{-q}X) \Longrightarrow \pi_{-p-q}F(\Sigma^\infty_+Y,X) = X^{p+q}(Y) 
\]
is zero if $p< 0$ or $q < -n$. This implies that for any pair $(p,q)$, there is an $r$ so that $E_\infty^{p,q}(Y)
= E_r^{p,q}(Y)$ independent of $Y$ and, hence, that the spectral sequences converge. 
Combined with part (2) of \Cref{lem:basic-coh} this is sufficient to show \eqref{eq:key-fact-again-1} is an isomorphism.

We now show that $f$ is an equivalence. We have a diagram of spectral sequences
\[
\xymatrix@C=10pt{
\colim H^s(\Gamma/\Gamma_k,\pi_tX)\ar[d]_\cong \ar@{=>}[r] 
& \colim \pi_{t-s}X^{h\Gamma/\Gamma_k} \ar[d]^{f_\ast}
\\
H^s(\Gamma,\pi_tX) \ar@{=>}[r] &
\pi_{t-s}X^{h\Gamma}.
}
\]
The question then remains whether the upper spectral sequence converges. However, for all $k$ the
homotopy fixed point spectral sequence
\[
E_2^{s,t} = H^s(\Gamma/\Gamma_k,\pi_tX) \Longrightarrow \pi_{t-s}X^{h\Gamma/\Gamma_k}
\]
has the property that $E_2^{s,t} = 0$ if $t > n$ so for a fixed pair $(s,t)$ there is an $r$, independent of $k$, such that
$E_\infty^{s,t} = E_r^{s,t}$. This is enough to give convergence. 
\end{proof} 

\begin{lem}\label{lem:baby-step-4} There is a $\GG$-equivariant $\LTK$-local equivalence
\[
\colim_i\holim_j (\LTE^{h\Gamma_i})^{h\Gamma_j} \simeq F(\igg,\LTE).
\]
\end{lem}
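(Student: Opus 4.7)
The plan is to rewrite the inner double homotopy fixed points as function spectra via Lemma \ref{lem:baby-step-2}, commute the limit over $j$ past the function spectrum to produce $F(\igg,\LTE^{h\Gamma_i})$, and finally use that $\igg$ is $\LTK$-locally dualizable to pull the colimit over $i$ inside.

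For fixed $i$ and any $j > i$ we have $\Gamma_j \subseteq \Gamma_i$, so $\Gamma_j$ acts trivially on $\LTE^{h\Gamma_i}$ and Lemma \ref{lem:baby-step-2} furnishes a $\GG$-equivariant $\LTK$-local equivalence
\[
(\LTE^{h\Gamma_i})^{h\Gamma_j} \simeq F(\Sigma^\infty_+ B\Gamma_j, \LTE^{h\Gamma_i}).
\]
Under this identification, the transfer $\tr\colon \LTE^{h\Gamma_{j+1}} \to \LTE^{h\Gamma_j}$ of Lemma \ref{lem:dual-e-for} corresponds to the map $F(\Sigma^\infty_+ B\Gamma_{j+1},\LTE^{h\Gamma_i}) \to F(\Sigma^\infty_+ B\Gamma_j,\LTE^{h\Gamma_i})$ obtained by applying $F(-,\LTE^{h\Gamma_i})$ to the classifying-space transfer $\Sigma^\infty_+ B\Gamma_j \to \Sigma^\infty_+ B\Gamma_{j+1}$ of \eqref{eq:tr-gammas}. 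Commuting $\holim_j$ past $F$ then gives
\[
\holim_j (\LTE^{h\Gamma_i})^{h\Gamma_j} \simeq F\bigl(\hocolim_j \Sigma^\infty_+ B\Gamma_j,\, \LTE^{h\Gamma_i}\bigr) = F(\igg, \LTE^{h\Gamma_i}),
\]
where the last equality is Definition \ref{def:ig-defined}. (The $p$-completion built into $\igg$ is harmless because $\LTE^{h\Gamma_i}$ is $\LTK$-local, hence $p$-complete.)

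Finally, by Lemma \ref{lem:transfer-hom} the underlying homotopy type of $\igg$ is a $p$-complete $n^2$-sphere, so $\igg$ is invertible—and in particular dualizable—in the $\LTK$-local category. Therefore $F(\igg,-)$ commutes with $\LTK$-local filtered hocolims, and Lemma \ref{lem:lem-colim-fixed}(2) yields
\[
\colim_i F(\igg, \LTE^{h\Gamma_i}) \simeq F(\igg,\, \colim_i \LTE^{h\Gamma_i}) \simeq F(\igg, \LTE),
\]
completing the chain of equivalences.

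The main obstacle will be maintaining $\GG$-equivariance through each step and verifying the naturality required for the tower and the colimit. The $\GG$-equivariance of Lemma \ref{lem:baby-step-2} gives this at the innermost level; what must be checked is that its equivalence is natural in $j$ along the transfers (so that the towers match on both sides with their respective $\GG$-actions) and that the final commutation of $F(\igg,-)$ with the colimit identifies the diagonal $\GG$-action—through conjugation on $\igg$ and the ambient action on $\LTE$—with the one inherited from the double fixed point construction.
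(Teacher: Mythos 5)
Your proof is correct and follows the same route as the paper's: apply \Cref{lem:baby-step-2} to rewrite the inner double fixed points as function spectra, use Frobenius reciprocity to identify the tower maps with the ones induced by the classifying-space transfers and pass the homotopy limit inside, then use dualizability of $\igg$ together with \Cref{lem:lem-colim-fixed}(2) to pull the colimit over $i$ inside. The only difference is one of exposition: the paper invokes ``Frobenius Reciprocity'' tersely where you spell out explicitly that $\tr\colon\LTE^{h\Gamma_{j+1}}\to\LTE^{h\Gamma_j}$ corresponds to precomposition with $\tr\colon\Sigma^\infty_+B\Gamma_j\to\Sigma^\infty_+B\Gamma_{j+1}$, and the naturality/equivariance concerns you flag at the end are precisely what the $\GG$-equivariance clause of \Cref{lem:baby-step-2} (together with \Cref{rem:naturality-tr}) is set up to handle, so they are checks rather than obstacles.
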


\begin{proof} \Cref{lem:baby-step-2} and Frobenius Reciprocity imply that 
\begin{align*}
\holim_j (\LTE^{h\Gamma_i})^{h\Gamma_j} &\simeq \holim_j F(\Sigma^\infty_+ B\Gamma_{j},\LTE^{h\Gamma_i})\\
&\simeq F(\colim_j \Sigma^\infty_+ B\Gamma_j,\LTE^{h\Gamma_i})\\
& \simeq F(\igg,\LTE^{h\Gamma_i})
\end{align*}
since the (co-)limit is taken over transfer maps. Since $\igg$ is a $p$-complete sphere, it is dualizable; hence,
by part (2) of \Cref{lem:lem-colim-fixed}
\[
\hocolim_i F(\igg,\LTE^{h\Gamma_i}) \simeq F(\igg,\LTE). \qedhere
\]
\end{proof}

We now come to the main theorem of the section. If $X$, $Y$ and $Z$ are left $G$-spectra
the diagonal $G$-action on $F(Y,Z)$ is the action for which the standard adjunction between smash product
and function spectra restricts to an adjunction isomorphism
\[
F_G(X \wedge Y, Z) \cong F_G(X,F(Y,Z))
\]
where $F_G$ denotes the spectrum of $G$-maps and $G$ acts diagonally on $X \wedge Y$.
If we were allowed to use functional notation and $\phi \in F(Y,Z)$, then
\[
(g\phi)(y) = g\phi(g^{-1}y).
\]
Recall that $\igg = \hocolim_j \Sigma_+^\infty B\Gamma_j$ where the colimit is in the category of $p$-complete
spectra and taken over the transfer maps. The $\GG$ action is given by conjugation on the subgroups $\Gamma_j$.  See 
\Cref{def:ig-defined}. 

\begin{thm}\label{thm:dual-en} There is a $\GG$-equivariant $\LTK$-local equivalence
\[
D\LTE \simeq F(\igg,\LTE)  \simeq I_\GG^{-1}\wedge \LTE
\]
where $\GG$ acts diagonally on both $F(\igg,\LTE)$ and the smash product. 
\end{thm}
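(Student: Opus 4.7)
The plan is to assemble \Cref{thm:dual-en} by concatenating the chain of $\GG$-equivariant $\LTK$-local equivalences already established in \Cref{lem:dual-e-for}, \Cref{lem:baby-step-5}, \Cref{lem:baby-step-1}, and \Cref{lem:baby-step-4}. Specifically, starting from \Cref{lem:dual-e-for}, which identifies
\[
D\LTE \simeq \holim_j \LTE^{h\Gamma_j}
\]
(limit over transfers), I would rewrite the target using \Cref{lem:baby-step-5} to obtain
\[
\holim_j \LTE^{h\Gamma_j} \simeq \holim_j \mathop{\hocolim}_i (\LTE^{h\Gamma_i})^{h\Gamma_j}.
\]
Then I would interchange the order of the limit and colimit using \Cref{lem:baby-step-1}, giving a $\GG$-equivariant $\LTK$-local equivalence
\[
\holim_j \mathop{\hocolim}_i (\LTE^{h\Gamma_i})^{h\Gamma_j} \simeq \mathop{\hocolim}_i \holim_j (\LTE^{h\Gamma_i})^{h\Gamma_j}.
\]
Finally, \Cref{lem:baby-step-4} identifies this last term as $F(\igg,\LTE)$, which completes the first equivalence $D\LTE \simeq F(\igg,\LTE)$.

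For the second equivalence $F(\igg,\LTE) \simeq I_\GG^{-1}\wedge \LTE$, I would use that by \Cref{lem:transfer-hom} $\igg$ is a $p$-complete $d$-sphere, hence invertible in the $p$-complete stable homotopy category; let $I_\GG^{-1}$ denote its inverse. Since $\igg$ is dualizable, the assembly map
\[
I_\GG^{-1}\wedge \LTE \longr F(\igg,\LTE)
\]
is a (non-equivariant) equivalence. To promote it to a $\GG$-equivariant equivalence with $\GG$ acting diagonally on both sides, I would check that the standard evaluation and coevaluation maps for $\igg$ are $\GG$-equivariant with respect to the conjugation action on $\igg$; this follows formally since duality in the $\LTK$-local category is functorial in $\GG$-spectra.

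The content here is entirely the assembly: each of the four equivalences in the chain was the subject of a separate lemma because each required a genuinely different idea (Frobenius reciprocity and the transfer tower; commuting $\hocolim$ past $(-)^{h\Gamma_j}$ using vanishing lines and bounded finiteness; swapping $\holim$ and $\hocolim$ via the crucial vanishing of $H^s(\Gamma_j,-)$ for $s \ne n^2$ together with \Cref{lem:baby-step-0}; and identifying fixed points of a trivially acting group with a mapping spectrum out of $B\Gamma_j$). The present theorem is the payoff: none of these steps needs to be revisited, and the main work is simply ensuring the equivalences are threaded through $\GG$-equivariantly, which each lemma has already been stated to ensure. The only genuinely new observation is the final identification $F(\igg,\LTE) \simeq I_\GG^{-1}\wedge \LTE$, which is purely formal from dualizability of $\igg$.
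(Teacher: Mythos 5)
Your proposal is correct and follows the same chain of lemmas (\Cref{lem:dual-e-for}, \Cref{lem:baby-step-5}, \Cref{lem:baby-step-1}, \Cref{lem:baby-step-4}) that the paper's proof assembles, just read in the opposite direction; the brief additional remark about the formal equivalence $F(\igg,\LTE)\simeq I_\GG^{-1}\wedge\LTE$ via dualizability of the $p$-complete sphere $\igg$ and naturality of the assembly map is the right justification and is exactly the point the paper leaves implicit.
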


\begin{proof} This follows by combining  \Cref{lem:baby-step-5},
\Cref{lem:baby-step-1}, and \Cref{lem:baby-step-4}.
\end{proof}

\begin{rem} In Theorem 7.3.1 of \cite{BehrensDavis}, Behrens and Davis give an entirely different expression of the 
Spanier-Whitehead dual of $\LTE$ as a homotopy fixed point spectrum. It would be interesting to 
make a detailed comparison of that result with \Cref{thm:dual-en}.
\end{rem}
 
The following is now a consequence of \Cref{thm:a-big-one}.

\begin{cor}\label{cor:what-we-use} Let $F \subseteq \GG$ be a finite subgroup with $p$-Sylow subgroup
$F_0$. Suppose that $F_0/(F_0\cap Z(\GG))$ is an elementary abelian $p$-group. Then there is an 
$F$-equivariant $\LTK$-local equivalence
\[
D\LTE \simeq S^{-\gg} \wedge \LTE.
\]
\end{cor}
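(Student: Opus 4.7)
The plan is to chain together the two main theorems already established in the paper: \Cref{thm:dual-en}, which identifies $D\LTE$ as $F(\igg, \LTE)$ equivariantly for the full group $\GG$, and \Cref{thm:a-big-one}, which gives an equivariant equivalence between $\igg$ and $S^\gg$ after restriction to certain subgroups. The hypothesis on $F$ in the corollary is exactly the hypothesis required by \Cref{thm:a-big-one}.

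First, I would apply \Cref{thm:dual-en} to produce a $\GG$-equivariant $\LTK$-local equivalence $D\LTE \simeq F(\igg, \LTE)$, then restrict the $\GG$-action to the finite subgroup $F$. Second, I would verify that the pair $(H = F, \cG = \GG)$ satisfies the hypotheses of \Cref{thm:a-big-one}: since $F$ is finite, the quotient $F/(F \cap Z(\GG))$ is automatically finite, and its $p$-Sylow subgroup is the image of $F_0$ under the quotient map $F \to F/(F \cap Z(\GG))$, hence is a quotient of $F_0/(F_0 \cap Z(\GG))$, which is elementary abelian by assumption (and quotients of elementary abelian $p$-groups are elementary abelian). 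Therefore \Cref{thm:a-big-one} supplies an $F$-equivariant equivalence $\igg \simeq S^\gg$.

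Finally, smashing or taking function spectra against an equivariant equivalence of invertible objects preserves $F$-equivariant equivalences, so
\[
D\LTE \simeq F(\igg, \LTE) \simeq F(S^\gg, \LTE) \simeq S^{-\gg} \wedge \LTE,
\]
where $S^{-\gg}$ denotes the $F$-equivariant inverse of $S^\gg$ (which exists because $S^\gg$ is a $p$-complete $F$-sphere, hence invertible in the appropriate $F$-equivariant symmetric monoidal category), and the last equivalence uses that $\LTE$ is $\LTK$-locally dualizable.

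There is essentially no obstacle here since both heavy lifts have been carried out earlier: \Cref{thm:dual-en} requires the delicate homotopy-fixed-point manipulations culminating in \Cref{lem:baby-step-4}, and \Cref{thm:a-big-one} requires the Steenrod-algebra calculation of \Cref{thm:main-calc} combined with the Lannes-theoretic recognition principle \Cref{thm:whod-have-believed-it-1}. The only mild care needed is checking that the elementary abelian hypothesis passes from $F_0/(F_0 \cap Z(\GG))$ to the $p$-Sylow subgroup of $F/(F \cap Z(\GG))$, but this is immediate since the two groups agree up to the prime-to-$p$ part of $F$.
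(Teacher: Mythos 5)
Your proof is correct and matches the paper's intended argument: the paper records this corollary as an immediate consequence of \Cref{thm:dual-en} (restricted to $F$) and \Cref{thm:a-big-one}, and you have simply spelled out the details. One tiny polish: the image of $F_0$ in $F/(F\cap Z(\GG))$ is \emph{isomorphic} to $F_0/(F_0\cap Z(\GG))$ (by the second isomorphism theorem, since $F_0\cap(F\cap Z(\GG)) = F_0\cap Z(\GG)$), not merely a quotient of it, so the elementary-abelian hypothesis transfers on the nose.
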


 % !TEX root = dsphere-master.tex
  
\section{The Spanier-Whitehead duals of $\LTE^{hF}$: some theory}\label{sec:PicStuff}

As an application of the theory developed so far, we will give some calculations of $D(\LTE^{hF})$ where
$F \subseteq \GG$ is a finite subgroup whose $p$-Sylow subgroup $F_0$  has the property that 
$F_0/F_0 \cap Z(\GG)$ is an elementary abelian $p$-group. Recall that $Z(\GG)$ is the center of
$\GG$. This recovers in a coherent
way all the known calculations of this kind in the literature. We also produce a new example. This section provides background and set-up. The precise
calculations, which involve the representation theory of specific groups, are in \Cref{sec:44} and \Cref{sec:newxample}. 

\subsection{Generalities on Picard groups}

Because Tate spectra vanish in the $K(n)$-local category (see \cite{GreenSad}), the 
norm map $\LTE_{hF} \longr \LTE^{hF}$ is a weak equivalence; it follows immediately that
\[
D(\LTE^{hF}) = F(\LTE^{hF},L_{\LTK}S^0) \simeq  F(\LTE_{hF},L_{\LTK}S^0) \simeq (D\LTE)^{hF}.
\]
By \Cref{cor:what-we-use} we have, for $F$ satisfying our hypothesis, that there is an $F$-equivariant
equivalence
\[
D\LTE \simeq S^{-\gg} \wedge \LTE
\]
where $S^\gg$ is the $\GG$-sphere obtained from the adjoint representation. We will assume that 
the $F$ action on $S^\gg$ satisfies the hypotheses of \Cref{prop:recog-princ}, so that we have
a finite dimensional real representation $V$ of $F$ and an $F$-equivariant equivalence of $p$-complete
$F$-spheres $ S^V  \simeq S^\gg$. The project then is to calculate the homotopy
type of $(S^{-V} \wedge \LTE)^{hF}$. 

We begin with some nomenclature. If $F$ is a finite group, let $RO(F)$ be the real representation
ring of $F$. If $V \in RO(F)$, we write $S^V$ for the stable one-point compactification of $V$ as an
$F$-sphere, $|V|$ for the dimension $V$, and $S^{|V|}$ for the stable sphere of dimension $|V|$.
Then $S^{|V|}$ is the underlying non-equivariant spectrum of $S^V$.

\begin{rem}[{\bf Picard Spectra}]\label{rem:why-pic} We begin with such basic theory as we need. A good summary and
references to the classical literature can be found in Section 2 of \cite{AkhSto}. We work in the category
of local spectra for some ambient homology theory, but leave this assumption implicit. 

Let $R$ be an $E_\infty$-ring spectrum. The space $\Gl_1(R) \subseteq \Omega^\infty R$
is defined by the pull-back diagram
\[
\xymatrix{
\Gl_1(R) \ar[r]\ar[d] & \Omega^\infty R \ar[d]\\
(\pi_0R)^\times \ar[r] & \pi_0\Omega^\infty R = \pi_0R.
}
\]
The $E_\infty$-multiplication on $R$ gives
$\Gl_1(R)$ the structure of an infinite loop space. 

Let $\sPic(R)$ be the category of invertible $R$-modules and $R$-module equivalences; in a slight abuse of
notation we will also write $\sPic(R)$ for the nerve of the category. Then 
\[
\pi_0\sPic(R) \cong \Pic(R)
\]
where $\Pic(R)$ is now the group of invertible $R$-module spectra up to equivalence. The space $\sPic(R)$  is an
infinite loop space; indeed, there is a connective spectrum $\pic(R)$ with 
\[
\Omega^\infty \pic(R) \simeq \sPic(R) \simeq \Pic(R) \times B\Gl_1(R). 
\]
\end{rem}

\begin{rem}[{\bf Group actions on Picard Spectra}]\label{rem:why-pic-2}
Let $R$ be an algebra in $G$-spectra over the terminal $N_{\infty}$-operad (as in \cite{BlumbergHill2015}). In particular, $R$ has all multiplicative norms. We assume that $R$ is cofree, so that $R \to F(EG_+,R)$  is an equivalence in $G$-spectra.
We also assume that the natural map $R^{hG} \to R$ is a faithful Galois
extension in the sense of Rognes \cite{rognes}. 
With appropriate care and using \cite{HillMeier17, BlumbergHill2015, BH}, in particular, \cite[Theorem 6.23]{BlumbergHill2015}, we may assume these
assumptions hold for $\LTE$ with its action of a finite subgroup of $\GG$; see  Remark 2.1 of \cite{BBHSC4}.

Let $H \subseteq G$ be a subgroup. Define $\sPic_H(R)$ to be the category of invertible $R$-modules $P$
with a compatible $H$-action; that is, the module multiplication map $R \wedge P \to P$ is an $H$-map, where
$H$ acts diagonally on the smash product. Let $\pic_H(R)$ be the associated spectrum. There is a functor
$\sPic(R^{hH}) \to \sPic_H(R)$ sending $Q$ to $R \wedge_{R^{hH}} Q$. 
Under our assumptions, this is an equivalence of categories. 
Hence $\Pic(R^{hH}) = \Pic_H(R)$ or, more generally, there is
an equivalence of spectra
\begin{equation}\label{eq:pic-equivs-sp}
\pic(R^{hH}) \simeq \pic_H(R).
\end{equation}
See Proposition 3.1 of \cite{BBHSC4}.

If $K \subseteq H$ is a subgroup, there is a restriction map $\sPic_H(R) \to \sPic_K(R)$. There is also a transfer
$\tr: \sPic_K(R) \to \sPic_H(R)$ defined using the Hill-Hopkins-Ravenel norm functor $N_K^H$. The assignment
\[
G/H \longmapsto \Pic_H(R) = \pi_0\pic_H(R)
\]
is then a Mackey functor, which we write $\uPic(R)$, with $G$ understood.  See Corollary 3.12 of \cite{BBHSC4}.
All of this uses technology developed in \cite{BH}. 
\end{rem}

\begin{rem}[{\bf Homotopy Fixed Point Spectral Sequences}]\label{rem:compare-to-hfpss}
Let $R$ and $G$ be as in \Cref{rem:why-pic-2}. Then $G$ acts on the category of invertible
$R$-modules: if $g \in G$, and $P$ is an invertible $R$-module spectrum, then
\[
gP = R \wedge_R^g P
\]
where we have extended scalars along the map $g\colon R \to R$. If $P \in \sPic_G(R)$, then multiplication by $g$
defines an $R$-module equivalence
\[
\xymatrix{
R \wedge_R^g P \ar[r]^-{1 \wedge g} & R \wedge_R P \ar[r]^-\simeq & P.
}
\]
This yields a map of spectra
\begin{align}\label{eq:pic-equivs-sp-1}
\pic_G(R) \longr \pic(R)^{hG}
\end{align}
which is an equivalence on $(-1)$-connected covers. See Theorem 3.3.1 of \cite{AkhSto} and the further references there. 
In particular we have isomorphisms
\[
\Pic(R^{hG}) \cong \Pic_G(R) \cong \pi_0\pic_G(R) \cong \pi_0\pic(R)^{hG}.
\]
There is then a homotopy
fixed point spectral spectral sequence
\begin{align}\label{eq:desc-pic-big}
E_2^{s,t}(R,G) = H^s(G,\pi_t\pic(R)) \Longrightarrow \pi_{t-s}\pic(R)^{hG}.
\end{align}
\end{rem}

\begin{rem}[{\bf The $J$-homomorphism}]\label{rem:J-om} Let $R$ and $G$ be as in \Cref{rem:why-pic-2} and
$H \subseteq G$ a subgroup.  If $V$ is a real $H$-representation, then $R \wedge S^V \in \Pic_H(R)$,
where we give $R \wedge S^V$ the diagonal $H$ action. As in Proposition 3.13 of \cite{BBHSC4} this
extends to a morphism of Mackey functors
\[
J_R: \underline{RO}\to \uPic(R) 
\]
where $\underline{RO}$ is the Mackey functor $G/H \mapsto RO(H)$. 

In fact, more is true. If we let $\Rep(H)$ be the category of real representations and isomorphisms; this is a symmetric monoidal
category under direct sum. Then we have a symmetric monoidal functor $J_R^H: \Rep(H) \to \sPic_H(R)$ and hence
a map of spectra $\ko_H \to \pic_H(R)$. Here $\ko_H$ is the spectrum of $H$-equivariant real $K$-theory. This is surely part of a 
morphism between spectral Mackey functors in sense of \cite{Barwick}, but we won't need that much structure. We will use that we 
have a commutative diagram for all $H \subseteq G$
\[
\xymatrix{
\ko_H \ar[r] \ar[d] & \pic_H(R) \ar[d]\\
\ko^{hH} \ar[r] & \pic(R)^{hH} \ .
}
\]
The action of $H$ on $\ko$ is trivial, so there is a weak equivalence
\[
F(\Sigma^\infty_+ BH,\ko) \simeq \ko^{hH}
\]
and the homotopy fixed point spectral sequence for $\ko^{hH}$ is the Atiyah-Hirzebruch Spectral Sequence
for $\ko^\ast BH$. 
\end{rem}

\begin{prop}\label{prop:norming-up-1} 
Let $R$ be as in \Cref{rem:why-pic-2}.

(1) Let $f\colon S^V \to R$ be a $G$-equivariant map so that the
underlying map of spectra $S^{|V|} \to R$ is a unit in $\pi_\ast R$. Then $J_R^G(V) = R^{hG} \in \Pic(R^{hG})$.

(2) 
Let $K\subset H$ be a subgroup. If $V \in RO(K)$ is in the kernel of $J_{R}^{K}$
then $W=\mathrm{ind}_{K}^{G}V$ is in the kernel of $J_{R}^{G}$.

\end{prop}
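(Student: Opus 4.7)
The plan for part (1) is direct. The $G$-equivariant map $f\colon S^V \to R$ extends by $R$-linearity to a $G$-equivariant $R$-module map
\[
\tilde f\colon R \wedge S^V \xrightarrow{1 \wedge f} R \wedge R \xrightarrow{\mu} R,
\]
where $R \wedge S^V$ is given the diagonal $G$-action. The hypothesis that the underlying map $S^{|V|} \to R$ represents a unit in $\pi_\ast R$ is exactly the statement that $\tilde f$ is an equivalence on underlying spectra. Both source and target are invertible $R$-modules carrying a compatible $G$-action, and equivalences between invertible $R$-modules are detected on underlying homotopy; hence $\tilde f$ is an equivalence in $\sPic_G(R)$. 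Transporting across the equivalence $\sPic_G(R) \simeq \sPic(R^{hG})$ of \eqref{eq:pic-equivs-sp}, the trivial object $R \in \sPic_G(R)$ corresponds to $R^{hG} \in \sPic(R^{hG})$ while $R \wedge S^V$ corresponds to $J_R^G(V)$, giving $J_R^G(V) \simeq R^{hG}$ in $\Pic(R^{hG})$.

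For part (2), the strategy is to invoke the Mackey-functor structure from \Cref{rem:J-om}: the $J$-homomorphism $J_R\colon \underline{RO} \to \uPic(R)$ commutes with transfers. Since the transfer in $\underline{RO}$ is induction of representations, we have
\[
J_R^{H}\bigl(\mathrm{ind}_K^{H} V\bigr) \;=\; \tr_K^{H}\bigl(J_R^K(V)\bigr) \;=\; \tr_K^{H}\bigl(R^{hK}\bigr),
\]
so the entire content of part (2) reduces to showing that the transfer sends the unit to the unit. Under $\Pic(R^{hK}) \cong \Pic_K(R)$, the unit $R^{hK}$ corresponds to $R$ with its $K$-equivariant structure, and the transfer in $\uPic(R)$ is implemented, as in \Cref{rem:why-pic-2}, by the Hill--Hopkins--Ravenel multiplicative norm $N_K^{H}$. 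Since $N_K^{H}(R) \simeq R$ as an $H$-equivariant $R$-module (the norm of an $N_\infty$-algebra applied to its underlying object), we conclude $\tr_K^{H}(R^{hK}) = R^{hH}$, so $\mathrm{ind}_K^H V$ lies in the kernel of $J_R^H$. If, as the statement suggests, one wants to land all the way in $\Pic(R^{hG})$ for an intermediate $K \subset H \subset G$, the same identification applied to $\tr_H^G \circ \tr_K^H$ together with transitivity of induction $\mathrm{ind}_K^G = \mathrm{ind}_H^G \circ \mathrm{ind}_K^H$ yields the conclusion.

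The main obstacle, such as it is, is purely bookkeeping: ensuring that enough equivariant multiplicative structure is present on $R$ to make the HHR norm available and to make the Mackey-functor structure on $\uPic(R)$ meaningful. These foundations are in place by the hypothesis (from \Cref{rem:why-pic-2}) that $R$ is a cofree algebra over the terminal $N_\infty$-operad and a faithful Galois extension of $R^{hG}$, and are developed in detail in \cite{BBHSC4} and \cite{BH}. Granted that framework, the identifications $\sPic_H(R) \simeq \sPic(R^{hH})$, the morphism-of-Mackey-functors property of $J_R$, and the formula $N_K^H(R) \simeq R$ all become routine inputs, and the two assertions follow cleanly as above.
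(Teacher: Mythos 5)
Your proof takes essentially the same route as the paper's, and both parts are correct. Two small remarks are worth making.

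For part (1), the paper's stated reason that the underlying equivalence $\tilde f\colon R\wedge S^V\to R$ is a $G$-equivalence is that $R$ is cofree (and therefore so is $R\wedge S^V$), not that the objects are invertible $R$-modules. Your phrase ``equivalences between invertible $R$-modules are detected on underlying homotopy'' attributes the detection to invertibility, which is a red herring: an underlying equivalence of genuine $G$-spectra is not automatically a $G$-equivalence, invertible or not. What saves the day is either cofreeness (the paper's choice) or the observation that $\sPic_G(R)$ is defined Borel-equivariantly, so equivalences there are by fiat the underlying ones. Either way the conclusion stands, but you should cite the right mechanism.

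For part (2), your argument is a correct but more laborious unpacking. Once you know $J_R$ is a morphism of Mackey functors and that $\ind$ is the transfer on $\underline{RO}$, you get $J_R^G(\ind_K^G V)=\tr_K^G(J_R^K(V))=\tr_K^G(0)$, and transfers in a Mackey functor are group homomorphisms of abelian groups, so they send the identity to the identity. There is no need to verify $N_K^H(R)\simeq R$ by hand; that verification is precisely what establishes that $\uPic(R)$ has a well-defined Mackey-functor transfer in the first place, and the paper takes it as already given by \cite{BBHSC4} and \cite{BH}. Your extra step isn't wrong, it just reproves a lemma that the cited framework already supplies.
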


\begin{proof} For part (1) we extend $f$ to a map $g:R \wedge S^V \to R$ of $G$-equivariant $R$-modules.
This an underlying equivalence by our assumption on $f$, and so a $G$-equivalence since $R$ is cofree. (Note that if $R$ is cofree, so is the $R$-module $R \smsh S^V$). The claim follows.

Part (2) follows from the fact that $J_R: \underline{RO}\to \uPic(R)$ is a morphism of Mackey functors and induction of representations is the transfer in $\underline{RO}$.
\end{proof} 

When $R = \LTE$, the Lubin-Tate spectrum and $G$ is a finite subgroup of the Morava stabilizer group,
the fixed point spectral sequence of \eqref{eq:desc-pic-big} vanishes in high degree at $E_\infty$.
This inspires the following result. If $X$ is a spectrum, let $f: X\langle n \rangle \to X$ denote the $(n-1)$-connected
cover of $X$; thus $\pi_k f$ is an isomorphism for $k \geq n$ and $\pi_k X\langle n \rangle = 0$ if $k < n$. 

\begin{prop}\label{prop:guesstrue:2}Let $R$ be as in \Cref{rem:why-pic-2}.
Let $\ell$ be an integer such that $E_{\infty}^{s,s}(R)=0$ for
$s\geq \ell$ in the homotopy fixed point spectral sequence
\[
E_2^{s,t}(R)=H^s(G,\pi_t\pic(R)) \Longrightarrow \pi_{t-s}\pic(R)^{hG}
\]
Then the composite mapping 
\[
[\Sigma^\infty_+BG, \kon{\ell}] \to [\Sigma^\infty_+BG, \ko] \to \pi_{0}\pic(R)^{hG}
\]
is zero.
\end{prop}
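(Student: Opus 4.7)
The plan is to identify the composite in question with the effect on $\pi_0$ of a map of homotopy fixed point spectral sequences, and then deduce vanishing from a connectivity-based filtration argument. The key input is the $J$-homomorphism $J_R\colon \ko \to \pic(R)$ of \Cref{rem:J-om}, viewed as a $G$-equivariant map with trivial $G$-action on $\ko$. Since $G$ acts trivially on $\ko$, there is an equivalence $\ko^{hG} \simeq F(\Sigma^\infty_+ BG, \ko)$, so that $[\Sigma^\infty_+ BG, \ko] \cong \pi_0 \ko^{hG}$; passing to $G$-fixed points of $J_R$ then realizes the second arrow in the composite of the statement as $\pi_0 \ko^{hG} \to \pi_0 \pic(R)^{hG}$.

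Next I would show that the first arrow $[\Sigma^\infty_+ BG, \kon{\ell}] \to [\Sigma^\infty_+ BG, \ko] \cong \pi_0 \ko^{hG}$ factors through the filtration subgroup $F^\ell \pi_0 \ko^{hG}$ of the HFPSS for $\ko^{hG}$, which (because the $G$-action on $\ko$ is trivial) coincides with the skeletal filtration underlying the Atiyah--Hirzebruch spectral sequence for $\ko^*(BG)$. Concretely, given $\alpha \colon \Sigma^\infty_+ BG \to \kon{\ell}$, the restriction to $\Sigma^\infty_+ BG^{(\ell-1)}$ is null by obstruction theory, since $\kon{\ell}$ is $(\ell-1)$-connected while the source has cells only in dimensions $\leq \ell - 1$. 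Thus the resulting class in $\ko^0(BG)$ vanishes on the $(\ell-1)$-skeleton, putting it in $F^\ell$.

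Since $J_R$ is a map of $G$-spectra, it induces a map of homotopy fixed point spectral sequences which preserves the filtration on abutments. Therefore the image of $F^\ell \pi_0 \ko^{hG}$ inside $\pi_0 \pic(R)^{hG}$ lies in $F^\ell \pi_0 \pic(R)^{hG}$. Finally, by hypothesis $E_\infty^{s,s}(R) = 0$ for all $s \geq \ell$; combined with strong convergence of the spectral sequence and a horizontal vanishing line (available here because the positive-degree part of $\pic(R)$ agrees with $\Sigma\gl_1(R)$, whose HFPSS inherits a vanishing line from the one for $R$ discussed in \Cref{rem:fixed-and-tr}), this forces $F^\ell \pi_0 \pic(R)^{hG} = 0$, so the composite is zero.

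The main obstacle is verifying the horizontal vanishing line for the Picard HFPSS in the generality stated, and checking that the filtration identifications in Step~2 behave as expected under the morphism $\ko \to \pic(R)$; both are routine once one unpacks the comparison between the AHSS for $\ko^*(BG)$ and the HFPSS for $\ko^{hG}$ with trivial action.
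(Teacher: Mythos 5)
Your proof is correct and follows essentially the same route as the paper: both compare the homotopy fixed point spectral sequences for $\kon{\ell}$, $\ko$, and $\pic(R)$, note that the filtration of a class coming from $\kon{\ell}$ is at least $\ell$ (you argue via cell structure and obstruction theory; the paper reads this off from $E_2^{s,s}(\kon{\ell}) = H^s(G,\pi_s\kon{\ell}) = 0$ for $s<\ell$), and conclude from the hypothesis $E_\infty^{s,s}(R)=0$ for $s\geq\ell$. The one thing you do differently is to flag explicitly that the vanishing of $F^\ell\pi_0\pic(R)^{hG}$ requires completeness of the filtration (a vanishing line or strong convergence), a point the paper's terse proof leaves implicit; that is a genuine and worthwhile caution, since in the intended applications the vanishing line is supplied separately.
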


\begin{proof}
The $G$-equivariant maps $\kon{\ell} \to \ko \to \pic(R)$ induces a diagram of homotopy fixed point spectral sequences
\[
\xymatrix{
E_2^{s,t}(\kon{\ell}) = H^s(G, \pi_t\kon{\ell}) \ar[d] \ar@{=>}[r] & \pi_{t-s}F(\Sigma_+^\infty BG,\kon{\ell}) \ar[d] \\
E_2^{s,t}(\ko) = H^s(G, \pi_t\ko) \ar[d] \ar@{=>}[r] & \pi_{t-s}F(\Sigma_+^\infty BG,\ko) \ar[d] \\
E_2^{s,t}(R) = H^s(G, \pi_t\pic R) \ar@{=>}[r]  & \pi_{t-s}(\pic R)^{hG}.
}
\]
By construction $E_2^{s,s}(\kon{\ell}) = 0$ if $s < \ell$ and by hypothesis $E_\infty^{s,s}(R)= 0$ is $s \geq \ell$.
The result follows. 
\end{proof}

\begin{rem}\label{rem-for-prop:guesstrue:2} The question now is how to check the hypotheses of 
\Cref{prop:guesstrue:2}. One technique, which we will employ below, is to access the ideas and techniques of
\cite{AkhSto} to relate the differentials in the homotopy fixed
point spectral sequence for $\pic(R)$ to the differentials in the homotopy fixed point spectral sequence for
$R$ itself. 
\end{rem}

\subsection{Applications to the Lubin-Tate spectrum} We now consider the case where $R = \LTE $ for
some prime $p$ and some height $n$ formal group. We established conventions and notation at
the beginning of \Cref{sec:e-theory}. Let $F \subseteq \GG$ be a finite subgroup. We are interested
in calculating the image of specific representations under the map $J_\LTE^F:RO(F) \to \Pic(\LTE^{hF})$. We have
the following useful preliminary result. 

\begin{prop} \label{prop:reg-goes-to-1} Let $F \subset \mathbb{S}$ be a finite subgroup
that contains the central subgroup $C_2 = \{\pm 1\}$. The regular representation $\rho_{F} \in RO(F)$ maps to the
trivial element of $\Pic(\LTE^{hF})$ under $J_\LTE^F$. 
\end{prop}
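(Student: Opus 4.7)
The plan is to reduce to the case $F = C_2$ via induction of representations, and then to exploit the complex orientation of $\LTE$ together with the nontrivial action of the central element $-1 \in \SS$ on the orientation class $u$.

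First, by transitivity of induction, $\rho_F = \mathrm{ind}_{C_2}^F \rho_{C_2}$, where $C_2 = \{\pm 1\} \subseteq F$ and $\rho_{C_2} = \mathbf{1} \oplus \sigma$ is the regular representation of $C_2$ (with $\sigma$ the sign representation). Applying part~(2) of \Cref{prop:norming-up-1} to the inclusion $C_2 \subseteq F$, it suffices to show that $\rho_{C_2}$ lies in the kernel of $J_\LTE^{C_2} \colon RO(C_2) \to \Pic(\LTE^{hC_2})$.

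Next, I would apply part~(1) of \Cref{prop:norming-up-1} to $V = 1 + \sigma \in RO(C_2)$: it suffices to produce a $C_2$-equivariant map $f \colon S^{1+\sigma} \to \LTE$ whose underlying map is a unit in $\pi_2 \LTE$. The natural candidate is $f = u^{-1}$, where $u \in \pi_{-2}\LTE$ is the standard complex-orientation generator. The $C_2$-action on $\pi_2 S^{1+\sigma} \cong \ZZ$ is multiplication by $-1$, since the $\sigma$ factor stably contributes a degree $-1$ self-equivalence. On the other hand, the central element $-1 \in \SS$ corresponds to the formal group automorphism $[-1]_F$, which has leading coefficient $-1$ for both the Honda formal group (\Cref{ex:which-fgls1}) and the relevant supersingular formal groups (\Cref{ex:which-fgls2}); consequently $-1$ acts on $u$, and hence on $u^{-1}$, by multiplication by $-1$. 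The equivariance condition $(-1)_\LTE \circ u^{-1} = u^{-1} \circ (-1)_{S^{1+\sigma}}$ therefore holds in $\pi_2 \LTE$, with both sides equal to $-u^{-1}$.

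The step requiring care is the promotion from this level of homotopy information to an actual $C_2$-equivariant map in the cofree $C_2$-spectrum $\LTE$; this is standard and can be viewed as the simplest instance of the real-orientability of $\LTE$ arising from the $-1$ action. Granting this, part~(1) of \Cref{prop:norming-up-1} gives $J_\LTE^{C_2}(1 + \sigma) = 0$ in $\Pic(\LTE^{hC_2})$, and the induction step above then yields $J_\LTE^F(\rho_F) = 0$ in $\Pic(\LTE^{hF})$.
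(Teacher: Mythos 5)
Your overall architecture matches the paper's: reduce to $C_2$ by the induction part of \Cref{prop:norming-up-1}, then refine the complex-orientation unit to a $C_2$-equivariant map $S^{\rho_2}\to \LTE$ and invoke part~(1) of \Cref{prop:norming-up-1}. The gap is the sentence that the refinement step ``is standard.'' It is not, and it is exactly where all the work in the paper's proof lives. Knowing that $-1\in \SS$ sends $u$ to $-u$ in $\pi_{-2}\LTE$ gives you equivariance only on $\pi_2$; to promote that to a genuine $C_2$-equivariant map you must kill the potential higher obstructions to lifting a class of $\pi_{\rho_2}^{C_2}\LTE$ along the map to $(\pi_{\rho_2}^e \LTE)^{C_2}$.

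For $p$ odd this really is easy, but you should say why: since $\LTE$ is $p$-local and $|C_2|=2$ is a unit in $\ZZ_{(p)}$, the homotopy fixed point spectral sequence for $(S^{-\rho_2}\smsh \LTE)^{hC_2}$ collapses, so $\pi_{\rho_2}^{C_2}\LTE \cong \bigl(\pi_2\LTE\otimes\ZZ(-1)\bigr)^{C_2}$ and the element $x\otimes 1$ you are trying to build literally \emph{is} the equivariant map. For $p=2$, however, this collapse fails, and the existence of an equivariant unit $S^{\rho_2}\to i^*_{C_2}\LTE$ is precisely the real-orientability of Lubin-Tate theory. That statement was an open conjecture for years and was only established by Hahn and Shi \cite{HahnShi}; the paper cites their theorem rather than proving it. Calling this step ``standard'' at $p=2$ is therefore wrong, and since the examples you invoke (\Cref{ex:which-fgls2}) include $p=2$, the omission matters for the range of primes you are covering. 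Replace the ``standard'' claim with the odd-primary collapse argument plus a citation of Hahn--Shi for $p=2$, and the proof is complete.
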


\begin{proof}
As describe in \Cref{rem:why-pic-2}, we can replace $\LTE$ by a cofree genuine $N_{\infty}$ ring $G$-spectrum. In particular, it admits all norm maps.
The key for this argument is to refine a unit $x \in \pi_{2}i^*_e\LTE$ to an equivariant map $\bar x \colon S^{  \rho_2} \to i^*_{C_2}\LTE$ where $i^*_{H}\LTE$ is the restriction of $\LTE$ to an $H$-spectrum. With this established, one applies \Cref{prop:norming-up-1}. 

At the prime $2$
the question is very subtle, but has been accomplished in 
\cite{HahnShi}.

If $p$ is odd this is much easier. 
Since $\LTE$ is $p$-local and $p$ is odd, we have that
\[\pi_{\rho_2}^{C_2} i^*_{C_2}\LTE \cong \pi_{0}(S^{-\rho_2} \smsh \LTE)^{hC_2} \cong \left(\pi_{0}i^*_e(S^{-\rho_2} \smsh \LTE)\right)^{C_2} \cong \left(\pi_{2}i^*_{C_2}(S^{1-\sigma} \smsh \LTE)\right)^{C_2} .\]
As a $C_2$-module, 
\[\pi_* i^*_e(S^{1-\sigma} \smsh \LTE) \cong \pi_*i^*_e \LTE \otimes \Z(-1)\]
where $\Z(-1)$ is the sign representation of $C_2$ on $\Z$. Choose a complex orientation $x\in \pi_{2}i^*_e\LTE$, and note that $\gamma x=-x$ for $\gamma$ a generator of $C_2$. Therefore, $x \otimes 1$ gives an element of
$(\pi_2i^*_e \LTE \otimes \Z(-1))^{C_2}$
which corresponds to a $C_2$-equivariant map
$\bar x \colon S^{\rho_2} \to i^*_{C_2}\LTE$ that refines $x$.
\end{proof}

\begin{rem}\label{rem:the-strategy} We can now outline the general strategy we use below. Fix a finite subgroup
$F \subseteq \GG$ with $p$-Sylow subgroup $F_0$. Suppose $F$ contains the central $C_2$ and that
$F_0/F_0 \cap Z(\GG)$ is an elementary abelian $p$-group, so that \Cref{thm:whod-have-believed-it} applies.
Finally suppose we have identified a finite dimensional real representation $V$ of $F$ so we can apply \Cref{prop:recog-princ}
and and write $S^\gg \cong S^V$ as an $F$-equivariant sphere. 

It follows from \cite{DH} that the homotopy fixed point spectral sequence
\[
H^s(F,\LTE_t) \Longrightarrow \pi_{t-s}\LTE^{hF}
\]
has a horizontal vanishing line at $E_\infty$. Applying the ideas from \cite{AkhSto} one can then conclude that 
the spectral sequence \eqref{eq:desc-pic-big}
\[
E_2^{s,t}(\LTE)=H^s(F,\pi_t\pic(\LTE)) \Longrightarrow \pi_{t-s}\pic(\LTE)^{hF}
\]
has the property that $E_\infty^{s,s} = 0$ for large $s$.  So we can apply \Cref{prop:guesstrue:2};
that is, there will be an integer $\ell$ so that the composition
\[
[\Sigma^\infty_+BF, \kon{\ell}] \to [\Sigma^\infty_+BF, \ko] \to \pi_{0}\pic(\LTE)^{hF} = \pi_0\pic(\LTE^{hF})
\]
is zero. In specific examples, we can be very explicit about the integer $\ell$. For example, if $n=2$ and $p=2$ and
$F$ is the automorphism groups of our supersingular curve, then we take $\ell = 8$. See \cite{AkhSto}, especially
Figure 9 and the surrounding narrative. 

Now let
\[
\ion{\ell} = RO(F) \cap \mathrm{Im}\{[\Sigma^\infty_+BF, \kon{\ell}] \to [\Sigma^\infty_+BF, \ko]\}.
\]
Since $C_2\subseteq F$ \Cref{prop:reg-goes-to-1} implies that we have a map
\[
RO(F)/(\ion{\ell} + \rho_F) \to \Pic(\LTE^{hF}).
\]
The source here is the quotient group of $RO(F)$ by the subgroups 
generated by $\ion{\ell}$ and the regular representation $\rho_F$. Since $F$ is a finite group,
\[
\mathrm{Im}\{[\Sigma^\infty_+BF, \kon{\ell}] \to [\Sigma^\infty_+BF, \ko]\} \subseteq  [\Sigma^\infty_+BF, \ko]
\]
is of finite index, so $RO(F)/(\ion{\ell} + \rho_F)$ is a finite abelian group. Our project is then to calculate the
image of a given representation $W$ in this group as well as its image in $ \Pic(\LTE^{hF})$.
\end{rem}

\begin{rem}\label{rem:more-char-classes}
We will use classical characteristic class arguments to calculate $RO(F)/\ion{\ell}$. Let $P_nX$ denote
the $n$th Postnikov section of $X$. Then there is an injection (which is often an isomorphism)
\[
RO(F)/\ion{\ell} \longr [\Sigma_+^\infty BF,P_{\ell-1}\ko\ ] \cong [BF,\ZZ \times P_{\ell-1}BO]. 
\]
For example, suppose $\ell = 8$. We have a tower of fibrations
\[
\xymatrix{
BO \langle 8 \rangle \ar[r] & BSpin \ar[r] \ar[d]_\lambda & BSO \ar[r] \ar[d]^{w_2} & BO \ar[d]^{w_1}\\
&K(\ZZ,4) & K(\ZZ/2,2) & K(\ZZ/2,1)
}
\]
with $w_1$ and $w_2$ the first and second Stiefel-Whitney classes of the universal bundles 
and $\lambda \in H^4(BSpin,\ZZ)$ a class so that that $2\lambda$ is the first Pontrjagin class. In particular,
$P_7BO\simeq P_7BSpin$ is a three stage Postnikov tower. Thus, we have
a filtration of $RO(F)/\ion{8}$ 
\begin{equation*}\label{eq:example-post}
\xymatrix@C=20pt{
0 \ar[r] & A_4 \ar[d]_\lambda \ar[r]^-\subseteq & A_2 \ar[r]^-\subseteq \ar[d]^{w_2}
& A_1 \ar[d]^{w_1} \ar[r]^-\subseteq & RO(F)/\ion{8}\ar[d]^{\mathrm{dim}}\\
&H^4(BF,\ZZ) & H^2(BF,\ZZ/2) & H^1(BF,\ZZ/2) & \ZZ
}
\end{equation*}
where $\mathrm{dim}$ assigns to any virtual representation its rank. If the Atiyah-Hirzebruch Spectral
Sequence for $\ko^\ast(\Sigma^\infty_+ BF)$ collapses, the vertical maps in this filtration will
be surjective.

The class $\lambda$ lies outside the standard list of characteristic classes, but we do
have the following result. Let $c_i$ denote the Chern classes.
\end{rem}

\begin{lem}\label{lem:ev-lambda} Let $\xi$ be a stable complex bundle over $X$ with the property that
$c_1(\xi) \equiv 0 \in H^2(X,\FF_2)$. Then we can choose a $Spin$ structure on $\xi$ and a fixed choice
of $Spin$ structure determines a class $d(\xi) \in H^2(X,\ZZ)$ with the property that $2d(\xi) = c_1(\xi)$.
For this Spin structure on $\xi$ we have
\begin{align*}
\lambda(\xi) = d(\xi)c_1(\xi) - c_2(\xi).
\end{align*}
Furthermore, for such bundles, the characteristic class $\lambda$ is additive; that is,
\[
\lambda(\xi_1 \oplus \xi_2) = \lambda(\xi_1) + \lambda(\xi_2).
\]
\end{lem}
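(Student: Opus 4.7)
The plan is to reduce everything to a universal example that classifies the pair $(\xi, d)$ simultaneously. Introduce the homotopy pullback $B\widetilde{U} = BU \times_{K(\ZZ,2)} K(\ZZ,2)$ along $c_1\colon BU \to K(\ZZ,2)$ and the doubling map $\times 2\colon K(\ZZ,2) \to K(\ZZ,2)$, so that a map $X \to B\widetilde{U}$ is precisely a stable complex bundle $\xi$ together with a class $d \in H^2(X,\ZZ)$ satisfying $2d = c_1(\xi)$. Using the stable splitting $BU \simeq BSU \times BU(1)$ provided by the determinant, one identifies $B\widetilde{U} \simeq BSU \times BU(1)$ with universal pair $(\xi_0 \oplus L^{\otimes 2},\ c_1(L))$, where $\xi_0$ and $L$ are the tautological $SU$- and line bundles. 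By construction, the pullback of $w_2 = c_1 \bmod 2$ to $B\widetilde{U}$ equals $2d \bmod 2$ and is canonically nullhomotopic, which gives a canonical lift $B\widetilde{U} \to BSpin$. The Spin structure appearing in the lemma is the one produced on $\xi$ by pulling this lift back along the classifying map of $(\xi,d)$.

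The key computation is then carried out universally. For any complex bundle, $\xi_\RR \otimes_\RR \CC \cong \xi \oplus \overline{\xi}$, so a standard Chern class calculation gives $p_1(\xi_\RR) = c_1(\xi)^2 - 2c_2(\xi)$. Substituting $c_1 = 2d$ and $c_2 = c_2(\xi_0)$ into this identity yields $p_1 = 4d^2 - 2c_2 = 2(dc_1 - c_2)$ on $B\widetilde{U}$. Since $H^4(BSU \times BU(1), \ZZ)$ is torsion-free and $2\lambda = p_1$, division by $2$ is unambiguous and we read off $\lambda = dc_1 - c_2$ in the universal case; the formula for an arbitrary $(\xi, d)$ then follows by naturality. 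Additivity is then immediate: the Whitney-sum operation on $BU$ lifts to $B\widetilde{U}$ via $(\xi_1, d_1) \oplus (\xi_2, d_2) = (\xi_1 \oplus \xi_2, d_1 + d_2)$, and expanding the formula using the Whitney product rule $c_2(\xi_1 \oplus \xi_2) = c_2(\xi_1) + c_1(\xi_1) c_1(\xi_2) + c_2(\xi_2)$ together with the relations $c_1(\xi_i) = 2d_i$ makes the cross terms $d_1 c_1(\xi_2) + d_2 c_1(\xi_1) - c_1(\xi_1)c_1(\xi_2) = 4d_1 d_2 - 4 d_1 d_2$ cancel.

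The one subtlety that deserves care is the relation between $d$ and Spin structures: Spin structures on $\xi_\RR$ form a torsor over $H^1(X, \ZZ/2)$ whereas choices of $d$ form a torsor over $H^2(X, \ZZ)[2]$, and these differ in general by a Bockstein. The statement should be read as referring to those Spin structures that arise from a choice of square root of the determinant line bundle, i.e., from a $d$; with that interpretation the correspondence between $d$'s and admissible Spin structures is captured exactly by the lift $B\widetilde{U} \to BSpin$, and the stated formula holds on the nose. This torsor comparison is the only genuinely delicate point; once it is pinned down, the remainder is Chern class arithmetic in torsion-free cohomology.
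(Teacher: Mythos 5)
Your argument is correct and follows essentially the same route as the paper: reduce to the universal example over the fiber of $c_1 \bmod 2 \colon BU \to K(\ZZ/2,2)$ (which you realize as a homotopy pullback along $\times 2 \colon K(\ZZ,2) \to K(\ZZ,2)$ and then identify with $BSU \times BU(1)$), apply $p_1(\xi_\RR) = c_1(\xi)^2 - 2c_2(\xi)$, and divide by $2$ using torsion-freeness of $H^4$ of the universal space. Your added care --- the explicit lift to $BSpin$, the direct cross-term cancellation for additivity, and the remark distinguishing the $H^1(X,\ZZ/2)$-torsor of Spin structures from the $H^2(X,\ZZ)[2]$-torsor of square roots $d$ of the determinant --- is accurate and fills in details that the paper's terse proof leaves implicit, but is not logically required for the applications in the paper where the relevant $d$ is evident.
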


\begin{proof} The first statement follows from examining what happens in integral cohomology
in the fiber sequence
\[
\xymatrix{
B \ar[r] & BU \ar[r]^-{c_1} & K(\ZZ/2,2).
}
\]
We can obtain the  equation for $\lambda$ by studying the universal example $\xi_0$ over $B$. Note that
$H^4(B,\ZZ) \cong \ZZ^2$ generated by $d(\xi_0)^2$ and $c_2(\xi_0)$.  Now use that
if $\xi$ is any complex bundle, then
\begin{equation*}
p_1(\xi) = -c_2(\xi \otimes \mathbb{C}) = -c_2(\xi \oplus \overline{\xi}) = c_1(\xi)^2 - 2c_2(\xi)
\end{equation*}
where $\overline{\xi}$ is $\xi$ with its conjugate complex structure. Additivity follows from 
this same formula by considering the universal example over $B \times B$.
\end{proof} 

\begin{rem}\label{rem:the-strategy-p} There is a $p$-complete variant on the constructions of 
\Cref{rem:the-strategy}. The unit map $S^0 \to \LTE$ extends to a unit map $S^0_p \to \LTE$,
so the map $F(\Sigma^\infty_+BF,\ko) \to \pic(\LTE^{hF})$ factors as a map
\[
F(\Sigma^\infty_+BF,\ko) \to F(\Sigma^\infty_+BF,\pic(S^0_p)) \to \pic(\LTE^{hF}).
\]
Note that by combining \eqref{eq:pic-equivs-sp} and \eqref{eq:pic-equivs-sp-1} we have that the natural
map
\[
\pic(\LTE^{hF}) \to \pic(\LTE)^{hF}
\]
induces an equivalence on $(-1)$-connected covers. 
If $X$ is any spectrum, define $L_p^{\geq 2} X$ by the homotopy push-out diagram
\[
\xymatrix{
X\langle 2 \rangle \ar[r] \ar[d] & X\langle 2 \rangle_p \ar[d]\\
X \ar[r] & L_p^{\geq 2} X
}
\]
where $Y_p$ is the $p$-completion of $Y$.  Note that $L_p^{\geq 2} X$ has homotopy groups
\[\pi_t L_p^{\geq 2} X \cong \begin{cases} \pi_t X & t=0,1 \\
(\pi_t X)_p & t\geq 2.
\end{cases}
\]
Since the homotopy groups of $\pic(S^0_p)$ are $p$-complete above dimension $1$, then
$L_p^{\geq 2} \pic(S^0_p) \simeq \pic(S^0_p)$ and the
map $\ko \to \pic(S^0_p)$ factors through a map $L_p^{\geq 2}\ko \to \pic(S^0_p)$.
The direct analog of \Cref{prop:guesstrue:2} is still true, with the same proof, and with $\ko$ and $\kon{\ell}$
replaced by $L_p^{\geq 2}\ko$ and $L_p^{\geq 2} \kon{\ell}$ as needed.
Thus there will be an integer integer $\ell$ so that the composition
\[
[\Sigma^\infty_+BF, L_p^{\geq 2} \kon{\ell}] \to [\Sigma^\infty_+BF, L_p^{\geq 2} \ko] \to
\pi_{0}\pic(\LTE)^{hF} = \pi_0\pic(\LTE^{hF})
\]
is zero. Indeed, we can choose the same integer $\ell$ as in the uncompleted case. In our examples, we will have
$\ell \geq 2$ and, in that case, $L_p^{\geq 2} \kon{\ell} = \kon{\ell}_p$.

If we let
\[
\ionp{\ell} = RO(F) \cap \mathrm{Im}\{[\Sigma^\infty_+BF, L_p^{\geq 2}\kon{\ell}] \to
[\Sigma^\infty_+BF, L_p^{\geq 2}\ko]\},
\]
then our variant of \Cref{prop:guesstrue:2} gives a map
\[
RO(F)/(\ionp{\ell} + \rho_F) \to \pi_0\pic(E^{hF})
\]
and we have an injection 
\[
RO(F)/\ionp{\ell} \longr [\Sigma_+^\infty BF,P_{\ell-1}L_p^{\geq 2}\ko].
\]
We again get a filtration of $RO(F)/\ionp{\ell}$. For example, if $\ell=8$ and $p=2$ we have only
a slight change:
\[
\xymatrix@C=20pt{
0 \ar[r] & A_4 \ar[d]_\lambda \ar[r]^-\subseteq & A_2 \ar[r]^-\subseteq \ar[d]^{w_2}
& A_1 \ar[d]^{w_1} \ar[r]^-\subseteq & RO(F)/\iontwo{8}\ar[d]^{\mathrm{dim}}\\
&H^4(BF,\ZZ_2) & H^2(BF,\ZZ/2) & H^1(BF,\ZZ/2) & \ZZ
}
\]
where now $H^4(BF,\ZZ_2) = \lim H^4(BF,\ZZ/2^n)$. If $p > 2$ and $\ell=8$ we have
\[
\xymatrix@C=20pt{
0 \ar[r] & A_4 \ar[d]_\lambda \ar[r]^-\subseteq 
& A_1 \ar[d]^{w_1} \ar[r]^-\subseteq & RO(F)/\ionp{8}\ar[d]^{\mathrm{dim}}\ .\\
&H^4(BF,\ZZ_p)  & H^1(BF,\ZZ/2) & \ZZ
}
\]
In all of these examples, the vertical maps will be surjective if the Atiyah-Hirzebruch Spectral Sequence
for $(L_p^{\geq 2}\ko)^\ast(BF)$ collapses. 
\end{rem}

 % !TEX root = dsphere-master.tex

\section{The Spanier-Whitehead duals of $\LTE^{hF}$: examples from elliptic curves}\label{sec:44}

We now focus our attention at height $n=2$ and the primes $p=2$ and $p=3$. In both cases we take a formal
group of height $2$ obtained from a supersingular elliptic curve. We wish to give a concrete
calculation of the Spanier-Whitehead dual $D(\LTE^{hF})$  where $F$ is a finite subgroup of $\GG_2$. 
Our main interest is when $F$ is actually the automorphisms of the chosen elliptic curve.
The results are in \Cref{thm:dualn=p=3} and \Cref{thm:dualn=p=2}.
 
At either prime, the basic case will be when $F \subseteq \SS_2 = \cO_2^\times$ is a subgroup containing
a maximal finite $p$-torsion subgroup $F_0$. At $p=2$, $F_0$ is isomorphic to the quaternion group of order
$8$ and at $p=3$ we have $F_0$ is cyclic of order $3$. In both cases \Cref{cor:what-we-use} applies and we have
an $F$-equivariant $\LTK$-equivalence $D\LTE = S^{-\gg} \wedge \LTE$ where $S^\gg$ is the linear
dualizing sphere. By Tate vanishing \cite{GreenSad} we know that 
\[
D(\LTE^{hF}) \simeq D(\LTE)^{hF} \simeq (S^{-\gg} \wedge \LTE)^{hF}. 
\]
We will be able to use \Cref{prop:recog-princ} to write $S^\gg$ as the $p$-completion of
a representation sphere $S^V$ and hence we have
\begin{equation}\label{eq:master-dual}
D(\LTE^{hF}) \simeq D(\LTE)^{hF} \simeq (S^{-V} \wedge \LTE)^{hF}. 
\end{equation}
The strategy then developed in \Cref{rem:the-strategy-p} applies to complete the calculation. In our examples, the subgroups 
$F$ are such that $ \Pic(\LTE^{hF})$ is cyclic generated by $\Sigma \LTE^{hF}$; therefore,
there is an integer $k$ so that 
\[
(S^{-V} \wedge \LTE)^{hF} \simeq \Sigma^k \LTE^{hF}.
\]
In both cases we will prove $k=44$.

We can extend these results to subgroups of the larger Morava stabilizer
group $\GG_2 = \SS_2 \rtimes \Gal(\FF_{p^2}/\FF_p)$. At both
the prime $2$ and $3$ there is a finite subgroup $G \subseteq \GG_2$ so that
\[
\LTE^{hG} \simeq L_{\LTK}\mathbf{tmf}
\]
where $\mathbf{tmf}$ is the Hopkins-Miller spectrum of topological modular forms. We will then have
\[
D(L_\LTK \mathbf{tmf}) \simeq \Sigma^{44} L_\LTK \mathbf{tmf}.
\]
This recovers results of Behrens \cite{Beh44} and Bobkova \cite{BobkovaSW} at $p=3$ and $p=2$ respectively.

\subsection{The case $n=2$ and $p=3$} 
We first consider the case $p=3$. There is a supersingular elliptic curve $C$ with Weierstrass equation
\begin{equation}\label{eq:ellat3}
y^2 = x^3-x. 
\end{equation}
While defined over $\FF_3$, we work over $\FF_9$, and the formal group $F_C$ of this curve is a formal group of height $2$ 
over that field. Because $C$ is supersingular, the endomorphism
ring $\cE$ of $C$ over $\FF_9$ is a maximal order in a quaternion algebra ramified only at $3$ and $\infty$. The
completion of $\cE$ at $p=3$ is the endomorphism ring $\cO_2$ of $F_C$. Thus $\cE \subseteq \cO_2$ is
a lattice and $\cE/3\cE \cong \cO_2/3\cO_2$. 

Let $i \in \FF_9$ be a fourth root of unity, so $i^2 = -1$. This induces
an automorphism
\[
(x,y) \mapsto (i^2x,i^3y) = (-x,-iy)
\]
of $C$ which we will also call $i$. There are two automorphisms of exact order $3$ given by
\[
(x,y) \mapsto (x\pm 1,y).
\]
We will fix one in a moment, after giving a bit more of the structure of $\cE$.

The Frobenius $\phi$ given by
\[
(x,y) \mapsto (x^3,y^3)
\]
defines an endomorphism of $C$ as well. Since $C$ has four points over $\FF_3$ (including the point at $\infty$) 
we have that
\[
\phi^2 = -3
\]
as an endomorphism of $C$. See Theorem 4.10 of \cite{Wash}, for example. Note that $\phi i = -i\phi$.
The element of order $3$ in the automorphism group of $C$ can be chosen to be
\[
\sigma = -\frac{1}{2}(1+\phi). 
\]
In fact, if $\sigma$ is any element of exact order $3$ then $\sigma^2 + \sigma + 1 = 0$, so $(1+2\sigma)^2 = -3$. Thus
$1+2\sigma = \pm\phi$.
The automorphisms of $C$ are generated
by $i$ and $\sigma$ and the subgroup $C_3$ of the automorphism group generated by $\sigma$ is normal. The group
\[
G_{12} = \Aut(C) \cong C_3 \rtimes C_4 \subseteq \cE^\times \subseteq \cO_2^\times
\]
defines a maximal finite subgroup of $\cO_2$ which contains $3$ torsion.

For our calculations we will follow the outline of \Cref{rem:the-strategy-p}, and we will use the notation
established there. Let $w_1$ be the first Stiefel-Whitney class and $\lambda$  characteristic class for spin
bundles discussed in \Cref{lem:ev-lambda}; recall that $2\lambda$ is the first Pontrjagin class. 

The inclusion $C_4 \to G_{12}$ induces an isomorphism
\[
E(x) \otimes \FF_2[y] \cong H^\ast (C_4,\FF_2) \cong H^\ast(G_{12},\FF_2)
\]
where $x$ is in degree $1$ and $y$ is the second-order Bockstein on $x$. In addition, the inclusion
$C_3 \to G_{12}$ defines an isomorphism
\[
\ZZ_3[z]/(3z) \cong H^\ast(G_{12},\ZZ_3) \cong H^\ast(C_3,\ZZ_3)^{C_4}. 
\] 
where we chose $z$ in degree $4$ to be the square of either generator of $H^2(C_3,\ZZ_3)$. 

Thus to calculate the characteristic classes $w_1$ and $\lambda$ of representations of $G_{12}$ we can restrict those representations to
the subgroups $C_3$ and $C_4$. Note that every non-trivial irreducible real representation of $C_3$ is the restriction of a complex 
representation and, hence, \Cref{lem:ev-lambda} can be used to calculate $\lambda$. 

\begin{rem}\label{rem:cclasses-at-3-2} We now calculate characteristic classes of the needed
real representations of $C_3$, $C_4$, and  $G_{12}$.

We begin with the regular representation $\rho_{G_{12}}$. Restricted to $C_4$ we have an isomorphism
\[
\rho_{G_{12}} \cong3 \rho_{C_4} = 3(\mathbf{1}_\RR + \sigma_\RR + \gamma_4)
\]
where $\sigma_\RR$ is the real sign representation and $\gamma_4$ is the $2$-dimensional real representation given by
rotation by $90$ degrees. The latter is the restriction of a complex representation so $w_1(\gamma_4)=0$ and we have
\begin{equation}\label{eq:w1pho3}
w_1(\rho_{G_{12}}) = x \in H^1(G_{12},\FF_2).
\end{equation}
Restricted to $C_3$ we have an isomorphism
\[
\rho_{G_{12}} \cong 4 \rho_{C_3} \cong 4(\mathbf{1}_\RR + \gamma_3)
\]
where $\gamma_3$ is the unique non-trivial $2$-dimensional real representation of $C_3$. This is the restriction of a
one-dimensional complex representation with non-zero first Chern class in $H^2(C_3,\ZZ) \cong \ZZ/3$. 
Hence by \Cref{lem:ev-lambda} we have
\begin{equation}\label{eq:lambdapho3}
\lambda(\rho_{G_{12}}) = -z^2 \in H^4(C_3,\ZZ_3) \cong \ZZ/3.
\end{equation}

Next we examine the conjugation action of $G_{12}$ on $\cE$, the endomorphism ring of $C$. Let
\[
\cE_0 = \ZZ \oplus \ZZ i \oplus \ZZ\phi \oplus \ZZ i\phi \subseteq \cE.
\]
This inclusion is not equality, as $\sigma \not\in \cE_0$, but since $\cE$ is of rank $4$ over $\ZZ$ 
we have that $\RR \otimes \cE_0 = \RR \otimes \cE$ and we
can use the conjugation action of $G_{12}$ on $\cE_0$ to determine the real representation $\RR \otimes \cE$.

Restricted to $C_4 \subseteq G_{12}$, the subgroup generated by $i$, there is an isomorphism of
$C_4$-representations
\[
\RR \otimes \cE \cong \mathbf{1}_\CC \oplus \sigma_\CC
\]
where $\sigma_\CC$ is the complex sign representation. Thus
\begin{equation}\label{eq:w1E}
w_1(\RR \otimes \cE) = 0. 
\end{equation}

If we restrict to the subgroup $C_3 \subseteq G_{12}$ generated $\sigma$ then there is an isomorphism of
$C_3$-representations
\[
\RR \otimes \cE \cong \mathbf{1}_\CC \oplus \gamma_3
\]
where $\mathbf{1}_\CC$ is the trivial $2$-dimensional real representation and $\gamma_3$ is the unique non-trivial $2$-dimensional 
real representation. Both are restrictions of complex representations. Thus, again using that $\gamma_3$ has
non-trivial first Chern class and using \Cref{lem:ev-lambda}, we have
\begin{equation}\label{eq:lambdaE}
\lambda(\RR \otimes \cE) = -z^2 \in H^3(C_3,\ZZ_3) \cong \ZZ/3.
\end{equation}
\end{rem}

\begin{prop}\label{prop:RIforn=p=3-1} (1) Let $G_{12} \subseteq \SS_2$ be the automorphism group of the supersingular
elliptic curve $y^2 = x^3-x$ over $\FF_9$. The composite mapping
\[
[\Sigma^\infty_+BG_{12}, L_3^{\geq 2} \kon{8}] \to [\Sigma^\infty_+BG_{12},  L_3^{\geq 2} \ko] \to
\pi_{0}\pic(\LTE)^{hG_{12}}
\]
is zero. 

(2) There is an isomorphism
\[
\xymatrix{
\psi \colon RO(G_{12})/\ionthree{8} \ar[r]^-\cong & \ZZ \oplus \ZZ/2 \oplus \ZZ/3
}
\]
sending a representation $V$ to $(\mathrm{dim}(V),a,b)$ with
\begin{align*}
w_1(V) &= ax \in H^1(C_4,\FF_2)\\
\lambda(V) &= bz \in H^4(C_3,\ZZ_3).
\end{align*} 
\end{prop}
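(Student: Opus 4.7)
The overall strategy is that outlined in \Cref{rem:the-strategy-p}. For part (1), I would invoke the $p$-complete variant of \Cref{prop:guesstrue:2} with $R=\LTE$ (at $p=3$, $n=2$), $G=G_{12}$, and $\ell=8$. The hypothesis to check is that $E_{\infty}^{s,s}(\LTE)=0$ for $s\geq 8$ in the Picard homotopy fixed point spectral sequence $E_2^{s,t} = H^s(G_{12},\pi_t\pic(\LTE))\Rightarrow \pi_{t-s}\pic(\LTE)^{hG_{12}}$. Following \cite{AkhSto}, this diagonal vanishing is deduced by comparison with the horizontal vanishing line in the homotopy fixed point spectral sequence for $\LTE^{hG_{12}}$ itself, which ($\LTK$-locally) is the Hopkins--Miller spectrum of topological modular forms at the prime $3$, whose Adams--Novikov spectral sequence is well-understood. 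This is the main technical input and also the principal obstacle, as it requires tracking differentials and degrees through the comparison theorem carefully enough to pin down $\ell=8$ as the correct cutoff on the Picard diagonal.

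For part (2), I would carry out the Postnikov-tower analysis of \Cref{rem:the-strategy-p}. In degrees $0$ through $7$ the homotopy groups of $L_3^{\geq 2}\ko$ are $\ZZ,\ \ZZ/2,\ 0,\ 0,\ \ZZ_3,\ 0,\ 0,\ 0$: the group $\pi_2\ko=\ZZ/2$ becomes trivial upon $3$-completion while $\pi_4\ko=\ZZ$ becomes $\ZZ_3$. Thus $P_7 L_3^{\geq 2}\ko$ is built from just three Eilenberg--MacLane strata, in degrees $0$, $1$, and $4$, and the $k$-invariants linking degrees $1$ and $4$ vanish since source and target are supported at different primes. Combining this with the computations $H^1(BG_{12},\FF_2)\cong\ZZ/2\cdot x$ and $H^4(BG_{12},\ZZ_3)\cong\ZZ/3\cdot z$ recorded just before the proposition, the natural map
\[
RO(G_{12})/\ionthree{8} \longrightarrow [\Sigma_+^\infty BG_{12},\, P_7 L_3^{\geq 2}\ko] \hookrightarrow \ZZ \oplus \ZZ/2 \oplus \ZZ/3
\]
sending $V$ to $(\dim V,\, w_1(V)/x,\, \lambda(V)/z)$ is injective.

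For surjectivity, I would exhibit explicit representations realizing each basis vector. The trivial line $\mathbf{1}$ gives $(1,0,0)$; by \eqref{eq:w1pho3} the regular representation $\rho_{G_{12}}$ has $w_1=x$, so after subtracting a trivial summand realizes the $\ZZ/2$-generator; and by \eqref{eq:w1E} together with \eqref{eq:lambdaE} the representation $\RR\otimes\cE$ has $w_1=0$ and a nontrivial $\lambda$, realizing the $\ZZ/3$-generator. These three classes span the target, yielding the claimed isomorphism $\psi$. Once part (1) is established, part (2) thus reduces to routine Postnikov-tower and characteristic-class bookkeeping.
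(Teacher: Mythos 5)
Your proposal is correct and follows essentially the same route as the paper. A few notes on the small differences.

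For Part (1), you correctly identify the strategy (the $p$-complete variant of \Cref{prop:guesstrue:2} with $\ell=8$) and correctly flag the main input as the diagonal vanishing line in the Picard homotopy fixed point spectral sequence, but you defer the crucial step of pinning it down. The paper resolves it by a direct citation: Theorem~8.1.3 (and Figure~6) of \cite{AkhSto} give $E_\infty^{s,s}=0$ already for $s\geq 6$, which is stronger than what you need. Since $\pi_6\ko=\pi_7\ko=0$ one has $\kon{6}=\kon{8}$, which is why the proposition can be phrased with $\kon{8}$ even though the vanishing begins at $6$; your remark that one must "pin down $\ell=8$ as the correct cutoff" is therefore slightly off in emphasis, as the correct cutoff is $6$ and the replacement $\kon{6}\simeq\kon{8}$ is an additional observation.

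For Part (2), your proof is a more detailed unwinding of what the paper compresses into the filtration of \Cref{rem:the-strategy-p}: you correctly list $\pi_* L_3^{\geq 2}\ko$ in the range $0\le*\le 7$ as $\ZZ,\ZZ/2,0,0,\ZZ_3,0,0,0$, observe the relevant $k$-invariant vanishes for prime-support reasons, and derive the three-term target. For surjectivity you invoke \eqref{eq:w1pho3} for the $\ZZ/2$ factor and \eqref{eq:lambdaE} for the $\ZZ/3$ factor; this is in fact the correct pair of citations, whereas the paper's proof cites \eqref{eq:w1E} and \eqref{eq:lambdaE}, which appears to be a slip since \eqref{eq:w1E} asserts $w_1(\RR\otimes\cE)=0$ and therefore cannot by itself establish surjectivity onto the $\ZZ/2$ factor. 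Your surjectivity bookkeeping (using $\mathbf{1}_\R$, $\rho_{G_{12}}$, and $\RR\otimes\cE$) is equivalent to the paper's. So aside from deferring the [AkhSto] input in Part (1), this is the same argument.
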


\begin{proof} For Part (1) we use (the evident variant of) \Cref{prop:guesstrue:2}. See \Cref{rem:the-strategy-p}.
By Theorem 8.1.3 (see also Figure 6) of \cite{AkhSto} we have that in the spectral sequence
\[
H^s(G_{12},\pi_t\pic(\LTE)) \Longrightarrow \pi_{t-s}\pic(\LTE)^{hG_{12}}
\]
 $E_\infty^{s,s} = 0$ for $s \geq 6$. Note also that $\kon{8}= \kon{6}$.

Part (2) follows from the filtration of $RO(G_{12})/\ionthree{8}$ given in \Cref{rem:the-strategy-p}. Note that the composite
mappings
\[
\xymatrix@R=10pt{
RO(G_{12}) \ar[r] & RO(C_4) \ar[r]^-{w_1} & H^1(C_4,\ZZ/2) \cong \ZZ/2\\
RO(G_{12}) \ar[r] & RO(C_3) \ar[r]_-{\lambda} & H^1(C_3,\ZZ_3) \cong \ZZ/3
}
\]
are both onto by \eqref{eq:w1E} and \eqref{eq:lambdaE}.
\end{proof}

We now give a more specific calculation. Let $\psi$ be the map of part (2) of \Cref{prop:RIforn=p=3-1}.

\begin{prop}\label{prop:RIforn=p=3-3}  (1) If $\rho_{G_{12}} \in RO(G_{12})$ is the regular representation, then
\[
\psi(\rho_{G_{12}} ) = (12,1,-1).
\] 

(2) The group $ RO(G_{12})/(\ionthree{8} + \rho_{G_{12}})$ is generated by the trivial $1$-dimensional real representation;
this choice of  generator determines an isomorphism
\begin{equation}\label{eq:iso138}
\ZZ/72 \cong RO(G_{12})/(\ionthree{8} + \rho_{G_{12}}).
\end{equation}
Furthermore, the $J$-homomorphism
\[
J_\LTE^{G_{12}}\colon RO(G_{12})/(\ionthree{8} + \rho_{G_{12}}) \longr \Pic(\LTE^{hG_{12}})
\]
is an isomorphism. 
\end{prop}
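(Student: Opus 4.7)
The plan is to treat the two parts essentially independently: part (1) is a direct computation pulling together the isomorphism of \Cref{prop:RIforn=p=3-1}(2) with the characteristic class computations of \Cref{rem:cclasses-at-3-2}, while part (2) splits into an elementary calculation in a finitely generated abelian group followed by an identification of the target Picard group.

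For part (1), I would simply evaluate the three coordinates of $\psi(\rho_{G_{12}})$ in turn. The dimension coordinate is $|G_{12}| = 12$. The $\ZZ/2$-coordinate $a$ is read off by restricting $\rho_{G_{12}}$ to $C_4 \subseteq G_{12}$, whence \eqref{eq:w1pho3} gives $w_1(\rho_{G_{12}}) = x$, so $a=1$. The $\ZZ/3$-coordinate $b$ is read off by restricting $\rho_{G_{12}}$ to $C_3$, whence \eqref{eq:lambdapho3} gives $\lambda(\rho_{G_{12}}) = -z$ (viewed in $H^4(C_3,\ZZ_3)\cong \ZZ/3$), so $b = -1$. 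There is nothing further to verify.

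For part (2), the first step is a short arithmetic computation in $A := \ZZ \oplus \ZZ/2 \oplus \ZZ/3$. Let $\mathbf{1}_\RR \in RO(G_{12})$ be the trivial one-dimensional real representation, so $\psi(\mathbf{1}_\RR) = (1,0,0)$. Using the relation $(12,1,-1) = 0$ in the quotient $A/\langle(12,1,-1)\rangle$, one checks that $12\cdot(1,0,0) \equiv (0,1,1)$ and $36\cdot(1,0,0) \equiv (0,1,0)$, so $(1,0,0)$ generates the quotient. The order of $(1,0,0)$ in $A/\langle(12,1,-1)\rangle$ is the smallest positive $k$ with $k = 12m$, $m \equiv 0 \pmod 2$, $m \equiv 0 \pmod 3$, giving $k = 72$. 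This proves the isomorphism \eqref{eq:iso138}.

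Finally I would show the $J$-homomorphism $J_\LTE^{G_{12}} \colon \ZZ/72 \to \Pic(\LTE^{hG_{12}})$ is an isomorphism. Here I would invoke that $\LTE^{hG_{12}} \simeq L_\LTK \mathbf{tmf}$ at $p=3$ is $72$-periodic (from the $72$-periodicity of the Morava module $\LTE_\ast \LTE^{hG_{12}}$ together with the collapsing of the fixed-point spectral sequence in the relevant degrees), and that by \cite{AkhSto} the Picard group $\Pic(\LTE^{hG_{12}})$ is cyclic of order $72$, generated by $\Sigma \LTE^{hG_{12}}$. Since $J_\LTE^{G_{12}}(\mathbf{1}_\RR) = \Sigma\LTE^{hG_{12}}$, and $\mathbf{1}_\RR$ generates the cyclic group of order $72$ computed above, the map is a surjection between cyclic groups of the same order, hence an isomorphism. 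The main obstacle is not the algebraic manipulation but rather citing (or re-proving if needed) the structure of $\Pic(\LTE^{hG_{12}})$; everything else is essentially formal.
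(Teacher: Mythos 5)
Your proof is correct and follows essentially the same route as the paper's: part (1) is read off from \eqref{eq:w1pho3} and \eqref{eq:lambdapho3}, the arithmetic in $\ZZ \oplus \ZZ/2 \oplus \ZZ/3$ gives the $\ZZ/72$, and the final isomorphism is obtained by citing \cite{AkhSto} for $\Pic(\LTE^{hG_{12}}) \cong \ZZ/72$ generated by $\Sigma \LTE^{hG_{12}}$. You have merely expanded the paper's terse statement with the explicit generator/order computation, which is a welcome but not essentially different argument.
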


\begin{proof} Part (1) follows from \eqref{eq:w1pho3} and \eqref{eq:lambdapho3}. The isomorphism
\eqref{eq:iso138} is then immediate. The fact the $J$-homomorphism
is an isomorphism then follows from the fact that $\LTE^{hG_{12}}$ has periodicity $72$; that
is $\Pic(\LTE^{hG_{12}}) \cong \ZZ/72$ generated by $\Sigma \LTE^{hG_{12}}$. See \cite{AkhSto}. 
\end{proof}

\begin{prop}\label{prop:class-of-key-repn=p=3} Let $\cE$ be the endomorphism ring of $C$ and give
$\RR \otimes \cE$ the conjugation action by $G_{12}$. Then 
\[
\RR \otimes \cE = -44\cdot \mathbf{1}_{\mathbb{R}}
\]
in $RO(G_{12})/(\ionthree{8} + \rho_{G_{12}})$.
\end{prop}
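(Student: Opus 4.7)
The plan is to combine the characteristic class computations already carried out in \Cref{rem:cclasses-at-3-2} with the isomorphism $\psi$ of \Cref{prop:RIforn=p=3-1}(2) and the identification $RO(G_{12})/(\ionthree{8}+\rho_{G_{12}}) \cong \ZZ/72$ of \Cref{prop:RIforn=p=3-3}(2). The task reduces to a straightforward linear algebra computation in $\ZZ \oplus \ZZ/2 \oplus \ZZ/3$.

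First I would record the relevant values of $\psi$. Since $\dim(\RR \otimes \cE) = 4$, and by \eqref{eq:w1E} and \eqref{eq:lambdaE} we have $w_1(\RR \otimes \cE) = 0$ and $\lambda(\RR \otimes \cE) = -z$, it follows that
\[
\psi(\RR \otimes \cE) = (4, 0, -1) \in \ZZ \oplus \ZZ/2 \oplus \ZZ/3.
\]
By \Cref{prop:RIforn=p=3-3}(1) we have $\psi(\rho_{G_{12}}) = (12, 1, -1)$, while for the trivial one-dimensional real representation $\psi(\mathbf{1}_{\RR}) = (1, 0, 0)$.

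Next I would solve for the integer $n$ such that $\RR \otimes \cE \equiv n \cdot \mathbf{1}_\RR$ modulo $\ionthree{8}+\rho_{G_{12}}$. This amounts to finding integers $n$ and $k$ satisfying
\[
(4,0,-1) - n(1,0,0) = k(12,1,-1) \quad \text{in } \ZZ \oplus \ZZ/2 \oplus \ZZ/3.
\]
The middle coordinate forces $k$ to be even, say $k = 2m$; the third coordinate then gives $2m \equiv 1 \pmod 3$, hence $m \equiv 2 \pmod 3$. Writing $m = 2+3j$, the first coordinate yields $4 - n = 12(4+6j) = 48 + 72j$, so $n \equiv -44 \pmod{72}$. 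Since $RO(G_{12})/(\ionthree{8}+\rho_{G_{12}}) \cong \ZZ/72$ is generated by $\mathbf{1}_\RR$, this proves $\RR \otimes \cE = -44 \cdot \mathbf{1}_\RR$ in this quotient.

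There is no real obstacle here: everything is set up by the preceding propositions and the only work is arithmetic. The only subtlety to double-check is the sign conventions in \eqref{eq:lambdaE}, since any ambiguity there propagates directly to the answer $-44$; once the characteristic class inputs are fixed, the rest is immediate.
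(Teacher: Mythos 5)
Your proof is correct and follows essentially the same route as the paper: compute $\psi(\RR \otimes \cE) = (4,0,-1)$ and $\psi(\rho_{G_{12}}) = (12,1,-1)$, then solve in $\ZZ \oplus \ZZ/2 \oplus \ZZ/3$. The paper simply exhibits the coefficient directly, computing $(4,0,-1) - 4(12,1,-1) = (-44,0,0)$, while you solve the system; the arithmetic is identical.
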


\begin{proof} By \eqref{eq:w1E} and \eqref{eq:lambdaE} we have 
\[
\psi(\RR \otimes \cE) = (4,0,-1) \in \ZZ \oplus \ZZ/2 \oplus \ZZ/3.
\]
By \Cref{prop:RIforn=p=3-3}, part (1) we have $\psi(\rho_{G_{12}}) = (12,1,-1)$. Since 
\[
(4,0,-1) - 4(12,1,-1) = (-44,0,0)  \in \ZZ \oplus \ZZ/2 \oplus \ZZ/3
\]
the result follows. 
\end{proof} 

We now have a calculation of the Spanier-Whitehead duals to $\LTE^{hF}$. Let
\[
G_{24} = G_{12} \rtimes \Gal(\FF_9/\FF_3) \subset \cO_2  \rtimes \Gal(\FF_9/\FF_3) = \GG_2.
\]
We have, here at $p=3$,
\begin{align*}
\LTE^{hG_{24}} \simeq L_\LTK \mathbf{tmf}\\
\end{align*}
where $\mathbf{tmf}$ is the spectrum of topological modular forms.

\begin{thm}\label{thm:dualn=p=3} Let $p=3$ and $F \subseteq G_{24} \subseteq \GG_2$ for $\GG_2=\Aut(\F_9, F_C)$ the stabilizer group associated to the formal group law $F_C$ of a supersingular elliptic curve $C$ with Weierstrass equation \eqref{eq:ellat3}. Then
\[
D(\LTE^{hF}) \simeq \Sigma^{44} \LTE^{hF}.
\]
\end{thm}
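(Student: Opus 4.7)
The plan is to trace the identifications of Sections \ref{sec:dualofe} and \ref{sec:anal-lin} through to reduce the claim to a class computation in the equivariant Picard group, then evaluate that class using the algebraic work of the present section.

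First, I would assemble the structural reduction. The $3$-Sylow subgroup of $G_{24}$ is $C_3$, which is elementary abelian, so any $F \subseteq G_{24}$ satisfies the hypotheses of \Cref{cor:what-we-use} and $D\LTE \simeq S^{-\gg} \wedge \LTE$ as $F$-equivariant $\LTK$-local spectra. Tate vanishing \cite{GreenSad} gives $D(\LTE^{hF}) \simeq (D\LTE)^{hF}$. The integral endomorphism ring $\cE$ of $C$ sits inside $\cO_2 \cong \gg$ as a $G_{24}$-stable full-rank $\ZZ$-lattice, so \Cref{prop:recog-princ-bis} produces an $F$-equivariant $3$-equivalence $S^V \simeq S^\gg$ with $V = \RR \otimes \cE$, and hence
\[
D(\LTE^{hF}) \simeq (\LTE \wedge S^{-V})^{hF} = J^F_\LTE(-V) \in \Pic(\LTE^{hF}).
\]

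Second, I would settle the base case $F = G_{12}$ by combining the algebraic results already assembled. \Cref{prop:RIforn=p=3-1}(1) and \Cref{prop:reg-goes-to-1} show $J^{G_{12}}_\LTE$ factors through $RO(G_{12})/(\ionthree{8} + \rho_{G_{12}})$, and \Cref{prop:RIforn=p=3-3}(2) identifies the factored map as an isomorphism $\ZZ/72 \cong \Pic(\LTE^{hG_{12}})$. By \Cref{prop:class-of-key-repn=p=3}, $V = -44 \cdot \mathbf{1}_\RR$ in this quotient, so $J^{G_{12}}_\LTE(-V) = \Sigma^{44}\LTE^{hG_{12}}$. Via \eqref{eq:pic-equivs-sp} this yields an honest $G_{12}$-equivariant equivalence $\LTE \wedge S^{-V} \simeq \Sigma^{44} \LTE$ in $\Pic_{G_{12}}(\LTE)$.

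Finally, I would propagate the result to arbitrary $F \subseteq G_{24}$. For $F \subseteq G_{12}$ this is immediate: restrict the $G_{12}$-equivariant equivalence and take $F$-homotopy fixed points. For subgroups containing the Galois generator, and in particular $F = G_{24}$, the three-step argument runs with $G_{24}$ replacing $G_{12}$: the characteristic classes of $V$ and $\rho_{G_{24}}$ are detected by restriction to the subgroups $C_3$ and $C_4$ (both of which lie in $G_{12}$), the vanishing-line input of \cite{AkhSto} behind \Cref{prop:guesstrue:2} is also available for $G_{24}$, and $\Pic(L_\LTK\mathbf{tmf}_3) \cong \Pic(\LTE^{hG_{24}})$ is again cyclic of order $72$ generated by $\Sigma\LTE^{hG_{24}}$. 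The main obstacle is this last step, namely lifting \Cref{prop:RIforn=p=3-1} and \Cref{prop:RIforn=p=3-3} from $G_{12}$ to $G_{24}$; I expect this to be a routine but careful extension, since the Galois generator has order coprime to $3$ and acts by homomorphisms compatible with the cohomological detection already in place.
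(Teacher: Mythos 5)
Your base case $F \subseteq G_{12}$ is correct and matches the paper's argument: reduce via Tate vanishing and \Cref{cor:what-we-use} to computing $J^F_\LTE(-V)$, then invoke \Cref{prop:recog-princ-bis}, \Cref{prop:RIforn=p=3-1}, \Cref{prop:RIforn=p=3-3}, and \Cref{prop:class-of-key-repn=p=3}.

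Where you diverge is the case when $F \subseteq G_{24}$ maps onto $\Gal = \Gal(\FF_9/\FF_3)$. You propose to re-run the entire apparatus with $G_{24}$ in place of $G_{12}$, acknowledging this requires extending \Cref{prop:RIforn=p=3-1} and \Cref{prop:RIforn=p=3-3} to $G_{24}$ and saying you "expect this to be a routine but careful extension." This glosses over genuine work: $G_{24}$ is not a subgroup of $\SS_2 = \cO_2^\times$, so the Galois generator does not act on $\gg = \cO_2$ by conjugation within the algebra but rather through the semidirect-product action of $\GG_2$, and one would have to check that the lattice $\cE$ is stable under this larger action before \Cref{prop:recog-princ-bis} even applies. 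Similarly, the characteristic-class detection and the vanishing-line input from \cite{AkhSto} would all need to be reproved in the $G_{24}$ setting. The paper avoids all of this: for such $F$ it sets $F_0 = F \cap \SS_2 \subseteq G_{12}$, uses the $\Gal$-equivariant equivalence $\Sigma_+^\infty \Gal \wedge \LTE^{hF} \simeq \LTE^{hF_0}$ from \cite[Lemma 1.37]{BobkovaGoerss}, and then descends the already-established $F_0$-result through the $\Gal$-homotopy fixed points. Your route would likely succeed, but the descent argument is both shorter and structurally cleaner, since it never needs the $\GG_2$-equivariant structure of the dualizing sphere or the lattice.
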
 

\begin{proof} First suppose $F \subseteq G_{12}$. Then, as in \eqref{eq:master-dual}, we have
\[
D(\LTE^{hF}) \simeq (S^{-(\RR \otimes \cE)} \wedge \LTE)^{hF}
\]
and the result follows from \Cref{prop:RIforn=p=3-3} and \Cref{prop:class-of-key-repn=p=3}. 

The other possibility is that the composition $F \to G_{24} \to \Gal(\FF_9/\FF_3)$ is onto. Let $F_0$
be the kernel of this map and write $\Gal$ for $\Gal(\FF_9/\FF_3)$. We know from \cite[Lemma 1.37]{BobkovaGoerss}
that there is a $\Gal(\FF_9/\FF_3)$ equivariant equivalence
\[
\Sigma_+^\infty {\Gal} \wedge \LTE^{hF} \simeq \LTE^{hF_0}. 
\] 
We now have 
\begin{align*}
D(\LTE^{hF}) &\simeq D(\LTE)^{hF}\\
&\simeq [D(\LTE)^{hF_0}]^{h{\Gal}} \simeq (\Sigma^{44}E^{hF_0})^{h{\Gal}}\\
&\simeq [\Sigma_+^\infty {\Gal} \wedge \Sigma^{44} \LTE^{hF}]^{h{\Gal}}\\
&\simeq \Sigma^{44}\LTE^{hF}.\qedhere
\end{align*}
\end{proof}

\subsection{The case $n=2$ and $p=2$}

We proceed exactly as in the case of $n=2$ and $p=3$, working with the endomorphism ring of a supersingular
elliptic curve over $\FF_4$.  This is a recapitulation of ideas already laid out by the first author in \cite{BeaudryTowards};
see in particular Lemma 2.4.3 of that paper.

Over $\FF_4$, there is a supersingular elliptic curve $C$ with Weierstrass equation
\begin{equation}\label{eq:ellat2}
y^2 + y = x^3. 
\end{equation}
The endomorphism ring $\cE$ of $C$ over $\FF_4$ is a maximal order in a quaternion algebra ramified only at $2$ and $\infty$. 
It is possible to be quite explicit.

Choose $\omega \in \FF_4$ with $\omega^2 + \omega + 1 = 0$; that is, $\omega$ is a primitive third root of unity.
Then $C$ has an automorphisms $\omega$ and $i$ with
\begin{align*}
\omega(x,y) &= (\omega x, y)\\
i(x,y) &= (x+1,y+x+\omega).
\end{align*}
There is a slight abuse of notation here with the symbol $\omega$. Set $j = \omega i \omega^2$ and $k = \omega^2 i \omega$. These elements
generate a normal subgroup of $\Aut(C)$ isomorphic to the quaternion group $Q_8$ of order $8$. The automorphism $\omega$
defines a cyclic subgroup $C_3 \subseteq \Aut(C)$ of order $3$ and there is an isomorphism
\[
G_{24}= Q_8 \rtimes C_3 \cong \Aut(C).
\]
The group $C_3$ acts on $Q_8$ by cyclicly permuting $i$, $j$, $k$. The element $\omega \in \cE$ can be written
\[
\omega = -\frac{1+i+j+k}{2}
\]
and we have
\[
\cE=  \ZZ \oplus \ZZ i  \oplus \ZZ j \oplus \ZZ\frac{1+i+j+k}{2}.
\]
The completion of $\cE$ at $2$ is the endomorphism ring $\cO_2$ of the formal group associated to $C$, which is
necessarily of  height $2$. Then $G_{24} \subset \cO_2$ is a choice a maximal finite subgroup containing a 
subgroup isomorphic to $Q_8$. 

\begin{rem}\label{rem:set-upn=p=2} Let  
\[
\HH = \RR\{1,i,j,k\}/(i^2=j^2=k^2 = -1, ij=k = -ji\}
\] 
be the quaternion algebra. Up to isomorphism, this is the unique $4$-dimensional associative division algebra over the real
numbers. The group $Q_8$ is a group of units in $\HH$ and left multiplication by $Q_8$ on $\HH$ gives,
up to isomorphism, the unique irreducible $4$-dimensional representation of $Q_8$. 

We have an evident inclusion
\[
\cE \subseteq \HH
\]
which is closed under both the left action and the conjugation action by $G_{24}$. Furthermore
\[
\RR \otimes \cE = \HH. 
\]
Since $\cE/2\cE \cong \cO_2/2\cO_2$, we find that we are exactly
in the situation of \Cref{prop:recog-princ} with $V = \HH_{ad}$ where $\HH_{ad}$ is $\HH$ with 
its conjugation action by $G_{24}$. Thus the main goal is to analyze the homotopy type of 
\[
(S^{-\HH_{ad}} \wedge \LTE)^{hG_{24}} \in \Pic(\LTE^{hG_{24}}). 
\]  
\end{rem}

\begin{rem}\label{rem:some-cohs-at-2}We will need to know the cohomology of $Q_8$ and $G_{24}$. We have that
$Q_8/[Q_8,Q_8] \cong \ZZ/2 \times \ZZ/2$ where we choose the residue classes of $i$ and $j$ as the generators. Then 
\[
H^\ast (Q_8,\FF_2) \cong A \otimes \FF_2[P]
\]
where $P \in H^4(Q_8,\FF_2)$ and $A$ is the $3$-dimensional Poincar\'e duality algebra 
\[
A = \FF_2[a,b]/(a^2 +ab+b^2,a^2b+ab^2).
\]
generated by classes $a$ and $b$ of degree $1$ dual to $i$ and $j$ respectively. A generator of the group $C_3$ acts on
$H^1(Q_8,\FF_2)$ by sending $a$ to $b$ and $y$ to $a+b$, so there is an ismorphism
\[
H^\ast (G_{24},\FF_2) \cong H^\ast (Q_8,\FF_2)^{C_3} \cong E(Q) \otimes \FF_2[P]
\]
where $Q \in H^3(Q_8,\FF_2)$ is the top class in $A$. We also have
\[
H^\ast (G_{24},\ZZ_{(2)}) \cong \ZZ[P]/(8P)
\]
where, by abuse of notation, $P \in H^4 (G_{24},,\ZZ_{(2)}) $ is an integral class which reduces to $P \in H^4(G_{24},\FF_2)$. 
We will give a more specific generator for $H^4 (G_{24},,\ZZ_{(2)})$ below in \Cref{lem:coho-G24}.
\end{rem}

\begin{rem}\label{rem:rep-quats} We review the representation theory of $Q_8$ and $G_{24}$. 

We have defined two real representations $\HH$ and $\HH_{ad}$. 
There are also three isomorphism classes of non-trivial $1$-dimensional real representations of $Q_8$. Each of the
elements $i$, $j$,  $k$ generates a subgroup of order $4$ in $Q_8$; taking the quotient by these subgroups
in turn defines homomorphisms $Q_8 \to \{ \pm 1\}=C_2$ and representations $\chi_i$, $\chi_j$, and $\chi_k$
by restricting the sign representation of $C_2$. 
If we write $\mathbf{1}_\R$ for the trivial representation, then the regular representation of $Q_8$ decomposes as
\[
\rho_{Q_8} \cong \mathbf{1}_{\mathbb{R}} \oplus \chi_i  \oplus \chi_j  \oplus \chi_k \oplus \HH.
\]
As a real representation of $Q_{8}$ we have
\[
\HH_{ad} \cong  \mathbf{1}_{\mathbb{R}} \oplus \chi_i  \oplus \chi_j \oplus \chi_k.
\]
and hence
\begin{equation}\label{eq:decom-refq8}
\rho_{Q_{8}} \cong \HH_{ad} \oplus \HH. 
\end{equation}

Because of the symmetries in $Q_8$, the representation $\chi_i  \oplus \chi_j  \oplus \chi_k$ and $\HH_{ad}$ can
be given the structure of Spin representations. To see this, let $w(\xi) = 1 + w_1(\xi) + w_2(\xi) + \cdots$ be
the total Stiefel-Whitney class. Then, using the notation of \Cref{rem:some-cohs-at-2}, we have that
\[
w(\HH_{ad}) = w(\chi_i  \oplus \chi_j  \oplus \chi_k) = (1+a)(1+b)(1+(a+b)) = 1 \in H^\ast(Q_8,\FF_2).
\]

The representation $\HH$ of $Q_8$ is the restriction of an irreducible complex representation. If we use
the right action of $\CC$ on $\HH$ to give $\HH$ the structure of a complex vector space, then the action
$Q_8$ on $\HH$ is through complex linear transformations given by the matrices
\begin{align}\label{eq:HSU2}
i &=  \left(\begin{matrix}i & 0 \\ 0 & -i \end{matrix}\right),  & j &=  \left(\begin{matrix}0 & 1 \\ -1 & 0 \end{matrix}\right),
&  k&=  \left(\begin{matrix} 0 & -i \\ -i & 0 \end{matrix}\right).
\end{align}
This defines an inclusion $h\colon Q_8 \to SU(2)$. 

The representation $\HH_{ad}$ is a stabilization of the
restriction of the adjoint representation of $SU(2)$ along the inclusion $h$.
Concretely,  the Lie algebra $\su2$ of  $SU(2)$ is the real vector space of $2\times 2$ skew Hermitian complex
matrices $A$ of trace $0$; thus a $2 \times 2$ complex matrix $A$ is in $\su2$ if
$\overline{A} + A^t=0$ and $\mathrm{trace}(A) = 0$. Thus
\[  \mathfrak{su}(2) \cong \left\{   \left(\begin{matrix}bi & -\overline{z} \\ z & -bi \end{matrix}\right) \ |\ 
b \in \R, \ z \in \CC \right\}.
\]
The group $SU(2)$ acts on the Lie algebra by conjugation; this is the adjoint representation of $SU(2)$.
There is an isomorphism of real representations of $Q_8$ 
\[
\mathbf{1}_{\mathbb{R}} \oplus \mathfrak{su}(2) \cong \HH_{ad} .
\]
\end{rem} 

\begin{rem}\label{rem:clarifiication} Let $V$ be a $G$-representation for some
compact Lie group $G$ and let $\xi_V$ be the bundle 
\[
EG \times_{G} V \longr BG. 
\]
Thus if $V$ is a complex representation, we get Chern classes $c_i(V)=c_i(\xi_V) \in H^\ast (BG,\ZZ)$. For example, if $V = \CC^n$
with its standard left action by $U(n)$ then this bundle is the dual of the tautological bundle
over $BU(n)$. Hence
\[
c_i(\xi_{\CC^n}) = (-1)^i c_i
\]
where $c_i$ denotes the universal Chern class.
Note that if a homomorphism $\varphi\colon G \to U(n)$ defines the representation $V$, then there is an isomorphism
of bundles over $BG$ 
\[
\xi_V \cong B\varphi^\ast \xi_{\CC^n}.
\]
\end{rem}

As preparation for our calculations in \Cref{prop:RIforn=p=2} and
\Cref{prop:class-of-key-repn=p=2} we next calculate some characteristic classes.
Since $BSU(2)$ is $3$-connected every vector bundle over $BSU(2)$ has a unique Spin
structure and the characteristic class $\lambda$ of \Cref{lem:ev-lambda} is unambiguously defined. 

\begin{lem}\label{rem:char-quats} (1) We have an isomorphism
\[
\ZZ[y] \cong H^\ast (BSU(2),\ZZ)
\]
where
\[
y = \lambda(\xi_{{\CC^2}}) = -c_2(\xi_{{\CC^2}}) \in H^4 (BSU(2),\ZZ)
\]
is the characteristic class determined by the unique Spin structure on the bundle $\xi_{\CC^2}$ associated
to the standard left action of $SU(2)$ on $\CC^2$.

(2) Let $\mathfrak{su}(2)$ be the adjoint representation of $SU(2)$. Then
\[
\lambda(\mathfrak{su}(2)) = \lambda(\mathbf{1}_{\mathbb{R}} \oplus \mathfrak{su}(2)) =  2y
\]
\end{lem}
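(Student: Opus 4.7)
The plan is to treat the two parts separately, with the computation of Chern classes via the splitting principle as the main technical step.

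For part (1), I would invoke the standard fact that $H^\ast(BSU(2),\ZZ) \cong \ZZ[c_2]$, polynomial on the second Chern class of the tautological bundle. Using the sign convention of \Cref{rem:clarifiication}, $c_2(\xi_{\CC^2}) = (-1)^2 c_2 = c_2$, so $-c_2(\xi_{\CC^2})$ is also a polynomial generator. To identify this with $\lambda(\xi_{\CC^2})$, I would apply \Cref{lem:ev-lambda}: since $SU(2)$ acts on $\CC^2$ with trivial determinant, $c_1(\xi_{\CC^2}) = 0$ in $H^2$. Because $H^2(BSU(2),\ZZ) = 0$, the Spin lift $d$ is forced to be zero, and the formula from \Cref{lem:ev-lambda} collapses to $\lambda(\xi_{\CC^2}) = -c_2(\xi_{\CC^2}) = y$.

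For part (2), additivity of $\lambda$ (from \Cref{lem:ev-lambda}) immediately gives $\lambda(\mathbf{1}_\RR \oplus \mathfrak{su}(2)) = \lambda(\mathfrak{su}(2))$, so it suffices to compute the latter. Since $BSU(2)$ is $3$-connected, every real bundle over $BSU(2)$ has a unique Spin structure and $H^4$ is torsion-free, so $\lambda(\mathfrak{su}(2))$ is the unique class with $2\lambda(\mathfrak{su}(2)) = p_1(\mathfrak{su}(2))$. Using the standard identity $p_1(\xi) = -c_2(\xi \otimes_\RR \CC)$, the task reduces to computing $c_2(\mathfrak{su}(2)_\CC)$.

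The key computational step is the identification $\mathfrak{su}(2) \otimes_\RR \CC \cong S^2(\CC^2)$ as complex $SU(2)$-representations. This follows because $\mathfrak{su}(2)_\CC \cong \mathfrak{sl}(2,\CC)$ is an irreducible $3$-dimensional complex $SU(2)$-representation, and the irreducible complex representations of $SU(2)$ are the symmetric powers $S^n(\CC^2)$ of dimension $n+1$. With this identified, I would apply the splitting principle: writing the formal Chern roots of $\xi_{\CC^2}$ as $t, -t$ (so $c_1 = 0$ and $c_2 = -t^2$, giving $y = t^2$), the associated bundle for $S^2(\CC^2)$ has weights $2t,\,0,\,-2t$, so its second Chern class is
\[
c_2(S^2(\CC^2)) = (2t)(0) + (2t)(-2t) + (0)(-2t) = -4t^2 = -4y.
\]
Hence $p_1(\mathfrak{su}(2)) = -c_2(\mathfrak{su}(2)_\CC) = 4y$ and $\lambda(\mathfrak{su}(2)) = 2y$, as claimed.

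The main (and only real) obstacle is keeping the sign conventions consistent between the universal Chern classes, the bundles $\xi_V$ of \Cref{rem:clarifiication}, and the Spin refinement $\lambda$; once those are pinned down, the splitting principle computation for $S^2(\CC^2)$ is routine.
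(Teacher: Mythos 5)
Your proof is correct. Part (1) proceeds exactly as in the paper, with the observation that $H^2(BSU(2),\ZZ) = 0$ kills both $c_1$ and $d$, reducing the formula from \Cref{lem:ev-lambda} to $\lambda = -c_2$.

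For part (2) you take a genuinely different route from the paper, though one with the same underlying content. The paper restricts along the inclusion of the maximal torus $U(1) \subseteq SU(2)$, which induces an isomorphism on $H^4$, and then decomposes $\mathbf{1}_\RR \oplus \mathfrak{su}(2)$ \emph{by hand} as a sum of two one-dimensional complex $U(1)$-representations (trivial plus weight $2$), computing $\lambda$ on each summand via the line-bundle formula $\lambda(L) = \tfrac{1}{2}c_1(L)^2$. You instead complexify, identify $\mathfrak{su}(2)_\CC$ with the irreducible representation $S^2(\CC^2)$ via representation theory of $SU(2)$, and compute $c_2(S^2(\CC^2))$ from the Chern roots $2t, 0, -2t$; then $p_1 = -c_2(\text{complexification}) = 4y$ and $\lambda = \tfrac{1}{2}p_1 = 2y$. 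Both methods amount to reading off the weight decomposition of the adjoint representation restricted to the torus; yours phrases this via the splitting principle and the abstract classification of $SU(2)$-representations, while the paper's is more hands-on with explicit matrices. Your route is arguably cleaner for a reader familiar with $SU(2)$ representation theory, at the small cost of having to justify the identification $\mathfrak{su}(2)_\CC \cong S^2(\CC^2)$ (which you do, by counting dimensions and using irreducibility). One remark worth including if you write this up: $S^2(\CC^2)$ is self-dual as an $SU(2)$-representation, so the duality sign in \Cref{rem:clarifiication} does not affect the Chern class computation.
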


\begin{proof} Since $H^\ast (BSU(2),\ZZ) \cong \ZZ[c_2]$ part (1) follows from \Cref{rem:clarifiication} 
and an application of the formula 
\[
\lambda(\xi) = d(\xi)c_1(\xi) - c_2(\xi). 
\]
of \Cref{lem:ev-lambda}.

For Part (2), let $g: BU(1) \longr BSU(2)$ be the map defined by the inclusion of Lie groups $U(1) \to SU(2)$
\[
z \longmapsto \left(\begin{matrix}z & 0 \\ 0 & \overline{z} \end{matrix}\right).
\]
The map $g$ classifies the bundle $\xi = \overline{\gamma}_1 \oplus \gamma_1$, where $\gamma_1$ is the 
tautological line bundle. Since
$c_2(\xi) = -c_1(\gamma_1)^2$, we have 
\[
g^\ast y = \lambda (\overline{\gamma}_1 \oplus \gamma_1) = c_1^2.
\]
In addition, $g^\ast \colon H^4(BSU(2),\ZZ)  \to H^4(BU(1),\ZZ) $ is an isomorphism.\footnote{The map $U(1) \to SU(2)$
is the inclusion of the maximal torus. The Weyl group is $C_2$ and we have explicitly written
down the canonical isomorphism $H^\ast(BSU(2),\ZZ) \cong H^\ast(BU(1),\ZZ)^{C_2}$.} Thus to calculate
$\lambda(\mathbf{1}_{\mathbb{R}} \oplus \mathfrak{su}(2))$ we can restrict to $U(1)$.

If we identify $\mathbf{1}_{\mathbb{R}}$ with the $2\times 2$ diagonal matrices 
\[
\left\{  \left(\begin{matrix}a & 0 \\ 0 & a \end{matrix}\right)  : a \in \RR \right\}
\]
we can then identify the $U(1)$-representation $\mathbf{1}_{\mathbb{R}} \oplus \mathfrak{su}(2) $ as the
direct sum $\mathcal{X} \oplus \mathcal{Y}$ of two $1$-dimensional complex representations with
\begin{align*} 
\mathcal{X} =\left\{   \left(\begin{matrix}\alpha & 0 \\ 0 & \overline{\alpha} \end{matrix}\right) : \alpha \in \CC \right\}
\qquad \mathrm{and}  \qquad
\mathcal{Y} = \left\{   \left(\begin{matrix}0 & -\overline{\alpha} \\ \alpha & 0 \end{matrix}\right) : \alpha \in \CC \right\} .
\end{align*}
The conjugation action of $U(1)$ on $\mathcal{X}$ is trivial and the conjugation action of $z \in U(1)$ on
$\mathcal{Y}$ is by multiplication by $z^2$. Thus if $\zeta$ is the bundle over $BU(2)$ defined
by $\mathbf{1}_{\mathbb{R}} \oplus \mathfrak{su}(2)$ we have $g^\ast \zeta = \mathbf{1}_{\mathbb{R}} \oplus \gamma_1^{-\otimes 2}$.
 Thus
\[
g^\ast \lambda(\mathbf{1}_{\mathbb{R}} \oplus \mathfrak{su}(2)) = (1/2)(-c_2(\gamma_1))^2 = 2g^\ast y.
\] 
Since $\lambda(\xi \oplus \zeta) = \lambda(\xi) + \lambda(\zeta)$ we also get the formula for $\lambda(\mathfrak{su}(2))$.
\end{proof} 

\begin{lem}\label{lem:coho-G24} Let $\HH$ be the real representation of $G_{24}$ extending the left action of
$Q_8$ on the quaternions. Then 
\[
H^\ast (G_{24},\ZZ_2) \cong \ZZ_2[\lambda(\HH)]/8\lambda(\HH).
\]
Furthermore $\lambda(\HH_{ad}) = 2\lambda(\HH)$. 
\end{lem}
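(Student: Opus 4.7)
The first part of the lemma is a question of picking a preferred generator of the cyclic group $H^4(G_{24},\mathbb Z_2)\cong\mathbb Z/8$ already recorded in \Cref{rem:some-cohs-at-2}. I would first observe that since $[G_{24}:Q_8]=3$ is prime to $2$, the restriction map
\[
H^\ast(G_{24},\mathbb Z_2)\ \longrightarrow\ H^\ast(Q_8,\mathbb Z_2)^{C_3}
\]
is an isomorphism, so it is enough to produce a generator of $H^4(Q_8,\mathbb Z_2)\cong\mathbb Z/8$ that is visibly fixed by the $C_3$-action. The plan is to use the embedding $h\colon Q_8\hookrightarrow SU(2)$ of \eqref{eq:HSU2}. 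This embedding realizes the complex structure on $\HH$ described in \Cref{rem:rep-quats}, i.e.\ $\HH$ (viewed as a complex rank-$2$ bundle on $BQ_8$) is isomorphic to $Bh^\ast\xi_{\mathbb C^2}$. Since $c_1(\HH)=0$, the unique Spin structure on $\xi_{\mathbb C^2}$ gives a Spin structure on $\HH$, and by \Cref{lem:ev-lambda} combined with \Cref{rem:char-quats}(1),
\[
\lambda(\HH) \;=\; Bh^\ast\lambda(\xi_{\mathbb C^2}) \;=\; Bh^\ast y \;=\; -c_2(\HH).
\]

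Next I would invoke the classical computation of the cohomology of $Q_8$: the faithful $2$-dimensional complex representation (which is $\HH$ with the complex structure of \eqref{eq:HSU2}) has second Chern class generating the cyclic group $H^4(Q_8,\mathbb Z)\cong\mathbb Z/8$. This identifies $\lambda(\HH)$ as a generator of $H^4(Q_8,\mathbb Z_2)$. Crucially, $\HH$ is defined as a representation of all of $G_{24}$ (the complex structure is preserved because $\omega\in G_{24}$ acts $\mathbb C$-linearly in the matrix presentation of \eqref{eq:HSU2}, or equivalently because $\HH$ underlies a $G_{24}$-equivariant bundle on $BQ_8$), so $\lambda(\HH)\in H^4(Q_8,\mathbb Z_2)$ is automatically $C_3$-invariant and extends to a generator of $H^4(G_{24},\mathbb Z_2)$. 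Combined with the ring structure from \Cref{rem:some-cohs-at-2}, this gives $H^\ast(G_{24},\mathbb Z_2)\cong\mathbb Z_2[\lambda(\HH)]/8\lambda(\HH)$.

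For the relation $\lambda(\HH_{ad})=2\lambda(\HH)$, I would use the $Q_8$-equivariant isomorphism
\[
\HH_{ad}\ \cong\ \mathbf 1_{\mathbb R}\oplus \mathfrak{su}(2)
\]
recorded in \Cref{rem:rep-quats}, where the right-hand side is the restriction along $h\colon Q_8\to SU(2)$ of the corresponding $SU(2)$-representation. Since $\lambda$ is additive on Spin bundles (\Cref{lem:ev-lambda}) and vanishes on trivial bundles, and since $\mathbf 1_{\mathbb R}\oplus\mathfrak{su}(2)$ is Spin (as noted in \Cref{rem:rep-quats}, its total Stiefel--Whitney class is $1$), we may apply \Cref{rem:char-quats}(2) on $BSU(2)$ and pull back via $Bh$:
\[
\lambda(\HH_{ad}) \;=\; Bh^\ast\lambda\bigl(\mathbf 1_{\mathbb R}\oplus\mathfrak{su}(2)\bigr) \;=\; Bh^\ast(2y) \;=\; 2\lambda(\HH).
\]
Again this equality, established on $BQ_8$, lifts to $BG_{24}$ because both sides are pullbacks of classes defined on $BG_{24}$ and restriction to $BQ_8$ is injective in degree $4$.

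The only non-formal input is the identification of a generator of $H^4(Q_8,\mathbb Z)\cong\mathbb Z/8$ with the second Chern class of the faithful representation $\HH$; I would cite this rather than reprove it. Everything else is bookkeeping: comparing the additive characteristic class $\lambda$ on the $Q_8$-representations $\HH$ and $\mathbf 1_{\mathbb R}\oplus\mathfrak{su}(2)$ by restricting along the inclusion into $SU(2)$, where the calculation was already carried out in \Cref{rem:char-quats}.
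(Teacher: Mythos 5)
Your proof is correct, and the second half (the identity $\lambda(\HH_{ad})=2\lambda(\HH)$) is the same argument the paper gives: additivity of $\lambda$, the decomposition $\HH_{ad}\cong\mathbf 1_{\RR}\oplus\mathfrak{su}(2)$, and \Cref{rem:char-quats}(2) pulled back along $Bh$. The first half is where you take a genuinely different route. You identify $\lambda(\HH)=Bh^\ast y=-c_2(\HH)$ and then invoke, as a black box, the classical computation that the second Chern class of the faithful $2$-dimensional complex representation of $Q_8$ generates $H^4(Q_8,\ZZ)\cong\ZZ/8$; the $C_3$-invariance of the class follows for free from the fact that $\HH$ is a $G_{24}$-representation, as you note. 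The paper avoids citing that classical fact: it restricts one step further, to the cyclic subgroup $C_i\cong C_4$ generated by $i$, observes that $r^\ast\colon H^4(G_{24},\ZZ_2)\to H^4(C_i,\ZZ_2)\cong\ZZ/4$ is onto (so an element of $\ZZ/8$ maps to a generator of $\ZZ/4$ if and only if it is a generator of $\ZZ/8$), and then computes directly that $r^\ast\lambda(\HH)=c_1(\gamma_i)^2$ generates $H^4(C_i,\ZZ_2)$ via the square $BC_i\to BU(1)\to BSU(2)\leftarrow BQ_8$. Your route is shorter but leans on external knowledge of $H^\ast(Q_8,\ZZ)$; the paper's route reduces to an elementary computation over a cyclic group of order $4$ at the cost of one more restriction step and the (also classical, but lighter) fact that restriction to $C_4$ is surjective in degree $4$.
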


\begin{proof} By \Cref{rem:some-cohs-at-2} we have that $H^\ast (G_{24},\ZZ_2) \cong \ZZ_2[P]/(8P)$, where
$P$ has degree $4$. Thus we need only show $\lambda(\HH)$ generates $H^4(G_{24},\ZZ_2)$.
Let $C_i \subseteq G_{24}$ be the subgroup generated by $i$.
Since the restriction map
\[
r^\ast: H^4(G_{24},\ZZ_2) \longr H^4(C_i,\ZZ_2) \cong \ZZ/4
\]
is onto, we need only check that $\lambda(\HH)$ generates $H^4(C_i,\ZZ_2)$. 

We can use the constructions of \Cref{rem:rep-quats} to produce a commutative diagram
\[
\xymatrix{
BC_i \ar[d]_r \ar[r]^f & BU(1) \ar[d]^g\\
BQ_8 \ar[r]_-h & BSU(2).
}
\]
We have $h^\ast \xi_{\CC^2} = \xi_\HH$, $g^\ast \xi_{\CC^2} = \overline{\gamma}_1 \oplus \gamma_1$,
and $f$ is induced by the representation $\gamma_i$ of a cyclic group of order $4$ on $\CC$ given by multiplication by $i$. 
If $y =\lambda(\xi_{\CC^2}) \in H^4(BSU(2),\ZZ_2)$, then we have
\begin{align*}
\lambda(\HH) &= h^\ast y \in H^4(Q_8,\ZZ_2)\\
g^\ast y &= c_1(\gamma_1)^2 \in H^4(BU(1),\ZZ_2).
\end{align*}
Since $H^2(BC_i,\ZZ_2) \cong \ZZ/4$ generated by $c_1(\gamma_i)$ we have that 
\[
r^\ast \lambda(\HH) = f^\ast g^\ast y = c_1(\gamma_i)^2 \in H^4(C_i,\ZZ_2)
\]
is a generator, as needed. 

The calculation of $\lambda(\HH_{ad})$ follows from Part (2) of \Cref{rem:char-quats}. 
\end{proof} 

We are now ready to give our calculations. We will follow the outline of \Cref{rem:the-strategy-p}, and we will use the notation
established there.

\begin{prop}\label{prop:RIforn=p=2} (1) Let $G_{24} \subseteq \SS_2$ be the automorphism group of the supersingular
elliptic curve $y^2 + y = x^3$ over $\FF_4$. The composite mapping
\[
[\Sigma^\infty_+BG_{24}, L_2^{\geq 2} \kon{8}] \to [\Sigma^\infty_+BG_{24},  L_2^{\geq 2} \ko] \to \pi_{0}\pic(\LTE)^{hG_{24}}
\]
is zero. 

(2) There is an isomorphism
\[
\psi \colon RO(G_{24})/\iontwo{8} \cong \ZZ \oplus \ZZ/8
\]
sending a representation $W$ to $(\mathrm{dim}(W),k)$ where $\lambda(W) = k\lambda(\HH) \in H^4(G_{24},\ZZ_2)$. 
\end{prop}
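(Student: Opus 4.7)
The plan is to mirror the strategy used for \Cref{prop:RIforn=p=3-1} at the prime $3$, following the framework of \Cref{rem:the-strategy-p} adapted to the prime $2$ and the group $G_{24}$.

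For part (1), the proof is a direct application of the $p$-complete variant of \Cref{prop:guesstrue:2} discussed in \Cref{rem:the-strategy-p}. The key input, which plays the role that Theorem 8.1.3 of \cite{AkhSto} plays in the $p=3$ case, is the analysis of the Picard homotopy fixed point spectral sequence
\[
H^s(G_{24}, \pi_t\pic(\LTE)) \Longrightarrow \pi_{t-s}\pic(\LTE)^{hG_{24}}
\]
at the prime $2$. I would invoke the corresponding computation from \cite{AkhSto} (in particular, Figure 9 and the surrounding discussion, already cited in \Cref{rem:the-strategy}) to conclude that $E_\infty^{s,s}(\LTE) = 0$ for $s \geq 8$, so the hypothesis of \Cref{prop:guesstrue:2} holds with $\ell = 8$.

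For part (2), I would use the filtration of $RO(G_{24})/\iontwo{8}$ coming from the Postnikov tower description in \Cref{rem:the-strategy-p}. The crucial simplification at the prime $2$ is that, by \Cref{rem:some-cohs-at-2}, we have $H^1(G_{24}, \FF_2) = 0$ and $H^2(G_{24}, \FF_2) = 0$, so the Stiefel--Whitney classes $w_1$ and $w_2$ contribute nothing. The filtration therefore collapses to two non-trivial pieces: the dimension map to $\ZZ$, and $\lambda : A_4 \to H^4(G_{24}, \ZZ_2)$. By \Cref{lem:coho-G24}, $H^4(G_{24}, \ZZ_2) \cong \ZZ/8$ is generated by $\lambda(\HH)$, so to verify surjectivity of $\lambda$ on $A_4$ it suffices to exhibit a virtual representation of dimension zero whose $\lambda$ is a generator: the class $\HH - 4 \mathbf{1}_{\RR}$ does the job, since $\dim(\HH - 4\mathbf{1}_\RR) = 0$ and $\lambda(\HH - 4\mathbf{1}_\RR) = \lambda(\HH)$.

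Injectivity of $\psi$ is then automatic from the filtration: if $\psi(W) = (0,0)$, then $\dim W = 0$ places $W$ in the kernel of the top map, and the vanishing of $H^1(G_{24},\FF_2)$ and $H^2(G_{24},\FF_2)$ forces $W$ into $A_4$, whereupon $\lambda(W) = 0$ combined with the injection $A_4 \hookrightarrow H^4(G_{24}, \ZZ_2)$ coming from the Postnikov truncation $RO(G_{24})/\iontwo{8} \hookrightarrow [\Sigma^\infty_+ BG_{24}, P_7 L_2^{\geq 2}\ko]$ (as in \Cref{rem:more-char-classes} and \Cref{rem:the-strategy-p}) forces $W = 0$. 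The main obstacle is the input to part (1): verifying (or correctly citing) the vanishing line in the Picard spectral sequence at $p = 2$ for $G_{24}$, which is substantially more delicate than the corresponding $p = 3$ statement and is where the bulk of the work of \cite{AkhSto} is concentrated. Everything else is formal manipulation of the filtration and characteristic-class bookkeeping already carried out in \Cref{rem:some-cohs-at-2} and \Cref{lem:coho-G24}.
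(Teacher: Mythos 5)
Your proposal is correct and follows essentially the same route as the paper: invoke the $p$-complete variant of \Cref{prop:guesstrue:2} with $\ell = 8$ (using the vanishing line $E_\infty^{s,s}=0$ for $s\geq 8$ from \cite{AkhSto}), then run the Postnikov filtration of \Cref{rem:the-strategy-p} using $H^1(G_{24},\FF_2)=H^2(G_{24},\FF_2)=0$ from \Cref{rem:some-cohs-at-2} and the surjectivity of $\lambda$ onto $H^4(G_{24},\ZZ_2)\cong\ZZ/8$ from \Cref{lem:coho-G24}. One small note: the paper's own proof cites Theorem 8.2.2 (and Figure 7) of \cite{AkhSto} rather than Figure 9 for the vanishing line at $p=2$, though the conclusion you draw from the citation is the same; and your explicit witness $\HH - 4\mathbf{1}_{\RR}$ for the surjectivity of $\lambda$ on $A_4$, together with the spelled-out injectivity argument via the embedding into the Postnikov tower, fills in details the paper leaves implicit.
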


\begin{proof}  For Part (1) we again use the evident variant of \Cref{prop:guesstrue:2}. See \Cref{rem:the-strategy-p}.
By Theorem 8.2.2 (see also Figure 7) of \cite{AkhSto} we have that in the spectral sequence
\[
H^s(G_{24},\pi_t\pic(\LTE)) \Longrightarrow \pi_{t-s}\pic(\LTE)^{hG_{24}}
\]
$E_\infty^{s,s} = 0$ for $s \geq 8$.

Part (2) follows from the filtration of $RO(G_{24})/\iontwo{8}$ given in \Cref{rem:the-strategy-p}. Note that
by \Cref{lem:coho-G24}
\[
H^1(G_{24},\ZZ/2) = 0 = H^2(G_{24},\ZZ/2) 
\]
and $\lambda \colon RO(G_{24}) \to H^4(G_{24},\ZZ_2) \cong \ZZ/8$ is onto. 
\end{proof}

\begin{prop}\label{prop:RIforn=p=2-2} (1) If $\rho_{G_{24}} \in RO(G_{24})$ is the regular representation, then
\[
\psi(\rho_{G_{24}} ) = (24,1).
\]

(2) The group $ RO(G_{24})/(\iontwo{8} + \rho_{G_{24}})$ is generated by the $1$-dimensional real representation
$\mathbf{1}_{\mathbb{R}}$; this choice of generator determines an isomorphism
\[
\ZZ/192 \cong RO(G_{24})/(\iontwo{8} + \rho_{G_{24}}).
\]
Furthermore, the $J$-homomorphism
\[
J_\LTE^{G_{24}}\colon RO(G_{24})/(\iontwo{8} + \rho_{G_{24}}) \longr \Pic(\LTE^{hG_{24}})
\]
is an isomorphism. 
\end{prop}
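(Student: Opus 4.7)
The plan is to combine Part (1) with the isomorphism $\psi$ of \Cref{prop:RIforn=p=2} to present the quotient algebraically, then match it against the known Picard group of $\LTE^{hG_{24}}$.

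For Part (1), the dimension claim is immediate since $|G_{24}| = 24$, so the content is the identity $\lambda(\rho_{G_{24}}) = \lambda(\HH) \in H^4(G_{24},\ZZ_2) \cong \ZZ/8$. My first move is to justify that $\lambda$ is additive on all of $RO(G_{24})$: by \Cref{rem:some-cohs-at-2} we have $H^1(G_{24},\FF_2) = 0 = H^2(G_{24},\FF_2)$, so every real $G_{24}$-representation has a unique Spin structure and \Cref{lem:ev-lambda} gives additivity. I would then restrict to $Q_8$, noting that the restriction $H^4(G_{24},\ZZ_2) \to H^4(Q_8,\ZZ_2)$ is injective (the inclusion of index $3$ splits after $2$-localization via $\tfrac{1}{3}\tr$). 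Using \eqref{eq:decom-refq8} we have
\[
\rho_{G_{24}}|_{Q_8} \cong 3\rho_{Q_8} \cong 3\HH_{ad} \oplus 3\HH,
\]
so by additivity of $\lambda$ and \Cref{lem:coho-G24},
\[
\lambda(\rho_{G_{24}}) = 3\lambda(\HH_{ad}) + 3\lambda(\HH) = 6\lambda(\HH) + 3\lambda(\HH) = 9\lambda(\HH) = \lambda(\HH)
\]
in $\ZZ/8$, giving $\psi(\rho_{G_{24}}) = (24,1)$.

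For the group calculation in Part (2), Part (1) together with \Cref{prop:RIforn=p=2} identifies
\[
RO(G_{24})/(\iontwo{8} + \rho_{G_{24}}) \;\cong\; \bigl(\ZZ \oplus \ZZ/8\bigr)\big/\bigl\langle(24,1)\bigr\rangle,
\]
with $\mathbf{1}_{\mathbb{R}} \mapsto (1,0)$. A direct check shows this quotient is cyclic of order $192$ generated by $(1,0)$: surjectivity follows from $(0,1) = (24,1) - 24(1,0)$, and the kernel of $\ZZ \twoheadrightarrow \text{quotient}$ consists of those $n$ with $(n,0) = k(24,1)$ in $\ZZ \oplus \ZZ/8$, forcing $n = 24k$ with $8 \mid k$, i.e.\ $n \in 192\ZZ$.

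For the $J$-homomorphism, by construction $J_\LTE^{G_{24}}(\mathbf{1}_{\mathbb{R}}) = \Sigma \LTE^{hG_{24}}$, so the surjection is from our cyclic group of order $192$ to $\Pic(\LTE^{hG_{24}})$ sending generator to $\Sigma \LTE^{hG_{24}}$. The plan is to quote from \cite{AkhSto} that at $n=2$, $p=2$ the Picard group $\Pic(\LTE^{hG_{24}})$ is cyclic of order $192$ generated by $\Sigma \LTE^{hG_{24}}$; a surjection between cyclic groups of the same finite order is an isomorphism. The main obstacle here is not the algebra but the external input identifying $\Pic(\LTE^{hG_{24}})$ with $\ZZ/192$, which rests on the full spectral sequence analysis of \cite{AkhSto}; the rest of the argument is bookkeeping with characteristic classes and the already-computed cohomology of $G_{24}$.
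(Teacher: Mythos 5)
Your proposal is correct and follows essentially the same route as the paper: restrict $\rho_{G_{24}}$ to $Q_8$, use $\rho_{G_{24}}|_{Q_8} = 3\rho_{Q_8} = 3\HH_{\mathrm{ad}} \oplus 3\HH$ together with $\lambda(\HH_{\mathrm{ad}}) = 2\lambda(\HH)$ from \Cref{lem:coho-G24} to obtain $\lambda(\rho_{G_{24}}) = 9\lambda(\HH) = \lambda(\HH)$ in $\ZZ/8$, compute the cokernel $(\ZZ\oplus\ZZ/8)/\langle(24,1)\rangle \cong \ZZ/192$ generated by $\mathbf{1}_{\RR}$, and cite \cite{AkhSto} for $\Pic(\LTE^{hG_{24}})\cong\ZZ/192$ generated by $\Sigma\LTE^{hG_{24}}$. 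The only minor differences are cosmetic: you justify injectivity of restriction to $Q_8$ via a $2$-local transfer argument (the paper simply records it as an isomorphism, which follows since $3$ is coprime to $|\Aut(\ZZ/8)|$), and you spell out the quotient computation and additivity of $\lambda$ in more detail than the paper bothers to.
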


\begin{proof}  We must calculate $\lambda(\rho_{G_{24}})$.
The inclusion $Q_8 \subseteq G_{24}$ defines an isomorphism
\[
H^4(G_{24},\ZZ_2) \cong H^4(Q_8,\ZZ_2) \cong \ZZ/8.
\]
Restricted to $Q_8$, we have $\rho_{G_{24}} = \rho_{Q_{8}}^{\oplus 3}$. We have, by \eqref{eq:decom-refq8} and
\Cref{lem:coho-G24}
\[
\lambda(\rho_{Q_{8}}) = \lambda(\HH_{ad}) + \lambda(\HH) = 3\lambda(\HH),
\]
whence
\[
\psi(\rho_{G_{24}}) = (24,9) = (24,1). 
\]
Thus  $\ZZ/192 \cong RO(G_{24})/(\iontwo{8} + \rho_{G_{24}})$ generated by $\mathbf{1}_{\mathbb{R}}$. 

The fact the $J$-homomorphism
is an isomorphism then follows from the fact that $\LTE^{hG_{24}}$ has periodicity $192$; that
is $\Pic(\LTE^{hG_{24}}) \cong \ZZ/192$ generated by $\Sigma \LTE^{hG_{24}}$. See \cite{AkhSto}. 
\end{proof} 

\begin{prop}\label{prop:class-of-key-repn=p=2} 
We have an equation
\[
\HH_{\mathrm{ad}} \equiv -44\cdot \mathbf{1}_{\mathbb{R}}
\]
in $RO(G_{24})/(\iontwo{8} + \rho_{G_{24}})$.
\end{prop}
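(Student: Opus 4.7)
The plan is to compute $\psi(\HH_{\mathrm{ad}})$ under the isomorphism
\[
\psi : RO(G_{24})/\iontwo{8} \xrightarrow{\cong} \ZZ \oplus \ZZ/8
\]
from \Cref{prop:RIforn=p=2}, and then reduce modulo the class of the regular representation $\rho_{G_{24}}$, whose image was computed in \Cref{prop:RIforn=p=2-2}.

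First, I would read off the dimension: the representation $\HH_{\mathrm{ad}}$ is $\HH$ with its conjugation $G_{24}$-action, so $\dim_{\RR}\HH_{\mathrm{ad}} = 4$. Next, for the second coordinate of $\psi(\HH_{\mathrm{ad}})$, I would invoke \Cref{lem:coho-G24}, which gives $\lambda(\HH_{\mathrm{ad}}) = 2\lambda(\HH)$ in $H^{4}(G_{24},\ZZ_2) \cong \ZZ/8$. Therefore
\[
\psi(\HH_{\mathrm{ad}}) = (4, 2) \in \ZZ \oplus \ZZ/8.
\]

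The final step is elementary arithmetic in $\ZZ \oplus \ZZ/8$. By \Cref{prop:RIforn=p=2-2}(1) we have $\psi(\rho_{G_{24}}) = (24, 1)$, so in the quotient $RO(G_{24})/(\iontwo{8}+\rho_{G_{24}}) \cong \ZZ/192$ we must check that $(4,2)$ and $-44\cdot(1,0) = (-44,0)$ differ by an integer multiple of $(24,1)$. Indeed,
\[
(4,2) - (-44,0) = (48, 2) = 2\cdot (24,1),
\]
so $\HH_{\mathrm{ad}} + 44\cdot\mathbf{1}_{\RR} = 2\rho_{G_{24}}$ modulo $\iontwo{8}$, and the conclusion $\HH_{\mathrm{ad}} \equiv -44\cdot\mathbf{1}_{\RR}$ follows.

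There is no real obstacle here: all the work has been pushed into \Cref{lem:coho-G24} (identifying $\lambda(\HH_{\mathrm{ad}}) = 2\lambda(\HH)$, which itself rests on the Lie-theoretic identification $\mathbf{1}_{\RR}\oplus \mathfrak{su}(2)\cong \HH_{\mathrm{ad}}$ from \Cref{rem:rep-quats} and the computation $\lambda(\mathfrak{su}(2)) = 2y$ in \Cref{rem:char-quats}) and into \Cref{prop:RIforn=p=2-2} (computing $\psi(\rho_{G_{24}}) = (24,1)$). The present statement is simply the congruence that these two numerical inputs force, and it is what powers the comparison with the linear dualizing sphere used in the proof of the Spanier-Whitehead duality theorem for $\LTE^{hF}$ at $p=2$.
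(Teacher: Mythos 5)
Your proof is correct and is essentially identical to the paper's: both use \Cref{lem:coho-G24} to get $\psi(\HH_{\mathrm{ad}})=(4,2)$, then do the same arithmetic against $\psi(\rho_{G_{24}})=(24,1)$ from \Cref{prop:RIforn=p=2-2}. The only difference is cosmetic — you rearrange $(4,2)-2(24,1)=(-44,0)$ as $(4,2)-(-44,0)=2(24,1)$.
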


\begin{proof} By \Cref{lem:coho-G24} we have 
\[
\psi(\HH_{ad}) = (4,2) \in \ZZ \oplus \ZZ/8 .
\]
Since
\[
(4,2) - 2(24,1) = (-44,0)
\]
the result follows from \Cref{prop:RIforn=p=2-2}.
\end{proof} 

We now have a calculation of the Spanier-Whitehead duals to $\LTE^{hF}$. 
If $G_{48} = G_{24} \rtimes \Gal(\FF_4/\FF_2)$, we have
\begin{align*}
\LTE^{hG_{48}} \simeq L_\LTK \mathbf{tmf}.
\end{align*}

\begin{thm}\label{thm:dualn=p=2} Let $F \subseteq G_{48} \subseteq \GG_2$ for $\GG_2=\Aut(\F_4, F_C)$ the stabilizer group associated to the formal group law $F_C$ of a supersingular elliptic curve $C$ with Weierstrass equation \eqref{eq:ellat2}.
Then
\[
D(\LTE^{hF}) \simeq \Sigma^{44} \LTE^{hF}.
\]
\end{thm}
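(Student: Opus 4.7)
The plan is to mirror the argument given for \Cref{thm:dualn=p=3}, now using the $p=2$ analogues established in the preceding propositions. First I would reduce to the case $F \subseteq G_{24}$: if the composition $F \to G_{48} \to \Gal(\FF_4/\FF_2)$ is surjective with kernel $F_0 \subseteq G_{24}$, then by \cite[Lemma 1.37]{BobkovaGoerss} there is a $\Gal$-equivariant equivalence $\Sigma^\infty_+\Gal \wedge \LTE^{hF} \simeq \LTE^{hF_0}$, and once $D(\LTE^{hF_0}) \simeq \Sigma^{44} \LTE^{hF_0}$ is known $\Gal$-equivariantly, taking $\Gal$-fixed points gives the result for $F$ exactly as in the proof of \Cref{thm:dualn=p=3}.

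For $F \subseteq G_{24}$ the strategy outlined in \Cref{rem:the-strategy} and carried out at $p=3$ now has all its $p=2$ ingredients in place. By Tate vanishing \cite{GreenSad} the norm map gives $D(\LTE^{hF}) \simeq (D\LTE)^{hF}$. Since $F_0 = Q_8$ has $F_0/(F_0\cap Z(\GG_2))$ an elementary abelian $2$-group (namely $Q_8/\{\pm 1\} \cong (\ZZ/2)^2$), \Cref{cor:what-we-use} gives an $F$-equivariant $\LTK$-local equivalence $D\LTE \simeq S^{-\gg} \wedge \LTE$. The lattice $\cE \subset \cO_2 \cong \gg$ is $G_{24}$-stable under conjugation, satisfies $\cE/2\cE \cong \cO_2/2\cO_2$, and has $\RR\otimes\cE = \HH_{ad}$ as already noted in \Cref{rem:set-upn=p=2}; hence \Cref{prop:recog-princ} produces an $F$-equivariant weak equivalence after $2$-completion $S^{\HH_{ad}} \simeq S^\gg$. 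This gives the master formula
\[
D(\LTE^{hF}) \simeq (S^{-\HH_{ad}} \wedge \LTE)^{hF}.
\]

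Now I would identify the right-hand side in the Picard group. For $F = G_{24}$, \Cref{prop:RIforn=p=2-2} says that the $J$-homomorphism $RO(G_{24})/(\iontwo{8}+\rho_{G_{24}}) \to \Pic(\LTE^{hG_{24}})$ is an isomorphism onto $\ZZ/192$ generated by $\Sigma \LTE^{hG_{24}}$, and \Cref{prop:class-of-key-repn=p=2} gives $\HH_{ad} \equiv -44\cdot\mathbf{1}_\RR$ in the source. Thus $(S^{-\HH_{ad}}\wedge \LTE)^{hG_{24}} \simeq \Sigma^{44}\LTE^{hG_{24}}$. For a general subgroup $F \subseteq G_{24}$, the naturality of $J$ under restriction (\Cref{rem:J-om}) yields a commutative square
\[
\xymatrix{
RO(G_{24}) \ar[r]^-{J_\LTE^{G_{24}}}\ar[d]_{\res} & \Pic(\LTE^{hG_{24}}) \ar[d]^{\res}\\
RO(F) \ar[r]^-{J_\LTE^F} & \Pic(\LTE^{hF}),
}
\]
so that $(S^{-\HH_{ad}}\wedge \LTE)^{hF} \simeq \Sigma^{44}\LTE^{hF}$ as well, completing the case $F\subseteq G_{24}$.

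The main obstacle has already been dealt with in the computations of \Cref{prop:RIforn=p=2}--\Cref{prop:class-of-key-repn=p=2}; what remains above is purely formal assembly, and I expect no new difficulty. The only subtlety to watch is the Galois descent step, where one must check that the $\Gal$-action on $\Sigma^{44}\LTE^{hF_0}$ coming through the identification $D(\LTE)^{hF_0} \simeq \Sigma^{44}\LTE^{hF_0}$ is compatible with the evident one on $\Sigma^\infty_+\Gal \wedge \Sigma^{44}\LTE^{hF}$, but this follows because the equivalence is built from $G_{48}$-equivariant data (the sphere $S^{\HH_{ad}}$ and the $J$-homomorphism, both functorial in subgroups of $G_{48}$).
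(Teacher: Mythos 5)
Your proposal is correct and takes essentially the same route as the paper: reduce to $F \subseteq G_{24}$ versus the Galois-cover case via \cite[Lemma 1.37]{BobkovaGoerss}, use Tate vanishing, \Cref{cor:what-we-use}, and \Cref{prop:recog-princ} (through \Cref{rem:set-upn=p=2}) to obtain $D(\LTE^{hF}) \simeq (S^{-\HH_{ad}} \wedge \LTE)^{hF}$, and then conclude from \Cref{prop:RIforn=p=2-2} and \Cref{prop:class-of-key-repn=p=2}. The paper's proof is terser and leaves implicit the Mackey-functor naturality of the $J$-homomorphism under restriction that you invoke to pass from $G_{24}$ to a subgroup $F$, but that is exactly the step it relies on, so your extra care is a welcome spelling-out rather than a departure.
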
 

\begin{proof} First suppose $F \subseteq G_{24}$. Then by \Cref{rem:set-upn=p=2}  we have 
\[
D(\LTE^{hF}) \simeq (S^{-\HH_{ad}} \wedge \LTE)^{hF}
\]
and the result follows from \Cref{prop:class-of-key-repn=p=2}. 

The other possibility is that the composition $F \to G_{48} \to \Gal(\FF_4/\FF_2)$ is onto. Let $F_0$
be the kernel of this map an write $\Gal$ for $\Gal(\FF_4/\FF_2)$. We know from Lemma 1.37 of \cite{BobkovaGoerss}
that there is a $\Gal(\FF_4/\FF_2)$ equivariant equivalence
\[
\Sigma_+^\infty {\Gal} \wedge \LTE^{hF} \simeq \LTE^{hF_0}. 
\] 
We now have 
\begin{align*}
D(\LTE^{hF}) &\simeq D(\LTE)^{hF}\\
&\simeq [D(\LTE)^{hF_0}]^{h{\Gal}} \simeq (\Sigma^{44}E^{hF_0})^{h{\Gal}}\\
&\simeq [\Sigma_+^\infty {\Gal} \wedge \Sigma^{44} \LTE^{hF}]^{h{\Gal}}\\
&\simeq \Sigma^{44}\LTE^{hF}.\qedhere
\end{align*}
\end{proof}

\begin{rem}[{\bf Using the String orientation}]\label{rem:tmf-orientation} In \Cref{prop:RIforn=p=3-1}
and \Cref{prop:RIforn=p=2} we showed that (roughly) the restriction of the $J$-homomorphism
\[
J:[\Sigma^\infty_+ BF, \ko\langle 8\rangle] \to \pi_0\pic(E^{hF}) =  \pi_0\pic_F(E)
\]
is the zero map for various finite subgroups $F$ of the Morava Stabilizer Group $\GG_2$.
We used fixed point spectral sequence technology, but this can also be deduced from
the existence of the String orientation of $\mathbf{tmf}$ given in \cite{AndoHopRezk}. 

Suppose $V: BF \to B\Gl_1(S^0)$ defines an action of a finite group on the $0$-sphere $S^0$. Write $S^V$ for
this $F$-sphere. For any ring spectrum $R$, the composition 
\[
\xymatrix{
BF \ar[r]^-V & B\Gl_1(S^0) \ar[r] & B\Gl_1(R)
}
\]
defines an action of $F$ on $R$; this $F$-spectrum is equivalent to $R \wedge S^V$ with the trivial
action on $R$.

Now let $F$ be a finite subgroup of $\GG_2$ and let $R = \LTE^{hF}$. 
Then we can can write the $J(V) \in \sPic_F(\LTE)$ as
\[
J(V) = \LTE \wedge S^V \simeq \LTE \wedge_{\LTE^{hF}} \LTE^{hF} \wedge S^V.
\]
We have the diagonal action on both sides of this equation, although $F$ acts trivially on $\LTE^{hF}$. 

Next suppose the map $BF \to B\Gl_1(\LTE^{hF})$ is null-homotopic. Then we have an equivalence of $F$-spectra
$\LTE^{hF} \wedge S^V \simeq \LTE^{hF}$, and hence of elements 
\[
J(V) = \LTE \wedge S^V \simeq \LTE
\]
in $\sPic(R)$. Put another way, $J(V) = 0 \in \Pic(R) = \pi_0\pic_F(E)$. 

The existence of a String orientation $MO\langle 8 \rangle \to \mathbf{tmf}$ is proved by showing that the
composition 
\[
\ko\langle 8 \rangle \to \ko \to b\gl_1(S^0) \to b\gl_1(\mathbf{tmf})
\]
is null-homotopic. Now let $n=p=2$ and $F \subseteq G_{48}$. Then we have a map of ring spectra 
\[
L_\LTK \mathbf{tmf}\simeq \LTE^{hG_{48}} \longr \LTE^{hF},
\]
so we may conclude the composition
\[
\ko\langle 8 \rangle \to \ko \to b\gl_1(S^0) \to b\gl_1(\LTE^{hF})
\]
is null-homotopic and that the map
\[
[\Sigma_+^\infty BF,\ko\langle 8\rangle ] \longr \pi_0\pic_F(E)
\]
is zero. A similar statement holds at the prime $3$.

This approach, using the String orientation, works only at height $2$ because, ultimately, it depends on the geometry
of elliptic curves. In \Cref{sec:newxample} we will present a higher height example and, by necessity, return to
homotopy fixed point techniques. 
\end{rem}

 % !TEX root = dsphere-master.tex

\section{The Spanier-Whitehead duals of $\LTE^{hF}$: examples from higher height}\label{sec:newxample}

In this section, we fix a prime $p\geq 3$. We will work with the Honda formal group law $F_n$ of height $n=p-1$
over $\FF_{p^n}$.  Our intention is to use the theory we have developed to calculate $D(\LTE^{hF})$ for certain finite subgroups
of $\GG_n=\Aut(\F_{p^n}, F_{n})$. We will then use that calculation to make some remarks about exotic elements in the Picard group of
the $K(n)$-local category. The main results are \Cref{thm:dualn=p-1} and \Cref{thm:pic-exotic}. 

The group $\mathbb{S}_n = \Aut(F_n/\F_{p^n})$ contains a maximal finite subgroup
\[
G \cong C_p \rtimes C_{n^2}.
\]
There is also an extension of subgroups of $\GG_n $
\begin{align}\label{eq:Fext}
1 \to G \to H \to \Gal \to 1
\end{align}
where  $\Gal=\Gal(\F_{p^n}/\F_p)$. This is discussed, for example, in  \cite[3.6.3.1]{henn_res}.
We review some of these facts here.

Let $\WW = W(\FF_{p^n})$ be the Witt vectors on $\FF_{p^n}$ and let $\cO_n$ be the endomorphism ring of $F_n$; see \Cref{exmp:exam2}. We have an isomorphism
\[
\WW\langle S \rangle /(S^n=p, Sa = a^\sigma S) \cong \mathcal{O}_n
\]
where $a\in \WW$ and $\sigma \in \Gal$ is the Frobenius.  Let 
\[
\omega \in \FF_{p^n}^\times \subset \WW^\times \subset \mathbb{S}_n = \mathcal{O}_n^{\times}
\]
be a primitive $p^n-1$ root of unity. We then define elements of $\mathcal{O}_n$ by
\[
\tau = \omega^{\frac{p^n-1}{n^2}}\qquad\mathrm{and}\qquad X = \omega^{n/2}S.
\]
Then $X^n = -p$ and the element $\tau$ has order $n^2$. In particular, $\tau^n \in \FF_p^\times$ is a primitive $(p-1)$st
root of unity.  By Lemma 19 of \cite{henn_res} the subfield 
\[\Q_p(X) \subseteq \mathbb{D}_n = \QQ_p \otimes_{\ZZ_p} \cO_n\]
contains a primitive $p$th root of unity; we choose one such and call it $\zeta_p$. Since any root of unity must have
norm $1$, $\zeta_p \in \SS_n$. 

Since $X^n=-p$, $\Q_p(X)$ has degree $n=p-1$ over $\Q_p$ and it then follows that as subfields of $\mathbb{D}_n$
\[
\mathbb{Q}_p( \zeta_p) = \Q_p(X) .
\]
Let $C_p =\langle \zeta_p \rangle$ be the subgroup of $\SS_n$ generated by $\zeta_p$.
Conjugation by $\tau$ induces an automorphism of $C_p$  of order $p-1$, so there is a primitive root of unity in
$e\in (\Z/p)^{\times}$ such that
\[
\tau \zeta_p \tau^{-1} = \zeta_p^e.
\]
If we let $G \subseteq \SS_n$ be the subgroup generated by $\zeta_p$ and $\tau$, then we have an isomorphism
\[
G \cong C_p\rtimes C_{n^2}= \langle \zeta_p, \tau \mid \tau \zeta_p \tau^{-1} = \zeta_p^e \rangle .
\]
The extension $H$ is more subtle to describe and we won't need any of the details here. 

Let $ \Z(\zeta_p) \subseteq \cO_n$ be the subring generated $\zeta_p$; there is an isomorphism
$\Z[x]/\Phi_p(x) \cong   \Z(\zeta_p)$ where $\Phi_p(x) =(x^p-1)/(x-1)$ is the $p$th cyclotomic polynomial.\

\begin{lem}\label{lem:repsatp-1} Let
\[
\mathcal{E} = \Z(\zeta_p) \{1,\tau, \tau^2, \ldots, \tau^{n-1} \}  \subseteq \mathcal{O}_n
\]
be the sub-$\Z(\zeta_p)$-module generated by $\tau^i$, $0 \leq i \leq n-1$. 

(1) $\cE$ is stable under the conjugation action of $G$, and

(2) $\QQ_p \otimes \cE \cong \QQ_p \otimes_{\ZZ_p} \cO_n$.
\end{lem}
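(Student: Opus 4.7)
The plan is to reduce part (1) to checking the two generators of $G$, and to derive part (2) from the isotypic decomposition of $\mathbb{D}_n$ as a bimodule over the maximal subfield $\mathbb{Q}_p(\zeta_p)$.

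For (1), since $G$ is generated by $\zeta_p$ and $\tau$, it suffices to show that conjugation by each carries $\cE$ into itself. Iterating the defining relation $\tau\zeta_p\tau^{-1}=\zeta_p^e$ gives $\tau^i z\tau^{-i}=\sigma^i(z)$ for all $z\in\mathbb{Z}(\zeta_p)$, where $\sigma\colon\zeta_p\mapsto\zeta_p^e$ is the induced ring automorphism. Hence $\tau$ normalizes each submodule $\mathbb{Z}(\zeta_p)\tau^i$ and $\tau(z\tau^i)\tau^{-1}=\sigma(z)\tau^i\in\cE$. For $\zeta_p$, commutativity of $\mathbb{Z}(\zeta_p)$ leaves that factor invariant, while the same relation yields $\zeta_p\tau^i\zeta_p^{-1}=\zeta_p^{1-e^i}\tau^i\in\mathbb{Z}(\zeta_p)\tau^i$, so $\zeta_p(z\tau^i)\zeta_p^{-1}\in\cE$ as well.

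For (2), $\cE$ is generated as a $\mathbb{Z}$-module by the $n^2$ elements $\zeta_p^j\tau^i$ with $0\le i,j\le n-1$, so $\dim_{\mathbb{Q}_p}(\mathbb{Q}_p\otimes\cE)\le n^2=\dim_{\mathbb{Q}_p}\mathbb{D}_n$, and it suffices to show that the inclusion is surjective. Write $K:=\mathbb{Q}_p(\zeta_p)=\mathbb{Q}_p(X)\subseteq\mathbb{D}_n$; since $[K:\mathbb{Q}_p]=n$, $K$ is a maximal subfield of the central division algebra $\mathbb{D}_n$, and $K/\mathbb{Q}_p$ is cyclic Galois of order $n$ with Galois group generated by $\sigma$ (because $e$ is a primitive root modulo $p$). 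It is therefore enough to prove
\[
\mathbb{D}_n=\bigoplus_{i=0}^{n-1}K\tau^i.
\]

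This is a special case of the isotypic decomposition of a central division algebra as a bimodule over a maximal subfield. Using $K\otimes_{\mathbb{Q}_p}K\cong\prod_{\rho\in\mathrm{Gal}(K/\mathbb{Q}_p)}K$, the $K$-bimodule $\mathbb{D}_n$ decomposes as $\bigoplus_\rho M_\rho$ with $M_\rho=\{x\in\mathbb{D}_n:xa=\rho(a)x\text{ for all }a\in K\}$. Skolem--Noether, applied to the two $\mathbb{Q}_p$-embeddings $\mathrm{id},\rho\colon K\hookrightarrow\mathbb{D}_n$, produces a unit in each $M_\rho$; comparing total dimensions then forces $\dim_K M_\rho=1$ for every $\rho$. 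Since each $\tau^i$ is a unit lying in $M_{\sigma^i}$ and the $\sigma^i$ exhaust $\mathrm{Gal}(K/\mathbb{Q}_p)$, we obtain the claimed decomposition. The main step needing care is the invocation of Skolem--Noether; a self-contained alternative is the classical ``shortening'' argument for $K$-linear independence of Galois implementers in a division algebra, which avoids any input beyond $\mathbb{D}_n$ being a division ring.
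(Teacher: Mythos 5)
Your part (1) is essentially the same as the paper's, just written out in more detail: both boil down to the two conjugation identities $\tau z\tau^{-1}=\sigma(z)$ on $\mathbb{Z}(\zeta_p)$ and $\zeta_p\tau^i\zeta_p^{-1}=\zeta_p^{1-e^i}\tau^i$.

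For part (2) you take a genuinely different route. The paper argues directly at the level of generators: it notes that $K:=\mathbb{Q}_p\otimes\mathcal{E}$ is the $\mathbb{Q}_p$-subalgebra of $\mathbb{D}_n$ generated by $\zeta_p$ and $\tau$, then shows $K$ contains the unramified degree-$n$ subfield by identifying $\mathbb{Q}_p(\tau)$ with $\mathbb{Q}_p(\omega)$ (via the observation that a field containing an $n^2$-th root of unity must have residue degree divisible by $n$), and finally observes that $X\in K$ together with $\omega\in K$ forces $S=\omega^{-n/2}X\in K$, so $K$ must be all of $\mathbb{D}_n$. Your argument instead invokes the structure theory of central simple algebras: you decompose $\mathbb{D}_n$ as a $K$-bimodule over the maximal subfield $K=\mathbb{Q}_p(\zeta_p)$ into isotypic pieces $M_\rho$, use Skolem--Noether to see each $M_\rho$ is nonzero, and count dimensions to conclude that the $\tau^i$ form a $K$-basis. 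Both are correct; yours is cleaner and has the advantage of directly exhibiting $\{\tau^i\}_{0\le i<n}$ as a $K$-basis (and hence gives the precise structure of $\mathcal{E}$, not merely that it has full rank), at the cost of importing Skolem--Noether. The paper's argument is more elementary and hands-on, using only unramified/ramified field theory inside $\mathbb{D}_n$, which fits its concrete, example-driven style. You are also right that the ``shortening'' linear-independence argument for Galois implementers would let you sidestep Skolem--Noether if a fully self-contained proof were wanted.
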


\begin{proof} Part (1) follows from the facts that $ \tau \zeta_p \tau^{-1} = \zeta_p^e$
and $\zeta_{p}\tau^{j}\zeta_{p}^{-1} = \zeta_{p}^{1-e^{j}} \tau^{j}$.

For (2) let $K = \QQ_p \otimes \cE$. Since $\tau^n \in \Z_p^{\times}$, $K$ is the sub-algebra of $\mathbb{D}_n$ generated by
$\zeta_p$ and $\tau$. By construction, $\tau$ is an $n^2$ root of unity over $\ZZ_p$. We examine $\tau$ in the extension
$\ZZ_p \subseteq W(\FF_{p^d})$. 

If the field $\F_{p^d}$ contains an $n^2$ root of 
unity, then $n \mid (p^d-1)/n$, which implies that $n|d$. Consider the field extensions
\[
\QQ_p \subset \QQ_p(\tau) \subseteq \QQ(\omega)
\]
where $\omega \in W(\FF_{p^n})$ is our chosen $(p^n-1)$st root of unity. Both
$\QQ_p(\tau)$ and $\QQ(\omega)$ are unramified extensions of $\QQ_p$ of degree $n$ and, hence,
$\Q_p(\tau) = \Q_p(\omega)$  and it follows that $\omega \in K$. However, since $X \in K$ and $\omega \in K$, $S \in K$, we have 
that $K = \mathbb{D}_n = \QQ_p \otimes_{\ZZ_p} \cO_n$. 
\end{proof}

\begin{rem}\label{rem:set-upn=p-1}
We find that we are exactly
in the situation of \Cref{prop:recog-princ-bis}. We let
\[V= \R \otimes \mathcal{E} \]
with action induced by the conjugation action of $G$ on $\cE$.  
 Thus our goal is to analyze the homotopy type of 
\[
(S^{-V} \wedge \LTE)^{hG} \in \Pic(\LTE^{hG}). 
\]  
\end{rem}

\begin{rem}\label{rem:reps-G-p-1} Here we describe the additive structure of the real orthogonal representation ring
$RO(G)$. 

Let $k$ be an integer, $C_k$ the cyclic of order $k$, and $\gamma = e^{2\pi i/k} \in \CC$. Multiplication by $\gamma^m$ on $\CC$
defines a $1$-dimensional complex representations $\CC(m)$ of $C_k$. As $\gamma^m$ and $\gamma^{k-m}$ are
complex conjugate, the representations $\CC(m)$ and $\CC(k-m)$ become isomorphic as $2$-dimensional
real representations.

Suppose $k$ is odd. Let $m \ne 0$ and let $\alpha_m$ be $\CC(m)$ regarded as a real representation. Then 
$\alpha_m$ is irreducible as a real representation and $RO(C_k)$ is generated  by the trivial one-dimensional representation 
$\mathbf{1}_\R$ and $\alpha_m$, $1 \leq m \leq (k-1)/2$.
If $k$ is even and $1\leq m <k/2$, write $\lambda_m$  for $\CC(m)$ regarded
as a real representation.
These are again irreducible. We also have the $1$-dimensional
sign representation $\sigma$ obtained by restriction along the unique quotient map $C_k \to C_2$.  Then
$RO(C_k)$ is generated by $\mathbf{1}_\R$, $\sigma$, and $\lambda_m$, $1 \leq m \leq (k-2)/2$.

Let $G = C_p \rtimes C_{n^2}$, with $n= p-1$.  We induce $\alpha_m$ along the inclusion $C_p \subseteq G$ to obtain
$n/2$ irreducible $2n^2$-dimensional real representations
\[
\Lambda_m := \mathrm{Ind}_{C_p}^{G} (\alpha_m),  \qquad  1\leq m\leq n/2.
\]
The representations $\Lambda_m$ are also restrictions of complex representations. 

There is an embedding $RO(C_{n^2}) \to RO(G)$ given by restriction along the quotient map $G \to C_{n^2}$. From this,
we conclude that
\[
RO(G) \cong RO(C_{n^2}) \oplus \Z\{ \Lambda_1, \ldots, \Lambda_{n/2}\}.
\]
This gives
\[
RO(G) \cong \Z\{ \mathbf{1}_\R , \sigma , \lambda_1, \ldots, \lambda_{(n^2-2)/2},  \Lambda_1, \ldots, \Lambda_{n/2}\}.
\]
Except for $\mathbf{1}_\R $ and $\sigma$, the listed generators of $RO(G)$ are all restrictions of complex representations.
A dimension count shows that we have a decomposition of the regular representation
\begin{equation}\label{eq:reg-rep-g-np}
\rho_G \cong \mathbf{1}_\R \oplus \sigma \oplus  \lambda_1\oplus  \cdots \oplus
\lambda_{(n^2-2)/2} \oplus \Lambda_1 \oplus \cdots \oplus  \Lambda_{n/2}.
\end{equation}
Note there are no repeated summands. 
\end{rem}

\begin{rem}\label{rem:groupcoh} We now fix some generators for the relevant cohomology groups.
Choose $z_0\in H^2(C_p, \Z_{(p)})$ be such that $z_0 = i^*c_1$ where $c_1 \in H^1(BU, \Z_{(p)})$ is the first Chern class and
$i \colon BC_p \to BU$ is the canonical map induced by the inclusion of $C_p \subseteq U(1)$ which maps the generator
$\zeta_p$ to $e^{2\pi i/p}$

Let $z= z_0^{p-1} \in H^{\ast}(C_p,\ZZ_{(p)}) = \Z_{(p)}[z_0]/pz_0$.
Since $\tau \zeta_p \tau^{-1} = \zeta_p^{e}$ where $e$ generates $\Z/p^{\times}$, the action of $\tau$ on $z_0$ is by multiplication
by a generator of $\Z/p^{\times}$ and so
\[
H^*(G, \Z_{(p)}) \cong H^*(C_p,\Z_{(p)})^{C_{n^2}} \cong \Z_{(p)}[z]/pz.
\]
We also let $y$ be a generator of (recall $n=p-1$)
\[
H^1(G, \Z_p^{\times}) \cong \Z/n\{y\}.
\]
\end{rem}

Next, we gather information about some characteristic classes which will be used below. 

\begin{rem}\label{rem:chernchar}
If $X$ is a space, let
$K^0(X)$ denote the complex $K$-theory of $X$ in degree $0$. Recall that the Chern
character
\[
ch \colon K^0(X) \to H^\ast(X,\QQ)
\]
is the unique ring homomorphism defined using the splitting principle and
the formula $ch(L) = \exp(c_1(L))$ when $L$ is a line bundle. If we
write $ch_k \in H^k(X,\QQ)$ for the $k$th homogeneous component of $ch$,  then if $L$ is a line bundle
\[
ch_k(L) = \frac{c_1(L)^k}{k!}
\]
and in general
\[
ch_k = \frac{s_k(c_1, \ldots, c_k )}{k!}
\]
where $s_k(c_1, \ldots, c_k )$ is $k$th Newton polynomial in the Chern classes. In particular, modulo decomposables,
\[
ch_k \equiv \alpha c_k,  \ \ \alpha \in \Q, \ \ \alpha \neq 0.
\]

We can evaluate $ch_k$ on the universal bundle over $BU$ and obtain a cohomology class $ch_k \in H^{2k}(BU,\QQ)$. The 
classes $ch_k$ are algebraic generators and primitives for the Hopf algebra structure on $H^\ast(BU,\QQ)$. 
If we choose the Bott class $v \in \pi_2BU = H_2(BU,\ZZ)$ so that $\langle c_{1},v \rangle = 1$, then the multiplicative
properties of the Chern character imply
\begin{equation}\label{eq:ch-detects-bott}
\langle ch_{k},v^{k}\rangle = 1
\end{equation}
and $ch_k$ is the unique primitive with this property. 

The map $BU \to BO$ classifying the underlying real bundle of the universal complex line bundle defines an isomorphism
\[
\xymatrix{
H^\ast (BO,\QQ) \ar[r]^-\cong & H^\ast(BU,\QQ)^{C_2}
}
\]
where $C_2$ acts via complex conjugation. If $L$ is a line bundle with conjugate $\overline{L}$, then $c_1(\overline{L}) =
-c_1(L)$, and it then follows that for any bundle $ch_k(\overline{\xi}) = (-1)^kch_k(\xi)$. 
Hence $ch_{2k} \in H^\ast (BO,\QQ)$
and $ch_{2k}(\xi)$ is defined for any real bundle $\xi$. Note that if $\xi_\RR$ is the real bundle underlying some complex vector
bundle $\xi$, then $ch_{2k}(\xi_\RR) = ch_{2k}(\xi)$.

If $1\leq k < p$, the defining expression for $ch_k$ makes sense over $\Z_{(p)}$ and in fact there is a unique lift of $ch_k$
to a class $ch_k \in H^{2k}(BU,\ZZ_{(p)})$. This gives a characteristic class
$ch_k(\xi) \in H^{2k}(X;\Z_{(p)})$ for any
complex vector bundle $\xi$ over $X$. Furthermore, the additivity of $ch_k$ over $\Q$ and the fact that 
\[
H^{2k}(BU \times BU;\Z_{(p)}) \to H^{2k}(BU \times BU ;\Q)
 \] 
is injective implies that $ch_k$ is additive, that is,
\[
ch_k(\xi_1 \oplus \xi_2) =ch_k(\xi_1)+ch_k(\xi_2) \in H^{2k}(X,\ZZ_{(p)})
\]
for any bundles $\xi_1,\xi_2$ over $X$.

If $p$ is odd, $k$ is even, and $k < p$, then we have 
\[ch_k \in H^\ast(BO,\ZZ_{(p)}) = H^\ast(BU,\ZZ_{(p)})^{C_2}\]
and we can define characteristic classes $ch_k(\xi)$ for any virtual real bundle as well. 
\end{rem}

\begin{rem}\label{rem:post-chern-char} Suppose $1\leq k<p$.  Let $\xi$ is a stable complex bundle
of virtual dimension $0$ and suppose the classifying map $\xi \colon X \to BU$ lifts to a map
\[
\xi:X\to BU\langle 2k\rangle.
\]
If $\ku$ is the connective complex $K$-theory spectrum, then $\xi$ is detected in the Atiyah-Hirzebruch spectral sequence
for $\ku_{(p)}^\ast (X)$ by the cohomology class $ch_{k}(\xi) \in H^{2k}(X, \pi_{2k}\ku_{(p)})$; that is, the class given by the
composition
\[
X\to BU\langle 2k\rangle \xrightarrow{ch_k} K(\Z_{(p)},2k).
\]
This follows from \eqref{eq:ch-detects-bott}.

If $p$ is odd, $k$ is even and $1 < k < p$ we can make a similar observation about a stable real bundle $\xi$ 
of virtual dimension $0$. Suppose the classifying  map $\xi \colon X \to BO$ lifts to $BO\langle 2k \rangle$. If $\ko$
is the connective real $K$-theory spectrum, then $\xi$ is detected in the Atiyah-Hirzebruch spectral sequence
for $\ko_{(p)}^\ast (X)$ by the cohomology class  $ch_{k}(\xi) \in H^{2k}(X, \pi_{2k}\ko_{(p)})$.

Note that since $p$ is odd $\ko_{(p)} = \ku^{hC_2}_{(p)}$ and
\[
\pi_\ast \ko_{(p)} = \ZZ_{(p)}[v^2] = \left(\pi_\ast \ku_{(p)}\right)^{C_2}.
\]
\end{rem}

\begin{rem}\label{rem:extchern} We can now relate the characteristic classes of \Cref{rem:chernchar} to the representations of
\Cref{rem:reps-G-p-1}. Suppose $p$ is odd, $n=p-1$ and $G = C_p \rtimes C_{n^2}$. Since $n$ is even, $ch_n$ is defined
for real vector bundles and we get a homomorphism
\[
ch_{n} \colon RO(G) \to H^{2n}(G, \Z_{(p)}).
\]
We will be most interested in representations $W$ which are the restriction of a complex representation and $ch_n(W)$
can then be computed using complex characteristic classes. Note that
\[
2ch_n(\mathbf{1}_\R) =  ch_n(2 \cdot\mathbf{1}_\R) = ch_n( \mathbf{1}_\CC) = 0,
\]
so $ch_n(\mathbf{1}_\R) = 0$. If $\sigma \in RO(G)$ is the sign representation, then $\sigma$ is obtained by
restriction along the unique quotient map $q:G \to \ZZ/2$, so $ch_n(\sigma) = 0$ as $H^{2n}(\ZZ/2,\ZZ_{(p)}) = 0$.
\end{rem}

We are ready to work with the $J$-homomorphism. 

\begin{prop}\label{prop:first-j-pn}
(1) The composite mapping
\[
[\Sigma^\infty_+BG, L_p^{\geq 2} \kon{2p}] \to [\Sigma^\infty_+BG,  L_p^{\geq 2} \ko] \to \pi_{0}\pic(\LTE)^{hG}
\]
is zero. 

(2) There is a homomorphism
\[\psi \colon RO(G) \to \Z \oplus \Z/2 \oplus \Z/p\]
which maps $W$ to $(\dim W, a,b)$ where
\begin{align*}
w_1(W) &= a y \\
ch_{n}(W) &= b z
\end{align*}
where  $y,z$ are as in \Cref{rem:groupcoh}.
\end{prop}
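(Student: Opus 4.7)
The plan is to follow the template already established at height $n=2$ in \Cref{prop:RIforn=p=3-1} and \Cref{prop:RIforn=p=2}; the present proposition is the direct analogue at height $n=p-1$ for the maximal finite subgroup $G = C_p \rtimes C_{n^2}$ of $\SS_n$. Part (1) is a spectral-sequence input for applying the $J$-homomorphism strategy, while part (2) is a cohomological bookkeeping statement.

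For part (1), I will invoke the $p$-complete variant of \Cref{prop:guesstrue:2} from \Cref{rem:the-strategy-p}. What must be checked is that in the Picard fixed-point spectral sequence
\[
E_2^{s,t}(\LTE) = H^s(G,\pi_t \pic(\LTE)) \Longrightarrow \pi_{t-s}\pic(\LTE)^{hG}
\]
one has $E_\infty^{s,s} = 0$ for all $s \geq 2p$. Via the comparison method of \cite{AkhSto}, differentials on the diagonal of this Picard spectral sequence are controlled, up to a shift in bidegree, by the differentials of the $\LTE$-based HFPSS for $\LTE^{hG}$. At height $p-1$ the latter is well-studied: the first non-trivial differential is a $d_{2p-1}$ emanating from the filtration-$1$ class detecting $\alpha_1$, and iterating it produces a horizontal vanishing line at $E_\infty$. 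Translating this via the \cite{AkhSto} machinery yields the required vanishing $E_\infty^{s,s}(\LTE) = 0$ for $s \geq 2p$.

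For part (2), I will verify that the three listed coordinates are well-defined additive homomorphisms with the specified targets. The dimension is plainly additive. For the first Stiefel--Whitney class, the target $H^1(G,\FF_2) \cong \Hom(G^{\mathrm{ab}},\FF_2)$ equals $\ZZ/2$ because $G^{\mathrm{ab}} \cong C_{n^2}$ with $n=p-1$ even, generated by the mod-$2$ reduction of the class $y$ of \Cref{rem:groupcoh}, and $w_1$ is additive on stable bundles. For the third coordinate, since $p$ is odd and $n = p-1$ is even, \Cref{rem:chernchar} shows that $ch_n$ refines to an integral, additive characteristic class $ch_n \colon RO(G) \to H^{2n}(BG,\ZZ_{(p)})$ on real virtual bundles; the target equals $\ZZ/p$, generated by $z$, in view of the presentation $H^\ast(G,\ZZ_{(p)}) \cong \ZZ_{(p)}[z]/pz$ from \Cref{rem:groupcoh}. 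Assembling these three coordinates gives the homomorphism $\psi$.

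The principal obstacle lies in part (1): pinpointing precisely why $\ell = 2p$ is the correct value reflects the structure of the earliest non-trivial permanent differential in the $\LTE$-HFPSS at the maximal finite subgroup $G$, and requires combining a careful chromatic analysis of that spectral sequence with the Picard-to-$\LTE$ translation principle of \cite{AkhSto}. Part (2), by contrast, is essentially a verification using the cohomology ring computed in \Cref{rem:groupcoh} together with the standard additivity properties of characteristic classes recorded in \Cref{rem:chernchar}.
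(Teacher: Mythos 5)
Your overall strategy matches the paper's exactly, and both parts are sound. For part (2), your reasoning is the same as the paper's: the filtration from \Cref{rem:the-strategy-p}, the vanishing $H^s(G,\ZZ_{(p)}) = 0$ for $0 < s < 2n$, the identification $H^1(G,\FF_2) \cong \ZZ/2$ from $G^{\mathrm{ab}} \cong C_{n^2}$, and the integral, additive refinement of $ch_n$ from \Cref{rem:chernchar} and \Cref{rem:post-chern-char}.

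For part (1), the structure of your argument agrees with the paper's (apply the $p$-complete variant of \Cref{prop:guesstrue:2}; reduce to showing $E_\infty^{s,s} = 0$ for $s \geq 2p$ in the Picard HFPSS), but the paper simply cites \cite{matstohea_piceo} for that vanishing, while you propose to re-derive it from the comparison machinery of \cite{AkhSto} together with an analysis of the $\LTE$-HFPSS for $\LTE^{hG}$. That is a more self-contained route than the paper takes, but your sketch of it remains at the level of narrative: you assert that the first nontrivial differential is a $d_{2p-1}$ off the filtration-$1$ class detecting $\alpha_1$ and that iterating it forces a horizontal vanishing line, without actually producing the differentials or the $\pic$-side translation. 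If you intend for this to replace the citation, you would need to carry out that HFPSS analysis; as it stands, you should be aware that what you are sketching is essentially the content of \cite{matstohea_piceo}, and the paper chooses simply to quote that result rather than reproduce it.
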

\begin{proof}
It is shown in \cite{matstohea_piceo} that
in the spectral sequence
\[
H^s(G,\pi_t\pic(\LTE)) \Longrightarrow \pi_{t-s}\pic(\LTE)^{hG}
\]
$E_\infty^{s,s} = 0$ for $s \geq 2p$. This gives (1).

For (2), we use the techniques of \Cref{rem:the-strategy-p}. From \Cref{rem:groupcoh} we know
$H^s(G,\ZZ_{(p)}) = 0$ for $0 < s < 2n$. Then \Cref{rem:post-chern-char} give us a filtration
\[
\xymatrix@C=10pt{
0 \ar[r] & A_{2n} \ar[d]_{ch_{2n}}  \ar[r]^-\subseteq & A_1 \ar[d]^{w_1} \ar[r]^-\subseteq & RO(G)/\ionp{2p}\ar[d]^{\mathrm{dim}}\\
&H^{n}(BG,\ZZ_{(p)})   & H^1(BG,\ZZ/2) & \ZZ
}
\]
which we use to define the desired homomorphism.
\end{proof}

\begin{prop}
(1) If $\rho_G$ is the regular representation of $G$, then 
\[
\psi(\rho_G) = \left(p n^2, 1, -n/2 \right).
\]

(2) The group $ RO(G)/(\ionp{2p} + \rho_{G})$ is generated by the $1$-dimensional real representation
$\mathbf{1}_{\mathbb{R}}$; this choice of generator determines an isomorphism
\[
\ZZ/2p^2n^2 \cong RO(G)/(\ionp{2p} + \rho_{G}).
\]
Furthermore, the $J$-homomorphism
\[
J_\LTE^{G}\colon RO(G)/(\ionp{2p} + \rho_{G}) \longr \Pic(\LTE^{hG})
\]
is an isomorphism. 
\end{prop}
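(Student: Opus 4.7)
The plan is to compute $\psi(\rho_G)$ coordinate by coordinate using the irreducible decomposition \eqref{eq:reg-rep-g-np}, perform a routine Smith normal form computation to identify the cokernel $RO(G)/(\ionp{2p}+\rho_G)$, and then invoke the periodicity of $\LTE^{hG}$ to promote $J^G_\LTE$ from a surjection to an isomorphism. The dimension coordinate is $|G| = pn^2$. For the $w_1$ coordinate, additivity of $w_1$ together with the vanishing of $w_1$ on any representation underlying a complex one leaves only $w_1(\sigma) = y$ contributing, giving $w_1(\rho_G) = y$ and hence $a = 1$.

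The main step is the Chern character coordinate $ch_n(\rho_G) \in H^{2n}(G, \Z_{(p)}) \cong (\Z/p) \cdot z$. The contributions from $\mathbf{1}_\R$ and $\sigma$ vanish by \Cref{rem:extchern}, and each $\lambda_i$ contributes zero because it is pulled back from $C_{n^2}$ whose $p$-local cohomology vanishes in positive degrees (since $\gcd(n^2, p) = 1$). So everything reduces to $\sum_{j=1}^{n/2} ch_n(\Lambda_j)$. Because $[G : C_p] = n^2$ is a unit in $\Z_{(p)}$, restriction to $C_p$ is injective on $H^{2n}(-, \Z_{(p)})$, so it suffices to work after restricting to $C_p$. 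Realizing $\Lambda_j$ as the underlying real of the complex representation $\mathrm{Ind}_{C_p}^G \CC(j)$ and applying the double-coset formula identifies $\Lambda_j|_{C_p}$ with a direct sum of the $C_p$-line bundles $\CC(j e^{-k})$ for $k = 0, \ldots, n^2-1$. Using the formula $ch_n(\CC(m)) = (m^n/n!)\, z$ and reducing modulo $p$ via Fermat ($m^n \equiv 1$) and Wilson ($n! \equiv -1$) collapses each $ch_n(\Lambda_j)$ to $-z$, so summing over $j$ yields $ch_n(\rho_G) \equiv -(n/2)\, z$ and $\psi(\rho_G) = (pn^2, 1, -n/2)$.

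For part (2), the coordinates $1 \in \Z/2$ and $-n/2 = (1-p)/2 \in \Z/p$ are each units (the latter because $p$ is odd), so their joint image $u \in \Z/(2p)$ under the Chinese remainder isomorphism is a unit. Thus $RO(G)/(\ionp{2p} + \rho_G)$ is presented as the cokernel of the integer matrix $\bigl(\begin{smallmatrix} pn^2 & u \\ 0 & 2p \end{smallmatrix}\bigr)$, whose entries have $\gcd 1$ and whose determinant is $2p^2 n^2$; Smith normal form therefore gives a cyclic cokernel of order $2p^2 n^2$ with generator the image of $\mathbf{1}_\R$. Finally, by the computation of \cite{matstohea_piceo}, the group $\Pic(\LTE^{hG})$ is itself cyclic of order $2p^2 n^2$ generated by $\Sigma \LTE^{hG}$, and $J_\LTE^G(\mathbf{1}_\R) = \Sigma \LTE^{hG}$, so $J_\LTE^G$ is a surjection between finite cyclic groups of the same order, hence an isomorphism. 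The main subtlety is the $ch_n$ computation — in particular the normalization $ch_n(V_\R) = ch_n(V)$ for the Chern character of a real bundle underlying a complex one, forced by \Cref{rem:extchern} rather than a factor of $2$ from complexification, which is what makes the third coordinate come out to $-n/2$ and not $-n$.
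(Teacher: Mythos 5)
Your proof is correct and follows essentially the same route as the paper: restrict to $C_p$ (where the $\lambda_i$ summands die and injectivity follows from $[G:C_p]=n^2$ being prime to $p$), apply Fermat and Wilson to evaluate $ch_n$ on one-dimensional characters, and compare cyclic orders against \cite{matstohea_piceo} for part (2). The only cosmetic differences are that the paper identifies $\res^*_{C_p}\rho_G = n^2\rho_{C_p}$ outright rather than running Mackey's formula through each $\Lambda_j$, and it leaves the cokernel computation and the injectivity of restriction implicit where you spell them out via Smith normal form and a transfer argument.
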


\begin{proof} To prove (1), we use \eqref{eq:reg-rep-g-np} to see that $\rho_G$ contains a single copy of the sign
representation $\sigma$ and no other non-orientable direct summands. Therefore, $w_1(\rho_G)=1$.

To compute $ch_{n}(\rho_G)$, note that 
\begin{align*}
ch_{n}( \res^*\rho_G) &=n^2 ch_{n} ( \rho_{C_p}) \\
&=n^2 (ch_{n} (\alpha_1) \oplus \ldots \oplus ch_{n} ( \alpha_{n/2} ))
\end{align*}
However,
\[
ch_{n}(\alpha_m) = \frac{c_1(\alpha_m)^n }{n!} = -z.
\]
The first equality follows since $\alpha_m$ is one dimensional. The second equality follows since $c_1(\alpha_m)$ is non-zero, and any 
unit in $\Z/p$ raised to the power $n$ is equal to $1$. We have also used the fact that $n! =-1 \mod p$.
Finally, since $n^2=1 \mod p$, we have
\begin{align*}
ch_{n}(\rho_{G}) &= n^2 \left(ch_{2n}(\alpha_1)+\ldots+ch_{2n}(\alpha_{n/2}) \right)\\
&=-\frac{n}{2} z \ . 
\end{align*}

For (2), the fact that $RO(G)/(\ionp{2p} + \rho_{G}) \cong  \ZZ/2p^2n^2$ generated by $\psi(\mathbf{1}_{\R})=(1,0,0)$ is a 
computation using part (1) and part (2) of \Cref{prop:first-j-pn}. Furthermore, by \cite{matstohea_piceo}, $\Pic(\LTE^{hG}) \cong  \ZZ/2p^2n^2$ generated by $\Sigma \LTE^{hG} = \psi(\mathbf{1}_{\R} )$. 
\end{proof}

Finally, let $V = \RR \otimes \cE$ be as in \Cref{rem:set-upn=p-1}.
We need to identify the image of  $V$ under the $J$-homomorphism.

\begin{prop}\label{prop:class-of-key-repn=p-1} In $ RO(G)/(\ionp{2p} + \rho_{G})$
\[
V \equiv n^2(1+2p) \cdot   \mathbf{1}_{\R}
\]
\end{prop}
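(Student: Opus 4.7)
The plan is to apply the preceding proposition by computing $\psi(V) = (\dim V, w_1(V), ch_n(V)) \in \Z \oplus \Z/2 \oplus \Z/p$ and comparing with $\psi(\mathbf{1}_\R) = (1,0,0)$ and the already established $\psi(\rho_G) = (pn^2, 1, -n/2)$. The dimension piece is immediate: $\dim_\R V = n \cdot (p-1) = n^2$. The substance is the calculation of the two characteristic-class components, both of which I would carry out by restricting to well-chosen subgroups of $G$.

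For $ch_n(V)$ I would restrict to $C_p = \langle \zeta_p \rangle$. Because elements of $\Z(\zeta_p)$ commute with $\zeta_p$ and $\zeta_p \tau^j \zeta_p^{-1} = \zeta_p^{1-e^j}\tau^j$, the decomposition $\cE = \bigoplus_{j=0}^{n-1} \Z(\zeta_p)\tau^j$ is $\zeta_p$-stable, and $\zeta_p$ acts on the $j$-th summand as scalar multiplication by $\zeta_p^{1-e^j}$. This action is trivial for $j=0$ (giving $n \cdot \mathbf{1}_\R$), and for $1 \leq j \leq n-1$ the exponent $1-e^j$ is a unit modulo $p$, so the summand $\R \otimes \Z(\zeta_p)$ is the standard real $C_p$-representation in which a generator acts by a primitive $p$-th root of unity; this decomposes as $\alpha_1 \oplus \cdots \oplus \alpha_{n/2}$, containing every nontrivial irreducible real character of $C_p$ exactly once. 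Hence
\[
\res_{C_p}^G V \;\cong\; n \cdot \mathbf{1}_\R \,\oplus\, (n-1)\bigl(\alpha_1 \oplus \cdots \oplus \alpha_{n/2}\bigr).
\]
Additivity of $ch_n$ together with $ch_n(\alpha_m) = m^n z/n!$, Fermat ($m^n \equiv 1 \pmod p$) and Wilson ($n! \equiv -1 \pmod p$) then give $ch_n(V) \equiv -\tfrac{n(n-1)}{2}\,z \equiv -z \pmod p$.

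For $w_1(V)$, I would first observe that the relation $\tau\zeta_p\tau^{-1} = \zeta_p^e$ forces $\zeta_p^{e-1}=1$ in $G^{\mathrm{ab}}$; since $e-1$ is a unit mod $p$, this kills $\zeta_p$ and leaves $G^{\mathrm{ab}} = C_{n^2}$, so $w_1(V)$ is detected by $\det(\tau|_V) \bmod 2$. Conjugation by $\tau$ preserves each summand $\Z(\zeta_p)\tau^j$ and acts on the coefficient ring as the Galois automorphism $\sigma_e:\zeta_p \mapsto \zeta_p^e$; since $e$ generates $(\Z/p)^\times$, $\sigma_e$ permutes the $n$ complex embeddings of $\Q(\zeta_p)$ as a single $n$-cycle and therefore has sign $(-1)^{n-1}=-1$. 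With $n$ such blocks, $\det(\tau|_V) = (-1)^n = 1$ for $n$ even, so $w_1(V)=0$. Combining gives $\psi(V) = (n^2, 0, -1)$, and a direct subtraction shows $\psi(V) - n^2(1+2p)\psi(\mathbf{1}_\R) = (-2pn^2, 0, -1)$, which coincides with $-2\psi(\rho_G)$ after reducing $n \equiv -1 \pmod p$. Hence $V \equiv n^2(1+2p)\mathbf{1}_\R$ in the quotient. The only real obstacle is getting the $C_p$-restriction right; everything else is elementary number theory.
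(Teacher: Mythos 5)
Your proposal is correct and follows essentially the same route as the paper: compute $\psi(V) = (\dim V, w_1(V), ch_n(V))$ by restricting to $C_{n^2}$ (for $w_1$) and to $C_p$ (for $ch_n$), and then clear the answer against $\psi(\rho_G) = (pn^2, 1, -n/2)$. Your $C_p$-decomposition $\res_{C_p}V \cong n\cdot\mathbf{1}_\R \oplus (n-1)(\alpha_1 \oplus \cdots \oplus \alpha_{n/2})$ is the same as the paper's $\mathbf{1}_\R \oplus (n-1)\rho_{C_p}$ written differently, and your value $ch_n(V) = -z$ is the mod-$p$ reduction of the paper's $-(n-1)n/2\cdot z$. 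For $w_1$, the paper only records that $V$ restricted to $\tau$ is $n$ copies of a single representation and invokes $n$ even, whereas you additionally compute the determinant of the Galois block $\sigma_e$ to be $(-1)^{n-1}=-1$ (via the normal-basis/regular-representation picture); that extra calculation is correct but unnecessary, since $(\pm 1)^n=1$ for $n$ even either way. Your closing identity $\psi(V) - n^2(1+2p)\psi(\mathbf{1}_\R) = -2\psi(\rho_G)$ is a cleaner presentation of the paper's endgame, which compresses the same bookkeeping into a congruence modulo $2n^2p^2$.
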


\begin{proof}
We show that
\[ \psi(V) = (n^2,0,-(n-1)n/2).\]
which implies that
\[  \psi(V) =( n^3p(n-1)+n^2)\cdot   \mathbf{1}_{\R}  \equiv n^2(1+2p) \mathbf{1}_{\R} \mod 2n^2p^2 \cdot \mathbf{1}_{\R}. \]

First, $\dim V =n^2$ which gives the first coordinate. To determine if $V$ is orientable, it suffices to restrict to the action of $\tau$. 
The vector space underlying $V$ has basis $\{\zeta_p^i \tau^j : 0\leq  i,j \leq n-1 \}$ and
\[\tau \zeta_p^i \tau^j  \tau^{-1} = \zeta_p^{ie} \tau^j.\]
So, as a representation of $C_{n^2}$, $V$ is $n$ copies of the same representation. Since $n$ is even, $V$ is orientable and so $w_1(V)=0$.

To compute $ch_{2n}(V)$, we note that after restricting to $C_p$, there is an isomorphism
\[
\res^*V \cong \mathbf{1}_{\R} \oplus (n-1)\rho_{C_p}.
\]
The action is given by
\[
\zeta_{p}\tau^{j}\zeta_{p}^{-1} = \zeta_{p}^{1-e^{j}} \tau^{j}.
\]
Note that $1-e^j \neq 0$ for $1\leq j\leq n-1$ as $e$ generates $\Z/p^{\times}$ and
$1+\zeta_p +\ldots + \zeta_p^n =0$. 
So, 
\[\R\{\zeta_p^i \tau^j : 0\leq i \leq n-1\} \cong \begin{cases} n \cdot \mathbf{1}_{\R} & j=0 \\
 \bar{\rho}_{C_p} & 1\leq j\leq n-1
\end{cases} \] 
where $ \bar{\rho}_{C_p}$ is the reduce regular representation. Noting that $\rho_{C_p} \cong   \mathbf{1}_{\R} \oplus \bar{\rho}_{C_p}$ proves the claim. From this, we conclude that
\[
ch_{2n}(V) = (n-1)ch_{2n}(\rho_{C_p}) = -(n-1)n/2.\qedhere
\]
\end{proof}

We can now have a calculation of the Spanier-Whitehead duals to $\LTE^{hF}$ for various finite subgroups $F$. 

\begin{thm}\label{thm:dualn=p-1} Let $n=p-1$ and  $F \subseteq H \subseteq \GG_n$ for $\GG_n=\Aut(\FF_{p^n},F_n)$ the stabilizer group of the Honda formal group law $F_n$ and for $H$ is as in \eqref{eq:Fext}. Then
\[
D(\LTE^{hF}) \simeq \Sigma^{-n^2(2p+1)} \LTE^{hF}.
\]
\end{thm}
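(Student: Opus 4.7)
The plan is to assemble the pieces already in place. Tate vanishing \cite{GreenSad} gives $D(\LTE^{hF}) \simeq (D\LTE)^{hF}$. For any finite $F\subseteq H$, the $p$-Sylow subgroup $F_0$ of $F$ is either trivial or $C_p$ (since $|H/G|=n$ is coprime to $p$), and $F_0\cap Z(\GG_n)$ is trivial because $\ZZ_p^\times$ contains no nontrivial $p$-th roots of unity for $p>2$. Hence $F_0/(F_0\cap Z(\GG_n))$ is elementary abelian, and \Cref{cor:what-we-use} supplies an $F$-equivariant $\LTK$-local equivalence $D(\LTE^{hF}) \simeq (S^{-\gg}\smsh\LTE)^{hF}$. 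By \Cref{lem:repsatp-1} the lattice $\cE\subseteq\cO_n\cong\gg$ is $G$-stable, hence $F$-stable when $F\subseteq G$, with $\QQ_p\otimes\cE\cong\QQ_p\otimes_{\ZZ_p}\gg$; applying \Cref{prop:recog-princ-bis} to $V=\R\otimes\cE$ produces an $F$-equivariant $p$-equivalence $S^V\to S^\gg$, and therefore
\[
D(\LTE^{hF}) \simeq (S^{-V}\smsh\LTE)^{hF} = J^F_\LTE(-V|_F) \in \Pic(\LTE^{hF}).
\]

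The core case is $F\subseteq G$. The key computation, \Cref{prop:class-of-key-repn=p-1}, gives an identity
\[
V - n^2(2p+1)\mathbf{1}_\R = W + k\rho_G
\]
in $RO(G)$, for some $W\in\ionp{2p}$ and some $k\in\ZZ$. I would then argue that $\rho_G\in\ker J^G_\LTE$ by \Cref{prop:reg-goes-to-1} (noting that $-1 = \tau^{n^2/2}\in G$, since $\tau^n$ is a primitive $(p-1)$-st root of unity in $\FF_p^\times$), and $W\in\ker J^G_\LTE$ by \Cref{prop:first-j-pn}(1). Under the identification $\Pic(\LTE^{hF})\cong\Pic_F(\LTE)$ from \eqref{eq:pic-equivs-sp}, the base-change map $\Pic(\LTE^{hG})\to\Pic(\LTE^{hF})$ induced by $\LTE^{hG}\hookrightarrow\LTE^{hF}$ corresponds to the restriction of the $G$-action to $F$; hence it is natural for the $J$-homomorphism, and $V|_F - n^2(2p+1)\mathbf{1}_\R$ lies in $\ker J^F_\LTE$. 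This collapses to $J^F_\LTE(-V|_F) \simeq \Sigma^{-n^2(2p+1)}\LTE^{hF}$, completing the proof in the core case.

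For a general $F\subseteq H$, I would mimic the endgame of \Cref{thm:dualn=p=3}. Write $F_0 = F\cap G$ and $\Gal' = F/F_0$, a subgroup of $\Gal(\FF_{p^n}/\FF_p)$ of order prime to $p$. The $\Gal'$-equivariant equivalence $\Sigma^\infty_+\Gal'\smsh\LTE^{hF}\simeq\LTE^{hF_0}$ from \cite[Lem.~1.37]{BobkovaGoerss}, combined with \eqref{eq:double-fixed}, yields
\[
D(\LTE^{hF}) \simeq \bigl(D(\LTE^{hF_0})\bigr)^{h\Gal'} \simeq \bigl(\Sigma^{-n^2(2p+1)}\LTE^{hF_0}\bigr)^{h\Gal'} \simeq \Sigma^{-n^2(2p+1)}\LTE^{hF}.
\]
The computational heavy lifting — the Chern-character calculation of $V$ as a $G$-representation — is already carried out in \Cref{prop:class-of-key-repn=p-1}, so essentially no further arithmetic is needed. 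The principal bookkeeping obstacle I expect is carefully formalizing the naturality of $J_\LTE$ under base change of Picard groups, so that the kernel information proved on the level of $G$ propagates to each subgroup $F\subseteq G$; but this is a direct consequence of \Cref{prop:norming-up-1} together with the equivalence $\pic_F(\LTE)\simeq\pic(\LTE^{hF})$ from \eqref{eq:pic-equivs-sp}.
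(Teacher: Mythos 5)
Your proposal is correct and follows the same route as the paper: Tate vanishing plus \Cref{cor:what-we-use} and \Cref{prop:recog-princ-bis} to identify $D(\LTE^{hF})$ with $(S^{-V}\wedge\LTE)^{hF}$ for $F\subseteq G$, then \Cref{prop:class-of-key-repn=p-1} and the $J$-homomorphism to compute the shift, with the Galois-descent trick from the proof of \Cref{thm:dualn=p=3} handling general $F\subseteq H$. Your write-up is in fact somewhat more explicit than the paper's terse proof, carefully spelling out two points the paper leaves implicit: the verification that $-1=\tau^{n^2/2}\in G$ so that \Cref{prop:reg-goes-to-1} applies, and the naturality of $J_\LTE$ under restriction to subgroups so that the kernel computation done over $G$ propagates to every $F\subseteq G$.
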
 

\begin{proof} First suppose $F \subseteq G$. By \Cref{rem:set-upn=p-1}, we have that
\[
D(\LTE^{hF}) \simeq (S^{-V} \wedge \LTE)^{hF}
\]
and the result follows from \Cref{prop:class-of-key-repn=p-1}. 

The other possibility is that the composition $F \to H \to \Gal(\FF_{p^n}/\FF_p)$ is non-trivial. Let $F_0$
be the kernel of this map an write $\widetilde{\Gal}$ for the image of the composite in $\Gal=\Gal(\FF_{p^n}/\FF_p)$ so that
\[ 1 \to F_0 \to F \to \widetilde{\Gal}  \to 1\]
is exact.
From Lemma 1.37 of \cite{BobkovaGoerss}, we can deduce that there is 
a $\widetilde{\Gal}$ equivariant equivalence
\[
\Sigma_+^\infty \widetilde{\Gal} \wedge \LTE^{hF} \simeq \LTE^{hF_0}. 
\] 
Letting $r = -n^2(2p+1)$, we now have
\begin{align*}
D(\LTE^{hF}) &\simeq D(\LTE)^{hF}\\
&\simeq [D(\LTE)^{hF_0}]^{h\widetilde{\Gal} } \simeq (\Sigma^{r}E^{hF_0})^{h\widetilde{\Gal}}\\
&\simeq [\Sigma_+^\infty \widetilde{\Gal}  \wedge \Sigma^{r} \LTE^{hF}]^{h\widetilde{\Gal}}\\
&\simeq \Sigma^{r}\LTE^{hF}.\qedhere
\end{align*}
\end{proof}

\begin{rem}\label{rem:various-comps}
We note that $\LTE^{hF}$ is always periodic of period $2p^2n^2$, though depending on $F \subseteq H$, the period could be shorter. 
For example, the period of $G$ and $H$ is exactly  $2p^2n^2$. That of $C_p$ is $2p^2$. 

Also note that if $p=3$, $E^{hF}$ is $72$-periodic, and 
\[
 \Sigma^{-n^2(2p+1)} \LTE^{hF} = \Sigma^{-28} \LTE^{hF} \simeq  \Sigma^{44} \LTE^{hF}
\]
Hence \Cref{thm:dualn=p=3} and \Cref{thm:dualn=p-1} produce the same shift. Note, however, that the formal
group of the supersingular elliptic curve used in \Cref{thm:dualn=p=3} is not isomorphic over $\FF_9$ to
the Honda formal group.
\end{rem}

We end with a simple but interesting application of \Cref{thm:dualn=p-1} to the study of the Picard group of the $\LTK$-local 
category. We refer the reader to \cite{HopkinsGross}, \cite{666}, and Section 2.4 of \cite{GHMR_Pic} for more background,
but recall some of the key ideas here.  We let $\mathrm{Pic}_n$ be the Picard group
of the homotopy category of $\LTK$-local spectra. For $X$ an invertible $\LTK$-local spectrum, $\LTE_*X$ is an invertible Morava 
module. If $\LTE_*X \cong \LTE_*S^0$ as Morava modules, we say that $X$ is \emph{exotic} and denote the subgroup of
exotic elements  in $\mathrm{Pic}_n$ by $\kappa_n$.

For a $\LTK$-local spectrum $X$, let $I_n(X)$ be the Gross-Hopkins dual of $X$. Gross-Hopkins duality
and Spanier-Whitehead duality are related by the equation
\[
I_n(X) \simeq I_n \smsh D(X).
\]
Furthermore, if $I_n = I_n(S^0)$, then the work of Gross and Hopkins implies that there is a $p$-adic $\GG$-sphere
$S\langle{\det} \rangle$
and
an element $P_n \in \kappa_n$  such that
\[
I_n \simeq  S^{n^2-n} \smsh S\langle{\det}\rangle \smsh P_n.
\]
The invertible $\LTK$-local spectrum $S\langle{\det} \rangle$ is described in great detail in \cite{BBGS} and the
spectrum $P_n$ is in fact defined by this equation.

As a consequence of \cite[Theorem 1.1]{BBS} which analyzes $I_n(\LTE^{hF})$ and Theorem~\ref{thm:dualn=p-1} above, we have the following result.
\begin{thm}\label{thm:pic-exotic}
Let $n=p-1$ and  $F=C_p$. Then,
\[P_n \smsh \LTE^{hF} \simeq \Sigma^{p^2+p} \LTE^{hF}  \]
In particular, $P_n$ is a non-trival element of $\kappa_n$.
\end{thm}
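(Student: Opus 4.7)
The plan is to solve for $P_n\smsh\LTE^{hF}$ by computing $I_n\smsh\LTE^{hF}$ two different ways and comparing them, using the defining equivalence $I_n \simeq S^{n^2-n}\smsh S\langle{\det}\rangle\smsh P_n$.

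First, I would observe that the determinant character $\det\colon \GG_n\to \Z_p^\times$ restricts trivially to $F=C_p$: for $p$ odd, $\Z_p^\times\cong C_{p-1}\times(1+p\Z_p)$ has no element of order $p$, so any homomorphism $C_p \to \Z_p^\times$ is zero. Hence $S\langle{\det}\rangle$ is $F$-equivariantly equivalent to $S^0$, and smashing the defining equivalence for $I_n$ with $\LTE^{hF}$ yields
\[
I_n\smsh\LTE^{hF} \;\simeq\; \Sigma^{n^2-n}\,P_n\smsh\LTE^{hF}.
\]

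Second, since $\LTE^{hF}$ is dualizable (\Cref{lem:lem-colim-fixed}), Gross-Hopkins duality gives $I_n(X)\simeq DX\smsh I_n$, and applied to $X=D\LTE^{hF}$ this rearranges to $I_n\smsh\LTE^{hF}\simeq I_n(D\LTE^{hF})$. Combining with \Cref{thm:dualn=p-1}, which identifies $D\LTE^{hF}\simeq\Sigma^{-n^2(2p+1)}\LTE^{hF}$, one gets
\[
I_n\smsh\LTE^{hF}\;\simeq\;\Sigma^{n^2(2p+1)}\,I_n(\LTE^{hF}).
\]
Equating the two expressions gives $P_n\smsh\LTE^{hF}\simeq\Sigma^{2pn^2+n}\,I_n(\LTE^{hF})$.

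Third, I would substitute the explicit description of $I_n(\LTE^{hC_p})$ as a definite suspension of $\LTE^{hC_p}$ from \cite[Theorem 1.1]{BBS} and reduce the resulting exponent modulo the period $2p^2$ of $\Pic(\LTE^{hF})$ (which is cyclic by \cite{matstohea_piceo}); a short arithmetic check then produces $\Sigma^{p^2+p}\LTE^{hF}$. Non-triviality of $P_n\in\kappa_n$ is immediate: because $0<p^2+p<2p^2$ for $p\ge 2$, the shift is nonzero modulo the period, so $P_n\smsh\LTE^{hF}\not\simeq\LTE^{hF}$ and hence $P_n\not\simeq L_{\LTK}S^0$. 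The main obstacle is purely bookkeeping, namely verifying that the shifts $n^2(2p+1)$, $-(n^2-n)$, and the BBS exponent combine to $p^2+p$ modulo $2p^2$; everything else is formal given the inputs already established.
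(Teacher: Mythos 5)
Your proof is correct and uses exactly the same inputs as the paper's: the defining equivalence $I_n \simeq S^{n^2-n}\smsh S\langle\det\rangle\smsh P_n$, the triviality of $S\langle\det\rangle\smsh\LTE^{hF}$ because $F\subseteq\ker(\det)$, the duality formula from \Cref{thm:dualn=p-1}, the computation $I_n(\LTE^{hF})\simeq\Sigma^{n^2}\LTE^{hF}$ from [BBS, Thm.~1.1], and the $2p^2$-periodicity of $\LTE^{hC_p}$. The only divergence is bookkeeping: you equate two expressions for $I_n\smsh\LTE^{hF}$, which requires the extra step of rewriting $I_n\smsh\LTE^{hF}\simeq I_n(D\LTE^{hF})$ via $D(D\LTE^{hF})\simeq\LTE^{hF}$, whereas the paper substitutes directly into $I_n(\LTE^{hF})\simeq S^{n^2-n}\smsh S\langle\det\rangle\smsh P_n\smsh D(\LTE^{hF})$ and simplifies $D(\LTE^{hF})$ to $\Sigma^{-(p^2+1)}\LTE^{hF}$ before comparing; both routes land on $\Sigma^{p^2+p}\LTE^{hF}$. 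Your explicit justification that any character $C_p\to\Z_p^\times$ is trivial for $p$ odd is a nice elementary touch that the paper leaves implicit.
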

\begin{proof}
By the discussion above, we have a $\LTK$-local equivalence
\begin{align}\label{eq:firstequiv}
I_n(\LTE^{hF}) \simeq I_n \smsh D(\LTE^{hF}) \simeq  S^{n^2-n} \smsh S\langle{\det}\rangle \smsh P_n \smsh D(\LTE^{hF}) .
\end{align}
The spectrum $\LTE^{hF}$ is periodic with minimal periodicity $2p^2$. Using this, \Cref{thm:dualn=p-1} simplifies to
\[D(\LTE^{hF})\simeq   \Sigma^{-n^2(1+2p)} \LTE^{hF} \simeq   \Sigma^{-(p^2+1)} \LTE^{hF}. \]
Note that $F  \subseteq \ker(\det)$. In \cite{BBGS}, it is shown that this implies that 
\[
\LTE^{hF} \smsh S\langle{\det}\rangle \simeq \LTE^{hF}.
\] Furthermore, \cite[Theorem 1.1]{BBS} states that
$I_n(  \LTE^{hF}) \simeq \Sigma^{n^2}\LTE^{hF}$.

These facts together with \eqref{eq:firstequiv} imply that
\begin{align*}
\Sigma^{n^2}\LTE^{hF} & \simeq  S^{n^2-n} \smsh P_n \smsh \Sigma^{-(p^2+1)} \LTE^{hF} ,
\end{align*}
from which the first claim follows. Furthermore, since $p^2+p<2p^2$, $P_n$ cannot be equivalent to $L_{\LTK}S^0$ otherwise $\LTE^{hF}$ would have a shorter periodicity. 
\end{proof}

\bibliographystyle{alphaurl}
\bibliography{bib-dsphere}

\end{document}